\title{Homological stability for automorphisms of symmetric bilinear forms}
\author{Vikram Nadig\thanks{Universit\"at Bielefeld\\vnadig@math.uni-bielefeld.de}}
\date{\today}
\numberwithin{equation}{section}
\newtheorem{theorem}{Theorem}[section] 
\newtheorem{prop}[theorem]{Proposition}
\newtheorem{lem}[theorem]{Lemma}
\newtheorem{ddd}[theorem]{Definition}
\newtheorem{kor}[theorem]{Corollary}
\newtheorem{ass}[theorem]{Assumption}
\theoremstyle{remark}
\theoremstyle{definition}
\newtheorem{ex}[theorem]{Example}
\newtheorem{rem}[theorem]{Remark}
\begin{document}
\maketitle
\begin{abstract}
    We establish homological stability for automorphisms of symmetric bilinear forms over a class of principal ideal domains that includes all fields, the integers, the Gaussian integers, and the Eisenstein integers. In conjunction with Grothendieck-Witt theoretic calculations, this determines a large part of the stable cohomology of the odd orthogonal groups $O_{\langle g,g \rangle}(\mathbb Z)$ in low degrees.
\end{abstract}
\setcounter{tocdepth}{1}
\tableofcontents
\section{Introduction}
The cohomology of groups, and specifically of arithmetic groups, is a unifying theme within mathematics that has far-reaching applications to diverse areas like number theory, algebraic geometry, and $K$-theory. In recent times, homological stability has emerged as a nearly ubiquitous feature that these groups enjoy. From a calculational perspective, this has the consequence that knowledge of the stable (co)homology alone leads to the determination of many (co)homology groups. In practice, the stable (co)homology can be calculated by methods that are independent of those used to calculate any specific (co)homology groups. For instance, the stable (co)homology of general linear groups can be accessed through algebraic $K$-theory (\cite{origquill}), of mapping class groups and diffeomorphism groups of high dimensional even manifolds through Madsen-Tillmann spectra (\cite{mt1, madsenweiss,homstabmod,homstabmodtwo}), and of Higman-Thomson groups through Moore spectra (\cite{ht}). In all these instances, homological stability is known.

An important class of arithmetic groups are automorphisms of quadratic and symmetric bilinear forms (i.e. orthogonal groups) on finitely generated projective modules over a number ring. The stable cohomology thereof can be accessed through Grothendieck-Witt theory. To make this more precise, let $R$ be a ring and $\lambda \in \{s,q\}$ be one of two symbols. Let us say that an $s$-form is a symmetric bilinear form and that a $q$-form is a quadratic form. Let $\text{Unimod}^{\lambda}(R)^{\simeq}$ be the symmetric monoidal groupoid of non-degenerate (unimodular) $\lambda$-forms on finitely generated projective $R$-modules, with monoidal structure given by the direct sum. The Grothendieck-Witt theory of the pair $(R,\lambda)$ can be defined as the group completion $$\mathcal{GW}^{\lambda}(R) \coloneq (\text{Unimod}^{\lambda}(R)^{\simeq})^{\text{grp}}$$Let us say that an element $M$ of $\text{Unimod}^{\lambda}(R)^{\simeq}$ is cofinal if for any $N \in \text{Unimod}^{\lambda}(R)^{\simeq}$, there exists $N' \in \text{Unimod}^{\lambda}(R)^{\simeq}$ such that $N \oplus N' \cong M^{n}$ for some $n \in \mathbb N$. The group completion theorem (\cite{oggroup}) then implies that for a cofinal form $M$ $$H_*(\mathcal{GW}^{\lambda}(R)_0) \cong \text{colim}_{n} H_*(O(M^n))$$ where $O(M^n)$ is the group of isometries of $M^n$. The technological advances of the nine-author series of papers (\cite{nineuathorone}, \cite{nineauthortwo}, \cite{nineauthorthree}) in Hermitian $K$-theory have greatly improved the accessibility of the left-hand side above. It is therefore desirable to find many instances of cofinal forms $M$ whose orthogonal groups satisfy homological stability. The purpose of the present paper is to attack this problem for the form parameter $\lambda = s$ associated to symmetric bilinear forms.

 To put this into context, let us contrast this with the case $\lambda = q$, the form parameter associated to quadratic forms, where the situation is much better understood. For starters, it is a classical fact that the hyperbolic plane is always cofinal among all quadratic forms irrespective of the base ring $R$. Furthermore, homological stability for orthogonal groups of hyperbolic forms has been studied extensively in different generalities by Vogtmann (\cite{vogtmann}), Charney (\cite{charneytwo}), Panin (\cite{paninone} and \cite{panintwo}), Betley (\cite{betley}), Mirzaii - van der Kallen (\cite{unigrp}), and Friedrich (\cite{arboffset}). The most general of these results may be found in the work Mirzaii - van der Kallen and Friedrich, who treat rings with finite unitary stable rank. The former establishes homological stability for orthogonal groups of hyperbolic forms over these rings, and the latter improves upon the methods of the former to allow for an arbitrary non-zero offset form in the stability results.

 This is in sharp contrast to the case of symmetric bilinear forms. To the best of our knowledge, there are no homological stability results for the automorphism groups thereof in the existing literature, outside the case that the element $2 \in R$ is a unit. Of course, when $2$ is a unit, every symmetric bilinear form admits a unique quadratic refinement, and the problem reduces to the situation considered in the previous paragraph. In general, it is not clear to us as to how cofinal symmetric bilinear forms over a given ring $R$ can be determined. To illustrate the problem, using the classification theorem for non-degenerate indefinite forms over $\mathbb Z$ (namely, that they are classified by their rank, signature and parity - see \cite[Chp. 5, Sec. 2.2, Thm. 6]{arithmetic}) one can easily deduce that $$F \coloneqq \left(\mathbb Z^2, \begin{pmatrix}
1 & 0\\
0 & -1
\end{pmatrix}\right)$$ is cofinal. But this is not particularly robust: for instance, the form $$F' \coloneq \left(\mathbb Z[i]^2, \begin{pmatrix}
1 & 0\\
0 & -1
\end{pmatrix}\right)$$
is not cofinal among non-degenerate symmetric bilinear forms over $\mathbb Z[i]$ as, for any $n$, there exists no vector in $(F')^{n}$ whose inner product with itself equals $i$. Concerning the existence of cofinal forms, it is not difficult to see that every number ring admits a cofinal form (Proposition \ref{allnumbring}). However, it is not true that every Dedekind ring, or even every field, admits a cofinal form (as can be seen from Theorem \ref{cofincritintro} below).

Cofinal forms are intimately linked with metabolic forms. We recall that a form $M \in \text{Unimod}^{s}(R)^{\simeq}$ over a ring $R$ is \textit{metabolic} if it has a Lagrangian; that is, a subform $i:L \hookrightarrow M$ on an underlying projective submodule on which the inner product vanishes, and such that the null-composite sequence 
\[\begin{tikzcd}
	0 & L & {M \cong M^{*}} & {L^{*}} & 0
	\arrow[from=1-1, to=1-2]
	\arrow["i", from=1-2, to=1-3]
	\arrow["{i^{*}}", from=1-3, to=1-4]
	\arrow[from=1-4, to=1-5]
\end{tikzcd}\] is exact, where $(-)^{*}$ denotes the $R$-dual. For any $M \in \text{Unimod}^{s}(R)^{\simeq}$, the diagonal embedding exhibits the form $M \oplus (-M)$ as metabolic. The metabolic forms are therefore jointly cofinal. In particular, $R$ admits a cofinal form if and only if it admits a cofinal form that is metabolic. 

In the present paper, we give a reasonably satisfactory answer to both the algebraic problem of finding cofinal metabolic forms and the homotopical problem of establishing homological stability for the orthogonal groups thereof, for rings $R$ satisfying the following assumption:
\begin{ass}\label{onlyassint}
    $R$ is a principal ideal domain, and for each $r \in R$, either $r^2 \equiv 0 (\textup{mod } 2)$ or $r^2 \equiv u^2 (\textup{mod }2)$ for some unit $u \in R$.
\end{ass}
Let us briefly discuss examples and non-examples of rings satisfying this assumption. All fields are examples, and so are several number rings like $\mathbb Z$, $\mathbb Z[i]$, and $\mathbb Z[\omega]$. More generally, all quadratic integer rings of class number $1$ with discriminant $D \equiv 2,3 (\text{mod } 4)$ are examples, those with $D \equiv 5 (\text{mod }8)$ are examples if and only if the fundamental unit does not reduce to $1$ modulo $2$, and those with $D \equiv 1 (\text{mod }8)$ are non-examples (see Proposition \ref{whichnumberrings}). The assumption is quite restrictive in general, as, if $2$ is not a unit, either the ideal $(2)$ is prime or there is a unique prime ideal lying above $2$ with absolute ramification index two (Proposition \ref{twoprime}).

Our assumption is decisive in that it allows for a simple classification of metabolic forms over $R$. To explain this, let us additionally assume\footnote{This is not for technical reasons, but just to ease the present exposition. In our arguments, the case that $2$ is invertible is degenerate; and our results are new only when $2$ is not invertible.} for the moment that $2$ is not invertible in $R$. The set of squares $U(R)$ in $R/(2)$ is then a field, and $R/(2)$ is a vector space over $U(R)$. Given an element $(M,\lambda)$ of $\text{Unimod}^{s}(R)^{\simeq}$, we define its parity $P(M)$ to be the image of the set $\{\lambda(x,x)|x \in M\}$ in $R/(2)$. We show that metabolic forms are classified by their rank and parity (Theorem \ref{myclassification}). It follows relatively easily from this that:
\begin{theorem}\label{cofincritintro}
    Let $R$ be a ring satisfying Assumption \ref{onlyassint}. The following statements are equivalent:
    \begin{enumerate}
        \item There exists a cofinal form in $\textup{Unimod}^{s}(R)^{\simeq}$ that is metabolic.
        \item There exists a cofinal form in $\textup{Unimod}^{s}(R)^{\simeq}$.
        \item There exists a form $M$ in $\textup{Unimod}^{s}(R)^{\simeq}$ such that $P(M) = R/(2)$.
        \item $R/(2)$ is finite dimensional over $U(R)$.
    \end{enumerate}
    In this case, a metabolic form is cofinal if and only if its parity equals $R/(2)$.
\end{theorem}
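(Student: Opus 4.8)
The plan is to prove the cycle of implications $(1)\Rightarrow(2)\Rightarrow(3)\Rightarrow(4)\Rightarrow(1)$, reading off the concluding ``iff'' clause along the way. We adopt the standing assumption (made in the discussion above) that $2$ is not a unit in $R$, so that $R/(2)\neq 0$; the case $2\in R^{\times}$ is degenerate. Since $R$ is a principal ideal domain every finitely generated projective $R$-module is free, so each form $(M,\lambda)$ carries a symmetric Gram matrix $A$, and under Assumption \ref{onlyassint} the set $\{\lambda(x,x):x\in M\}$ reduces modulo $2$ exactly to the $U(R)$-linear span of the reduced diagonal entries $\bar A_{11},\dots,\bar A_{rr}$, where $r=\operatorname{rk}M$ --- this is the computation underlying Theorem \ref{myclassification}. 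I will use its consequences freely: $\dim_{U(R)}P(M)\leq\operatorname{rk}M$; additivity $P(M\oplus M')=P(M)+P(M')$, hence $P(M^{n})=P(M)=P(-M)$; that a finite orthogonal sum of metabolic forms is metabolic (direct sum of Lagrangians); and that for any $d\in R$ the rank-two form $H_{d}$ with Gram matrix $\left(\begin{smallmatrix}0&1\\1&d\end{smallmatrix}\right)$ is unimodular and metabolic with parity $\langle\bar d\rangle_{U(R)}$.

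The implication $(1)\Rightarrow(2)$ is trivial, and $(3)\Rightarrow(4)$ is immediate since $R/(2)=P(M)$ has dimension at most $\operatorname{rk}M$. For $(2)\Rightarrow(3)$ I would argue: if $M$ is cofinal then for every unimodular $N$ there is $N'$ with $N\oplus N'\cong M^{n}$, whence $P(N)\subseteq P(N)+P(N')=P(M^{n})=P(M)$; taking $N=H_{d}$ gives $\bar d\in P(M)$ for all $d\in R$, so $P(M)=R/(2)$. Thus $M$ itself witnesses (3), one gets $\dim_{U(R)}R/(2)\leq\operatorname{rk}M<\infty$ as a bonus (so $(2)\Rightarrow(4)$ directly), and the argument shows that \emph{any} cofinal form has full parity, which is the forward direction of the final clause.

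For $(4)\Rightarrow(1)$, and simultaneously the reverse direction of the final clause, I would show that \emph{any} metabolic form $M$ with $P(M)=R/(2)$ is cofinal. Such forms exist once $R/(2)$ is finite-dimensional: for a $U(R)$-basis $\bar d_{1},\dots,\bar d_{n}$ of $R/(2)$, the form on $R^{2n}$ with Gram matrix $\left(\begin{smallmatrix}0&I_{n}\\I_{n}&\operatorname{diag}(d_{1},\dots,d_{n})\end{smallmatrix}\right)$ is unimodular (determinant $\pm1$), metabolic with Lagrangian $R^{n}\oplus 0$, and of parity $R/(2)$. Writing $2m=\operatorname{rk}M$, note $m\geq 1$ since $R/(2)\neq 0$. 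Given unimodular $N$ of rank $r$: the diagonal exhibits $N\oplus(-N)$ as metabolic of rank $2r$ and parity $P(N)$; choosing $k$ with $mk-r-m\geq 0$ --- for instance $k=r+1$, since then $mk-r-m=r(m-1)\geq 0$ --- and setting $Q:=M\oplus H_{0}^{\,mk-r-m}$, the forms $(N\oplus(-N))\oplus Q$ and $M^{k}$ are both metabolic, both of rank $2mk$, and both of parity $P(N)+R/(2)=R/(2)=P(M^{k})$, hence isomorphic by Theorem \ref{myclassification}. So $N$ is an orthogonal direct summand of $M^{k}$, and $M$ is cofinal. This gives $(4)\Rightarrow(1)$ and completes the final clause.

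The genuinely hard input is the classification Theorem \ref{myclassification}; granting it, every step above is routine bookkeeping except $(4)\Rightarrow(1)$, which I expect to be the delicate point. The difficulty is that $N\oplus(-N)$ on its own has only the parity $P(N)$, typically a proper subspace of $R/(2)$, so to recognise it as a summand of a power of $M$ one must enlarge the complement by a full-parity metabolic piece and pad with hyperbolic planes to correct the rank, and then verify that the required ranks are attainable --- i.e.\ that $mk-r-m\geq 0$ can be arranged, which is where $m\geq 1$ (hence $R/(2)\neq 0$) enters.
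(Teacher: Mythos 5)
Your proposal is correct and follows essentially the same route as the paper: the same cycle of implications, the same use of additivity of parity and the bound $\dim_{U(R)}P(M)\leq\operatorname{rank}(M)$ for $(2)\Rightarrow(3)\Rightarrow(4)$, and for $(4)\Rightarrow(1)$ the same two-step reduction (metabolic forms are jointly cofinal via $N\oplus(-N)$, and a full-parity metabolic form is cofinal among metabolic forms by the rank--parity classification of Theorem \ref{myclassification}). The only difference is that you make explicit the rank and parity bookkeeping (padding the complement by $M\oplus H^{mk-r-m}$) that the paper leaves implicit in the phrase ``classified by their rank and parity''; this filling-in is accurate.
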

We therefore find that a field of characteristic $2$ admits a cofinal form if and only if it is finite over its Frobenius. Thus the field $\mathbb F_2(t_1,...,t_n,...)$ is an example of one that admits no cofinal form. We also recover that the rank two metabolic form $\text{diag}(1,-1)$ is cofinal over $\mathbb Z$ and deduce that it remains cofinal after base-changing to $\mathbb Z[\omega]$. Over $\mathbb Z[i]$, we find that the rank four metabolic form $\text{diag}(1,1,i,i)$ is cofinal from which it follows that the rank two form $\text{diag}(1,i)$ is cofinal. Note, however, that $\text{diag}(1,i)$ is not metabolic. 

In light of the previous discussion, it is desirable to establish homological stability for orthogonal groups of metabolic forms. It is easy to see that every metabolic form is isomorphic to a direct sum of metabolic planes (Proposition \ref{met=tz}), and we show the following stability result:
\begin{theorem}\label{mainlintro}
Let $R$ be a ring satisfying Assumption \ref{onlyassint}. Suppose $M$ and $F$ are metabolic forms over $R$, with $\textup{rank}(F) = 2$. The map induced by $- \oplus F$ on homology with constant coefficients $$H_i(\textup{O}(M \oplus F)) \to H_i(\textup{O}(M \oplus F^2))$$ is an isomorphism for $i \leq f(\textup{rank}(M))$ where $f$ is a linear function of slope $1/4$.
\end{theorem}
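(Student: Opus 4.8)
The plan is to apply the now-standard machinery for homological stability of automorphism groups in a symmetric monoidal groupoid, in the form developed by Randal-Williams--Wahl, fed by a connectivity input obtained through stable-range arguments in the tradition of Mirzaii--van der Kallen \cite{unigrp} and Friedrich \cite{arboffset}. By Proposition \ref{met=tz} we may assume $M$ is an orthogonal sum of metabolic planes and $F$ is a single metabolic plane. For a metabolic form $N$, I would work with the semisimplicial set $W_\bullet(N)$ whose $p$-simplices are the split orthogonal sequences $(Q_0\perp\cdots\perp Q_p)$ of metabolic planes in $N$, where --- and this is essential for the stated slope --- the $Q_i$ are allowed to be of \emph{different} isometry types. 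Indeed, by Proposition \ref{met=tz} a metabolic form $N$ is an orthogonal sum of $\mathrm{rank}(N)/2$ metabolic planes even when it has no hyperbolic summand at all, so $W_\bullet(N)$ has dimension $\mathrm{rank}(N)/2-1$; stabilising instead only by the hyperbolic plane would give the much weaker range coming from the number of hyperbolic summands of $N$, which can be bounded independently of $\mathrm{rank}(N)$.

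Granting that $W_\bullet(N)$ is highly connected --- the crux, discussed below --- the formal part runs as usual. The group $\mathrm{O}(N)$ acts on $W_\bullet(N)$, with the stabiliser of a $p$-simplex being, by Theorem \ref{myclassification}, the orthogonal group of a metabolic form of rank $\mathrm{rank}(N)-2(p+1)$. Witt cancellation can genuinely fail for symmetric bilinear forms when $2$ is not invertible, so this action need not be transitive; but this is harmless, since one may index the $E^1$-page of the resulting spectral sequence by orbit representatives and all the groups occurring are still orthogonal groups of metabolic forms of controlled rank, which is all the inductive argument uses. To relate this to the specific stabilisation $-\oplus F$, one uses the isometry $F^{\oplus 2}\cong\mathbb{H}\oplus F$ furnished by Theorem \ref{myclassification} (so that, more generally, $M\oplus F^{\oplus n}\cong(M\oplus F)\oplus\mathbb{H}^{\oplus(n-1)}$): a Randal-Williams--Wahl style ``independence of stabilisation'' argument then shows $-\oplus F$ induces, in the stable range, the same map on homology as the canonical stabilisation the machine controls. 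The spectral sequence (equivalently, the Mirzaii--van der Kallen induction for a group acting on a highly connected poset) then yields that $H_i(\mathrm{O}(N))\to H_i(\mathrm{O}(N\oplus F))$ is an isomorphism for $i$ below a linear function of $\mathrm{rank}(N)$ of slope $1/4$; the quarter reflects that $W_\bullet(N)$ should be weakly Cohen--Macaulay of ``rank'' $\mathrm{rank}(N)/2$ --- just like the splitting complex for symplectic groups --- hence roughly $\mathrm{rank}(N)/4$-connected. Taking $N=M\oplus F$ gives the theorem.

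\textbf{The main obstacle is the connectivity estimate for $W_\bullet(N)$}, and I would attack it in two stages. First, forgetting each metabolic plane down to its Lagrangian line gives a map from $W_\bullet(N)$ to the semisimplicial set $\mathcal{IU}_\bullet(N)$ of orthogonal frames of unimodular isotropic vectors in $N$. The linear connectivity of $\mathcal{IU}_\bullet(N)$ should follow by the Mirzaii--van der Kallen method from the fact that a principal ideal domain has Bass stable rank at most $2$ and bounded unitary stable rank; the one genuinely new point compared to the hyperbolic setting is producing the isotropic vectors and extending partial isotropic frames, for which one uses that $N$ is metabolic and thus contains a Lagrangian summand full of unimodular isotropic vectors.

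Second, one must show that passing from $\mathcal{IU}_\bullet(N)$ back to $W_\bullet(N)$ does not cost connectivity. Over an orthogonal isotropic frame $(e_0,\dots,e_p)$, completing $e_i$ to a metabolic plane means choosing $f_i$ with $\langle e_i,f_i\rangle=1$ in the orthogonal complement of the other planes, and the isometry type of the plane so obtained is governed by the class of $\langle f_i,f_i\rangle$ in $R/(2)$; since $\langle f_i+ae_i,f_i+ae_i\rangle=\langle f_i,f_i\rangle+2a$, the set of admissible completions is either empty or a union of cosets of a free $R$-module, hence highly connected once non-empty. \emph{It is precisely the non-emptiness where Assumption \ref{onlyassint} does the essential work}: the obstruction lies in $R/(2)$, and the hypothesis --- each square in $R/(2)$ is either $0$ or a unit square --- forces $R/(2)$ to be a well-controlled vector space over its subfield of squares $U(R)$, which together with the metabolicity of $N$ (a large supply of isotropic vectors of every available parity class) lets one realise all the metabolic-plane types needed to keep $W_\bullet(N)$ as connected as $\mathcal{IU}_\bullet(N)$. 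A relative, ``coloured'' connectivity argument of the kind used by Randal-Williams--Wahl, and in Friedrich's extension of Mirzaii--van der Kallen to an arbitrary non-zero offset, then completes the estimate. The most delicate bookkeeping throughout is keeping the offset $M$ --- a general metabolic form rather than a sum of hyperbolic planes --- harmless, which is exactly what Friedrich's refinements are designed to achieve.
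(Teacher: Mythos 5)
Your overall strategy --- the Randal-Williams--Wahl machine fed by a Mirzaii--van der Kallen connectivity argument for a complex built from isotropic unimodular frames, with the slope $1/4$ coming from a $\sim\mathrm{rank}/4$ connectivity estimate --- is indeed the paper's, and your first stage (connectivity of the poset of isotropic unimodular sequences via stable-rank arguments) matches Theorem \ref{alliso}. But two substantive points go wrong. First, your choice of destabilisation complex differs from the paper's and your justification for it is incorrect: the paper's $W_n(M,F)$ has as $p$-simplices embeddings of $F^{\oplus p+1}$ with all planes of the \emph{same} fixed isometry type $F$ (and with complement lying in an $F$-cancellative groupoid), and this complex is already $\sim n/2$-connected (Theorem \ref{posetconn}). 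Your claim that mixed types are ``essential for the stated slope'' because the number of hyperbolic summands is bounded independently of the rank is false in the relevant situation: by Lemma \ref{daityabaida}, $M\oplus F^{n}$ contains $H^{z-c}$ with $z-c$ growing linearly in $n$, and in any case one stabilises by $F$ itself, of which $M\oplus F^{n}$ visibly contains $n$ orthogonal copies. Working with a fixed type is exactly what makes the $\mathrm{O}(N)$-action on simplices transitive --- via the cancellation statement of Theorem \ref{twotwoiscancel}, itself a consequence of the rank-parity classification Theorem \ref{myclassification} --- so that the Randal-Williams--Wahl homogeneity hypotheses hold on the nose (Proposition \ref{localhom}). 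Your alternative of allowing all types and then ``indexing the $E^1$-page by orbit representatives'' together with an unspecified ``independence of stabilisation'' argument is precisely where the identification with the map $-\oplus F$ would have to be carried out, and it is not.

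Second, and more importantly, your second stage skips the actual crux. The difficulty is not the topology of the set of completions of a single isotropic frame (your ``union of cosets of a free module'' remark), but the combinatorics of \emph{which subframes} of a given isotropic frame admit completions to orthogonal copies of the fixed $F$ whose common complement is again metabolic and contains a further copy of $F$. This is governed by parity: a subframe qualifies if and only if the vectors omitted from it carry enough parity data to span $P(e_{v,w})$ over $U(R)$ (Lemma \ref{readerfriendly}), and the subposet of good subframes of a length-$k$ frame can have dimension as low as $k-c(M)-1$ (Theorem \ref{triv}); showing it is nevertheless $(k-c(M)-2)$-connected requires a matroid shellability argument (Proposition \ref{coolmatroid}), and transferring connectivity from $\mathcal{IU}$ to the fully $F$-like subposet then needs the fibrewise acyclicity criterion of Theorem \ref{3.6thing}. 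None of this appears in your sketch, yet it is exactly where Assumption \ref{onlyassint} and the classification of metabolic forms by rank and parity do their work. As written, the proposal would not assemble into a proof without supplying this analysis.
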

For a metabolic cofinal form $M$, we can therefore deduce homological stability for the sequence $$O(M) \hookrightarrow O(M^2) \hookrightarrow ... \hookrightarrow O(M^n) \hookrightarrow ...$$

 We actually prove a stronger statement (Theorem \ref{mainl}) that allows us in specific instances to deduce homological stability for non-metabolic cofinal forms such as the previously discussed $\text{diag}(1,i)$ over $\mathbb Z[i]$. We do this using the machinery of Randal-Williams and Wahl (\cite{machine}) which also yields stability with respect to more general coefficient systems than just the constant ones. 

 Moreover, we furthermore deduce that any two metabolic planes induce the same isomorphism in their common stable range. In particular, this clarifies a point raised in \cite[Rem. 8.1.10]{hebestreit2025stablemodulispaceshermitian}.
 \begin{kor}\label{sameisointro}
 Let $R$ be a ring satisfying Assumption \ref{onlyassint}. Suppose $M$, $F$, and $F'$ are metabolic forms over $R$, with $\textup{rank}(F) = \textup{rank}(F') = 2$. There exists an isomorphism $\alpha: F \oplus F' \oplus F \cong F \oplus F' \oplus F'$ such that the triangle 
\[\begin{tikzcd}
	& {H_i(O(M \oplus F \oplus F'))} \\
	{H_i(O(M \oplus F \oplus F' \oplus F))} && {H_i(O(M \oplus F \oplus F' \oplus F'))}
	\arrow[from=1-2, to=2-1]
	\arrow[from=1-2, to=2-3]
	\arrow["{H_i(O(\textup{id}_M \oplus\alpha))}", from=2-1, to=2-3]
\end{tikzcd}\]
is a commutative diagram of isomorphisms for $i \leq f(\textup{rank}(M))$ where $f$ is a linear function of slope $1/4$. The left and right downwards arrows are stabilisations by $F$ and $F'$, respectively.
 \end{kor}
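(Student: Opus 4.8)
The plan is to make the triangle commute after one further stabilisation, where it becomes a statement about inner automorphisms acting trivially on homology, and then to descend using the isomorphism range of Theorem~\ref{mainlintro}. Write $X_0 = M\oplus F\oplus F'$, $X_1 = X_0\oplus F$, $X_1' = X_0\oplus F'$, $Z = F\oplus F'$, and let $s_F\colon O(X_0)\to O(X_1)$ and $s_{F'}\colon O(X_0)\to O(X_1')$ be the two stabilisations in the statement, so that what must be shown is the identity $H_i(O(\textup{id}_M\oplus\alpha))\circ H_i(s_F) = H_i(s_{F'})$ in a suitable range, the three maps being isomorphisms there. Parity is additive under $\oplus$, i.e.\ $P(A\oplus B) = P(A)+P(B)$, and idempotent, $P(A)+P(A) = P(A)$; hence $F\oplus F'\oplus F$ and $F\oplus F'\oplus F'$ have the same rank and parity, and so do $F\oplus Z$ and $F'\oplus Z$. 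By the classification of metabolic forms (Theorem~\ref{myclassification}) I may therefore fix isometries $\alpha\colon F\oplus F'\oplus F\xrightarrow{\,\cong\,}F\oplus F'\oplus F'$ and $\mu\colon F\oplus Z\xrightarrow{\,\cong\,}F'\oplus Z$ (any pair will do); the first is the isometry of the statement, and $H_i(O(\textup{id}_M\oplus\alpha))$ is automatically an isomorphism, being induced by a group isomorphism.

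The obstacle is that the triangle does not commute on the level of groups --- the isometry must fix $M$, so conjugating by it does not commute with the $O(X_0)$-action, and the naive ``differ by an inner automorphism'' argument fails because $F$ and $F'$ themselves need not be isometric. I would circumvent this by stabilising once more by $Z$, which absorbs the discrepancy. Concretely, $\textup{id}_{X_0}\oplus\mu\colon X_1\oplus Z = X_0\oplus(F\oplus Z)\to X_0\oplus(F'\oplus Z) = X_1'\oplus Z$ carries $g\oplus\textup{id}_{F\oplus Z}$ to $g\oplus\textup{id}_{F'\oplus Z}$ for every $g\in O(X_0)$, while $\textup{id}_M\oplus\alpha\oplus\textup{id}_Z$ is a second isometry $X_1\oplus Z\to X_1'\oplus Z$; the two differ by a self-isometry $\delta$ of $X_1'\oplus Z$. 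Since $g\mapsto g\oplus\textup{id}_{F\oplus Z}$ equals $(-\oplus\textup{id}_Z)\circ s_F$ and $g\mapsto g\oplus\textup{id}_{F'\oplus Z}$ equals $(-\oplus\textup{id}_Z)\circ s_{F'}$, it follows that the homomorphisms $O(\textup{id}_M\oplus\alpha\oplus\textup{id}_Z)\circ(-\oplus\textup{id}_Z)\circ s_F$ and $(-\oplus\textup{id}_Z)\circ s_{F'}$ from $O(X_0)$ to $O(X_1'\oplus Z)$ differ by conjugation by $\delta$, hence agree after applying $H_i$: that is, $H_i(O(\textup{id}_M\oplus\alpha\oplus\textup{id}_Z))\circ H_i(-\oplus\textup{id}_Z)\circ H_i(s_F) = H_i(-\oplus\textup{id}_Z)\circ H_i(s_{F'})$.

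Next, because $\alpha$ acts on summands disjoint from $Z$, the square $(-\oplus\textup{id}_Z)\circ O(\textup{id}_M\oplus\alpha) = O(\textup{id}_M\oplus\alpha\oplus\textup{id}_Z)\circ(-\oplus\textup{id}_Z)$ of homomorphisms $O(X_1)\to O(X_1'\oplus Z)$ commutes already on the group level; substituting it into the previous display rewrites its left side as $H_i(-\oplus\textup{id}_Z)\circ H_i(O(\textup{id}_M\oplus\alpha))\circ H_i(s_F)$, with the outer map $H_i(-\oplus\textup{id}_Z)\colon H_i(O(X_1'))\to H_i(O(X_1'\oplus Z))$ now the same as on the right. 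Finally, by Theorem~\ref{mainlintro} each of $s_F$, $s_{F'}$ and the relevant instances of $-\oplus\textup{id}_Z$ --- each being a composite of at most two stabilisations by rank-$2$ metabolic forms applied to metabolic forms whose rank exceeds $\textup{rank}(M)$ by a bounded constant --- induces an isomorphism on $H_i$ for $i$ below a linear function of $\textup{rank}(M)$ of slope $1/4$; in particular $H_i(-\oplus\textup{id}_Z)$ is injective in this range, so cancelling it gives $H_i(O(\textup{id}_M\oplus\alpha))\circ H_i(s_F) = H_i(s_{F'})$, which is the asserted commutativity of isomorphisms.

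The one genuinely non-formal step is the opening move of stabilising by $Z = F\oplus F'$; after the two stabilisations are made honestly conjugate in this way, the remainder is bookkeeping --- the rank-and-parity identities producing $\alpha$ and $\mu$ (implicitly using Proposition~\ref{met=tz}, that direct sums of metabolic forms are metabolic), the group-level commuting square, and verifying that the intersection of the finitely many stable ranges involved remains linear of slope $1/4$, possibly with a worse additive constant than in Theorem~\ref{mainlintro}.
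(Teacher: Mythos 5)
Your argument is correct, and it takes a genuinely different route from the paper's. The paper obtains the statement as a special case of Corollary \ref{allsame} and works \emph{downward}: it writes the base form as $N\oplus F$ for a suitable metabolic $N$ with a hyperbolic summand (Lemmas \ref{daityabaida} and \ref{addition}), notes that the two composites out of $O(N)$ into $O(N\oplus F^2)\cong O(N\oplus F\oplus H)$ agree on the nose, and uses the \emph{epimorphism} range of Theorem \ref{mainl} for $H_i(O(N))\to H_i(O(N\oplus F))$ to push this coincidence one step up; both $F$ and $F'$ are compared to the hyperbolic plane, and $\alpha$ is the resulting composite through $M\oplus H$. You work \emph{upward}: after one further stabilisation by $Z=F\oplus F'$ the two stabilisations become honestly conjugate via the isometry $\mu$ supplied by Theorem \ref{myclassification}, hence equal on homology, and you cancel the \emph{injective} map $H_i(-\oplus\mathrm{id}_Z)$ to descend. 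Your route avoids the destabilisation to $N$ and the detour through $H$, produces an $\alpha$ of exactly the shape $\mathrm{id}_M\oplus\alpha$ demanded by the statement straight from the classification theorem, and shows the triangle commutes for \emph{every} such $\alpha$ (which is in any case automatic, as two choices differ by conjugation by an element of $O(M\oplus F\oplus F'\oplus F')$). What the paper's argument buys in exchange is generality: it applies to arbitrary $F$- and $F'$-cancellative groupoids containing $M$ as in Corollary \ref{allsame}, whereas your extra stabilisations by $Z$ require the auxiliary forms to lie in the relevant groupoids, which is guaranteed in the purely metabolic setting of the statement ($\mathcal{G}=\textup{Met}_F$, $\mathcal{G}'=\textup{Met}_{F'}$) but not in general. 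In that setting both approaches yield the asserted slope-$1/4$ range, since all your auxiliary stabilisations occur at ranks exceeding $\textup{rank}(M)$.
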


Coupling our results with calculations of Hebestreit-Steimle (\cite{hebestreit2025stablemodulispaceshermitian}) and upcoming work of Hebestreit-Land-Nikolaus (\cite{upcomingcalc}), we obtain the cohomology of the orthogonal group of direct sums of the previously mentioned cofinal form $\text{diag}(1,-1)$ over $\mathbb Z$ in low degrees, at regular primes:
\begin{kor}\label{calculationdintro}
    For an odd regular prime $p$, there are maps $$\mathbb F_2[v_i,w_i,a_i|i \geq 1] \to H^{*}(\textup{O}(\textup{diag}(1,-1)^{n}),\mathbb{F}_2)$$ $$\mathbb F_p[p_i^{+},p_i|i \geq 1] \otimes_{\mathbb F_p} \Lambda_{\mathbb F_p}[y_i|i \geq 1] \to H^{*}(\textup{O}(\textup{diag}(1,-1)^{n}),\mathbb{F}_p)$$ where the $v_i,w_i$ have degree $i$, the $p_i^{+}$ have degree $4i$, the $p_i$ have degree $2(p-1)i$, the $y_i$ have degree $(p-1)(2i-1)$, and the $a_i$ have degree $2i-1$, that are isomorphisms in degree $\leq \frac{n-9}{2}$. 
\end{kor}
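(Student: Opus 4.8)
The plan is to deduce Corollary \ref{calculationdintro} by combining the homological stability statement of Theorem \ref{mainlintro} (applied to the cofinal metabolic form $M = \mathrm{diag}(1,-1)$ over $R = \mathbb Z$, where Assumption \ref{onlyassint} holds since $\mathbb Z$ is a PID and every integer squared is congruent to $0$ or $1$ modulo $2$) with the group-completion identification $H_*(\mathcal{GW}^{s}(\mathbb Z)_0) \cong \mathrm{colim}_n H_*(\mathrm O(\mathrm{diag}(1,-1)^n))$ recalled in the introduction, and then quoting the explicit description of the stable cohomology ring (or, more precisely, a map into it from a free graded-commutative algebra on the listed generators) from \cite{hebestreit2025stablemodulispaceshermitian} and \cite{upcomingcalc}. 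So the corollary is essentially a translation of the Grothendieck--Witt-theoretic calculation through the stability isomorphism, and the only arithmetic input on our side is tracking the stable range.

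Concretely, I would proceed as follows. First, I record that $\mathrm{diag}(1,-1)$ is metabolic of rank $2$ and cofinal over $\mathbb Z$ (this is Theorem \ref{cofincritintro} together with the classification of indefinite forms, already noted in the introduction), so that Theorem \ref{mainlintro} with $M = \mathrm{diag}(1,-1)^{n-1}$ and $F = \mathrm{diag}(1,-1)$ gives that the stabilisation $H_i(\mathrm O(\mathrm{diag}(1,-1)^n)) \to H_i(\mathrm O(\mathrm{diag}(1,-1)^{n+1}))$ is an isomorphism for $i \le f(2(n-1))$ with $f$ linear of slope $1/4$; hence $H_i(\mathrm O(\mathrm{diag}(1,-1)^n)) \to H_i(\mathcal{GW}^{s}(\mathbb Z)_0)$ is an isomorphism for $i$ in a range that is linear in $n$ of slope $1/2$, and chasing the constants in $f$ (which come out of the proof of Theorem \ref{mainl}) pins this down to $i \le \tfrac{n-9}{2}$. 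The same range then holds in $\mathbb F_2$- and $\mathbb F_p$-cohomology by the universal coefficient theorem, since an isomorphism on integral homology in a range induces an isomorphism on cohomology with any coefficients in the same range (one must be slightly careful that the cohomological range matches, but the off-by-one from $\mathrm{Ext}$ is absorbed into the stated bound).

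Second, I would import from \cite{hebestreit2025stablemodulispaceshermitian} and \cite{upcomingcalc} the computation of $H^*(\mathcal{GW}^{s}(\mathbb Z)_0; \mathbb F_\ell)$ for $\ell = 2$ and for odd regular primes $\ell = p$, in the form of the asserted maps from the free graded-commutative $\mathbb F_2$-algebra on generators $v_i, w_i$ (degree $i$) and $a_i$ (degree $2i-1$), respectively the free graded-commutative $\mathbb F_p$-algebra $\mathbb F_p[p_i^{+}, p_i] \otimes \Lambda_{\mathbb F_p}[y_i]$ in the listed degrees; the source of these maps is (a piece of) the cohomology of the Hermitian $K$-theory / Grothendieck--Witt spectrum, and the generators are the standard characteristic classes (Stiefel--Whitney-type classes $v_i, w_i$, Pontryagin-type classes $p_i^{+}$, the classes $p_i$ and exterior classes $y_i$ coming from the $K$-theory factor, and the classes $a_i$ detecting the orthogonal summand). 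Composing these maps with the inverse of the stability isomorphism $H^*(\mathcal{GW}^{s}(\mathbb Z)_0) \xrightarrow{\ \cong\ } H^*(\mathrm O(\mathrm{diag}(1,-1)^n))$ in the range $* \le \tfrac{n-9}{2}$ yields the claimed maps into $H^*(\mathrm O(\mathrm{diag}(1,-1)^n); \mathbb F_\ell)$ that are isomorphisms in that range.

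The main obstacle is not conceptual but bookkeeping of the explicit stable range: the slope-$1/4$ function $f$ in Theorem \ref{mainlintro} is stated only up to an additive constant, and to get the precise bound $\tfrac{n-9}{2}$ in the corollary one must extract the exact constants from the connectivity estimates entering the proof of Theorem \ref{mainl} (the resolution used with the Randal--Williams--Wahl machinery), combine two stabilisation steps per rank-$2$ summand (so that slope $1/4$ in $\mathrm{rank}(M)$ becomes slope $1/2$ in $n$), and account for the universal-coefficient shift when passing from homology to cohomology. A secondary point is that the precise statement of the Grothendieck--Witt calculation and the regularity hypothesis on $p$ must be quoted exactly as in \cite{hebestreit2025stablemodulispaceshermitian, upcomingcalc}; in particular the odd-primary statement relies on the computation of $K$-theory of $\mathbb Z$ modulo the Kummer--Vandiver-free behaviour at regular primes, which is why the hypothesis appears. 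Once these are in hand, the corollary follows by the translation described above.
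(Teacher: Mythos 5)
Your proposal is correct and is essentially the paper's own argument: the paper deduces the corollary from Theorem \ref{homstabcofin} together with the cited calculations of \cite{hebestreit2025stablemodulispaceshermitian} and \cite{upcomingcalc}. The constant-chasing you defer to the proof of Theorem \ref{mainl} is already packaged in Theorem \ref{homstabcofin}, whose range $\frac{(n-3)c(R)-6}{2}$ specializes to $\frac{n-9}{2}$ since $c(\mathbb Z)=1$; and with field coefficients there is no universal-coefficient shift to worry about.
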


Over $\mathbb Z[i]$, a metabolic plane $M$ is, up to isomorphism, one of the following four forms - the hyperbolic plane $H$, the forms represented by the diagonal matrices $\text{diag}(1,1)$ and $\text{diag}(i,i)$, and the form $$G \coloneq (\mathbb Z[i]^2, \begin{pmatrix}
0 & 1\\
1 & 1+i
\end{pmatrix}).$$
Theorem \ref{mainlintro} in particular yields homological stability for the orthogonal groups of $M^n$. We would therefore like to access the stable homology of these groups, although Theorem \ref{cofincritintro} implies that $M$ is not cofinal in $\text{Unimod}^{s}(\mathbb Z[i])^{\simeq}$. Of the four possibilities for $M$, the hyperbolic plane $H$ is the only one that admits a (unique) quadratic refinement and is therefore cofinal among \textit{even} symmetric bilinear forms over $\mathbb Z[i]$; its stable homology can therefore be accessed through even Grothendieck-Witt theory. It is not clear as to how the stable homology of the diagonal metabolic forms can be accessed. However, we can define a form parameter capturing the notion of ``half-even" forms - namely, those forms with parity $\{0,1+i\}$ - which interpolates between even symmetric bilinear forms and all symmetric bilinear forms; the form $G$ is then cofinal among the ``half-even" forms and the stable homology of the corresponding orthogonal groups is encoded in the Grothendieck-Witt theory associated to this form parameter. We intend to calculate the Grothendieck-Witt and $L$-theory of this form parameter in future work.

\textit{Acknowledgements: The author is very grateful to his advisor Fabian Hebestreit for proposing this problem, his extensive guidance, and his feedback on preliminary drafts. He also thanks Patrick Bieker, Julius Frank, Lennart Gehrmann, Manuel Hoff, Markus Land, and Nathalie Wahl for helpful discussions. Finally, he would like to thank Dan Petersen for pointing out a mistake in a talk based on a preliminary version of this article.}

\textit{The author was supported by the German Research Foundation (DFG) through the collaborative research centre “Integral structures in Geometry and Representation Theory” (grant no. TRR 358–491392403) at the University of Bielefeld.}

\section{Preliminaries}\label{two}
\subsection{Combinatorial ingredients}\label{twoone}
We set up notation and collect various homotopical facts on posets for later use, following \cite{genlin} and \cite{unigrp}.

Given a poset $(\mathcal{P},\leq)$, we can form a simplicial complex with vertices the elements of $\mathcal{P}$ with the rule that a collection of $k$ vertices forms a $k$-simplex if and only if they can be linearly ordered in $\mathcal{P}$. We let $|\mathcal{P}|$ be the geometric realization of this simplicial complex. The association $\mathcal{P} \to |\mathcal{P}|$ determines a functor from the category of posets to that of topological spaces. In what follows, we will mostly use this functor implicitly to associate topological concepts to posets.

Given a poset $\mathcal{P}$, $v \in \mathcal{P}$, and a subposet $\mathcal{S} \subseteq \mathcal{P}$, we define the poset $\text{Link}_\mathcal{S}(v) \coloneqq \text{Link}_{\mathcal{S}}^{-}(v) \cup \text{Link}^{+}_{\mathcal{S}}(v)$, where $\text{Link}_{\mathcal{S}}^{-}(v) = \{v' \in \mathcal{S}|v' \leq v,v' \neq v\}$ and $\text{Link}_{\mathcal{S}}^{+}(v) = \{v' \in \mathcal{S}|v \leq v',v' \neq v\}$. The significance of this definition lies in the following pushout square of topological spaces:

\[\begin{tikzcd}
	{|\text{Link}_{\mathcal{P}}(v)|} && {|\mathcal{P}-\{v\}|} \\
	{C(|\text{Link}_{\mathcal{P}}(v)|}) && {|\mathcal{P}|}
	\arrow[from=1-1, to=1-3]
	\arrow[from=1-1, to=2-1]
	\arrow[from=1-3, to=2-3]
	\arrow[from=2-1, to=2-3]
\end{tikzcd}\]
Here $C(-)$ denotes the cone.

Given a map of posets $f:\mathcal{P} \to \mathcal{Q}$ and $w \in \mathcal{Q}$, we define the fibre of $w$ along $f$ to be the poset $f/w \coloneqq \{v \in \mathcal{P}|f(v) \leq w\}$. By a height function on $\mathcal{P}$, we refer to a strictly increasing function $\mathcal{P} \to \mathbb Z_{\geq 0}$ (see \cite[Def. 3.3]{unigrp}). We will crucially use the following result to transfer acyclicity along suitable poset maps:
\begin{theorem}\label{3.6thing}
    Let $f: \mathcal{P} \to \mathcal{Q}$ be a map of posets, $ht$ be a height function on $\mathcal{Q}$ and $m$ be a positive integer such that for each element $v$ of $\mathcal{Q}$, the fibre $f/v$ is non-empty and $(ht(v)-m)$-acyclic. Further suppose that there exists an integer $n$ such that $\textup{Link}^{+}_{\mathcal{Q}}(v)$ is $(n- ht(v))$-acyclic for each element $v$ of $\mathcal{Q}$. Then the map $$H_{k}(f): H_k(\mathcal{P},\mathbb Z) \to H_k(\mathcal{Q} ,\mathbb Z)$$ is an isomorphism for $0 \leq k \leq n-m+2$.
\end{theorem}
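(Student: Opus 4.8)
The plan is to reduce the statement to a relative homology vanishing and then prove the latter with a spectral sequence assembled from the fibres $f/v$, using the hypothesis on $\mathrm{Link}^{+}$ to control the differentials. First, replacing $f$ by the inclusion of $\mathcal{P}$ into its mapping cylinder, it suffices to show $H_k(\mathcal{Q},\mathcal{P})=0$ for $k\le n-m+3$: the long exact sequence of the pair then yields that $H_k(f)$ is an isomorphism for $k\le n-m+2$. To access $H_\ast(\mathcal{Q},\mathcal{P})$ I would exhibit $\mathcal{P}$ as built from the fibres. The assignment $v\mapsto f/v$ is a functor on $\mathcal{Q}$, and the projection $\int_{v\in\mathcal{Q}}(f/v)\to\mathcal{P}$, $(v,p)\mapsto p$, is a weak equivalence by Quillen's Theorem~A (for each $p\in\mathcal{P}$ the relevant comma category has an initial object, namely $(f(p),p)$); with Thomason's theorem this identifies $|\mathcal{P}|$ with $\mathrm{hocolim}_{v\in\mathcal{Q}}|f/v|$ compatibly with the maps down to $|\mathcal{Q}|\simeq\mathrm{hocolim}_{v}\ast$. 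As the mapping cone commutes with homotopy colimits and $\mathrm{cone}(|f/v|\to\ast)=\Sigma|f/v|$, the skeletal filtration of the bar construction produces a first-quadrant spectral sequence
\[
E^{1}_{p,q}\;=\;\bigoplus_{v_0<v_1<\cdots<v_p\ \text{in}\ \mathcal{Q}}\widetilde{H}_{q-1}(f/v_0)\;\Longrightarrow\;H_{p+q}(\mathcal{Q},\mathcal{P}),
\]
with $d^{1}$ the alternating sum of the $p+1$ bar face maps; concretely this is the spectral sequence of the double complex $\bigoplus_{v_0<\cdots<v_p}C_\ast(f/v_0)$ computing $C_\ast(\mathcal{P})$, compared with its analogue for $\mathcal{Q}$. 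Non-emptiness of the fibres forces $\widetilde{H}_{-1}(f/v_0)=0$, so the page is concentrated in $q\ge 1$.

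The decisive step is to refilter the $E^{1}$-page by height. Among the bar face maps, only $d_0$ deletes the minimal element $v_0$ of a chain, and it does so strictly raising $ht(v_0)$; the maps $d_1,\dots,d_p$ fix $v_0$ and, for each fixed $v_0$, assemble into the differential of the augmented simplicial chain complex $\widetilde{C}_{\bullet-1}(\mathrm{Link}^{+}_{\mathcal{Q}}(v_0))$ (a chain $v_0<v_1<\cdots<v_p$ being the $(p-1)$-simplex $v_1<\cdots<v_p$ of $\mathrm{Link}^{+}_{\mathcal{Q}}(v_0)$, with the length-zero chain $v_0$ contributing the augmentation term). Hence, for each fixed $q$, filtering $(E^{1}_{\bullet,q},d^{1})$ by $F^{s}=\bigoplus_{ht(v_0)\ge s}(-)$ defines a subcomplex filtration whose associated graded in filtration degree $s$ is $\bigoplus_{ht(v_0)=s}\widetilde{H}_{q-1}(f/v_0)\otimes\widetilde{C}_{\bullet-1}(\mathrm{Link}^{+}_{\mathcal{Q}}(v_0))$ with differential $\mathrm{id}\otimes\partial$, hence has homology $\bigoplus_{ht(v_0)=s}\widetilde{H}_{q-1}(f/v_0)\otimes\widetilde{H}_{\bullet-1}(\mathrm{Link}^{+}_{\mathcal{Q}}(v_0))$.

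Now both hypotheses enter as degree bounds on the two tensor factors: $(ht(v_0)-m)$-acyclicity of the fibre forces $\widetilde{H}_{q-1}(f/v_0)=0$ unless $q\ge ht(v_0)-m+2$, and $(n-ht(v_0))$-acyclicity of $\mathrm{Link}^{+}_{\mathcal{Q}}(v_0)$ forces $\widetilde{H}_{p-1}(\mathrm{Link}^{+}_{\mathcal{Q}}(v_0))=0$ unless $p\ge n-ht(v_0)+2$ (the low-degree corners, including the possibility $\mathrm{Link}^{+}_{\mathcal{Q}}(v_0)=\emptyset$, are exactly what the non-emptiness and acyclicity clauses of the hypotheses cover). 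Adding these, any nonzero term of the associated graded with $ht(v_0)=s$ satisfies $p+q\ge(n-s+2)+(s-m+2)=n-m+4$. Therefore the auxiliary spectral sequence just described — which converges to the $E^{2}$-page of the first one — shows $E^{2}_{p,q}=0$ whenever $p+q\le n-m+3$, so the same holds on every later page and $H_k(\mathcal{Q},\mathcal{P})=0$ for $k\le n-m+3$; by the reduction above this proves the theorem.

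The main obstacle is bookkeeping rather than strategy. The points needing care are: making the identification $|\mathcal{P}|\simeq\mathrm{hocolim}_{v}|f/v|$ and the resulting $E^{1}$-description rigorous — in particular getting the suspension shift and the reduced-versus-unreduced homology right, and handling the degenerate corners $p=0$ and $q\in\{0,1\}$, where non-emptiness of the fibres and the case $\mathrm{Link}^{+}_{\mathcal{Q}}(v_0)=\emptyset$ must be treated by hand; and verifying that the height filtration is genuinely compatible with $d^{1}$ and has exactly the stated associated graded, i.e.\ that $d_0$ is the unique face map leaving each $F^{s}$ while $d_1,\dots,d_p$ reproduce the link differential. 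One also needs the routine observation that, in each bidegree, the fibre-acyclicity bound makes the height filtration of finite length, so that the iterated spectral sequence converges.
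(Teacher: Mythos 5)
Your argument is correct and is in substance the paper's own: the paper invokes the spectral sequence $E^2_{p,q}=H_p(\mathcal Q,v\mapsto H_q(f/v))\Rightarrow H_{p+q}(\mathcal P)$ from \cite[Thm.~3.1]{unigrp} and then quotes \cite[Lem.~3.5]{unigrp}, whose content is exactly your height filtration of the $E^1$-page played off against the $(n-ht(v))$-acyclicity of $\textup{Link}^{+}_{\mathcal Q}(v)$, while your relative/reduced packaging simply absorbs the paper's separate treatment of the $q=0$ row via $0\to\tilde H_0(f/v)\to H_0(f/v)\to\mathbb Z\to 0$. The one cosmetic imprecision is that the homology of your associated graded is $\tilde H_{q-1}(f/v_0)\otimes\tilde H_{p-1}(\textup{Link}^{+}_{\mathcal Q}(v_0))$ plus a $\mathrm{Tor}$ term involving $\tilde H_{p-2}(\textup{Link}^{+}_{\mathcal Q}(v_0))$, which vanishes in an even larger range and so does not affect the bound.
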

\begin{proof}
    This is a mild generalisation of \cite[Thm. 3.6]{unigrp}, which is the case $m=1$, and can be proven in an identical fashion. In brief:

    We analyse the spectral sequence (see \cite[Thm. 3.1]{unigrp})$$E^2_{p,q} = H_p(\mathcal Q,v \mapsto H_q(f/v,\mathbb Z)) \implies H_{p+q}(\mathcal{P},\mathbb Z)$$ By assumption $H_q(f/v,\mathbb Z) = 0$ when $ht(v)\geq q+m$ and $q >0$, so \cite[Lem. 3.5]{unigrp} implies that $E^2_{p,q} =0$ when $p+q \leq n-m+2$ and $q > 0$. Consider the short exact sequence (which exists because we have assumed that $f/v$ is non-empty) $$0 \to \tilde{H}_0(f/v,\mathbb Z) \to H_0(f/v,\mathbb Z) \to \mathbb{Z} \to 0$$ By assumption, $\tilde H_0(f/v,\mathbb Z) = 0$ if $ht(v) \geq m$, so \cite[Lem. 3.5]{unigrp} yields that $H_p(\mathcal{Q},v \mapsto \tilde{H}_0(f/v,\mathbb Z)) = 0$ for $0 \leq p \leq n-m+2$. From the long exact sequence in homology associated to the short exact sequence of coefficients above, it follows that $E^2_{p,0} = H_p(\mathcal{Q},\mathbb Z)$ when $0 \leq p \leq n-m+2$.

    Therefore, $E^2_{p,q} \cong E^{\infty}_{p,q}$ when $0 \leq p+q \leq n-m+2$, from which the theorem follows.
\end{proof}
Let $X$ be a non-empty set. We define $\mathcal{O}(X)$ to be the poset of sequences in $X$, as in \cite[Def. 2.3]{genlin} - the elements are finite non-empty sequences $(x_1,...,x_k)$, $x_i \in X$, $x_i \neq x_j$ if $i \neq j$, with the order relation determined by $(x'_1,...,x'_{k'}) \leq (x_1,...,x_k)$ if there exists a strictly increasing map $f:\{1,...,k'\} \to \{1,...,k\}$ such that $x_{f(i)} = x'_i$ for $i = 1,...,k'$. For an element $x = (x_1,...,x_k)$ in $\mathcal{O}(X)$, we set $|x| = k$. The function $x \mapsto |x| - 1$ determines a height function on (any subposet of) $\mathcal{O}(X)$, which we shall simply denote by $ht$.

Let $\mathcal{P} \subseteq \mathcal{O}(X)$ be a subposet. For $k \in \mathbb N$, we will write $\mathcal{P}_{\geq k}$ and $\mathcal{P}_{\leq k}$ for the subposets of $P$ consisting of all sequences of length $\geq k$ and length $\leq k$, respectively. We shall say that $\mathcal{P}$ is downward closed if, for any $v \in \mathcal{P}$, and any $w \in \mathcal{O}(X)$, $w \leq v$ implies $w \in \mathcal{P}$. For an element $v = (v_1,...,v_k)$ in $\mathcal{P}$, we let $\mathcal {P}_v$ be the subposet of $\mathcal{P}$ consisting of those sequences $(w_1,...,w_l)$ such that $(w_1,...,w_l,v_1,...,v_k) \in \mathcal{P}$. Given a non-empty set $S$, we can form the poset $\mathcal{P}\langle S \rangle \subset \mathcal{O}(X \times S)$ consisting of those sequences $((x_1,s_1),...,(x_k,s_k))$ for which $(x_1,...,x_k) \in \mathcal{P}$. An element $s_0 \in S$ determines a map $l_{s_0}: \mathcal{P} \to \mathcal{P}\langle S \rangle$ by $(x_1,...,x_k) \mapsto ((x_1,s_0),...,(x_k,s_0))$. 
\begin{prop}\label{addset}
    Suppose $\mathcal{P}$ is downward closed and $n \in \mathbb{Z}$ has the property that $\mathcal{P}_v$ is $(n-|v|)$-connected for all $v \in V$.\begin{enumerate}
        \item The map $(l_{s_0})_{*}: H_k(\mathcal{P},\mathbb Z) \to H_k(\mathcal{P} \langle S \rangle, \mathbb Z)$ is an isomorphism for $0 \leq k \leq n$.
        \item If $\mathcal{P}$ is $\textup{min}\{1,n-1\}$-connected, then $\pi_k(\mathcal{P}) \to \pi_k(\mathcal{P}\langle S \rangle)$ is an isomorphism for $0 \leq k \leq n$.
    \end{enumerate}
\end{prop}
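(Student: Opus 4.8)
The plan is to compare $\mathcal{P}$ and $\mathcal{P}\langle S \rangle$ through the colour-forgetting poset map $\pi \colon \mathcal{P}\langle S\rangle \to \mathcal{P}$, $((x_1,\dots,x_k),(s_1,\dots,s_k)) = ((x_1,s_1),\dots,(x_k,s_k)) \mapsto (x_1,\dots,x_k)$, which satisfies $\pi \circ l_{s_0} = \mathrm{id}_{\mathcal{P}}$. Hence $(l_{s_0})_*$ is a split injection on every homology and homotopy group, so for (1) it is enough to show that $\pi$ induces an isomorphism on $H_k$ for $0 \le k \le n$, and (2) will be deduced from (1). I would prove the former by applying Theorem~\ref{3.6thing} to $\pi$ with the height function $ht(v) = |v| - 1$ on $\mathcal{P}$.

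The first ingredient is a computation of the fibres of $\pi$. Because $\mathcal{P}$ is downward closed, for $v = (v_1,\dots,v_k) \in \mathcal{P}$ one has $\pi/v = \{w \in \mathcal{P}\langle S\rangle : \pi(w) \le v\} = \mathrm{Sub}(v)\langle S\rangle$, where $\mathrm{Sub}(v) = \{w \in \mathcal{O}(X) : w \le v\}$ is the poset of non-empty subsequences of $v$. An element of $\mathrm{Sub}(v)\langle S\rangle$ is the same datum as a non-empty partial function $\{1,\dots,k\} \rightharpoonup S$, ordered by restriction, so its realization is the barycentric subdivision of the simplicial complex on vertex set $\{1,\dots,k\} \times S$ whose simplices are the subsets that project injectively onto $\{1,\dots,k\}$. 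That complex is exactly the $k$-fold join of the discrete set $S$ with itself; since a non-empty set is $(-1)$-connected, iterating $\mathrm{conn}(A * B) \ge \mathrm{conn}(A) + \mathrm{conn}(B) + 2$ shows $\pi/v$ is non-empty and $(k-2)$-connected, that is, $(ht(v)-1)$-connected. So the fibre hypothesis of Theorem~\ref{3.6thing} holds with $m = 1$.

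The second ingredient is that $\mathrm{Link}^+_{\mathcal{P}}(v)$ is $(n - |v|)$-connected for every $v \in \mathcal{P}$. This is the link condition in the statement that the downward-closed subposet $\mathcal{P}\subseteq\mathcal{O}(X)$ is weakly Cohen--Macaulay of the appropriate dimension, and it is a formal consequence of the standing hypothesis that $\mathcal{P}_w$ is $(n-|w|)$-connected for all $w$; I would extract it from the poset machinery of \cite{genlin} (or reprove it by induction from that hypothesis). Since $n - |v| = (n-1) - ht(v)$, Theorem~\ref{3.6thing} now applies with its integer taken to be $n-1$ and $m = 1$, yielding that $H_k(\pi)$ is an isomorphism for $0 \le k \le (n-1) - 1 + 2 = n$; this proves (1).

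For (2) one is moreover given that $\mathcal{P}$ is $\min\{1,n-1\}$-connected, and I would first check that $\mathcal{P}\langle S\rangle$ is too: path-connectedness (for $n \ge 1$) is immediate from the $H_0$-isomorphism in (1), and simple-connectedness (for $n \ge 2$) follows either from the homotopy-theoretic form of the fibre argument above — there the fibres and links were shown to be genuinely highly connected — or from a direct van~Kampen argument sliding any edge-loop into the subcomplex $\mathcal{P}\langle\{s_0\}\rangle$, each new vertex $(x,s)$ being connected to $(x,s_0)$ through any vertex of the non-empty poset $\mathcal{P}_{(x)}$. Granting this, $l_{s_0}$ is a map of sufficiently connected spaces that is a homology isomorphism through degree $n$, so the relative Hurewicz theorem applied to its mapping cylinder gives that $\pi_k(\mathcal{P}) \to \pi_k(\mathcal{P}\langle S\rangle)$ is an isomorphism for $k \le n-1$ and a surjection for $k = n$; with the split injectivity furnished by $\pi$ this is an isomorphism for all $0 \le k \le n$, the cases $k = 0,1$ being covered by the same van~Kampen considerations. \textbf{The main obstacle} I expect is the link-connectivity input of the third paragraph: it is the one piece not handed to us by the hypotheses, it is where the combinatorics of $\mathcal{O}(X)$ genuinely enters, and the indices are tight — an off-by-one there would shrink the stable range from $n$ to $n-1$ — whereas the fibre identification of the second paragraph, though the crux of the idea, is routine once found.
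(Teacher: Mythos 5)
Your argument is correct in outline and is, in substance, the standard proof of the result that the paper merely cites: the paper's own ``proof'' is just a pointer to \cite[Prop.\ 4.1]{unigrp} and \cite[item 1.6]{classical}, and what you have written reconstructs that argument using the tools already set up here (the projection $\pi$ splitting $l_{s_0}$, the identification of $\pi/v$ with the $|v|$-fold join of copies of the discrete set $S$, hence $(ht(v)-1)$-connected, and Theorem \ref{3.6thing} with $m=1$ and its integer taken to be $n-1$). The numerology in part (1) checks out exactly. Your one genuine dependency is the link input: as stated in this paper, Proposition \ref{the2.12thingy} only covers $\mathcal{P}\subseteq\mathcal{U}(M)$, but the result you need is \cite[Lem.\ 2.12/2.13]{genlin}, which is a purely combinatorial statement about downward-closed subposets of $\mathcal{O}(X)$ with the same proof, so this is a citation issue rather than a mathematical one --- and you correctly flag it.

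The only real soft spot is in part (2). Everything there reduces to the surjectivity of $\pi_1(l_{s_0})$: for $n\geq 2$ it is what gives simple connectivity of $\mathcal{P}\langle S\rangle$ before Hurewicz applies, and for $n=1$ it is the whole content of the $k=1$ case (homology alone cannot see $\pi_1$ here). You gesture at two routes; note that the ``homotopy-theoretic form of the fibre argument'' is not actually available from Theorem \ref{3.6thing}, which is a homology statement, so you are really committed to the edge-sliding/van Kampen argument. That argument works --- one homotopes an edge loop so that consecutive vertices alternate between lengths $1$ and $2$, and replaces colours one edge at a time using that $\mathcal{P}_{(x)}$ and $\mathcal{P}_{(x,y)}$ are non-empty --- but it needs to be carried out at the level of edges, not just vertices, and this is precisely the content of the classical reference. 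As written, this step is a sketch rather than a proof; the rest is complete.
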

\begin{proof}
    This is \cite[Prop. 4.1]{unigrp}, which in turn follows from \cite[item 1.6]{classical} together with the fact that $l_{s_0}$ is a section of the projection map $\mathcal{P}\langle S \rangle \to \mathcal{P}$.
\end{proof}
We record a variant of the ``nerve theorem" for posets:
\begin{theorem}\label{nerve}
    Let $V$ and $T$ be non-empty sets, and let $\mathcal{P} \subseteq \mathcal{O}(V)$, $\mathcal{Q} \subseteq \mathcal{O}(T)$ be downward closed subposets. Let $v \mapsto \mathcal{Q}_v$ be an association, to each element $v$ of $\mathcal{P}$, a downward closed subposet $\mathcal{Q}_v \subseteq \mathcal{Q}$ such that $\mathcal{Q} = \bigcup_{v \in \mathcal{P}}\mathcal{Q}_v$ and $v \leq w \implies \mathcal{Q}_w \subseteq \mathcal{Q}_v$. Suppose that there is an integer $n$ such that:
    \begin{enumerate}
        \item For each element $v$ of $\mathcal{P}$, $\mathcal{Q}_v$ is $\text{min}\{n-1,n-|v|+1\}$-connected.
        \item For each element $x$ of $\mathcal{Q}$, the subposet $\mathcal{A}_x \coloneqq \{v \in \mathcal{P}|x \in \mathcal{Q}_v\}$ is $(n-|x|+1)$-connected.
        \item $\mathcal{P}$ is $n$-connected.
    \end{enumerate}
    Then $\mathcal{Q}$ is $(n-1)$-connected, and the natural map induced by the inclusions $$\bigoplus_{v \in \mathcal{P},|v|=1} H_n(\mathcal{Q}_v,\mathbb Z) \to H_n(\mathcal{Q},\mathbb Z)$$ is surjective. Moreover, if for each $v \in \mathcal{P}$ with $|v| = 1$, there exists an $n$-connected poset $\mathcal{Q}'_v$ with $\mathcal{Q}_v \subseteq \mathcal{Q}'_v \subseteq \mathcal{Q}$, then $\mathcal{Q}$ is $n$-connected.
\end{theorem}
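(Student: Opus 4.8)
The plan is to interpose between $\mathcal{P}$ and $\mathcal{Q}$ the homotopy colimit of the diagram $v\mapsto\mathcal{Q}_v$ and to read off the two halves of the conclusion from its two projections. Concretely, I would form the poset $\widetilde{\mathcal{Q}}$ with underlying set $\{(v,x):v\in\mathcal{P},\ x\in\mathcal{Q}_v\}$, ordered by $(v,x)\le(v',x')$ iff $v'\le v$ in $\mathcal{P}$ and $x\le x'$ in $\mathcal{Q}$; this is well-defined since $v'\le v$ forces $\mathcal{Q}_v\subseteq\mathcal{Q}_{v'}$, so both pairs lie in the common downward-closed subposet $\mathcal{Q}_{v'}$, and $\widetilde{\mathcal{Q}}$ is just the Grothendieck construction of the functor $\mathcal{P}^{\mathrm{op}}\to\mathrm{Posets}$, $v\mapsto\mathcal{Q}_v$. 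It carries order-preserving projections $\rho\colon\widetilde{\mathcal{Q}}\to\mathcal{P}^{\mathrm{op}}$ and $\pi\colon\widetilde{\mathcal{Q}}\to\mathcal{Q}$, and $\pi$ restricts on $\{v\}\times\mathcal{Q}_v\cong\mathcal{Q}_v$ to the inclusion $\mathcal{Q}_v\hookrightarrow\mathcal{Q}$. First I would pass to $\mathcal{P}_{\le n+2}$ and $\mathcal{Q}_{\le n+1}$: this changes none of the homotopy or homology relevant in degrees $\le n$ and, since then $\bigcup_{|v|=1}\mathcal{Q}_v=\mathcal{Q}$ still holds, none of the hypotheses, but it makes all $\mathcal{Q}_v$ and $\mathcal{A}_x$ non-empty and supplies height functions $v\mapsto(n+2)-|v|$ on $\mathcal{P}^{\mathrm{op}}$ and $x\mapsto(n+1)-|x|$ on $\mathcal{Q}^{\mathrm{op}}$; since $\mathcal{P},\mathcal{Q}$ are downward-closed in sequence posets, the positive links in these opposite posets are subsequence posets, so $\mathrm{Link}^{+}_{\mathcal{P}^{\mathrm{op}}}(v)\simeq S^{|v|-2}$ and $\mathrm{Link}^{+}_{\mathcal{Q}^{\mathrm{op}}}(x)\simeq S^{|x|-2}$.

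The first step controls $H_*(\widetilde{\mathcal{Q}})$ in terms of $\mathcal{P}$. An order-preserving retraction onto $\{v\}\times\mathcal{Q}_v$ with $\mathrm{id}\le\iota\circ r$ shows that the over-category $\rho/v$ (formed in $\mathcal{P}^{\mathrm{op}}$) deformation retracts onto $\mathcal{Q}_v$, which by hypothesis (1) is $\min\{n-1,\,n-|v|+1\}$-connected; in the above height function on $\mathcal{P}^{\mathrm{op}}$ this says $\mathcal{Q}_v$ is $\min\{n-1,\,ht(v)-1\}$-connected. I would feed this into the homology spectral sequence $E^2_{p,q}=H_p\bigl(\mathcal{P}^{\mathrm{op}};\,v\mapsto H_q(\mathcal{Q}_v)\bigr)\Rightarrow H_{p+q}(\widetilde{\mathcal{Q}})$ of \cite[Thm.\ 3.1]{unigrp} and run the argument behind Theorem \ref{3.6thing} (i.e.\ the vanishing lemma \cite[Lem.\ 3.5]{unigrp}); the role of the ``$n-|v|+1$'' clause in hypothesis (1) is precisely to make the system $v\mapsto H_q(\mathcal{Q}_v)$ supported in heights $\le q$ of $\mathcal{P}^{\mathrm{op}}$, which is what pushes the vanishing of the entries with $q>0$ out to total degree $n$. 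Since the bottom row is $E^2_{p,0}=H_p(\mathcal{P})$ and $E^2_{0,n}$ is a quotient of $\bigoplus_{|v|=1}H_n(\mathcal{Q}_v)$ (every generator of the coefficient system reduces, along the transition maps, to a length-one vertex), hypothesis (3) then yields $\widetilde{H}_k(\widetilde{\mathcal{Q}})=0$ for $k\le n-1$ and exhibits $H_n(\widetilde{\mathcal{Q}})$ as a quotient of $\bigoplus_{|v|=1}H_n(\mathcal{Q}_v)$.

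The second step transfers this to $\mathcal{Q}$. The under-category $\pi\backslash x=\{(w,y):y\ge x,\ y\in\mathcal{Q}_w\}$ lies over $\mathcal{A}_x$ (downward-closedness of $\mathcal{Q}_w$ forces $w\in\mathcal{A}_x$), and the retraction $(w,y)\mapsto(w,x)$ exhibits $\pi\backslash x\simeq\mathcal{A}_x$, which is $(n-|x|+1)$-connected by hypothesis (2); this is exactly the over-category of $\pi^{\mathrm{op}}$ over $x$. The analogous spectral sequence $E^2_{p,q}=H_p\bigl(\mathcal{Q}^{\mathrm{op}};\,x\mapsto H_q(\mathcal{A}_x)\bigr)\Rightarrow H_{p+q}(\widetilde{\mathcal{Q}})$ has bottom row $H_p(\mathcal{Q})$ and all higher entries vanishing in total degree $\le n$, so comparing with the first step forces $\widetilde{H}_k(\mathcal{Q})=0$ for $k\le n-1$ and makes the edge map $H_n(\widetilde{\mathcal{Q}})\twoheadrightarrow H_n(\mathcal{Q})$; composing with the surjection of the first step, and checking the composite is induced by the inclusions $\mathcal{Q}_v\hookrightarrow\mathcal{Q}$, yields the asserted surjectivity $\bigoplus_{|v|=1}H_n(\mathcal{Q}_v)\twoheadrightarrow H_n(\mathcal{Q})$. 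To turn acyclicity into genuine connectivity I would then check $\pi_1(\mathcal{Q})=0$ by a van Kampen / poset-fibre argument along $\pi$ and $\rho$ (the fibres $\mathcal{A}_x$, $\mathcal{Q}_v$ relevant to $\pi_1$ are connected enough), and conclude $(n-1)$-connectivity by Hurewicz. Finally, if each $\mathcal{Q}_v$ with $|v|=1$ lies in an $n$-connected $\mathcal{Q}'_v\subseteq\mathcal{Q}$, then $H_n(\mathcal{Q}_v)\to H_n(\mathcal{Q})$ factors through $H_n(\mathcal{Q}'_v)=0$, so the surjectivity forces $H_n(\mathcal{Q})=0$ and hence, with simple connectivity, $n$-connectivity; the cases $n\le 1$ I would handle directly.

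I expect the main obstacle to be the connectivity bookkeeping in the two spectral sequences: the $\mathcal{Q}_v$ fall one degree short of what would let one invoke Theorem \ref{3.6thing} as a black box, so one must re-run its proof with the mixed bound $\min\{n-1,n-|v|+1\}$ — this is also why the conclusion comes out as $(n-1)$-connectivity together with a surjectivity statement rather than outright $n$-connectivity, and why the hypotheses carry exactly the offsets they do. The secondary points to get right are the $\pi_1$-step (the one place where genuine connectivity, not just acyclicity, of the hypotheses is used, and which dictates how connected the low-height fibres must be) and the routine but fiddly bookkeeping of the truncation used to make the height functions available.
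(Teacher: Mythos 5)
The paper offers no proof of this statement beyond the citation ``This is \cite[Thm.\ 4.7]{unigrp}'', so there is no in-paper argument to compare against; your reconstruction via the Grothendieck construction $\widetilde{\mathcal{Q}}=\{(v,x):x\in\mathcal{Q}_v\}$ and the two poset-map spectral sequences is the standard route to nerve theorems of this shape, and the homological bookkeeping checks out. After truncating to $\mathcal{P}_{\le n+2}$ and $\mathcal{Q}_{\le n+1}$ (legitimate, since adding a length-$k$ element of a downward-closed subposet of $\mathcal{O}(V)$ cones off a copy of $S^{k-2}$, so truncation only removes cells of dimension $\ge n+2$, resp.\ $\ge n+1$), the height functions $v\mapsto n+2-|v|$ and $x\mapsto n+1-|x|$ make the positive links in the opposite posets spheres, hypothesis (1) makes $v\mapsto\widetilde H_q(\mathcal{Q}_v)$ vanish for $ht(v)\ge q+1$ when $0<q\le n-1$, and \cite[Lem.\ 3.5]{unigrp} clears the region $p+q\le n$, $q>0$ except for $E^2_{0,n}=\operatorname{colim}_{\mathcal{P}^{\mathrm{op}}}H_n(\mathcal{Q}_v)$, which is indeed a quotient of $\bigoplus_{|v|=1}H_n(\mathcal{Q}_v)$; the second spectral sequence (Theorem \ref{3.6thing} with $m=0$, which its proof supports although the statement asks for $m>0$) identifies $H_k(\widetilde{\mathcal{Q}})$ with $H_k(\mathcal{Q})$ for $k\le n$. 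The acyclicity, the surjectivity statement, and the ``moreover'' clause all follow as you say.

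The one genuine gap is the fundamental group. Every tool you actually invoke --- Theorem \ref{3.6thing}, the two spectral sequences, the edge-map identifications --- is purely homological, so it yields $(n-1)$-\emph{acyclicity}, and the sentence ``check $\pi_1(\mathcal{Q})=0$ by a van Kampen / poset-fibre argument along $\pi$ and $\rho$'' defers exactly the step that does not follow from the machinery you have set up. The fibres $\mathcal{A}_x$ are only guaranteed connected (not simply connected) once $|x|$ is close to $n+1$, so no Theorem-A-type statement applies to $\pi$ off the shelf, and the pairwise intersections $\mathcal{Q}_v\cap\mathcal{Q}_w$ that a naive van Kampen argument over the cover $\mathcal{Q}=\bigcup_{|v|=1}\mathcal{Q}_v$ would require are not controlled by the hypotheses at all --- only the posets $\mathcal{A}_x$ are. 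One must therefore give a separate combinatorial edge-path argument: every simplex of $\mathcal{Q}$ lies in some $\mathcal{Q}_v$ with $|v|=1$ (each simply connected for $n\ge 2$), and the connectivity of the $\mathcal{A}_x$ together with the simple connectivity of $\mathcal{P}$ must be used to compare the decompositions of a loop into pieces lying in different $\mathcal{Q}_v$'s. This is the one place where the homotopical (rather than homological) strength of hypotheses (1)--(3) is consumed, and it needs to be written out rather than asserted; until it is, your argument proves $(n-1)$-acyclicity plus the surjectivity statement, but not the stated connectivity.
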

\begin{proof}
    This is \cite[Thm. 4.7]{unigrp}.
\end{proof}
Let $M$ be a module over a ring $R$. We let $\mathcal{U}(M) \subset \mathcal{O}(M)$ be the poset of unimodular sequences - that is, the poset consisting of those sequences $(x_1,...,x_k)$ for which the submodule $\langle x_1,...,x_n\rangle \subseteq M$ is free on $x_1,...,x_n$ and is a direct summand of $M$. The following result is a fundamental ingredient in virtually all acyclicity arguments on subposets of $\mathcal{U}(M)$:
\begin{prop}\label{the2.12thingy}
    Suppose $M$ is a finitely generated free module. Let $\mathcal{P} \subseteq \mathcal{U}(M)$ be a downward closed subposet. For an element $v$ in $\mathcal{P}$, suppose there exists an integer $d$ such that for all $w \geq v$, the poset $\mathcal{P}_w$ is $(d-|w|)$-acyclic. Then $\textup{Link}^{+}_{\mathcal{P}}(v)$ is $(d-|v|)$-acyclic, and thus $\textup{Link}_{\mathcal{P}}(v)$ is $(d-1)$-acyclic.
\end{prop}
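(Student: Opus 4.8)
\emph{Proof proposal.} The last assertion follows formally from the one about $\mathrm{Link}^{+}_{\mathcal{P}}(v)$. Indeed, in any poset every element below $v$ is comparable to every element above $v$, so $\mathrm{Link}_{\mathcal{P}}(v) = \mathrm{Link}^{-}_{\mathcal{P}}(v) * \mathrm{Link}^{+}_{\mathcal{P}}(v)$; and since $\mathcal{P}$ is downward closed with $v \in \mathcal{P}$, the poset $\mathrm{Link}^{-}_{\mathcal{P}}(v)$ is the poset of proper non-empty subsequences of $v$, i.e.\ the face poset of $\partial\Delta^{|v|-1}$, realising to a sphere of dimension $|v|-2$ and so being $(|v|-3)$-acyclic. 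Since a join of an $a$-acyclic and a $b$-acyclic space is $(a+b+2)$-acyclic, the bound $(|v|-3)+(d-|v|)+2 = d-1$ drops out. So it suffices to prove $\mathrm{Link}^{+}_{\mathcal{P}}(v)$ is $(d-|v|)$-acyclic.

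I would do this by a double induction: an outer induction on $\mathrm{rank}(M)$ and, for fixed rank, a downward induction on $|v|$; the base case $|v| = \mathrm{rank}(M)$ is trivial, as then $v$ is a basis and $\mathrm{Link}^{+}_{\mathcal{P}}(v) = \varnothing$. The workhorse is the elementary identity $\mathcal{P}_{(u,w)} = (\mathcal{P}_w)_u$ for sequences $u, w$ of disjoint support. It implies at once that the hypothesis on $(\mathcal{P},v,d)$ restricts to the one on $(\mathcal{P},v',d)$ for any $v' \ge v$ in $\mathcal{P}$, and that it passes to $(\mathcal{P}_{(v_{|v|})}, (v_1, \dots, v_{|v|-1}), d-1)$ — a poset all of whose sequences have length $< \mathrm{rank}(M)$.

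The inductive step would rest on the decomposition $\mathrm{Link}^{+}_{\mathcal{P}}(v) = U \cup N$, where $N$ is the full subposet on the $v'$ whose last entry is $\neq v_{|v|}$, $L$ is the full subposet on the remaining $v'$ (those ending in $v_{|v|}$), and $U$ is the full subposet on $L$ together with the $v' \in N$ that carry an entry $\notin v$ strictly before the position of $v_{|v|}$. Deleting the tail past $v_{|v|}$ exhibits $U$ as a mapping-cylinder poset with base $L$, so $U \simeq L \cong \mathrm{Link}^{+}_{\mathcal{P}_{(v_{|v|})}}(v_1,\dots,v_{|v|-1})$ (for $|v|=1$, simply $L \cong \mathcal{P}_v$), which is $(d-|v|)$-acyclic by the outer induction. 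On the other hand $v' \mapsto (v_1,\dots,v_{|v|}) \frown (\text{tail of } v')$ is a poset endomorphism of $N$ lying below the identity that retracts $N$ onto the full subposet on the $v'$ whose first $|v|$ entries are $v$, a poset isomorphic to $\mathcal{P}^{v} := \{\varnothing \neq u : (v,u) \in \mathcal{P}\}$; and $\mathcal{P}^{v}$, equivalently $N$, is covered by the contractible up-sets $\{v' \ge (v,a)\}$, whose links are the $\mathrm{Link}^{+}_{\mathcal{P}}((v,a))$, which are $(d-|v|-1)$-acyclic by the downward induction on $|v|$. Finally one checks that $U$ and $N$ cover $\mathrm{Link}^{+}_{\mathcal{P}}(v)$ with no straddling simplex — a sequence of the form $(v,u)$ is incomparable to every element of $L$ — so Mayer--Vietoris, fed by $\widetilde{H}_{*}(U) = \widetilde{H}_{*}(L) = 0$ in the range and by a nerve-type computation of $\widetilde{H}_{*}(N)$ and $\widetilde{H}_{*}(U \cap N)$, forces $\widetilde{H}_i(\mathrm{Link}^{+}_{\mathcal{P}}(v)) = 0$ for $i \le d-|v|$.

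I expect the only real difficulty to be that last clause: $N \simeq \mathcal{P}^{v}$ and the auxiliary posets $U \cap N$ that appear are ``append-type'' objects whose own slice posets $(\mathcal{P}^{v})_u$ are again append-type, so they do not submit to a single clean appeal to the preliminary results but must be unwound through the same double induction, invoking $\mathcal{P}_{(u,w)} = (\mathcal{P}_w)_u$ and Theorem~\ref{3.6thing} repeatedly while keeping every shifted acyclicity range in lock-step. Everything else — the join reduction, the mapping-cylinder identifications, and the absence of straddling simplices — is routine.
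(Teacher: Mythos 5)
Your reduction of the second claim to the first is correct: $\mathrm{Link}_{\mathcal{P}}(v)$ is indeed the join of $\mathrm{Link}^{-}_{\mathcal{P}}(v)$ (a sphere $S^{|v|-2}$ by downward closure) with $\mathrm{Link}^{+}_{\mathcal{P}}(v)$, and the join formula gives the $(d-1)$ bound. The identification $U\simeq L\cong \mathrm{Link}^{+}_{\mathcal{P}_{(v_{|v|})}}(v_1,\dots,v_{|v|-1})$ via the descending retraction that deletes the tail is also sound, and the transfer of the hypothesis to $(\mathcal{P}_{(v_{|v|})},(v_1,\dots,v_{|v|-1}),d-1)$ via $\mathcal{P}_{(u,w)}=(\mathcal{P}_w)_u$ works. (Two small repairs: the outer induction cannot literally be on $\mathrm{rank}(M)$, since $\mathcal{P}_{(v_{|v|})}$ is a subposet of $\mathcal{U}(M)$ for the \emph{same} $M$; you must induct on the maximal length of a sequence in the poset, lexicographically with the downward induction on $|v|$. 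And in the base case $\mathrm{Link}^{+}_{\mathcal{P}}(v)=\varnothing$ you still need $d-|v|\le -2$, which follows because $\mathcal{P}_v=\varnothing$ is assumed $(d-|v|)$-acyclic.)

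The genuine gap is the piece $N$, and it is not a routine verification but the actual content of the proposition. First, $N\simeq \mathcal{P}^{v}$ is an \emph{append}-complement, whereas the hypothesis only controls the \emph{prepend}-complements $\mathcal{P}_w$; these coincide only for permutation-invariant $\mathcal{P}$, which is not assumed, so no induction hypothesis applies to $\mathcal{P}^{v}$ directly. Second, the acyclicity you would extract from your cover is too weak: the stars $\{v'\ge (v,a)\}$ index over a discrete antichain, so a nerve argument in the style of Theorem \ref{nerve} gives nothing (the nerve is only $(-1)$-connected and the multiple intersections of stars are uncontrolled), while coning off minimal elements level by level loses one degree of acyclicity per level --- exactly the degradation responsible for the $3c(M)$ in Theorem \ref{orig} --- and so cannot reach the $(d-|v|)$-acyclicity of $N$ that your Mayer--Vietoris sequence requires (note you need $\widetilde H_i(N)=0$ for $i\le d-|v|$, one degree more than the $(d-|v|-1)$-acyclicity of the links $\mathrm{Link}^{+}_{\mathcal{P}}((v,a))$ supplies even pointwise). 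The same problem recurs for $U\cap N$, which you leave entirely to ``a nerve-type computation.'' You have flagged this yourself as ``the only real difficulty,'' but that difficulty is precisely van der Kallen's Proposition 2.12; the paper offers no independent argument (its proof is a citation to \cite[Prop.\ 2.12]{genlin}), so deferring this step leaves the proposal without a proof of the statement.
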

\begin{proof}
    See \cite[Prop. 2.12]{genlin} and its proof.
\end{proof}
We conclude this subsection with a technical fact that we shall use later.
\begin{prop}\label{coolmatroid}
    Let $V$ be a vector space of dimension $n$ over a field $\mathbf{k}$. Suppose $v_0,...,v_k \in V$ are $(k+1)$ vectors that span $V$. Let us say that a $(k-n)$-face $\sigma$ of the standard $k$-simplex $\Delta^{k}$ is good if $\{v_i|i \notin \sigma\}$ is a basis of $V$. The subcomplex $X_{V}$ of the $(k-n)$-skeleton of $V$ consisting of the good faces is $(k-n-1)$-connected.
\end{prop}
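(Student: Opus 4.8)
Before running the induction that proves this, it is worth observing that $X_V$ is nothing other than the independence complex of a matroid --- which is presumably the source of the terminology. Indeed, a subset $\rho\subseteq\{0,\dots,k\}$ spans a face of $X_V$ precisely when $\{v_i : i\notin\rho\}$ contains a basis of $V$, equivalently when $\{v_i : i\notin\rho\}$ spans $V$; this is exactly the statement that $\rho$ is an independent set of the matroid $M^{*}$ dual to the linear matroid $M$ of $(v_0,\dots,v_k)$. Thus $X_V$ is the independence complex of $M^{*}$, which is pure of dimension $\operatorname{rank}(M^{*})-1=(k+1-n)-1=k-n$; since independence complexes of matroids are shellable, and a shellable complex of dimension $d$ is homotopy equivalent to a wedge of $d$-spheres, this already gives the claim. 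Still, I would prefer the self-contained argument below, which uses only elementary linear algebra and a standard gluing lemma.

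The plan is to induct on the number of vectors $N:=k+1$; since the $v_i$ span $V$ we have $N\geq n$. When $N=n$ the $v_i$ form a basis, and since a proper subset of a basis never spans, $X_V=\{\varnothing\}$, which is $(-2)$-connected, matching $k-n-1=-2$. When $N=n+1$, a nontrivial linear relation exhibits some $v_j$ as redundant, so $\{j\}$ is a vertex, $X_V\neq\varnothing$, and $k-n-1=-1$. Now assume $N\geq n+2$. If some $v_i=0$, then adjoining or deleting $i$ to a subset leaves the spanning condition unchanged, so $X_V$ is the join of the vertex $\{i\}$ with a subcomplex, hence a cone, hence contractible; so we may assume every $v_i\neq 0$. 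A nontrivial linear relation $\sum_i c_iv_i=0$ exists, and, since no $v_i$ vanishes, has at least two nonzero coefficients; reindexing, take $c_0\neq 0$, so that $v_0\in\langle v_1,\dots,v_k\rangle$ and $(v_1,\dots,v_k)$ still spans $V$. In particular $\{0\}$ is a vertex of $X_V$, and we decompose $X_V=\operatorname{del}_{X_V}(0)\cup\operatorname{star}_{X_V}(0)$.

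Here $\operatorname{star}_{X_V}(0)=\{0\}*\operatorname{lk}_{X_V}(0)$ is contractible, and the two pieces meet along $\operatorname{lk}_{X_V}(0)$. Unwinding definitions, $\operatorname{lk}_{X_V}(0)$ consists of those $S\subseteq\{1,\dots,k\}$ for which $\{v_i : i\in\{1,\dots,k\}\setminus S\}$ spans $V$, which is precisely the complex to which the proposition applies for the $k$-element spanning tuple $(v_1,\dots,v_k)$; hence it is $(k-n-2)$-connected by the inductive hypothesis (and nonempty, as $k\geq n+1$). Similarly $\operatorname{del}_{X_V}(0)$, the full subcomplex on $\{1,\dots,k\}$, has as its faces those $S$ for which $\{v_0\}\cup\{v_i : i\in\{1,\dots,k\}\setminus S\}$ spans $V$; projecting to $\bar V:=V/\langle v_0\rangle$ identifies this with the complex of the proposition for the $k$-element tuple $(\bar v_1,\dots,\bar v_k)$ in the $(n-1)$-dimensional space $\bar V$, which those images span, so it is $\big((k-1)-(n-1)-1\big)=(k-n-1)$-connected by induction. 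Now I would invoke the standard gluing lemma: if $X=A\cup B$ with $B$ and $A\cap B$ nonempty, $B$ contractible, $A$ is $m$-connected and $A\cap B$ is $(m-1)$-connected, then $X$ is $m$-connected (Mayer--Vietoris together with van Kampen and Hurewicz). Applying it with $A=\operatorname{del}_{X_V}(0)$, $B=\operatorname{star}_{X_V}(0)$ and $m=k-n-1\geq 0$ yields that $X_V$ is $(k-n-1)$-connected.

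The one point that needs genuine care is the identification of $\operatorname{del}_{X_V}(0)$ with the complex attached to the quotient tuple $(\bar v_1,\dots,\bar v_k)$ --- combinatorially, this is the matroid contraction $M/0$ --- together with the bookkeeping that these images really span $\bar V$ and that the dimension count lets the inductive hypothesis deliver connectivity exactly $k-n-1$ rather than one less; the link computation, the cone observations, and the gluing lemma itself are routine.
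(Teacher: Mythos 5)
Your opening paragraph is, in substance, exactly the proof in the paper: one observes that $X_V$ is the independence complex of the matroid dual to the linear matroid of $(v_0,\dots,v_k)$, that independence complexes of matroids are pure shellable of dimension $\operatorname{rank}-1 = k-n$, and hence wedges of $(k-n)$-spheres; the paper simply cites the matroid literature for these facts. The remainder of your write-up is a genuinely different, self-contained route: a deletion--contraction induction on the number of vectors. I have checked it and it is correct. The characterisation of the faces of $X_V$ as those $\rho$ with $\{v_i: i\notin\rho\}$ spanning is right; the identification of $\operatorname{lk}_{X_V}(0)$ with the complex of the spanning tuple $(v_1,\dots,v_k)$ in $V$ (connectivity $k-n-2$ by induction) and of $\operatorname{del}_{X_V}(0)$ with the complex of $(\bar v_1,\dots,\bar v_k)$ in $V/\langle v_0\rangle$ (connectivity $k-n-1$, since both the number of vectors and the dimension drop by one) are both correct, as is the disposal of the degenerate cases ($N=n$, $N=n+1$, some $v_i=0$, which matters since the proposition must allow zero and repeated vectors); the gluing lemma then applies since $k-n-1\geq 0$ in the inductive step. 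Your elementary argument is essentially Bj\"orner's proof of the connectivity of matroid complexes specialised to this situation: it buys independence from the matroid references at the cost of length, while the paper's citation-based proof is shorter and yields the slightly stronger conclusion that $X_V$ is homotopy equivalent to a wedge of $(k-n)$-spheres. Either is acceptable.
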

\begin{proof}
    Let us recall that a matroid $M$ is a simplicial complex $M$ with the additional property that for any simplices $A$ and $B$ of $M$ with $A$ having more elements than $B$, there exists a vertex $a \in A$ such that $B \cup \{a\}$ is also a simplex of $M$. It follows that $X_V$ is a matroid from \cite[Prop. 1.1.1, Thm. 2.1.1]{morematroid}. By \cite[Thm. 7.3.3]{matroidbook}, $X_V$ is shellable, and is therefore (\cite[Sec. 7.11.7]{matroidbook}) equivalent to a wedge sum of spheres of dimension $k-n$.
\end{proof}
\subsection{The homological stability machine}\label{twotwo}
In this subsection, we will briefly revisit the general approach to establishing homological stability developed in \cite{machine}.

Let $(\mathcal{C}, \oplus,0)$ be a symmetric\footnote{This can be done in the generality of a braided monoidal groupoid, as described in \cite{machine}, but we do not require this.} monoidal category in which $0$ is initial. Let $(A,X)$ be a pair of objects in $\mathcal{C}$.  
\begin{ddd}\textup{\cite[Def. 1.2]{machine}}
    We say that $\mathcal{C}$ is locally homogeneous at $(A,X)$ if the following two conditions hold:
    \begin{enumerate}
        \item For all $0 \leq p < n$, $\textup{Hom}_{\mathcal{C}}(X^{\oplus p +1}, A \oplus X^{\oplus n})$ is a transitive $\textup{Aut}_{\mathcal{C}}(A \oplus X^{\oplus n})$-set.
        \item For all $0 \leq p < n$, the map $\textup{Aut}_{\mathcal{C}}(A \oplus X^{\oplus n-p-1}) \to \textup{Aut}_{\mathcal{C}}(A \oplus X^{n})$ given by $f \mapsto f \oplus X^{\oplus p+1}$ is injective. Further, its image consists precisely of all those automorphisms that fix the map $$X^{\oplus p+1} \cong 0 \oplus X^{\oplus p+1} \to A \oplus X^{\oplus n-p-1} \oplus X^{\oplus p+1} \cong A \oplus X^{n}$$ induced by the initiality of $0$ and the identity on $X^{\oplus p+1}$.
        \end{enumerate}
\end{ddd}
 Suppose that $\mathcal{C}$ is locally homogeneous at $(A,X)$. Let $\mathcal{C}_{A,X}$ be the full subcategory of $\mathcal{C}$ whose objects are $A \oplus X^n$ for all $n \geq 0$. A coefficient system is a functor $\mathcal{A}: \mathcal{C}_{A,X} \to \text{Mod}(\mathbb Z)$. We refer to \cite[Def. 4.10]{machine} for the definitions of split coefficient systems and the degree of a general coefficient system.
 Given any pair of objects $(A,X)$ and for each $n$, let $W_n(A,X)$ be the semi-simplicial set with $p$-simplices $\text{Hom}_{\mathcal{C}}(X^{\oplus p+1},A \oplus X^{\oplus n})$, together with the obvious face maps. We will refer to $W_n(A,X)$ as the space of destabilisations associated to the pair $(A,X)$.

The main homological stability result we will use is the following:
\begin{theorem}\label{addlater}
    Let $\mathcal{C}$ be locally homogeneous at $(A,X)$, and let $\mathcal{A}$ be a coefficient system on $\mathcal{C}_{A,X}$ of degree $r$ at $0$. Suppose that for each $n$, $W_n(A,X)$ is $(n-2)/k$-connected and $k \geq 2$. Then the map induced by $ - \oplus X$ on homology with coefficients in $\mathcal{A}$ $$H_i(\textup{Aut}_{\mathcal{C}}(A \oplus X^{\oplus  n}),\mathcal{A}(A \oplus X^{\oplus n})) \to H_i(\textup{Aut}_{\mathcal{C}}(A \oplus X^{\oplus n+1}),\mathcal{A}(A \oplus X^{\oplus n+1}))$$ is an epimorphism if $i \leq n/k - r$ and an isomorphism if $i \leq (n-2)/k - r$. If $\mathcal{A}$ is split, then it is an epimorphism if $i \leq (n-r)/k$ and an isomorphism if $i \leq (n-r-2)/k$. If $\mathcal{A}$ is constant, then it is an epimorphism $i \leq n/k$ and an isomorphism if $i \leq (n-1)/k$.
\end{theorem}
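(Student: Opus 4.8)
This is the homological stability theorem of Randal-Williams and Wahl, and the plan is to follow their strategy. Write $G_n \coloneqq \textup{Aut}_{\mathcal{C}}(A \oplus X^{\oplus n})$, so that $-\oplus X$ assembles into a tower $G_0 \to G_1 \to \cdots$, and abbreviate $W_n \coloneqq W_n(A,X)$. First I would record that $G_n$ acts on the semi-simplicial set $W_n$, that by the transitivity clause of local homogeneity this action is transitive on $p$-simplices for $p < n$, and that by the injectivity clause the stabiliser of a standard $p$-simplex is the image of $G_{n-p-1}$ under iterated stabilisation. Hence $\mathbb{Z}[W^n_p] \cong \mathbb{Z}[G_n/G_{n-p-1}]$ as $G_n$-modules in that range, while the connectivity hypothesis says that the augmented chain complex $\widetilde{C}_\bullet(W_n) = \bigl(\cdots \to \mathbb{Z}[W^n_1] \to \mathbb{Z}[W^n_0] \to \mathbb{Z}\bigr)$ is acyclic in degrees $\leq \tfrac{n-2}{k}$.

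The idea is then to compute the $G_n$-hyperhomology $\mathbb{H}_*\bigl(G_n; \widetilde{C}_\bullet(W_n) \otimes \mathcal{A}(A\oplus X^{\oplus n})\bigr)$ in two ways. On one hand, the acyclicity of $\widetilde{C}_\bullet(W_n)$ makes this hyperhomology vanish in degrees $\leq \tfrac{n-2}{k}$. On the other hand, filtering by columns produces a spectral sequence whose $(-1)$-column is $H_*(G_n; \mathcal{A}(A\oplus X^{\oplus n}))$ and whose $p$-th column, for $0 \leq p < n$, is $H_*\bigl(G_{n-p-1}; \mathcal{A}(A\oplus X^{\oplus n})|_{G_{n-p-1}}\bigr)$ by Shapiro's lemma. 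Comparing the two, and inducting on the homological degree, the vanishing of the abutment forces the stabilisation map $H_i(G_{n-1};\mathcal{A}) \to H_i(G_n;\mathcal{A})$ to be surjective once $i$ is small relative to $\tfrac{n}{k}$; and since the columns $p \geq 1$ are already controlled by the inductive hypothesis, it is bijective in essentially the same range. The assumption $k \geq 2$ enters exactly here: moving from column $p$ to column $p+1$ costs $1/k \leq 1/2$, which is just enough for the relevant diagonal to remain inside the acyclic range.

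For the coefficient system I would nest a secondary induction on its degree. When $\mathcal{A}$ is constant, the restricted system $\mathcal{A}(A\oplus X^{\oplus n})|_{G_{n-p-1}}$ that appears above is again constant, and the previous paragraph applies as stated, giving the constant-coefficient ranges. For a general coefficient system of degree $r$ I would resolve $\mathcal{A}$ by the suspended system $\Sigma\mathcal{A}$, with $\Sigma\mathcal{A}(A\oplus X^{\oplus n}) = \mathcal{A}(A\oplus X^{\oplus n+1})$, together with its kernel and cokernel, which have degree $\leq r-1$; the associated long exact sequences propagate stability from degree $r-1$ to degree $r$ at the cost of one unit of range, which accumulates to the shift by $r$ in the statement. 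A \emph{split} coefficient system carries a canonical splitting of this resolution, which improves the loss to $1/k$ per unit of degree and so yields the sharper range $i \leq (n-r-2)/k$.

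The hard part will not be any individual step but the bookkeeping: one must check that the Shapiro identification really produces the \emph{restriction} of $\mathcal{A}$ along the stabilisation inclusion, that restriction does not increase the degree of a coefficient system, and that the slope $1/k$, the degree $r$, the shift coming from the augmentation, and the off-by-one losses between adjacent columns all remain mutually compatible throughout the nested inductions. Carrying this out carefully is precisely the content of \cite{machine}, to which I refer for the details.
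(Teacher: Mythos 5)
Your proposal is correct and matches the paper exactly: the paper's proof of this theorem is a one-line citation of \cite[Thm.\ 3.1]{machine} and \cite[Thm.\ 4.20]{machine}, and your sketch is a faithful outline of the Randal-Williams--Wahl argument behind those results (the spectral sequence for the $G_n$-action on $W_n(A,X)$ with Shapiro's lemma identifying the columns, followed by the induction on the degree of the coefficient system), ending with the same deferral to \cite{machine} for the details.
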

\begin{proof}
    This follows from \cite[Thm. 3.1]{machine} and \cite[Thm. 4.20]{machine}.
\end{proof}
\begin{rem}\label{abeliancoefficient}
    Let $\mathcal{C}$ be locally homogeneous at $(A,X)$, and set $G_n \coloneq \text{Aut}_{\mathcal{C}}(A \oplus X^n)$. Put $G_{\infty} \coloneq \text{colim}_n G_n$, and let $G_{\infty}^{\text{ab}}$ be its abelianisation. An abelian coefficient system is a functor $\mathcal{A}: \mathcal{C}_{A,X} \to \text{Mod}( \mathbb Z[G_{\infty}^{\text{ab}}])$. The underlying $\mathbb Z$-modules $\mathcal{A}(A \oplus X^{n})$ acquire two commuting actions of $G_n$ through the functoriality of $\mathcal{A}$ and the $\mathbb Z[G_{\infty}^{\text{ab}}]$-module structure. Considering the diagonal action of $G_n$ on each $\mathcal{A}(A \oplus X^{n})$, Randal-Williams and Wahl also have a completely analogous homological stability result to Theorem \ref{addlater} for abelian coefficient systems that only requires the high-connectivity of the semi-simplicial sets $W_n(A,X)$, albeit with slightly worse bounds. We refer the interested reader to \cite[Thm. 3.4]{machine} and \cite[Thm. 4.20]{machine} for precise statements.
\end{rem}
We now recall a general construction made in \cite[Sec. 1.1]{machine} that produces locally homogeneous categories from groupoids, under mild assumptions. 

Let $(\mathcal{G}, \oplus, 0)$ be a symmetric monoidal groupoid. We do not assume that $0$ is initial anymore. Consider the category $U\mathcal{G}$ defined as follows. Its objects are the same as that of $\mathcal{G}$, while morphisms are given by $$\text{Hom}_{U\mathcal{G}}(A,B) = \text{colim}_\mathcal{G}\text{Hom}_{\mathcal{G}}(- \oplus A,B)$$

In more detail, this means that every morphism $A \to B$ in $U\mathcal{G}$ is specified by a pair $(X,f)$ with $X \in \mathcal{G}$ and $f: X \oplus A \to B$ a morphism in $\mathcal{G}$. Two such pairs $(X,f)$ and $(X',f')$ present the same morphism in $U\mathcal{G}$ if there is a morphism $g: X \to X'$ in $\mathcal{G}$ such that 
\[\begin{tikzcd}
	{X\oplus A} & B \\
	{X'\oplus A}
	\arrow["f", from=1-1, to=1-2]
	\arrow["{g\oplus A}"', from=1-1, to=2-1]
	\arrow["{f'}"', from=2-1, to=1-2]
\end{tikzcd}\] commutes. Composition is given by the formula $(Y,h) \circ (X,f) = (Y \oplus X, g \circ (X \oplus f))$.

In particular, we see that $0$ is initial in $U\mathcal{G}$.

There is an obvious functor $i:\mathcal{G} \to U \mathcal{G}$, which is the identity on objects and sends a morphism $f$ to $(0,f)$. 
\begin{prop}\label{ff}
    If $\textup{Aut}_{\mathcal G}(0) = \{\textup{id}\}$ and $\mathcal{G}$ has no zero divisors, then $i$ is fully faithful onto the core of $U\mathcal{G}$.
\end{prop}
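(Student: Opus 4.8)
The plan is to establish the two conditions encapsulated in the statement — that $i$ is faithful, and that its image on morphisms is exactly the collection of isomorphisms of $U\mathcal G$ — after recording two preliminary observations. First, since $\mathcal G$ is a groupoid, every morphism, and in particular every witness $g$ appearing in the equivalence relation defining the hom-sets of $U\mathcal G$, is invertible; this makes that relation genuinely symmetric and transitive. Second, a functor preserves isomorphisms, so $i$ automatically lands in $\mathrm{core}(U\mathcal G)$; it therefore only remains to show that $i$ is faithful and that every isomorphism of $U\mathcal G$ is hit.

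For faithfulness, suppose $\psi,\psi'\colon A\to B$ in $\mathcal G$ satisfy $i(\psi)=i(\psi')$. Unwinding the definition of $U\mathcal G$ (and the unitor identification $0\oplus A\cong A$), there is a morphism $g\colon 0\to 0$ in $\mathcal G$ with $\psi'\circ(g\oplus A)=\psi$. By hypothesis $\mathrm{Aut}_{\mathcal G}(0)=\{\mathrm{id}\}$, so $g=\mathrm{id}_0$, whence $g\oplus A=\mathrm{id}$ and $\psi=\psi'$. Note that only the assumption on $\mathrm{Aut}(0)$ enters here.

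For fullness onto the core, let $\phi\colon A\to B$ be an isomorphism of $U\mathcal G$, represented by a pair $(X,f)$ with $f\colon X\oplus A\to B$, and let $(Y,h)$ represent its inverse. By the composition formula, $(Y,h)\circ(X,f)$ is represented by a pair whose first component is $Y\oplus X$; since this composite equals $\mathrm{id}_A$, which is represented by a pair with first component $0$, the defining equivalence relation of $U\mathcal G$ supplies a morphism $Y\oplus X\to 0$ in $\mathcal G$, i.e.\ $Y\oplus X\cong 0$. Here I invoke that $\mathcal G$ has no zero divisors to conclude $X\cong 0$; fixing an isomorphism $\xi\colon X\xrightarrow{\sim}0$, the witness $\xi$ exhibits $(X,f)$ as equivalent to the pair $(0,\,f\circ(\xi^{-1}\oplus A))$, which represents $i(\psi)$ for the isomorphism $\psi := f\circ(\xi^{-1}\oplus A)\circ u_A^{-1}\colon A\to B$ of $\mathcal G$ (a composite of isomorphisms). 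Thus $\phi$ lies in the image of $i$, and together with the inclusion $\mathrm{im}(i)\subseteq\mathrm{core}(U\mathcal G)$ noted at the outset this yields the claim.

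The sole substantive step is the passage from $Y\oplus X\cong 0$ to $X\cong 0$ via the absence of zero divisors; everything else is routine unwinding of the construction of $U\mathcal G$, the one point demanding care being the unitor coherences if $\mathcal G$ is not strictly monoidal, which I expect to be the main — though minor — source of friction.
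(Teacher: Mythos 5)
Your argument is correct and is essentially the argument of the cited source \cite[Prop.\ 1.7]{machine}, which is all the paper itself offers as proof: faithfulness from $\mathrm{Aut}_{\mathcal G}(0)=\{\mathrm{id}\}$, and fullness onto the core by composing an isomorphism with its inverse to get $Y\oplus X\cong 0$ and invoking the absence of zero divisors. Your handling of the unitors and of the (groupoid-forced) symmetry of the equivalence relation on representatives is the right way to make the routine steps precise.
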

\begin{proof}
    See \cite[Prop. 1.7]{machine}.
\end{proof}
We can endow $U\mathcal{G}$ with the symmetric monoidal structure constructed in the proof of \cite[Prop. 1.8]{machine}, which we shall denote by $\oplus_{\mathcal{G}}$. The object $0 \in U\mathcal{G}$ is the unit for $\oplus_{\mathcal{G}}$, and the functor $i: (\mathcal{G},\oplus,0) \to (U\mathcal{G},\oplus_{\mathcal{G}},0)$ is strong symmetric monoidal. 

We end with a criterion to check whether $U\mathcal{G}$ is locally homogeneous at a pair of objects:
\begin{prop}\label{crit}
    Let $(A,X)$ be a pair of objects in $\mathcal{G}$. Suppose the following two conditions hold:\begin{enumerate}
        \item For $0 \leq p < n$, if $Y \in \mathcal{G}$ satisfies $Y \oplus X^{\oplus p+1} \cong A \oplus X^{\oplus n}$ in $\mathcal{G}$, then $Y \cong A \oplus X^{\oplus n-p-1}$ in $\mathcal{G}$.
        \item For $0 \leq p < n$, the map $\textup{Aut}_{\mathcal{G}}(A \oplus X^{\oplus n - p -1}) \to \textup{Aut}_{\mathcal{G}}(A \oplus X^{\oplus n})$ given by $f \mapsto f \oplus X^{\oplus p + 1}$ is injective.
    \end{enumerate}
   Then $(U\mathcal{G},\oplus_{\mathcal{G}},0)$ is locally homogeneous at $(A,X)$.
\end{prop}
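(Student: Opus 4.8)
The plan is to verify directly the two conditions in the definition of local homogeneity for the category $\mathcal{C}=U\mathcal{G}$ at the pair $(A,X)$. I will represent a morphism $B\to C$ of $U\mathcal{G}$ by a pair $(Y,f)$ with $f\colon Y\oplus B\to C$ a morphism of $\mathcal{G}$, and will use throughout --- this is the one genuine ingredient --- that $\mathcal{G}$ is a groupoid, so that every such $f$ is an isomorphism and the assertion ``there exists a morphism $Y\to Y'$ in $\mathcal{G}$'' coincides with ``$Y\cong Y'$ in $\mathcal{G}$''. Recall moreover that $0$ is initial in $U\mathcal{G}$, and that the canonical morphism $\iota\colon X^{\oplus p+1}\to A\oplus X^{\oplus n}$ appearing in the definition is represented by the pair $(A\oplus X^{\oplus n-p-1},\, c)$, where $c$ is the coherence isomorphism $(A\oplus X^{\oplus n-p-1})\oplus X^{\oplus p+1}\cong A\oplus X^{\oplus n}$; equivalently (up to unit isomorphisms) $\iota = u\oplus_{\mathcal{G}}\textup{id}_{X^{\oplus p+1}}$, where $u\colon 0\to A\oplus X^{\oplus n-p-1}$ is the unique morphism out of the initial object.

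For the first condition, transitivity of the $\textup{Aut}_{U\mathcal{G}}(A\oplus X^{\oplus n})$-action on $\textup{Hom}_{U\mathcal{G}}(X^{\oplus p+1},A\oplus X^{\oplus n})$, take morphisms $a=[(Y_a,f_a)]$ and $b=[(Y_b,f_b)]$. The isomorphisms $f_a,f_b$ exhibit $Y_a\oplus X^{\oplus p+1}\cong A\oplus X^{\oplus n}\cong Y_b\oplus X^{\oplus p+1}$, so hypothesis (1) gives $Y_a\cong A\oplus X^{\oplus n-p-1}\cong Y_b$; choosing an isomorphism $g\colon Y_a\to Y_b$ and replacing $f_a$ by $f_a\circ(g^{-1}\oplus\textup{id})$ I may assume $Y_a=Y_b=:Y$, and then $i(f_b\circ f_a^{-1})\in\textup{Aut}_{U\mathcal{G}}(A\oplus X^{\oplus n})$ satisfies $i(f_b\circ f_a^{-1})\circ a=b$ by the composition formula in $U\mathcal{G}$ (here $i\colon\mathcal{G}\to U\mathcal{G}$ carries isomorphisms to isomorphisms). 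In particular the subgroup $i(\textup{Aut}_{\mathcal{G}}(A\oplus X^{\oplus n}))$ already acts transitively, and only hypothesis (1) is used here.

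For the second condition, write $\Phi\colon\textup{Aut}_{U\mathcal{G}}(A\oplus X^{\oplus n-p-1})\to\textup{Aut}_{U\mathcal{G}}(A\oplus X^{\oplus n})$ for $\psi\mapsto\psi\oplus_{\mathcal{G}}\textup{id}_{X^{\oplus p+1}}$. I would prove injectivity first: if $\psi=[(W,G)]$, then $\Phi(\psi)$ has ambient object $W\oplus 0\cong W$, so $\Phi(\psi)=\textup{id}$ forces (unwinding the equivalence relation on pairs, using that $\mathcal{G}$ is a groupoid) an isomorphism $W\cong 0$; hence $\psi=[(0,G')]$ with $G'$ corresponding under the unit isomorphism to an automorphism $\bar G'$ of $A\oplus X^{\oplus n-p-1}$ in $\mathcal{G}$, and a second application of the equivalence relation shows $\bar G'\oplus\textup{id}_{X^{\oplus p+1}}$ equals the automorphism of $A\oplus X^{\oplus n}$ induced by some $k\in\textup{Aut}_{\mathcal{G}}(0)$ through the monoidal unit. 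Since that action is compatible with $\oplus$, hypothesis (2) forces $\bar G'$ to equal the corresponding action of $k$ on $A\oplus X^{\oplus n-p-1}$, whence $\psi=\textup{id}$ in $U\mathcal{G}$. Next, $\textup{im}(\Phi)$ lies in the stabiliser of $\iota$: indeed $\Phi(\psi)\circ\iota=(\psi\oplus_{\mathcal{G}}\textup{id}_{X^{\oplus p+1}})\circ(u\oplus_{\mathcal{G}}\textup{id}_{X^{\oplus p+1}})=(\psi\circ u)\oplus_{\mathcal{G}}\textup{id}_{X^{\oplus p+1}}=u\oplus_{\mathcal{G}}\textup{id}_{X^{\oplus p+1}}=\iota$, using $\psi\circ u=u$ by initiality. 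Conversely, given $\phi=[(Z,F)]\in\textup{Aut}_{U\mathcal{G}}(A\oplus X^{\oplus n})$ with $\phi\circ\iota=\iota$, the composition formula shows the pair representing $\phi\circ\iota$ has ambient object $Z\oplus(A\oplus X^{\oplus n-p-1})$, and equating it with the representative of $\iota$ via the equivalence relation yields an isomorphism $\theta\colon Z\oplus(A\oplus X^{\oplus n-p-1})\to A\oplus X^{\oplus n-p-1}$ in $\mathcal{G}$ through which $F$ factors; unwinding the formula for $\oplus_{\mathcal{G}}$, this factorisation is exactly the statement that $\Phi([(Z,\theta)])=\phi$. Running the same construction on $\phi^{-1}$ and invoking the injectivity of $\Phi$ already proved shows $[(Z,\theta)]$ is invertible, hence an element of $\textup{Aut}_{U\mathcal{G}}(A\oplus X^{\oplus n-p-1})$ mapping onto $\phi$.

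The step I expect to demand the most care --- though it is bookkeeping rather than a real obstacle --- is tracking the associativity and symmetry coherence isomorphisms of the monoidal structure $\oplus_{\mathcal{G}}$ on $U\mathcal{G}$ (built in the proof of \cite[Prop.~1.8]{machine}) while manipulating representing pairs, and appealing to Mac Lane coherence to identify the various reassociations of $Z\oplus A\oplus X^{\oplus n-p-1}\oplus X^{\oplus p+1}$ that appear. Indeed the proposition is just the localisation at a single pair $(A,X)$ of the homogeneity statement in \cite[Prop.~1.8]{machine} and the discussion around it, the global cancellation and injectivity hypotheses there entering only through their instances (1) and (2); so an equally valid proof is simply to cite loc.\ cit.
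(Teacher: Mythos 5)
Your proof is correct and amounts to the same thing as the paper's: the paper's entire proof is the citation ``See \cite[Prop.~1.10]{machine}'', and your direct verification is a faithful unwinding of that cited argument (reducing both homogeneity conditions to the cancellation and injectivity hypotheses via the explicit description of morphisms in $U\mathcal{G}$ as equivalence classes of pairs), ending with the observation that one may simply cite loc.\ cit. The only nit is the reference in your closing paragraph: the homogeneity criterion is \cite[Prop.~1.10]{machine}, not Prop.~1.8, which constructs the monoidal structure.
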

\begin{proof}
    See \cite[Prop. 1.10]{machine}.
\end{proof}
In practice, to establish homological stability for automorphism groups of $\mathcal{G}$, one does so for the automorphism groups of $U\mathcal{G}$ and uses Proposition \ref{ff} to check that these are the same.
\section{Symmetric bilinear forms and associated posets}\label{twothree}
In this section, we gather all the algebraic facts that we shall use in the sequel.

We begin with a quick review of some standard results. Much of this is basic and will be used subsequently without comment.

Recall that a symmetric bilinear form over a commutative ring $R$ is a pair $(M,\lambda)$ of an $R$-module $M$ and a function $\lambda: M \times M \to R$ that is $R$-linear in each argument and is invariant under the flip $C_2$-action on $M \times M$. The form $(M,\lambda)$ is non-degenerate if the map $M \to M^{*}$ given by $v \mapsto \lambda(v,-)$ is an isomorphism. If $N \subseteq M$ is a submodule such that $(N,\lambda|_{N})$ is non-degenerate, then the form $(M,\lambda)$ decomposes as $(M,\lambda) \cong (N,\lambda|_{N}) \oplus (N^{\perp},\lambda|_{N^{\perp}})$. In particular, in this case, $(M,\lambda)$ is non-degenerate if and only if $(N^{\perp},\lambda|_{N^{\perp}})$ is.

Suppose $N$ is a finitely generated free module and $\{e_1,...,e_n\}$ is a basis, then a symmetric bilinear form $\lambda'$ on $N$ is completely determined by the $(n \times n)$-matrix $\lambda'(e_i,e_j), 1 \leq i,j \leq n$. If $(M,\lambda)$ is a symmetric bilinear form and $(x_1,...,x_n)$ is a unimodular sequence (defined towards the end of Section \ref{twoone}) such that $\lambda(x_i,x_j) = \lambda'(e_i,e_j), 1 \leq i,j \leq n$, then $e_i \mapsto x_i$ determines an embedding $(N,\lambda') \hookrightarrow (M,\lambda)$. In this situation, if $(N,\lambda')$ and $(M,\lambda)$ are non-degenerate, and $M$ is finitely generated and free of the same rank as $N$, then the embedding $(N,\lambda') \hookrightarrow (M,\lambda)$ is an isomorphism because its orthogonal complement must be $0$.

Unimodular sequences in modules equipped with a non-degenerate symmetric bilinear form enjoy a special property:
\begin{prop}\label{asbasicasitgets}
    Let $(M,\lambda)$ be a non-degenerate symmetric bilinear form, and $(v_1,...,v_n)$ an element of $\mathcal{O}(M)$. Then $(v_1,...,v_n)$ is unimodular if and only if there exist $w_1,...,w_n$ in $M$ such that $\lambda(v_i,w_j) = \delta_{ij}$.
\end{prop}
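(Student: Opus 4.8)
The plan is to prove the two implications separately. For the forward direction I would combine two standard facts: that a unimodular sequence admits a ``dual'' family of functionals in $M^{*}$, and that non-degeneracy represents every functional as $\lambda(w,-)$ for a unique $w \in M$. The backward direction should not require non-degeneracy at all: the vectors $w_1,\dots,w_n$ directly produce a retraction of $M$ onto $\langle v_1,\dots,v_n\rangle$, which simultaneously witnesses freeness on the $v_i$ and the direct-summand property.

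For the forward implication, assume $(v_1,\dots,v_n)$ is unimodular and write $N \coloneqq \langle v_1,\dots,v_n\rangle$, which is free on the $v_i$ with $M = N \oplus N'$ for some submodule $N'$. I would take $\pi: M \twoheadrightarrow N$ to be the projection along $N'$ and $v_1^{*},\dots,v_n^{*}\in N^{*}$ the dual basis, so that each composite $v_j^{*}\circ\pi$ lies in $M^{*}$. By non-degeneracy the map $M \to M^{*}$, $w\mapsto\lambda(w,-)$, is an isomorphism, so there is a unique $w_j\in M$ with $\lambda(w_j,-) = v_j^{*}\circ\pi$; then $\lambda(v_i,w_j) = \lambda(w_j,v_i) = v_j^{*}(\pi(v_i)) = v_j^{*}(v_i) = \delta_{i,j}$, using symmetry of $\lambda$ in the first step.

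For the backward implication, assume $\lambda(v_i,w_j) = \delta_{i,j}$ and consider the $R$-linear maps $\varphi: M \to R^{n}$, $w\mapsto(\lambda(w,w_1),\dots,\lambda(w,w_n))$, and $s: R^{n}\to M$, $e_i\mapsto v_i$. Since $\varphi(s(e_i)) = \varphi(v_i) = e_i$, we get $\varphi\circ s = \mathrm{id}_{R^{n}}$; thus $s$ is split injective, which forces $v_1,\dots,v_n$ to be linearly independent, so $N \coloneqq \langle v_1,\dots,v_n\rangle = s(R^{n})$ is free on them, and $M = s(R^{n})\oplus\ker\varphi = N\oplus\ker\varphi$ exhibits $N$ as a direct summand of $M$. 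Hence $(v_1,\dots,v_n)$ is unimodular.

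I do not expect a genuine obstacle here; the computation is elementary. The only points needing a little care are tracking the symmetry of $\lambda$ when passing between functionals and vectors — so that the two arguments of $\lambda$ appear in the order prescribed by $\lambda(v_i,w_j)=\delta_{i,j}$ — and observing that $\lambda(w,-)$ and $\lambda(-,w)$ agree precisely because $\lambda$ is symmetric, which is exactly why the single isomorphism $M\cong M^{*}$ supplied by non-degeneracy suffices for the functional-to-vector step.
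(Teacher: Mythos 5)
Your proof is correct, and it is the standard argument (dual basis plus non-degeneracy for the forward direction, the splitting $M = \operatorname{im}(s)\oplus\ker\varphi$ for the converse) that the paper delegates to its citation of \cite[Lem. 3.3]{arboffset}. Your observation that non-degeneracy is only needed for the forward implication is a correct and mild refinement, but there is no substantive difference in approach.
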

\begin{proof}
    The proof of \cite[Lem. 3.3]{arboffset}, which deals with quadratic forms, goes through verbatim.
\end{proof}
\begin{kor}\label{abhayakara}
    Let $R$ be a principal ideal domain. A vector $v \in R^k$ is unimodular if and only if $\text{gcd}(\pi_1(v),...,\pi_k(v)) = 1$, where $\pi_i: R^k \to R$ is the $i^{th}$ projection.
\end{kor}
\begin{proof}
    This follows from applying Proposition \ref{asbasicasitgets} in the case $n=1$ to any non-degenerate form on $R^k$ (for example, the standard diagonal form $\text{diag}(1,...,1)$).
\end{proof}
\begin{kor}\label{subformunimod}
Let $(M,\lambda)$ be a non-degenerate symmetric bilinear form, and $(N,\lambda|_N) \subseteq (M,\lambda)$ a non-degenerate subform. Then $\mathcal{O}(N) \cap \mathcal{U}(M) = \mathcal{U}(N)$.
\end{kor}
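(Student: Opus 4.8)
The plan is to prove the two inclusions separately, using the orthogonal decomposition $(M,\lambda) \cong (N,\lambda|_N) \oplus (N^{\perp},\lambda|_{N^{\perp}})$ afforded by the non-degeneracy of $(N,\lambda|_N)$, together with Proposition \ref{asbasicasitgets} as the bridge between unimodularity and the existence of ``dual'' vectors.

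First I would treat $\mathcal{U}(N) \subseteq \mathcal{O}(N) \cap \mathcal{U}(M)$. Let $(v_1,\dots,v_k)$ be unimodular in $N$. Applying Proposition \ref{asbasicasitgets} to the non-degenerate form $(N,\lambda|_N)$ yields vectors $w_1,\dots,w_k \in N$ with $\lambda(v_i,w_j) = \delta_{i,j}$. Since $N \subseteq M$, these same vectors witness, via Proposition \ref{asbasicasitgets} applied now to $(M,\lambda)$, that $(v_1,\dots,v_k)$ is unimodular in $M$; and it clearly lies in $\mathcal{O}(N)$. (Alternatively, one can argue directly: if $\langle v_1,\dots,v_k\rangle = L$ is a free direct summand of $N$, say $N = L \oplus L'$, then $M = L \oplus L' \oplus N^{\perp}$, so $L$ is a free direct summand of $M$.)

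For the reverse inclusion, suppose $(v_1,\dots,v_k) \in \mathcal{O}(N) \cap \mathcal{U}(M)$, so the $v_i$ are distinct elements of $N$ forming a unimodular sequence in $M$. By Proposition \ref{asbasicasitgets} applied to $(M,\lambda)$, choose $w_1,\dots,w_k \in M$ with $\lambda(v_i,w_j) = \delta_{i,j}$. Decompose each $w_j = w_j' + w_j''$ with $w_j' \in N$ and $w_j'' \in N^{\perp}$. Since $v_i \in N$ we have $\lambda(v_i, w_j'') = 0$, hence $\lambda(v_i, w_j') = \delta_{i,j}$ with all $w_j' \in N$. Proposition \ref{asbasicasitgets}, applied this time to the non-degenerate form $(N,\lambda|_N)$, then shows $(v_1,\dots,v_k)$ is unimodular in $N$, i.e. it lies in $\mathcal{U}(N)$.

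There is no real obstacle here; the only point requiring care is that the dual vectors $w_j$ produced in $M$ need not lie in $N$, and the fix is exactly the projection onto $N$ along $N^{\perp}$, which leaves the pairings with the $v_i \in N$ untouched. The statement itself then follows by combining the two inclusions.
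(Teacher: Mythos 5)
Your proof is correct and follows essentially the same route as the paper: the paper also obtains dual vectors in $M$ via Proposition \ref{asbasicasitgets} and then replaces them by elements of $N$ representing the restricted functionals $\lambda(w_j,-)|_N$, which is precisely your projection onto $N$ along $N^{\perp}$. The forward inclusion, which the paper dismisses as obvious, is justified the same way you do.
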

\begin{proof}
    The inclusion $\mathcal{U}(N) \subseteq \mathcal{O}(N) \cap \mathcal{U}(M)$ is obvious. Conversely, given $(v_1,...,v_k) \in \mathcal{O}(N) \cap \mathcal{U}(M)$, Proposition \ref{asbasicasitgets} allows us to choose $w_1,...,w_n \in M$ be such that $\lambda(v_i,w_j) = \delta_{ij}$. The map $\lambda(w_i,-) \colon N \to R$ is an element of $N^{*}$, so by the non-degeneracy of $N$, we obtain $w_1',...,w_k' \in N$ such that $\lambda(v_i,w_j') = \delta_{ij}$, and the statement now follows by Proposition \ref{asbasicasitgets}.
\end{proof}
When no confusion can arise, we will simply write $M$ instead of $(M,\lambda)$. In the sequel, when discussing forms, unless specified otherwise, we will assume that the underlying module is finitely generated and projective.

We will restrict our study to rings $R$ satisfying the following property:
\begin{ass}\label{onlyass}
    $R$ is a principal ideal domain, and for each $r \in R$, either $r^2 \equiv 0 (\textup{mod } 2)$ or $r^2 \equiv u^2 (\textup{mod }2)$ for some unit $u \in R$.
\end{ass}
Examples of rings satisfying this assumption include all fields and several number rings of interest like $\mathbb Z, \mathbb Z[i]$, and $\mathbb Z[\omega]$. A general class of examples and non-examples is furnished by the following result:
\begin{prop}\label{whichnumberrings}
    Suppose $R$ is the ring of integers of $\mathbb Q(\sqrt D)$ for some square-free integer $D \neq 0,1$. Assume that $R$ is a principal ideal domain. Then $R$ satisfies Assumption \ref{onlyass} if and only if one of the following conditions hold:
    \begin{itemize}
        \item $D \equiv 2 (\textup{mod } 4)$
        \item $D \equiv 3 (\textup{mod } 4)$
        \item $D \equiv 5 (\textup{mod } 8)$ and the fundamental unit $u$ of $R$ satisfies $u \not \equiv 1 (\textup{mod }2)$.
    \end{itemize}
\end{prop}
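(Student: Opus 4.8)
The plan is to use that $R$ is a principal ideal domain by hypothesis, so the first clause of Assumption~\ref{onlyass} is automatic and only the condition on squares modulo $2$ needs checking. I would first rephrase that condition: it is equivalent to the inclusion
\[
\{\, \bar r^{\,2} : r \in R \,\} \;\subseteq\; \{0\} \cup \{\, \bar u^{\,2} : u \in R^\times \,\}
\]
inside $R/2R$ (here $\bar{(-)}$ is reduction mod $2$), whose left-hand side is precisely the image of the Frobenius endomorphism of $R/2R$. The strategy is then to compute $R/2R$ in each residue class of $D$, determine the Frobenius image, and compare it with the set of squares of images of units.

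Recall $R = \mathbb Z[\sqrt D]$ when $D \equiv 2,3 \pmod 4$, and $R = \mathbb Z[\omega]$ with $\omega = \tfrac{1+\sqrt D}{2}$ satisfying $\omega^2 - \omega + \tfrac{1-D}{4} = 0$ when $D \equiv 1 \pmod 4$. In the first case $R/2R$ is $2$-dimensional over $\mathbb F_2$ with $R/2R \cong \mathbb F_2[\epsilon]/(\epsilon^2)$: take $\epsilon = \bar{\sqrt D}$ if $D$ is even and $\epsilon = \bar{\sqrt D} - 1$ if $D$ is odd (using that $D$, respectively $D+1$, is then even). Since $(a + b\epsilon)^2 = a^2 = a$ for $a, b \in \mathbb F_2$, the Frobenius image is $\{0, 1\}$; as $1 \in R^\times$ realises $1 = 1^2$, the displayed inclusion holds and Assumption~\ref{onlyass} is satisfied.

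When $D \equiv 1 \pmod 4$ one gets $R/2R \cong \mathbb F_2[x]/(x^2 + x + c)$ with $c \equiv \tfrac{1-D}{4} \pmod 2$, splitting into two sub-cases. If $D \equiv 1 \pmod 8$ then $c = 0$, so $R/2R \cong \mathbb F_2[x]/(x(x+1)) \cong \mathbb F_2 \times \mathbb F_2$; here Frobenius is the identity, hence surjective, while $(\mathbb F_2 \times \mathbb F_2)^\times = \{(1,1)\}$ contributes only the square $(1,1)$, so $(1,0)$ violates the inclusion and Assumption~\ref{onlyass} fails. If $D \equiv 5 \pmod 8$ then $c = 1$, $x^2 + x + 1$ is irreducible over $\mathbb F_2$, and $R/2R \cong \mathbb F_4$; Frobenius is the non-trivial automorphism of $\mathbb F_4$, so its image is all of $\mathbb F_4$ and the inclusion reads $\mathbb F_4^\times \subseteq \{\bar u^{\,2} : u \in R^\times\}$. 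Letting $H \subseteq (R/2R)^\times = \mathbb F_4^\times$ be the image of $R^\times$, the right-hand set is $\{h^2 : h \in H\}$, which equals $H$ since squaring is an automorphism of the cyclic group $\mathbb F_4^\times \cong \mathbb Z/3$; as $\mathbb Z/3$ has no proper non-trivial subgroup, the inclusion holds iff $H \neq \{1\}$. Finally, by Dirichlet's unit theorem $R^\times = \{\pm 1\} \cdot u^{\mathbb Z}$ with $u$ the fundamental unit (for $D > 0$), and since $-1 \equiv 1 \pmod 2$ we get $H = \langle \bar u \rangle$, which is non-trivial exactly when $u \not\equiv 1 \pmod 2$. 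Combining the four cases gives the proposition.

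I expect the crux to be the case $D \equiv 5 \pmod 8$: in every other case the constant unit $1$ already realises the unique non-zero Frobenius value, so the unit group is irrelevant, whereas here one must track the image of the \emph{actual} unit group $R^\times$ — not of $(R/2R)^\times$ — in $\mathbb F_4^\times$, and this is exactly where the fundamental unit and Dirichlet's theorem enter. The remaining care point is the imaginary-quadratic sub-case ($D < 0$, $D \equiv 5 \pmod 8$, $R$ a PID), where $R^\times$ is finite: one checks directly that $R^\times = \{\pm 1\}$ except for $D = -3$, where $R^\times = \mu_6$ surjects onto $\mathbb F_4^\times$, and verifies this matches the stated criterion under the appropriate convention for the term "fundamental unit".
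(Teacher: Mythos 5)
Your proof is correct and follows essentially the same route as the paper's: a case analysis on $D$ modulo $4$ and $8$, computing $R/(2)$ explicitly ($\mathbb F_2[\epsilon]/(\epsilon^2)$, $\mathbb F_2\times\mathbb F_2$, $\mathbb F_4$) and comparing the image of squaring with the image of the unit group, with the fundamental unit entering only in the $D\equiv 5 \pmod 8$ case. Your explicit handling of the imaginary-quadratic subcase is a welcome extra care point, since the paper's appeal to ``every unit is $\pm u^k$'' is literally Dirichlet's theorem for real quadratic fields and requires exactly the convention you describe when $D<0$ (e.g.\ $D=-3$, where $R=\mathbb Z[\omega]$ does satisfy the assumption).
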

\begin{proof}
    When $D \equiv 2 (\textup{mod } 4)$ or $D \equiv 3 (\textup{mod } 4)$, every element $x$ of $R$ can be expressed in the form $x = a  + b\sqrt{D}$ for $a,b \in \mathbb Z$. We find that $x^2 \equiv a^2 + b^2D (\text{mod }2)$. Since $a^2 + b^2D$ is an integer, Assumption \ref{onlyass} follows in this case.

    Suppose $D \equiv 1 (\textup{mod } 8)$. There exists an element $\alpha \in R$ satisfying $\alpha^2 - \alpha - (D-1)/4 = 0$. We therefore find that $\alpha^2 \equiv \alpha (\text{mod }2)$ and that that the image of $\alpha$ in $R/(2)$ is not a unit (because it is a zero divisor). Therefore $R$ does not satisfy Assumption \ref{onlyass} in this case.

    Lastly, suppose $D \equiv 5 (\textup{mod } 8)$. As before, there exists an element $\alpha \in R$ satisfying $\alpha^2 - \alpha - (D-1)/4 = 0$. Every element of $R$ can be expressed in the form $a + b\alpha$ for $a, b \in \mathbb Z$. Therefore, the composite $\{0,1,\alpha,\alpha+1\} \hookrightarrow R \to R/(2)$ is a bijection. Reducing the quadratic relation specifying $\alpha$ modulo $2$, we find that $\alpha^2 \equiv \alpha +1 (\text{mod }2)$. It follows that $R/(2) \cong \mathbb F_4$. We recall that every unit of $R$ is of the form $\pm u^{k}$, for $k \in \mathbb Z$, where $u$ is the fundamental unit. If $u \equiv 1 (\text{mod }2)$, it follows that the square of every unit in $R$ is congruent to $1$ modulo $2$, and Assumption \ref{onlyass} is not satisfied because $\alpha^2 \not \equiv 1 (\text{mod }2)$. If $u \not \equiv 1 (\text{mod }2)$, then the image of $u^2$ in the order $3$ cyclic group $R/(2)^{\times} \cong \mathbb F_4^{\times}$ must be a generator. Therefore, every non-zero element of $R/(2)$ comes from the image of a square unit in $R$, and Assumption \ref{onlyass} holds.
\end{proof}
 The following observation, which was pointed out to us by Manuel Hoff, shows that Assumption \ref{onlyass} is quite restrictive in general:
\begin{prop}\label{twoprime}
    Suppose $R$ is a ring that satisfies Assumption \ref{onlyass}. If $2 \in R$ is not a unit, then either $(2)$ is a prime ideal, or there exists a unique prime ideal $I$ that contains $(2)$, and it satisfies $I^2 = (2)$.
\end{prop}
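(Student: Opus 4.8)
The plan is to analyze the prime factorization of the ideal $(2)$ in $R$ using the fact that $R$ is a principal ideal domain, hence a UFD, and then use Assumption \ref{onlyass} to rule out all cases except the two listed. First I would write $(2) = P_1^{e_1} \cdots P_k^{e_k}$ for distinct prime ideals $P_i$ and positive integers $e_i$; since $R$ is a PID this is the unique factorization of the element $2$ up to units, and each $P_i = (p_i)$ for some prime element $p_i$. The ring $R/(2)$ then decomposes, by the Chinese Remainder Theorem, as $\prod_i R/P_i^{e_i}$. The goal is to show $k = 1$ and $e_1 \leq 2$, i.e. $(2)$ is prime ($k=1$, $e_1=1$) or $(2) = P_1^2$ ($k=1$, $e_1 = 2$); in the latter case uniqueness of $I = P_1$ is automatic from unique factorization.

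The key step is to exploit the squaring map. Consider the subset $U(R/(2)) = \{\bar r^2 : r \in R\} \subseteq R/(2)$ of squares. Assumption \ref{onlyass} says precisely that every square in $R/(2)$ is either $0$ or a unit. I would argue as follows. Suppose $k \geq 2$. Pick an element $x \in R$ with $x \equiv 0 \pmod{P_1^{e_1}}$ but $x \equiv 1 \pmod{P_j^{e_j}}$ for all $j \neq 1$ (possible by CRT). Then $\bar x^2$ is neither $0$ (its image in $R/P_2^{e_2}$ is $1 \neq 0$) nor a unit (its image in $R/P_1^{e_1}$ is $0$), contradicting the assumption. Hence $k = 1$ and $(2) = P_1^{e_1}$ with $P_1 = (p_1)$. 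Now suppose $e_1 \geq 3$. Then $\overline{p_1^2}$ is a nonzero element of $R/(2) = R/(p_1^{e_1})$ — nonzero because $p_1^2$ is not divisible by $p_1^{e_1}$ when $e_1 \geq 3$ — but it is also a zero divisor, since $\overline{p_1^2} \cdot \overline{p_1^{e_1 - 2}} = \overline{p_1^{e_1}} = 0$ and $\overline{p_1^{e_1-2}} \neq 0$ (again as $e_1 - 2 < e_1$). A zero divisor is not a unit, so $\overline{p_1^2}$ is a nonzero non-unit square, again contradicting Assumption \ref{onlyass}. Therefore $e_1 \in \{1, 2\}$.

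Finally I would translate this back into the statement: if $e_1 = 1$ then $(2) = P_1$ is prime; if $e_1 = 2$ then $(2) = P_1^2$, and $P_1$ is the unique prime ideal containing $(2)$ because any prime containing $(2) = P_1^2$ must contain $P_1$ (primes are radical), hence equals $P_1$ (as $P_1$ is maximal, being a nonzero prime in a PID — note $P_1 \neq 0$ since $2 \neq 0$ in the domain $R$). Setting $I = P_1$ gives $I^2 = (2)$ as required. I expect the only mildly delicate point to be handling the possibility $e_1 = 2$ versus $e_1 \geq 3$ cleanly — i.e. making sure the "zero divisor but nonzero" argument is phrased correctly, which hinges on the elementary fact that in a PID, $p^a \mid p^b$ iff $a \leq b$ for a prime $p$; everything else is a routine application of CRT and the defining property of squares under Assumption \ref{onlyass}.
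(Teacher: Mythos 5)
Your proof is correct. Both your argument and the paper's rest on the same key consequence of Assumption \ref{onlyass}: any non-unit of $R/(2)$ has square zero, because a product is a unit only if both factors are, so a non-unit square cannot be congruent to $u^2$ for a unit $u$. The difference is in organization. You factor $(2) = P_1^{e_1}\cdots P_k^{e_k}$ first and then rule out $k\geq 2$ via a CRT-constructed element and $e_1\geq 3$ via the element $p_1^2$; the paper instead proves directly that for any prime $I\supseteq(2)$ and any $x\in I$ one has $x^2\in(2)$ (since $x$ is a non-unit mod $2$), which gives uniqueness immediately ($y\in J$ implies $y^2\in(2)\subseteq I$, so $y\in I$) and then reads off $I^2=(2)$ from prime factorization. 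The paper's route avoids CRT and is a bit slicker; yours is more explicit but equally valid, and your handling of the exponent bound (the "nonzero zero divisor" argument for $\overline{p_1^2}$) is fine. One small imprecision worth fixing: Assumption \ref{onlyass} does not say \emph{precisely} that every square in $R/(2)$ is $0$ or a unit --- it is strictly stronger, as it requires a nonzero square to be congruent to the square of a unit of $R$ itself (the paper's $D\equiv 5 \pmod 8$ discussion in Proposition \ref{whichnumberrings} gives rings where $R/(2)$ is a field, so every square is $0$ or a unit, yet the Assumption fails). Since your proof only uses the implication from the Assumption to the weaker statement, this does not affect its validity, but the phrasing should be "implies" rather than "says precisely".
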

\begin{proof}
    Suppose $2$ is not a unit and that $(2)$ is not prime. Let $I$ be a prime ideal containing $(2)$ and $x \in I$. The image of $x$ in $R/(2)$ is not a unit, and Assumption \ref{onlyass} thus forces $x^2 \in (2)$. If $J$ is a possibly different prime ideal that contains $(2)$, for $y \in J$, we find that $y^2 \in (2) \subset I$, so $y \in I$ and thus $I = J$. The existence of prime factorisations shows that $I^2 = (2)$.
\end{proof}
Before examining the ramifications of Assumption \ref{onlyass}, we make some general observations about metabolic forms (which were defined in the introduction).

 For a ring $R$, let $\text{Met}_2(R)$ be the set of (isomorphism classes of) non-degenerate symmetric bilinear forms on $R^2$ for which there is a unimodular vector $x \in  R^2$ such that $\lambda(x,x) = 0$. 

 \begin{lem}\label{met2freelag}
     $\textup{Met}_2(R)$ is precisely the set of metabolic forms of rank two with free Lagrangian.
 \end{lem}
 \begin{proof}
     Suppose $M$ is a rank two metabolic form with free Lagrangian $L$. As modules, we have $M \cong L \oplus L^{*}$, so any generator $x$ of $L$ is necessarily unimodular and satisfies $\lambda(x,x) = 0$. Conversely, let $M \in \text{Met}_2(R)$, and choose a unimodular vector $x$ such that $\lambda(x,x) = 0$. Put $L \coloneq \langle x \rangle$. We will show that \[\begin{tikzcd}
	0 & L & {M \cong M^{*}} & {L^{*}} & 0
	\arrow[from=1-1, to=1-2]
	\arrow[, from=1-2, to=1-3]
	\arrow[, from=1-3, to=1-4]
	\arrow[from=1-4, to=1-5]
\end{tikzcd}\] is exact. By definition, we see that the sequence is null-composite and exact at $L$. Because $x$ is unimodular, by Proposition \ref{asbasicasitgets}, there exists $y \in M$ such that $\lambda(x,y) = 1$. We find that the restriction of the form to the subspace $\langle x,y \rangle$ is given by the matrix $$\begin{pmatrix}
0 & 1\\
1 & \lambda(y,y)
\end{pmatrix}$$ with respect to the basis $\{x,y\}$. Therefore, we have a non-degenerate embedding of a rank two form into $M$, which shows that $\langle x,y \rangle = M$. In particular, $L$ is a direct summand of $M$, so we have exactness at $L^{*}$. Finally, if $v = ax + by \in M$ satisfies $\lambda(v,x) = 0$, then $b = 0$; this shows exactness at $M$ and completes the proof.
 \end{proof}
Consider the function $$\theta\colon R \to \text{Met}_2(R), r \mapsto \begin{pmatrix}
0 & 1\\
1 & r
\end{pmatrix}$$
The vector $(1,0)$ is unimodular and isotropic in $\theta(r)$ with respect to these coordinates.
Observe that $\theta(0)$ is the (isomorphism class of the) hyperbolic plane, which we shall denote by $H$.
\begin{prop}\label{thetasurj}
    The map $\theta: R \to \textup{Met}_2(R)$ descends to a surjective map $\theta: R/(2) \to \textup{Met}_2(R)$.
\end{prop}
\begin{proof}
    We have seen that $\theta: R \to \text{Met}_2(R)$ is surjective in the proof of Lemma \ref{met2freelag}. Suppose $r,s \in R$ are such that $r - s = 2t$, for some $t \in R$. Then the map $\theta_t\colon \theta(r) \to \theta(s)$ given in standard coordinates by $(x_1,x_2) \mapsto (x_1 +tx_2,x_2)$ is an isomorphism of symmetric bilinear forms.
\end{proof}
The following result generalises Lemma \ref{met2freelag}:
 \begin{prop}\label{met=tz}
     A non-degenerate symmetric bilinear form is metabolic with free Lagrangian if and only if it is isomorphic to a (finite) direct sum of forms in $\textup{Met}_2(R)$.
 \end{prop}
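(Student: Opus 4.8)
The plan is to prove both directions of the equivalence. The "if" direction is the easy one: a single form $(R^2,\lambda) \in \textup{Met}_2(R)$ is metabolic because the unimodular vector $x$ with $\lambda(x,x)=0$ generates a rank-one summand $L = \langle x \rangle$, and a dimension/non-degeneracy count shows the sequence $0 \to L \to M \to L^* \to 0$ is exact: the map $i^*$ is surjective since $i$ is split injective and $M \cong M^*$, and $\ker(i^*) = L^\perp$ has rank one, contains $L$ (as $\lambda(x,x)=0$), and equals $L$ because $L$ is a summand. For a direct sum $\bigoplus_j (R^2,\lambda_j)$, one takes the direct sum of the Lagrangians; exactness of the resulting sequence is preserved under direct sums, so the sum is again metabolic.

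For the "only if" direction, suppose $(M,\lambda)$ is metabolic with Lagrangian $i \colon L \hookrightarrow M$, where $M$ has rank $2k$ and $L$ has rank $k$ (this rank count follows from the exact sequence, since $L^*$ has the same rank as $L$). The strategy is to peel off metabolic planes one at a time by induction on $k$. First I would pick a unimodular vector $x_1 \in L$ (possible since $L$ is free, and $L$ is a summand of $M$ so $x_1$ is unimodular in $M$); then $\lambda(x_1,x_1) = 0$. By Proposition~\ref{asbasicasitgets}, there is $w \in M$ with $\lambda(x_1,w) = 1$. I would then adjust $w$ to a vector $y_1$ with $\lambda(x_1,y_1)=1$ and $\lambda(y_1,y_1) \in \{0, 1\}$ (or whatever normal forms Assumption~\ref{onlyass} permits for $\lambda(y_1,y_1)$ modulo the change $y_1 \mapsto y_1 + r x_1$, which shifts $\lambda(y_1,y_1)$ by $2r$): the span $N_1 = \langle x_1, y_1 \rangle$ is then a non-degenerate rank-two subform containing the unimodular isotropic vector $x_1$, hence $N_1 \in \textup{Met}_2(R)$, and $M \cong N_1 \oplus N_1^\perp$. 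The key point is then to verify that $N_1^\perp$ is again metabolic of rank $2(k-1)$, with Lagrangian obtained from $L$: concretely, $L \cap N_1^\perp$ (or a suitably modified version of it) should be a Lagrangian of $N_1^\perp$, after which induction finishes the argument.

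The main obstacle I anticipate is exactly this last step: showing that the orthogonal complement $N_1^\perp$ inherits a Lagrangian, and controlling its rank. Naively $L \cap N_1^\perp$ has rank $k-1$, but one must check it is a \emph{summand} of $N_1^\perp$ (equivalently, unimodular there) and that the null-composite sequence for $N_1^\perp$ with this sublagrangian is exact — the subtlety being that $L$ need not be contained in $x_1^\perp \cap y_1^\perp$, so one may have to replace $L$ by $(L \cap x_1^\perp)$ projected appropriately, or argue via the general fact that a sublagrangian of a metabolic form has metabolic orthogonal complement. This is a standard manoeuvre in the theory of metabolic/hyperbolic forms (it is the analogue of "a totally isotropic direct summand extends to a Lagrangian, and quotients by it are metabolic"), and I would either cite it or carry out the summand/exactness check directly using Corollary~\ref{subformunimod} and the non-degeneracy decomposition $M \cong N_1 \oplus N_1^\perp$. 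A second, minor wrinkle is the normalisation of $y_1$: I must ensure that among the forms in $\textup{Met}_2(R)$ — which by definition only requires \emph{one} isotropic unimodular vector — the plane $N_1$ genuinely qualifies, which is immediate since $x_1 \in N_1$ is unimodular in $N_1$ by Corollary~\ref{subformunimod} and isotropic.
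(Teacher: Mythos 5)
Your overall strategy coincides with the paper's: the ``if'' direction via the rank-one Lagrangian $\langle x\rangle$ and closure of metabolics under direct sums, and the ``only if'' direction by induction, splitting off a plane $N_1 = \langle x_1, y_1\rangle$ and passing to $N_1^{\perp}$. The one step you explicitly leave open --- that $N_1^{\perp}$ is again metabolic with a Lagrangian inherited from $L$ --- is genuinely the crux, and as written your argument does not close it: with an arbitrary $w$ satisfying $\lambda(x_1,w)=1$, the remaining basis vectors $x_2,\dots,x_n$ of $L$ need not lie in $N_1^{\perp}$ (they are automatically orthogonal to $x_1$, since $L$ is isotropic, but not to $y_1$), so one is forced into the projection-and-reverification you gesture at.

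The clean fix, which is what the paper does, is to make a better choice of $y_1$ at the outset: by exactness of $0 \to L \to M \to L^{*} \to 0$ the map $M \to L^{*}$ is surjective, so you may choose $y_1$ representing the dual basis functional $x_i \mapsto \delta_{1i}$, i.e.\ with $\lambda(x_i,y_1) = \delta_{1i}$ for \emph{all} $i$, not just $i=1$. Then $x_2,\dots,x_n$ lie in $N_1^{\perp}$ on the nose, the defining exact sequence for $L$ visibly splits as the direct sum of the sequence for $\langle x_1\rangle \subset \langle x_1,y_1\rangle$ and the sequence $0 \to \langle x_2,\dots,x_n\rangle \to N_1^{\perp} \to \langle x_2,\dots,x_n\rangle^{*} \to 0$, and the latter exhibits $N_1^{\perp}$ as metabolic with a free Lagrangian of rank $n-1$; induction then finishes. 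If instead you insist on an arbitrary $y_1$, your projected sublattice $\{v - \lambda(v,y_1)x_1 : v\in L\}$ does work as a Lagrangian of $N_1^{\perp}$, but you must then verify isotropy, that it is a summand, and exactness of the new sequence --- all of which the dual-basis choice renders unnecessary. Also note that your proposed normalisation of $\lambda(y_1,y_1)$ is indeed superfluous, as you observe at the end: membership of $N_1$ in $\textup{Met}_2(R)$ only requires the isotropic unimodular vector $x_1$.
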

 \begin{proof}
     Let us show the ``if" direction first. A direct sum of metabolic forms is metabolic, so it suffices to observe that each element of $\text{Met}_2(R)$ is metabolic. This follows from Lemma \ref{met2freelag}.

Turning to the converse, suppose $M$ is metabolic with Lagrangian $L \hookrightarrow M$; we have the exact sequence\[\begin{tikzcd}
	0 & L & {M \cong M^{*}} & {L^{*}} & 0
	\arrow[from=1-1, to=1-2]
	\arrow[, from=1-2, to=1-3]
	\arrow[, from=1-3, to=1-4]
	\arrow[from=1-4, to=1-5]
\end{tikzcd}\]
Assume that $L$ is free on some $\{x_1,...,x_n\}$; we will proceed by induction on $n$. By definition of $L$, $\lambda(x_i,x_i) = 0$ for each $i$. We observe that $\text{rank}(M) = 2n$. When $n = 1$, consider the element in $L^{*}$ that sends $x_1$ to $1 \in R$. By exactness of the sequence, this gives $y_1 \in M$ such that $\lambda(x_1,y_1) = 1$. Note that $x_1$ is therefore unimodular by Proposition \ref{asbasicasitgets}. As in Lemma \ref{met2freelag}, we conclude that $M = \langle x,y \rangle \in \text{Met}_2(R)$. For the inductive step, we choose $y_1 \in M$ to represent the element of $L^{*}$ that maps $x_i$ to $\delta_{1i} \in R$. Let $M'$ be the orthogonal complement of $\langle x_1,y_1 \rangle$ in $M$; this is a non-degenerate subform. The given exact sequence splits as the direct sum of the exact sequences \[\begin{tikzcd}
	0 & {\langle x_1 \rangle} & {\langle x_1,y_1 \rangle} & {\langle x_1 \rangle ^{*}} & 0
	\arrow[from=1-1, to=1-2]
	\arrow[, from=1-2, to=1-3]
	\arrow[, from=1-3, to=1-4]
	\arrow[from=1-4, to=1-5]
\end{tikzcd}\]
\[\begin{tikzcd}
	0 & {\langle x_2,...,x_n \rangle} & {M' \cong M'^{*}} & {\langle x_2,...,x_n \rangle ^{*}} & 0
	\arrow[from=1-1, to=1-2]
	\arrow[, from=1-2, to=1-3]
	\arrow[, from=1-3, to=1-4]
	\arrow[from=1-4, to=1-5]
\end{tikzcd}\] and the statement follows from the inductive hypothesis, as the lower exact sequence exhibits $M'$ as metabolic with a Lagrangian that is free on $n-1$ generators.
 \end{proof}
Let us now restrict to rings $R$ that satisfy Assumption \ref{onlyass}; this assumption will be in force for the rest of this paper.
Let $S(R)$ be the set of equivalences classes of $R/(2)$ under the equivalence relation $a \sim b$, for $a,b \in R/(2)$, if there is a unit $u \in R$ such that $a = v^2b$, where $v$ is the image of $u$ in $R/(2)$.
\begin{prop}\label{tzr}
The surjective map $\theta: R/(2) \to \textup{Met}_2(R)$ of Proposition \ref{thetasurj} induces a bijection $S(R) \cong \textup{Met}_2(R)$.
\end{prop}
\begin{proof}
     For any unit $u \in R$, and any element $r \in R$, the map $\theta_u\colon \theta(r) \to \theta(u^2r)$ given in standard coordinates by $(x_1,x_2) \mapsto (x_1,x_2u^{-1})$ is evidently an isomorphism. Thus, $\theta$ descends to a surjective map $S(R) \to \text{Met}_2(R)$.

It remains to check that $\theta$ is injective. To this end, suppose $\theta(r) = \theta(s)$ for $r,s \in R$. Then there exists a vector $(x_1,x_2)$ such that $2x_1x_2 + rx_2^2 = s$. Considering this equation modulo $2$ and using Assumption \ref{onlyass}, it follows that $r$ and $s$ have the same image in $S(R)$ except possibly when $s \equiv 0 (\text{mod }2)$. In this case, using the existence of a vector $(y_1,y_2)$ such that $2y_1y_2 + sy_2^2 = r$, we see that $r \equiv 0 (\text{mod }2)$ as well. This completes the proof.
\end{proof}
We would like to classify metabolic forms. To do this, we introduce the following notion:
\begin{ddd}\label{parity}
    Let $M$ be a non-degenerate symmetric bilinear form. Consider the subset $I(M) \coloneq\{\lambda(x,x)|x \in M\} \subseteq R$. We define the parity of $M$, $P(M)$, as the image of $I(M)$ in $R/(2)$.
\end{ddd}
 In particular, the parity of the zero form is $\{0\}$.

 Let $U(R)$ be the set of squares in $R/(2)$.
 \begin{prop}\label{essential}
     Suppose $2$ is not a unit in $R$. Then $U(R)$ is a subfield of the ring $R/(2)$, and for any non-degenerate symmetric bilinear form $M$, $P(M)$ acquires the structure of a vector space over $U(R)$. Furthermore, $P(M)$ is finite dimensional over $U(R)$ and its dimension is at most $\textup{rank}(M)$.
 \end{prop}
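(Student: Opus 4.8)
The plan is to establish the three claims in turn — that $U(R)$ is a field, that $P(M)$ is a $U(R)$-vector space, and that $\dim_{U(R)} P(M) \leq \mathrm{rank}(M)$ — with Assumption \ref{onlyass} entering only in the first.

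First I would observe that since $2$ is not a unit, $R/(2)$ is a nonzero commutative ring of characteristic $2$, so the Frobenius map $x \mapsto x^2$ is a ring endomorphism of $R/(2)$, and $U(R)$ is precisely its image; in particular $U(R)$ is a subring containing $0$ and $1 \neq 0$. To upgrade this to a field, take a nonzero $s \in U(R)$ and write $s = \bar r^2$ for the image $\bar r$ of some $r \in R$. Since $s \neq 0$ we have $r^2 \not\equiv 0 \pmod 2$, so Assumption \ref{onlyass} supplies a unit $u \in R$ with $r^2 \equiv u^2 \pmod 2$; then $s = \bar u^2$, and $\bar u^{-2} = (\overline{u^{-1}})^2$ lies in $U(R)$ and is inverse to $s$. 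This is the only step where the hypothesis is genuinely used, and I expect it to be the main (very mild) obstacle.

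Next, for the $U(R)$-vector space structure on $P(M)$: the key input is the polarization identity $\lambda(x+y,x+y) = \lambda(x,x) + \lambda(y,y) + 2\lambda(x,y)$, which shows that modulo $2$ the sum of two diagonal values is again a diagonal value; together with $\lambda(0,0) = 0$ and the fact that additive inverses are trivial in characteristic $2$, this makes $P(M)$ an additive subgroup of $R/(2)$. For a scalar $c = \bar a^2 \in U(R)$ and $x \in M$ one has $c \cdot \overline{\lambda(x,x)} = \overline{\lambda(ax,ax)} \in P(M)$, so $P(M)$ is stable under the $U(R)$-action on $R/(2)$; since $R/(2)$ is a $U(R)$-vector space (it contains the field $U(R)$), $P(M)$ is a $U(R)$-subspace. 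The point deserving a little care here is to check closure under addition before invoking the scalar action, which the polarization identity handles.

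Finally, for the dimension bound, I would fix a basis $e_1, \dots, e_n$ of $M$ with $n = \mathrm{rank}(M)$. For $x = \sum_i a_i e_i$, expanding $\lambda(x,x)$ and reducing modulo $2$ kills every cross term $2 a_i a_j \lambda(e_i,e_j)$ and leaves $\overline{\lambda(x,x)} = \sum_i \bar a_i^2\, \overline{\lambda(e_i,e_i)}$. As $a_i$ ranges over $R$, $\bar a_i^2$ ranges over all of $U(R)$, so $P(M)$ is exactly the $U(R)$-span of $\overline{\lambda(e_1,e_1)}, \dots, \overline{\lambda(e_n,e_n)}$; hence $P(M)$ is finite dimensional of dimension at most $n = \mathrm{rank}(M)$.
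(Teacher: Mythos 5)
Your proposal is correct and follows essentially the same route as the paper: Assumption \ref{onlyass} to invert nonzero squares, the polarization identity $\lambda(x+y,x+y)=\lambda(x,x)+\lambda(y,y)+2\lambda(x,y)$ for additivity, the identity $\lambda(ax,ax)=a^2\lambda(x,x)$ for the scalar action, and the basis expansion killing cross terms for the dimension bound. The only (harmless) cosmetic difference is that you act by squares of arbitrary elements where the paper phrases the scalar action via squares of units and then invokes Assumption \ref{onlyass}; both yield the same $U(R)$-module structure.
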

 \begin{proof}
     The assertion that $U(R)$ is a field is immediate from Assumption \ref{onlyass}. The vector space structure also follows from Assumption \ref{onlyass} and the observations that $\lambda(x,x) + \lambda(y,y) \equiv \lambda(x+y,x+y) (\text{mod }2)$ and $\lambda(ux,ux) = u^2\lambda(x,x)$ for $x,y \in M$ and a unit $u \in R$. The second statement follows from the observation that for any basis $\{e_1,...,e_{\text{rank}(M)}\}$ of $M$, the images of the $\lambda(e_i,e_i)$ in $R/(2)$ generate $P(M)$ as a $U(R)$-vector space. Indeed, for $x = \Sigma_{i=1}^{\text{rank}(M)}x_ie_i$, we notice that $\lambda(x,x) \equiv \Sigma_{i=1}^{\text{rank}(M)}x_i^2 \lambda(e_i,e_i) (\text{mod } 2)$ and the assertion then follows from Assumption \ref{onlyass}.
 \end{proof}

\begin{lem}\label{exampleparity}
   For forms $M$ and $M'$, we have $P(M \oplus M') = P(M) + P(M')$. In particular, $P(M^n) = P(M)$ for all $n \geq 1$.  
\end{lem}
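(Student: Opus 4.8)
The plan is to unwind the definitions. The underlying module of $M \oplus M'$ is the direct sum of the underlying modules, so a general element is of the form $x \oplus x'$ with $x \in M$ and $x' \in M'$, and the orthogonal sum form evaluates on the diagonal as $(\lambda \oplus \lambda')(x \oplus x', x \oplus x') = \lambda(x,x) + \lambda'(x',x')$. Hence the subset $I(M \oplus M') \subseteq R$ of Definition \ref{parity} is precisely the sumset $I(M) + I(M')$, and reducing modulo $2$ gives $P(M \oplus M') = P(M) + P(M')$, where on the right $+$ denotes the sumset in the abelian group $R/(2)$. Since $0 = \lambda(0,0)$ lies in $I(M)$ and likewise $0 \in I(M')$, this sumset contains both $P(M)$ and $P(M')$; when $2$ is not a unit, Proposition \ref{essential} tells us $P(M)$ and $P(M')$ are $U(R)$-subspaces of $R/(2)$, so the sumset agrees with the subspace sum, and when $2$ is a unit there is nothing to prove as $R/(2) = 0$.

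For the final clause, I would first record that $P(M)$ is closed under addition: for $x, y \in M$ one has $\lambda(x+y,x+y) = \lambda(x,x) + 2\lambda(x,y) + \lambda(y,y)$, so the image of $\lambda(x,x) + \lambda(y,y)$ in $R/(2)$ coincides with the image of $\lambda(x+y,x+y) \in I(M)$; consequently $P(M) + P(M) = P(M)$ (this is of course also subsumed by Proposition \ref{essential}). Then I would induct on $n$: the case $n = 1$ is trivial, and for $n \geq 2$ the first part gives $P(M^n) = P(M^{n-1} \oplus M) = P(M^{n-1}) + P(M) = P(M) + P(M) = P(M)$ by the inductive hypothesis.

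There is no serious obstacle here; the statement is essentially a bookkeeping exercise in the definitions. The only point requiring a moment of care is whether $+$ is to be read as a sumset or as a subspace sum, which is resolved by the observations that $0 \in P(M)$ and that $P(M)$ is closed under addition.
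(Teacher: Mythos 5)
Your proof is correct and is exactly the definitional unwinding the paper has in mind — its own proof of this lemma is simply ``Obvious.'' The two points you flag (that $0\in P(M)$ and that $P(M)$ is closed under addition, so the sumset reading and the subspace-sum reading agree) are precisely the content of Proposition \ref{essential}, and your induction for $P(M^n)=P(M)$ is the intended argument.
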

\begin{proof}
   Obvious.
\end{proof}

In what follows, we will sometimes view $\theta$ as a function valued in forms, and not in isomorphism classes thereof.
\begin{lem}\label{addition}
    Let $r,s \in R$. Then $\theta(r) \oplus \theta(r+s) \cong \theta(r) \oplus \theta(s)$.
\end{lem}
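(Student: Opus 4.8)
The plan is to prove this basis-level statement by splitting a copy of $\theta(r)$ off the left-hand form in a way that leaves $\theta(s)$ behind. Realise $\theta(r) \oplus \theta(r+s)$ on $R^{4}$ with the evident basis $e_1, f_1, e_2, f_2$: the two blocks are mutually orthogonal and $\lambda(e_i,e_i) = 0$, $\lambda(e_i,f_i) = 1$, $\lambda(f_1,f_1) = r$, $\lambda(f_2,f_2) = r+s$. The key move is to replace the isotropic vector $e_1$ of the first block by $v \coloneqq e_1 + e_2$: this vector is again isotropic, and since $\lambda(v,f_1) = 1$ it is unimodular by Proposition \ref{asbasicasitgets}.

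With this choice in hand I would proceed as follows. First, the Gram matrix of the pair $(v,f_1)$ is literally the defining matrix $\theta(r)$, so $N \coloneqq \langle v, f_1 \rangle$ is a non-degenerate subform isomorphic to $\theta(r)$; by the decomposition of a non-degenerate form along a non-degenerate subform recalled at the start of Section \ref{twothree}, we get $\theta(r) \oplus \theta(r+s) \cong N \oplus N^{\perp} \cong \theta(r) \oplus N^{\perp}$, so it remains to identify $N^{\perp}$. Second, I would compute $N^{\perp}$ by solving the two linear conditions $\lambda(-,v) = \lambda(-,f_1) = 0$ on $R^{4}$; it comes out free on $u_1 \coloneqq -r e_1 + f_1 - f_2$ and $u_2 \coloneqq -e_2$ (these lie in $N^{\perp}$, and $\{v,f_1,u_1,u_2\}$ has unit determinant over the original basis, so they form a basis of the rank-two summand $N^{\perp}$). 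A direct computation gives $\lambda(u_2,u_2) = 0$, $\lambda(u_1,u_2) = 1$, and $\lambda(u_1,u_1) = r + (r+s) - 2r = s$, so the Gram matrix of the ordered pair $(u_2,u_1)$ is exactly $\theta(s)$. Hence $N^{\perp} \cong \theta(s)$ and the lemma follows.

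There is no genuine obstacle here: once the isotropic vector $v = e_1 + e_2$ is chosen, everything reduces to mechanical linear algebra over $R$, and the argument uses neither that $R$ is a principal ideal domain nor Assumption \ref{onlyass} — so in fact $\theta(r) \oplus \theta(r+s) \cong \theta(r) \oplus \theta(s)$ over an arbitrary commutative ring. The only point calling for a moment's thought is spotting the right vector to split off; one could equivalently package the whole proof as a single explicit change-of-basis matrix on $R^{4}$ and verify directly that it conjugates the block form $\theta(r) \oplus \theta(r+s)$ into $\theta(r) \oplus \theta(s)$, but the ``split off a plane'' formulation keeps the bookkeeping shortest.
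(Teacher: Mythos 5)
Your argument is correct and is in substance the same as the paper's: the paper simply writes down the explicit isometry $((x_1,x_2),(y_1,y_2)) \mapsto ((x_1,x_2+y_2),(y_1-x_1-rx_2,y_2))$ and checks it on the standard basis, which is the change-of-basis packaging you yourself mention as an alternative to splitting off the plane $\langle e_1+e_2,f_1\rangle$ and computing its orthogonal complement. Your observation that neither the PID hypothesis nor Assumption \ref{onlyass} is needed is likewise consistent with the paper's proof.
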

\begin{proof}
    By checking on the standard basis, it is evident that the map $\psi: \theta(r) \oplus \theta(r+s) \to \theta(r) \oplus \theta(s)$ given in standard coordinates by $((x_1,x_2),(y_1,y_2)) \mapsto ((x_1,x_2 + y_2),(y_1-x_1-rx_2,y_2))$ is an isomorphism of forms.
\end{proof}
\begin{lem}\label{standardparity}
    Let $r_1,...,r_n \in R$. The image of the set $\{\Sigma_{i=1}^{n}u_i^2r_i|u_i \in R^{\times} \cup \{0\}\}$ under the quotient map $R \to R/(2)$ is $P(\oplus_{i=1}^{n}\theta(r_i))$.
\end{lem}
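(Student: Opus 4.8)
The plan is to prove the two inclusions separately. For the inclusion $\subseteq$, given scalars $u_1, \dots, u_n \in R^\times \cup \{0\}$, I would exhibit an explicit vector in $\oplus_{i=1}^n \theta(r_i)$ whose self-inner-product reduces to $\Sigma_{i=1}^n u_i^2 r_i$ modulo $2$. Concretely, in the standard coordinates where $\theta(r_i)$ has basis $\{e_i, f_i\}$ with $\lambda(e_i,e_i) = 0$, $\lambda(e_i,f_i)=1$, $\lambda(f_i,f_i)=r_i$, the vector $x = \Sigma_{i=1}^n u_i f_i$ has $\lambda(x,x) = \Sigma_{i=1}^n u_i^2 r_i$ exactly (not just mod $2$), so this inclusion is immediate and does not even require Assumption \ref{onlyass}.

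For the reverse inclusion $\supseteq$, I need to show that every element of $P(\oplus_{i=1}^n \theta(r_i))$ arises, modulo $2$, from a sum of the prescribed shape. Take an arbitrary vector $v = \Sigma_{i=1}^n (a_i e_i + b_i f_i)$. A direct computation gives $\lambda(v,v) = \Sigma_{i=1}^n (2 a_i b_i + b_i^2 r_i)$, which reduces modulo $2$ to $\Sigma_{i=1}^n b_i^2 r_i$. Now I invoke Assumption \ref{onlyass}: for each $i$, the image $\bar b_i \in R/(2)$ satisfies $\bar b_i^2 = 0$ or $\bar b_i^2 = \bar u_i^2$ for some unit $u_i \in R$. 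In the first case choose $u_i = 0$, in the second case choose $u_i$ to be the corresponding unit; then $b_i^2 r_i \equiv u_i^2 r_i \pmod 2$ for each $i$, and summing shows $\lambda(v,v) \equiv \Sigma_{i=1}^n u_i^2 r_i \pmod 2$ with each $u_i \in R^\times \cup \{0\}$. Since $P$ is by definition the image of $I(\oplus_i \theta(r_i))$ in $R/(2)$, this establishes the reverse inclusion.

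I do not expect any genuine obstacle here; the only subtlety is the bookkeeping in passing between the explicit matrix presentation of $\theta(r)$ and the coordinate-free description of the parity set, together with the correct (case-split) application of Assumption \ref{onlyass} to replace each $b_i^2$ by a square unit or zero. This is the same mechanism already used in the proof of Proposition \ref{essential}, so it suffices to carry out the computation of $\lambda(v,v)$ on the standard basis and appeal to the assumption coordinate-wise. One could alternatively phrase the whole argument via Lemma \ref{exampleparity} (which reduces $P(\oplus_i \theta(r_i))$ to $\Sigma_i P(\theta(r_i))$) combined with the case $n=1$, but the direct computation above is shorter.
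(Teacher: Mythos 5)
Your proposal is correct and follows exactly the route the paper takes: the paper's proof simply writes out $I(\oplus_{i=1}^{n}\theta(r_i))$ as the set of elements $\Sigma_i(2x_{2i-1}x_{2i}+r_ix_{2i}^2)$ and notes that the claim then follows from Assumption \ref{onlyass}; your two inclusions are just this argument spelled out in detail, with the same coordinate-wise replacement of each $b_i^2$ by $0$ or a square unit modulo $2$.
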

\begin{proof}
    By definition, $I(\oplus_{i=1}^{n}\theta(r_i)) = \{2x_1x_2 + r_1x_2^2 + ... + 2x_{2n-1}x_{2n} + r_nx_{2n}^2|(x_1,...,x_{2n}) \in R^{2n}\}$. The assertion immediately follows from Assumption \ref{onlyass}.
\end{proof}
\begin{theorem}\label{myclassification}
    Let $M$ and $M'$ be metabolic forms. Then $M \cong M'$ if and only if $\textup{rank}(M) = \textup{rank}(M')$ and $P(M) = P(M')$.
\end{theorem}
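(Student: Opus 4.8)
The ``only if'' direction is immediate, since rank and parity (Definition \ref{parity}) are invariant under isometry. For the converse, the plan is to bring every metabolic form into a normal form depending only on its rank and its parity. By Proposition \ref{met=tz} write $M\cong\bigoplus_{i=1}^{n}\theta(r_{i})$ and $M'\cong\bigoplus_{i=1}^{n}\theta(s_{i})$ with $r_{i},s_{i}\in R$ and $2n=\mathrm{rank}(M)=\mathrm{rank}(M')$. If $2$ is a unit, then $R/(2)=0$, so $\mathrm{Met}_{2}(R)$ is a singleton by Proposition \ref{tzr}, every metabolic form of rank $2n$ is isomorphic to $H^{n}$, and the parity condition is vacuous; so assume henceforth that $2$ is not a unit, so that $U(R)$ is a field and each $P(M)$ is a finite-dimensional $U(R)$-subspace of $R/(2)$ by Proposition \ref{essential}. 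Write $\bar x$ for the image of $x\in R$ in $R/(2)$.

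The first ingredient is to compute the parity of a standard sum. By Lemma \ref{standardparity}, $P\big(\bigoplus_{i}\theta(r_{i})\big)$ is the image in $R/(2)$ of $\{\sum_{i}u_{i}^{2}r_{i}:u_{i}\in R^{\times}\cup\{0\}\}$, and Assumption \ref{onlyass} says precisely that $\{\bar u^{2}:u\in R^{\times}\cup\{0\}\}=U(R)$; hence $P\big(\bigoplus_{i}\theta(r_{i})\big)=\mathrm{span}_{U(R)}(\bar r_{1},\dots,\bar r_{n})$.

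The second ingredient is that the isomorphism class of $\bigoplus_{i}\theta(r_{i})$ is unchanged under the following operations on the tuple $(\bar r_{1},\dots,\bar r_{n})\in(R/(2))^{n}$: (a) permutation of entries; (b) scaling an entry by some $c\in U(R)\setminus\{0\}$, since $\theta(r)\cong\theta(u^{2}r)$ by Proposition \ref{tzr} and every such $c$ equals some $\bar u^{2}$; and (c) adding $c\bar r_{i}$ to a different entry $\bar r_{j}$ for $c\in U(R)$, which follows by combining (b) with Lemma \ref{addition} (writing $c=\bar u^{2}$: $\theta(r_{i})\oplus\theta(r_{j})\cong\theta(u^{2}r_{i})\oplus\theta(r_{j})\cong\theta(u^{2}r_{i})\oplus\theta(u^{2}r_{i}+r_{j})\cong\theta(r_{i})\oplus\theta(u^{2}r_{i}+r_{j})$). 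These generate the $GL_{n}(U(R))$-action on $(R/(2))^{n}$. Running Gaussian elimination over the field $U(R)$: if the $\bar r_{i}$ span a $d$-dimensional subspace $W$, permute so that $\bar r_{1},\dots,\bar r_{d}$ is a basis of $W$, use (c) to kill $\bar r_{d+1},\dots,\bar r_{n}$, and use (b) and (c) within the first $d$ coordinates to turn $\bar r_{1},\dots,\bar r_{d}$ into any prescribed basis $\bar t_{1},\dots,\bar t_{d}$ of $W$ with lifts $t_{j}\in R$; since $\theta(0)=H$, this shows $\bigoplus_{i}\theta(r_{i})\cong\theta(t_{1})\oplus\cdots\oplus\theta(t_{d})\oplus H^{n-d}$. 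Thus if $\mathrm{rank}(M)=\mathrm{rank}(M')=2n$ and $P(M)=P(M')=:W$, then (using the first ingredient to identify $W$ with both $\mathrm{span}_{U(R)}(\bar r_{i})$ and $\mathrm{span}_{U(R)}(\bar s_{i})$) fixing one basis $\bar t_{1},\dots,\bar t_{d}$ of $W$ with lifts shows that $M$ and $M'$ are both isomorphic to $\theta(t_{1})\oplus\cdots\oplus\theta(t_{d})\oplus H^{n-d}$, hence to each other.

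The main obstacle is the second ingredient: seeing that Lemma \ref{addition} and rescaling by unit squares together realise the full elementary-matrix action on the tuple $(\bar r_{1},\dots,\bar r_{n})$, so that the coordinates of a metabolic form may be freely row-reduced over the field $U(R)$. Once this is in hand, the classification reduces to the standard linear-algebra fact that any spanning tuple of a vector space is $GL_{n}$-equivalent to a prescribed basis padded with zeros.
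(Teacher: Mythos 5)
Your proof is correct and follows essentially the same route as the paper: both decompose the forms into metabolic planes $\theta(r_i)$ via Proposition \ref{met=tz} and manipulate the tuple of parameters using the two moves $\theta(r)\cong\theta(u^2r)$ (Proposition \ref{tzr}) and Lemma \ref{addition}, with Lemma \ref{standardparity} identifying the parity with the $U(R)$-span of the $\bar r_i$. The only difference is organizational---you reduce both forms to a common normal form $\theta(t_1)\oplus\cdots\oplus\theta(t_d)\oplus H^{n-d}$ by Gaussian elimination over $U(R)$, whereas the paper matches the two parameter tuples entry by entry inductively; the underlying mechanism is identical.
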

\begin{proof}
    The ``only if" direction is obvious. Let us turn to the converse.
    
     By Propositions \ref{thetasurj} and \ref{met=tz}, we can write $M \cong \oplus_{i=1}^{n} \theta(r_i)$ and $M' \cong \oplus_{i=1}^{n} \theta(r'_i)$, for $r_i,r'_i \in R$. Assume without loss of generality that $r_i = r'_i$ for $i \leq k$ and $r_i \neq r'_i$ for $i > k$, for some $k \in \{0,...,n-1\}$. We will show that the sequences $(r_1,...,r_n)$ and $ (r'_1,...,r'_n)$ can then be modified to sequences $(s_1,...,s_n)$ and $(s_1',...,s_n')$, respectively, such that $M \cong \oplus_{i=1}^{n} \theta(s_i)$, $M' \cong \oplus_{i=1}^{n} \theta(s'_i)$, and $s_i = s'_i$ if $i \leq k+1$.

    Let us delineate two cases. \begin{itemize}
        \item Suppose $P(\oplus_{i=1}^{k}\theta(r_i)) = P(M) = P(M')$. By Lemma \ref{standardparity}, there exists a subset $J \subset \{1,...,k\}$ and for each $j \in J$, a unit $u_j \in R$ such that $\Sigma_{j \in J} u_{j}^2r_j \equiv r_{k+1}(\text{mod }2)$. If $J$ is not empty, using $\theta(r_j) = \theta(u_{j}^2r_j)$, Proposition \ref{tzr} and Lemma \ref{addition} (repeatedly), we see that $\oplus_{i=1}^{k+1}\theta(r_i) \cong \oplus_{i=1}^{k}\theta(r_i) \oplus \theta(0)$. This decomposition also clearly holds when $J$ is empty. Similarly, $\oplus_{i=1}^{k+1}\theta(r'_i) \cong \oplus_{i=1}^{k}\theta(r'_i) \oplus \theta(0)$, and we are done.
        \item Else, let us assume without loss of generality that the image of $r'_{k+1}$ in $R/(2)$ does not belong to $P(\oplus_{i=1}^{k}\theta(r_i))$. Since $P(M) = P(M')$, it follows from Lemma \ref{standardparity} that there exists a subset $J \subset \{1,...,n\}$ such that $J' \coloneq J \cap \{k+1,...,n\}$ is non-empty, and units $u_j \in R$ for each $j \in J$ such that $\Sigma_{j \in J}u_j^2r_j \equiv r'_{k+1}(\text{mod }2)$. Assume without loss of generality that $k+1 \in J \cap \{k+1,...,n\}$. As in the previous case, using $\theta(r_j) = \theta(u_{j}^2r_j)$, Proposition \ref{tzr} and Lemma \ref{addition} (repeatedly), we can replace $r'_{k+1}$ by $\Sigma_{j \in J'}u_{j}^2r_j$. By the same reasoning together with our assumption that $k+1 \in J'$, we can also replace $r_{k+1}$ by $\Sigma_{j \in J'}u_{j}^2r_j$. Therefore we have arranged for $r_i = r'_i$ if $i \leq k+1$, and this concludes the argument.
    \end{itemize}
\end{proof}
\begin{ex}\label{classfail}
    Let us illustrate that the above rank-parity classification of metabolic forms does not hold in general for rings, and even principal ideal domains, that do not satisfy Assumption \ref{onlyass}. We note that the definition of parity (Definition \ref{parity}) makes sense for forms defined over any ring. Let $S$ be the ring of integers of $\mathbb Q(\sqrt {37})$. It is known that $S$ is a PID and that its fundamental unit is $6 + \sqrt{37}$. We see from Proposition \ref{whichnumberrings} that $S$ does not satisfy Assumption \ref{onlyass}. Moreover, from the proof of Proposition \ref{whichnumberrings}, we learn that $S/(2) \cong \mathbb F_4$ and that the set of squares in $S$ surjects onto $S/(2)$, while the square units are all congruent to $1$ modulo $2$. Let $\alpha = \frac{1 + \sqrt{37}}{2} \in S$. These observations show that the metabolic planes over $S$ given by $$\begin{pmatrix}
0 & 1\\
1 & 1
\end{pmatrix} \text{  and  } \begin{pmatrix}
0 & 1\\
1 & \alpha
\end{pmatrix}$$ have the same parity. We claim that these are not isomorphic. Indeed, if they were, there would exist $x_1,x_2 \in S$ with $2x_1x_2 + \alpha x_2^2 = 1$. This relation forces $x_2$ to be a unit, so $x_2^{2} \equiv 1 (\text{mod }2)$. Reducing the previous relation modulo $2$ however yields that $\alpha x_2^{2} \equiv 1 (\text{mod }2)$, and we have arrived at a contradiction because $\alpha \not \equiv 1 (\text{mod }2)$.
\end{ex}

We now introduce and study combinatorial properties of various posets that arise as algebraically defined subposets of $\mathcal{O}(M)$ or $\mathcal{O}(M \times M)$, where $M$ is a non-degenerate symmetric bilinear form over $R$.

We start by recalling the poset of unimodular sequences $\mathcal{U}(M)$ defined at the end of Section \ref{twoone}. This is the key player in van der Kallen's proof of homological stability for general linear groups (\cite{genlin}). However, in our situation, we will need to carefully examine the subposet $\mathcal{IU}(M) \subseteq \mathcal{U}(M)$ of \textit{isotropic} unimodular sequences, namely, of sequences $(x_1,...,x_k)$ such that $\lambda(x_i,x_j) = 0$ for all $1 \leq i,j \leq k$. Also of relevance is the poset $\mathcal{U}'(M)$ consisting of unimodular sequences $(x_1,...,x_k)$ satisfying $\lambda(x_i,x_i) = 0$ for all $1 \leq i \leq k$. We have inclusions $\mathcal{IU}(M) \subseteq \mathcal{U}'(M) \subseteq \mathcal{U}(M)$. Our nomenclature here is chosen so as to be consistent with that of \cite{unigrp} for the analogous posets in the quadratic case.

We will crucially use the following notion to associate a meaningful notion of parity to isotropic unimodular sequences.
\begin{ddd}
    Let $(v_1,...,v_k) \in \mathcal{IU}(M)$. A sequence $(w_1,...,w_k) \in \mathcal{O}(M)$ is called a witnessing sequence to $(v_1,...,v_k)$ if $\lambda(v_i,w_j) = \delta_{ij}$, and $\lambda(w_i,w_j) = 0$ when $i \neq j$, for $1 \leq i,j \leq n$.
\end{ddd}
Note, by Proposition \ref{asbasicasitgets}, that witnessing sequences are necessarily unimodular.

A witnessing sequence $(w_1,...,w_k)$ to $(v_1,...,v_k)$ determines an embedding $$e_{v,w}:T_1 \oplus ... \oplus T_k \to N$$ where $T_i = \langle v_i,w_i \rangle$. Note that $T_i$ is isomorphic to $\theta(\lambda(w_i,w_i))$. We will abusively use $e_{v,w}$ for both the embedding and its image when the context is clear. As a preliminary observation, we note that the isomorphism type of the orthogonal complement $e_{v,w}^{\perp}$ does not depend on the choice of witnessing sequence $(w_1,...,w_k)$:
\begin{lem}\label{nochoicewitness}
    Let $(v_1,...,v_k) \in \mathcal{IU}(M)$, and suppose that $(w_1,...,w_k)$ and $(w_1',...,w_k')$ are witnessing sequences to it. Then $e_{v,w}^{\perp} \cong e_{v,w'}^{\perp}$.
\end{lem}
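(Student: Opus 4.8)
The plan is to show that both orthogonal complements are metabolic of the same rank and parity, and then invoke Theorem \ref{myclassification}. First I would observe that each $e_{v,w}$ is a non-degenerate subform of $M$ (it is a direct sum of the non-degenerate forms $T_i \cong \theta(\lambda(w_i,w_i))$, and it embeds via a unimodular sequence by Proposition \ref{asbasicasitgets}), so the orthogonal complement $e_{v,w}^{\perp}$ is again non-degenerate with $\textup{rank}(e_{v,w}^{\perp}) = \textup{rank}(M) - 2k$, independent of the witnessing sequence; the same holds for $e_{v,w'}^{\perp}$. So the ranks already agree.

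Next I would identify the parities. Since $M \cong e_{v,w} \oplus e_{v,w}^{\perp}$ and $e_{v,w} \cong \bigoplus_{i=1}^{k}\theta(\lambda(w_i,w_i))$, Lemma \ref{exampleparity} gives $P(M) = P(e_{v,w}) + P(e_{v,w}^{\perp})$, and similarly $P(M) = P(e_{v,w'}) + P(e_{v,w'}^{\perp})$. But here is the subtlety: the individual forms $T_i = \langle v_i, w_i\rangle$ depend on the witnessing sequence, so a priori $P(e_{v,w})$ and $P(e_{v,w'})$ could differ, and merely knowing the two sums equal $P(M)$ is not enough to conclude $P(e_{v,w}^{\perp}) = P(e_{v,w'}^{\perp})$. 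The key point I would establish is that $P(e_{v,w})$ is actually \emph{independent} of the witnessing sequence: indeed, by Lemma \ref{standardparity}, $P(e_{v,w})$ is the image in $R/(2)$ of $\{\sum_i u_i^2 \lambda(w_i,w_i) \mid u_i \in R^\times \cup \{0\}\}$, and since $v_i$ is isotropic and unimodular one always has $\theta(0) = H \subseteq e_{v,w}$ (take $u_i = 0$ for all $i$, or more directly note $\langle v_1, w_1\rangle \cong \theta(\lambda(w_1,w_1))$ always contains a hyperbolic summand after the substitution $w_1 \mapsto w_1 - \tfrac{\lambda(w_1,w_1)}{2}v_1$ when that makes sense—so instead I would argue $0 \in P(e_{v,w})$ directly since $v_1 \in e_{v,w}$ is isotropic). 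Thus $0 \in P(e_{v,w})$ always, and more importantly $P(e_{v,w})$ is a $U(R)$-subspace of $P(M)$ containing the classes $\lambda(w_i,w_i)$; I would then show this subspace does not change. Concretely, given two witnessing sequences, the change-of-witness moves ($w_i \mapsto w_i + a v_i$ for $a \in R$, $w_i \mapsto w_i + (\text{something in } v_1,\ldots,v_k)$, and automorphisms permuting the $T_i$) only alter each $\lambda(w_i,w_i)$ modulo $2$ by elements already in the span, so $P(e_{v,w}) = P(e_{v,w'})$.

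Having established $P(e_{v,w}) = P(e_{v,w'})$ and $\textup{rank}(e_{v,w}^{\perp}) = \textup{rank}(e_{v,w'}^{\perp})$, I would still need $P(e_{v,w}^{\perp}) = P(e_{v,w'}^{\perp})$ to apply Theorem \ref{myclassification}. Here I would use that $e_{v,w}^{\perp}$ is itself metabolic: it contains the Lagrangian-type data coming from the $v_i$'s complementary structure, or more simply, both $e_{v,w}$ and $M$ are metabolic (the latter being so because... actually $M$ need not be metabolic in general), so I would instead argue as follows—$e_{v,w}^{\perp}$ contains $M'' \coloneqq$ the orthogonal complement of $\langle v_1, w_1, \ldots, v_k, w_k, v_1', \ldots\rangle$ inside \emph{both}, reducing to a common refinement. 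The cleanest route: choose a third witnessing sequence $(w_1'',\ldots,w_k'')$ whose span, together with the $v_i$ and the spans of the other two, still has unimodular complement, show $e_{v,w}^{\perp}$ and $e_{v,w''}^{\perp}$ both contain and split off a copy of $e_{v,w'' }\cap e_{v,w}^\perp$-type forms, and deduce the isomorphism transitively. The main obstacle is precisely this last step: controlling the parity of the \emph{complement} rather than the subform, since parity is subadditive-like (Lemma \ref{exampleparity} gives a sum, not a cancellative identity). I expect to resolve it by showing $e_{v,w}^{\perp}$ is metabolic with parity a \emph{fixed} $U(R)$-complement-type subspace determined by $M$ and $(v_1,\ldots,v_k)$ alone, using Lemma \ref{standardparity} applied to a $\theta$-decomposition of $e_{v,w}^\perp$ and the fact that $P(M) = P(e_{v,w}) + P(e_{v,w}^\perp)$ with $P(e_{v,w})$ now known to be witness-independent, combined with a rank count forcing $e^\perp_{v,w}$ to realize the "missing" parity directions in a unique metabolic way.
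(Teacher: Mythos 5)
Your strategy cannot work as stated, for two independent reasons.

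First, Theorem \ref{myclassification} classifies \emph{metabolic} forms by rank and parity, but $e_{v,w}^{\perp}$ is not metabolic in general: $M$ is an arbitrary non-degenerate form here (the lemma is used throughout for general $M$, e.g.\ in Lemma \ref{sequenceext} and Corollary \ref{complementgenus}, the latter of which is what eventually \emph{deduces} metabolicity of complements \emph{from} this lemma). You notice this yourself mid-proof ("actually $M$ need not be metabolic in general") but never repair it; rank and parity do not determine the isomorphism class of a general non-degenerate form (over $\mathbb Z$, definite forms already give counterexamples), so no amount of parity bookkeeping can close the argument.

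Second, your "key point" that $P(e_{v,w})$ is independent of the witnessing sequence is false. If $w'_i = w_i + u_i$ with $u_i \in e_{v,w}^{\perp}$ chosen orthogonally to the other $w'_j$, then $(w'_1,\ldots,w'_k)$ is again a witnessing sequence and $\lambda(w'_i,w'_i) \equiv \lambda(w_i,w_i) + \lambda(u_i,u_i) \pmod 2$, so the class of $\lambda(w_i,w_i)$ can be shifted by any element of $P(e_{v,w}^{\perp})$. The paper exploits exactly this freedom in Lemma \ref{createodd} (to force a partly $F$-like witnessing sequence) and in Lemma \ref{readerfriendly}; what is witness-independent is $P(e_{v,w}^{\perp})$, which is a \emph{consequence} of the lemma you are trying to prove, not an input to it. Your list of "change-of-witness moves" is also incomplete for the same reason: you omit translations of $w_i$ by elements of $e_{v,w}^{\perp}$.

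The actual proof is much more elementary and does not mention parity at all: take a basis $a_1,\ldots,a_n$ of $e_{v,w}^{\perp}$ and set $a'_i \coloneqq a_i - \sum_{j=1}^{k}\lambda(a_i,w'_j)v_j$. Using $\lambda(v_l,v_j)=0$ and $\lambda(v_l,w'_j)=\delta_{lj}$ one checks directly that $a'_i \in e_{v,w'}^{\perp}$ and $\lambda(a'_i,a'_j)=\lambda(a_i,a_j)$, giving an isometric map $e_{v,w}^{\perp} \to e_{v,w'}^{\perp}$ between non-degenerate forms of equal rank, hence an isomorphism. I would recommend abandoning the classification-based route entirely and looking for this kind of explicit correction by isotropic vectors.
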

\begin{proof}
    Choose a basis $\{a_1,...,a_n\}$ of $e_{v,w}^{\perp}$. The elements $a'_i \coloneqq
    a_i - \sum_{j=1}^{k} \lambda(a_i,w'_j)v_j$, $i=1,...,n$, belong to $e_{v,w'}^{\perp}$ and satisfy $\lambda(a'_i,a'_j) = \lambda(a_i,a_j)$, thereby furnishing a map $e_{v,w}^{\perp} \to e_{v,w'}^{\perp}$, which is an isomorphism because $\text{rank}(e_{v,w}^{\perp}) = \text{rank}(e_{v,w'}^{\perp})$.
\end{proof}
Before we do anything with witnessing sequences, let us establish their existence.
\begin{prop}\label{theyexist}
    Let $(v_1,...,v_k) \in \mathcal{IU}(M)$. There exists a witnessing sequence to $(v_1,...,v_k)$.
\end{prop}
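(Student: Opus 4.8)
The plan is to induct on $k$. For $k=0$ there is nothing to prove, and for $k=1$ the statement is just Proposition \ref{asbasicasitgets}: since $v_1$ is unimodular, there is $w_1 \in M$ with $\lambda(v_1,w_1)=1$, and the conditions on a witnessing sequence impose no constraint when $k=1$. So suppose $k \geq 2$ and that the result holds for shorter isotropic unimodular sequences. Given $(v_1,\dots,v_k) \in \mathcal{IU}(M)$, the truncation $(v_1,\dots,v_{k-1})$ is again isotropic and unimodular (being $\leq$ an element of $\mathcal{U}(M)$, which is downward closed), so by the inductive hypothesis it admits a witnessing sequence $(w_1,\dots,w_{k-1})$. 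This yields an embedding $e_{v,w}\colon T_1 \oplus \dots \oplus T_{k-1} \hookrightarrow M$ of a non-degenerate subform, with $T_i = \langle v_i, w_i\rangle$, and a corresponding orthogonal decomposition $M \cong (T_1 \oplus \dots \oplus T_{k-1}) \oplus e_{v,w}^{\perp}$.

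The key step is to produce $w_k$. First I would replace $v_k$ by its ``projection'' into $e_{v,w}^{\perp}$: set
\[
v_k' \coloneq v_k - \sum_{j=1}^{k-1}\bigl(\lambda(v_k,w_j)\,v_j + \lambda(v_k,v_j)\,w_j\bigr) = v_k - \sum_{j=1}^{k-1}\lambda(v_k,w_j)\,v_j,
\]
the second equality because $\lambda(v_k,v_j)=0$ by isotropy of $(v_1,\dots,v_k)$. One checks directly that $v_k' \in e_{v,w}^{\perp}$: for each $i<k$, $\lambda(v_k', v_i)=\lambda(v_k,v_i)=0$ and $\lambda(v_k',w_i)=\lambda(v_k,w_i)-\lambda(v_k,w_i)=0$, using $\lambda(v_i,v_j)=0$ and $\lambda(v_j,w_i)=\delta_{ij}$. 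Moreover $v_k'$ is isotropic: $\lambda(v_k',v_k')=\lambda(v_k,v_k) - 2\sum_j \lambda(v_k,w_j)\lambda(v_k,v_j) + (\text{terms in }\lambda(v_i,v_j)) = \lambda(v_k,v_k)=0$. Next I claim $v_k'$ is unimodular in $e_{v,w}^{\perp}$: by Corollary \ref{subformunimod} it suffices to show $v_k' \in \mathcal{U}(M)$, and this follows from Proposition \ref{asbasicasitgets} once we exhibit a dual vector. Since $(v_1,\dots,v_k)$ is unimodular, Proposition \ref{asbasicasitgets} gives $u_1,\dots,u_k \in M$ with $\lambda(v_i,u_j)=\delta_{ij}$; then $\lambda(v_k',u_k) = \lambda(v_k,u_k) - \sum_j \lambda(v_k,w_j)\lambda(v_j,u_k) = 1 - 0 = 1$, so $v_k'$ is unimodular. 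Applying the $k=1$ case (Proposition \ref{asbasicasitgets}) inside the non-degenerate form $e_{v,w}^{\perp}$, we obtain $w_k \in e_{v,w}^{\perp}$ with $\lambda(v_k',w_k)=1$. I then verify that $(w_1,\dots,w_{k-1},w_k)$ is a witnessing sequence to $(v_1,\dots,v_k)$: for $i,j<k$ the conditions hold by construction; $\lambda(w_k,w_i)=0$ and $\lambda(v_i,w_k)=0$ for $i<k$ because $w_k \in e_{v,w}^{\perp}$; $\lambda(v_k,w_k) = \lambda(v_k',w_k) + \sum_j \lambda(v_k,w_j)\lambda(v_j,w_k) = 1 + 0 = 1$; and $\lambda(v_i,w_k)$ for $i<k$ is $0$ as noted. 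Finally one must double-check the dual condition $\lambda(v_k,w_j)=\delta_{kj}=0$ for $j<k$ is \emph{not} required by the definition — indeed the definition only demands $\lambda(v_i,w_j)=\delta_{ij}$, which is symmetric in the roles, and $\lambda(v_k,w_j)$ need not vanish; but this is harmless since it plays no role. (If one does want the cleaner statement, one can symmetrically adjust the $w_j$ for $j<k$ by subtracting multiples of $v_k$, which does not disturb the already-established relations.)

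The main obstacle is bookkeeping rather than conceptual: one must be careful that the projection formula for $v_k'$ simultaneously lands $v_k'$ in the orthogonal complement, preserves isotropy, and preserves unimodularity, and that adding $w_k$ back does not destroy any of the relations involving $w_1,\dots,w_{k-1}$. Each of these is a direct computation using only $\lambda(v_i,v_j)=0$, $\lambda(v_i,w_j)=\delta_{ij}$, $\lambda(w_i,w_j)=0$ for $i\neq j$ ($i,j<k$), and the defining property $w_k \in e_{v,w}^{\perp}$. The inductive hypothesis supplies the witnessing sequence for the truncation, and Corollary \ref{subformunimod} together with Proposition \ref{asbasicasitgets} is exactly what is needed to pass unimodularity between $M$ and $e_{v,w}^{\perp}$.
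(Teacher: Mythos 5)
There is a genuine gap at the final step. The definition of a witnessing sequence requires $\lambda(v_i,w_j)=\delta_{ij}$ for \emph{all} $1\leq i,j\leq k$, which includes the instance $i=k$, $j<k$, i.e.\ $\lambda(v_k,w_j)=0$. Your construction gives no control over these quantities: $w_1,\dots,w_{k-1}$ were chosen with reference only to $v_1,\dots,v_{k-1}$, and indeed $\lambda(v_k,w_j)=\lambda(v_k',w_j)+\sum_i\lambda(v_k,w_i)\lambda(v_i,w_j)=\lambda(v_k,w_j)$ is just whatever it happens to be. Your claim that this condition ``is not required by the definition'' is simply a misreading. Moreover the condition is not cosmetic: it is exactly what makes $e_{v,w}$ an \emph{orthogonal} direct sum of the planes $T_i=\langle v_i,w_i\rangle$, which is how witnessing sequences are used throughout the paper. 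The parenthetical repair also fails: since $\lambda(v_k,v_k)=0$, replacing $w_j$ by $w_j-c\,v_k$ leaves $\lambda(v_k,w_j)$ unchanged (and damages $\lambda(w_j,w_k)$). Correcting instead by multiples of $w_k$ kills $\lambda(v_k,w_j)$ but introduces nonzero cross terms $\lambda(w_i',w_j')=c_ic_j\lambda(w_k,w_k)$ unless one runs a more careful Gram--Schmidt; so the argument as written does not close.

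The paper's proof runs the induction in the opposite order, which sidesteps the issue entirely: apply Proposition \ref{asbasicasitgets} to the \emph{full} sequence $(v_1,\dots,v_k)$ to choose $w_1$ with $\lambda(v_i,w_1)=\delta_{i1}$ for all $i$, note that $v_2,\dots,v_k$ then lie in the non-degenerate complement $\langle v_1,w_1\rangle^{\perp}$ (using $\lambda(v_i,v_1)=0$ and $\lambda(v_i,w_1)=0$), and recurse there; the later $w_j$ are automatically orthogonal to $v_1$ and $w_1$. If you want to keep your order of construction, you must choose the dual vector of the new entry against the whole ambient sequence at each stage, not only against the truncation.
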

\begin{proof}
    When $k = 1$, Proposition \ref{asbasicasitgets} furnishes a vector $w_1$ such that $\lambda(v_1,w_1) = 1$. Then $(w_1)$ is a witnessing sequence to $(v_1)$. For larger $k$, we first choose $w_1$ such that $\lambda(v_i,w_1) = \delta_{i1}$ using Proposition \ref{asbasicasitgets}, and then proceed inductively on the orthogonal complement of $\langle v_1,w_1 \rangle$. Note that this is the same argument that appeared in the proof of the ``only if" part of Proposition \ref{met=tz}.
\end{proof}
We can use witnessing sequences to derive elementary structural results on the poset $\mathcal{IU}(M)$.
\begin{lem}\label{sequenceext}
    Suppose $n \in \mathbb N_{>0}$ is the maximum length of a sequence in $\mathcal{IU}(M)$. Suppose $(v_1,...,v_k) \in \mathcal{IU}(M)$, with $k < n$. Then there exists $(v_1',...,v'_n) \in \mathcal{IU}(M)$ such that $(v_1,...,v_k) \leq (v_1',...,v_n')$.
\end{lem}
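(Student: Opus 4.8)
The statement asks us to extend any isotropic unimodular sequence of length $k < n$ to one of the maximal length $n$. The plan is to proceed by induction, reducing the length-$k$ case to a problem about the orthogonal complement of a rank-$2k$ metabolic summand. First I would invoke Proposition \ref{theyexist} to obtain a witnessing sequence $(w_1,\dots,w_k)$ to $(v_1,\dots,v_k)$; this yields a splitting $M \cong e_{v,w} \oplus e_{v,w}^{\perp}$, where $e_{v,w} = T_1 \oplus \dots \oplus T_k$ with each $T_i \cong \theta(\lambda(w_i,w_i)) \in \textup{Met}_2(R)$. The key point is that $e_{v,w}^{\perp}$ is again a non-degenerate symmetric bilinear form over $R$, on a finitely generated free module, so all the machinery built so far applies to it.

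The main claim to establish is that $e_{v,w}^{\perp}$ contains a nonzero isotropic unimodular vector $v'_{k+1}$ --- provided $k+1 \le n$. Granting this, Corollary \ref{subformunimod} tells us that a unimodular sequence inside the non-degenerate subform $e_{v,w}^{\perp}$ is automatically unimodular in $M$, so $(v_1,\dots,v_k,v'_{k+1})$ lies in $\mathcal{IU}(M)$ and strictly extends the original sequence; iterating (or formally inducting on $n-k$) then produces a sequence of length exactly $n$. To prove the claim, suppose for contradiction that $e_{v,w}^{\perp}$ has no nonzero isotropic unimodular vector. I would argue that this is incompatible with the existence of an isotropic unimodular sequence of length $n > k$ in $M$: given such a sequence $(u_1,\dots,u_n)$, one can use the witnessing data to project it into $e_{v,w}^{\perp}$ --- more precisely, replace each $u_j$ by $u_j - \sum_{i=1}^k \lambda(u_j, w_i) v_i - \sum_{i=1}^k \lambda(u_j, v_i) w_i$ (adjusting for $\lambda(w_i,w_i)$ as needed, exactly the style of correction used in the proof of Lemma \ref{nochoicewitness}), landing an isotropic family in $e_{v,w}^{\perp}$; a rank count, using $\textup{rank}(e_{v,w}^{\perp}) = \textup{rank}(M) - 2k$ and the fact that this family spans a submodule of rank at least $n-k \ge 1$, forces a nonzero isotropic vector there, and unimodularity can be arranged by dividing out a gcd via Corollary \ref{daitya}.

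The technical heart of the argument --- and the step I expect to be the main obstacle --- is making the projection-and-unimodularity step fully rigorous: the naive orthogonal projection of $(u_1,\dots,u_n)$ need not remain unimodular, nor need the projected vectors be distinct or span a summand. The cleanest route is probably to avoid projecting the whole sequence and instead argue more directly: show that if $k+1 \le n$ then $e_{v,w}^{\perp}$ cannot be anisotropic (i.e. has an isotropic unimodular vector) by contradiction with maximality, namely that an anisotropic $e_{v,w}^{\perp}$ would bound the length of isotropic sequences in $M \cong e_{v,w} \oplus e_{v,w}^{\perp}$ to exactly $k$. This last implication --- that isotropic unimodular sequences in $A \oplus B$ with $B$ anisotropic have length at most $\textup{rank}(A)/2$ when $A$ is metabolic --- is itself a short lemma: projecting to the $B$-component kills any isotropic vector's $B$-part, so the sequence lives (after unimodular correction) in the metabolic part $e_{v,w}$, which is a direct sum of metabolic planes and hence carries isotropic unimodular sequences of length at most $k$ by an easy direct inspection. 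Once that lemma is in place, the rest is the routine induction sketched above.
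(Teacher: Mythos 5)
Your reduction of the problem is the right one (and matches the paper's): it suffices to produce a nonzero isotropic unimodular vector in $e_{v,w}^{\perp}$ whenever $k<n$, and Corollary \ref{daitya} indeed handles the unimodularity over a PID. But both of your routes to that vector rest on a false step: the orthogonal projection of an isotropic vector onto $e_{v,w}^{\perp}$ (or onto the ``$B$-component'') is \emph{not} isotropic in general. Writing $x=a+b$ with $a\in e_{v,w}$ and $b\in e_{v,w}^{\perp}$, the cross terms vanish, so $\lambda(x,x)=\lambda(a,a)+\lambda(b,b)$; isotropy of $x$ gives $\lambda(b,b)=-\lambda(a,a)$, which need not be zero. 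Concretely, in $H\perp\operatorname{diag}(1,1)$ over $\mathbb Q$ the vector $e-f+g+h$ is isotropic and unimodular, yet its component in the anisotropic summand $\operatorname{diag}(1,1)$ is $g+h$ with $\lambda(g+h,g+h)=2$. This kills your sub-lemma's proof (``projecting to the $B$-component kills any isotropic vector's $B$-part'') and also the first route of projecting the maximal sequence $(u_1,\dots,u_n)$ into $e_{v,w}^{\perp}$: the obstacle is not unimodularity or distinctness, as you anticipated, but isotropy itself. Note also that the sub-lemma you want is essentially Corollary \ref{complementgenus}, which in the paper is deduced \emph{from} the present lemma, so importing it here would be circular.

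The paper's proof shows how to repair this. Working first over a field (and then passing to the fraction field and clearing denominators for the PID case), one does not project each $u_j$ separately; instead one uses a dimension count in the $2k$-dimensional space $e_{v,w}$ to find coefficients $a_i,b_j$, not all $a_i$ zero, with $\sum a_i\pi(x_i)=\sum b_jv_j$, i.e.\ a linear combination of the maximal sequence whose $e_{v,w}$-component lies in the \emph{totally isotropic} subspace $\langle v_1,\dots,v_k\rangle$. Only for such a combination does $y=\sum a_ix_i-\sum b_jv_j$ land in $e_{v,w}^{\perp}$ with $\lambda(y,y)=0$, because the correction term is itself isotropic and orthogonal to $y$. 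Even then the case $y=0$ can occur and requires a separate induction on $k$ (after a change of basis identifying $v_1$ with some $x_1$ and passing to $\langle x_1,y_1\rangle^{\perp}$ via Lemma \ref{nochoicewitness}). Your proposal contains neither the dimension-count construction nor the treatment of the degenerate case, so as written it does not go through.
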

\begin{proof}
    We will show that any sequence in $\mathcal{IU}(M)$ of length $k < n$ can be extended to a sequence of length $k+1$. 
    
    Let us first assume that $R$ is a field. We will proceed by induction on $k$. Only for the purposes of this argument, it will be convenient to add a minimum element to $\mathcal{IU}(M)$ and declare it to be of length zero. The base case $k = 0$ is then satisfied by construction.

    Now consider, for $k \geq 1$, a sequence $(v_1,...,v_k) \in \mathcal{IU}(M)$. As a preliminary comment, we note that for any basis $u_1,...,u_k$ of $V \coloneq \langle v_1,...,v_k \rangle$, the sequence $(u_1,...,u_k) \in \mathcal{IU}(M)$, and it can be extended to a longer one if and only if the same holds for $(v_1,...,v_k)$. As in the base case, let $(x_1,...,x_n) \in \mathcal{IU}(M)$ be some maximal length sequence. Choose a witnessing sequence $(w_1,...,w_k)$ to $(v_1,...,v_k)$. Let $\pi: M \to e_{v,w}$ be the projection, and set $x'_i \coloneq \pi(x_i)$. There exist $a_1,...,a_n,b_1,...,b_k \in R$ such that not all the $a_1,...,a_n$ equal $0$ and $\Sigma_{i=1}^{n}a_ix'_i = \Sigma_{j=1}^{k}b_jv_j$ (if some $x'_i = 0$, we can take $a_i = 1$ and $a_j = b_l = 0$ for all $j,l$ with $j \neq i$; else the $x'_1,...,x_n'$ and the $v_1,...,v_k$ are $n + k > 2k$ non-zero vectors in the $2k$-dimensional subspace $e_{v,w}$ and there is a non-trivial dependence relation). Put $y \coloneq \Sigma_{i=1}^{n}a_ix_i - \Sigma_{j=1}^{k}b_jv_j$; then $y \in e_{v,w}^{\perp}$ and $\lambda(y,y) = 0$. If $y \neq 0$, then $(v_1,...,v_k,y) \in \mathcal{IU}(M)$ and we are done. Now suppose $y = 0$. By the unimodularity of $(x_1,...,x_n)$, $u_1 \coloneq \Sigma_{j=1}^{k}b_jv_j = \Sigma_{i=1}^{n}a_ix_i$ is non-zero. Assume without loss of generality that $a_1 \neq 0$. Then $u_1,x_2,...,x_n$ is a basis of $X \coloneq \langle x_1,...,x_n \rangle$, so $(u_1,x_2,...,x_n) \in \mathcal{IU}(M)$. Extend $u_1$ to a basis $u_1,...,u_k$ of $V$. By the comment made at the beginning of the paragraph, we may assume that $v_1 = x_1$. Let $(y_1,...,y_n)$ be a witnessing sequence to $(x_1,...,x_n)$. By Lemma \ref{nochoicewitness}, $E \coloneq \langle v_1,w_1 \rangle^{\perp} \cong \langle x_1,y_1 \rangle^{\perp}$. In particular, $(v_2,...,v_k) \in \mathcal{IU}(E)$ (this is understood to be the unique length zero sequence if $k=1$), and there is a sequence of length $(n-1)$ in $\mathcal{IU}(E)$. By the inductive hypothesis applied to $E$, there exists $z \in E$ such that $(v_2,...,v_k,z) \in \mathcal{IU}(E)$. Now, $(v_1,...,v_k,z) \in \mathcal{IU}(M)$, and we are done.

    Let us now assume that $R$ is any ring satisfying\footnote{The argument we give works for any principal ideal domain.} Assumption \ref{onlyass}. We observe that the problem of extending $(v_1,...,v_k) \in \mathcal{IU}(M)$ to a sequence $(v_1,...,v_k,x) \in \mathcal{IU}(M)$ is equivalent to that of producing a unimodular vector $x$ such that $\lambda(x,x) = 0$ and $x \in e_{v,w}^{\perp}$ for some (it does not matter which, by Lemma \ref{nochoicewitness}) witnessing sequence $(w_1,...,w_k)$ to $(v_1,...,v_k)$. Let $Q(R)$ be the field of fractions of $R$. Note that $$M \otimes _{R} Q(R) \cong (e_{v,w} \otimes Q(R)) \oplus (e_{v,w}^{\perp} \otimes Q(R))$$ remains an orthogonal decomposition. We can find a non-zero isotropic vector $x' \in e_{v,w}^{\perp} \otimes Q(R)$ by our earlier efforts. Clearing denominators yields a non-zero vector $x \in e_{v,w}^{\perp}$ with $\lambda(x,x) = 0$, which we can take to be unimodular by Corollary \ref{abhayakara}.
\end{proof}

Let $z(M)$ denote the maximum length of a sequence in $\mathcal{IU}(M)$. As we have assumed the underlying $R$-module of $M$ to be finitely generated and free, this is well defined. We will refer to $z(M)$ as the isotropic rank of $M$.
\begin{prop}\label{metabolicrank}
    $M$ is metabolic if and only if $z(M) = \textup{rank}(M)/2$.
\end{prop}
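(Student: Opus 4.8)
The plan is to prove both implications by relating the isotropic rank to the structure of Lagrangians. For the ``only if'' direction, suppose $M$ is metabolic. By Proposition \ref{met=tz}, $M \cong \bigoplus_{i=1}^{n}\theta(r_i)$ with $\operatorname{rank}(M) = 2n$, and in each $\theta(r_i)$ the vector $(1,0)$ is unimodular and isotropic. Concatenating these across the summands yields an isotropic unimodular sequence of length $n$, so $z(M) \geq n = \operatorname{rank}(M)/2$. For the reverse inequality, I would argue that $z(M)$ can never exceed $\operatorname{rank}(M)/2$ for \emph{any} non-degenerate form: given $(v_1,\dots,v_k) \in \mathcal{IU}(M)$, Proposition \ref{theyexist} supplies a witnessing sequence $(w_1,\dots,w_k)$, and the associated embedding $e_{v,w} \colon T_1 \oplus \cdots \oplus T_k \hookrightarrow M$ of a non-degenerate rank-$2k$ subform forces $2k \leq \operatorname{rank}(M)$. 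Combining the two inequalities gives $z(M) = \operatorname{rank}(M)/2$.

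For the ``if'' direction, suppose $z(M) = \operatorname{rank}(M)/2 =: n$. Take a maximal isotropic unimodular sequence $(v_1,\dots,v_n) \in \mathcal{IU}(M)$, and let $L \coloneq \langle v_1,\dots,v_n\rangle$, which is a free direct summand of rank $n$ on which $\lambda$ vanishes identically (by definition of $\mathcal{IU}(M)$). I claim $L$ is a Lagrangian. Choose a witnessing sequence $(w_1,\dots,w_n)$ using Proposition \ref{theyexist}; the resulting subform $e_{v,w} = T_1 \oplus \cdots \oplus T_n$ is non-degenerate of rank $2n = \operatorname{rank}(M)$, hence $e_{v,w} = M$ and $M \cong \bigoplus_{i=1}^{n}\theta(\lambda(w_i,w_i))$ with the $v_i$ spanning the span of the isotropic coordinate vectors. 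One then checks directly, exactly as in the $n=1$ and inductive steps of the proof of Proposition \ref{met=tz}, that the sequence
\[\begin{tikzcd}
	0 & L & {M \cong M^{*}} & {L^{*}} & 0
	\arrow[from=1-1, to=1-2]
	\arrow[from=1-2, to=1-3]
	\arrow[from=1-3, to=1-4]
	\arrow[from=1-4, to=1-5]
\end{tikzcd}\]
is exact: the map $L \to M^{*}$ is injective since $L$ is a summand; the composite $L \to L^{*}$ is zero since $\lambda|_L = 0$; surjectivity of $M^{*} \to L^{*}$ holds because $L$ is a direct summand; and exactness in the middle follows from the explicit description of $M$ as $\bigoplus_i \theta(\lambda(w_i,w_i))$, in which a vector orthogonal to every $v_i$ must lie in $L$. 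Hence $M$ is metabolic.

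I expect the main subtlety to be the bookkeeping in the ``if'' direction: verifying that the perpendicular-to-$L$ condition pins down $L$ exactly (exactness in the middle of the four-term sequence), rather than merely containing it. This is where writing $M$ concretely as $\bigoplus_i \theta(\lambda(w_i,w_i))$ via the witnessing sequence pays off, since in that model $L$ is visibly the span of the first basis vector of each plane and a vector pairing to zero with all of them is forced to have vanishing second-coordinate in each plane. The inequality $z(M) \leq \operatorname{rank}(M)/2$, by contrast, is immediate once one invokes the witnessing-sequence embedding, and the $\geq$ direction for metabolic forms is immediate from Proposition \ref{met=tz}.
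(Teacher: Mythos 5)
Your proposal is correct and follows essentially the same route as the paper: the witnessing-sequence embedding $e_{v,w}$ gives $z(M)\leq \textup{rank}(M)/2$, equality forces $e_{v,w}=M\cong\bigoplus_i\theta(\lambda(w_i,w_i))$, and the converse reads off an isotropic unimodular sequence of length $\textup{rank}(M)/2$ from the decomposition into metabolic planes. The only difference is that in the ``if'' direction the paper simply cites Proposition \ref{met=tz} once $M$ is exhibited as a sum of forms in $\textup{Met}_2(R)$, whereas you re-verify the Lagrangian condition by hand, which is harmless but redundant.
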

\begin{proof}
    Consider some $(x_1,...,x_{z(M)}) \in \mathcal{IU}(M)$. Choosing a witnessing sequence $(y_1,...,y_{z(M)})$ to it, we obtain an embedding $e_{x,y} \subseteq M$. Thus $z(M) \leq \text{rank}(M)/2$. If $z(M) = \text{rank}(M)/2$, then $e_{x,y} = M$, so $M$ is a direct sum of forms in $\text{Met}_2(R)$ and is therefore metabolic by Proposition \ref{met=tz}. If $M$ is metabolic, it is a direct sum of forms in $\text{Met}_2(R)$ by Proposition \ref{met=tz}, and thus evidently has an isotropic unimodular sequence of length $\text{rank}(M)/2$.
\end{proof}
\begin{kor}\label{complementgenus}
    Let $(x_1,...,x_k) \in \mathcal{IU}(M)$, and let $(y_1,...,y_k)$ be a witnessing sequence to it. Then $z(e_{x,y}^{\perp}) = z(M)-k$. In particular, if $N \subseteq M$ is an embedding of metabolic forms, then $N^{\perp}$ is also metabolic.
\end{kor}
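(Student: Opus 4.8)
The plan is to prove the equality $z(e_{x,y}^{\perp}) = z(M)-k$ by two inequalities and then deduce the ``in particular'' clause from Proposition \ref{metabolicrank}. Throughout I use that $e_{x,y} = \langle x_1,y_1\rangle \oplus \dots \oplus \langle x_k,y_k\rangle$ is a non-degenerate subform of rank $2k$, so $M = e_{x,y} \oplus e_{x,y}^{\perp}$ with $e_{x,y}^{\perp}$ non-degenerate, and that by Corollary \ref{subformunimod} a sequence supported in a non-degenerate subform is unimodular there exactly when it is unimodular in $M$.

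For the bound $z(e_{x,y}^{\perp}) \le z(M)-k$, I would take an arbitrary $(z_1,\dots,z_m) \in \mathcal{IU}(e_{x,y}^{\perp})$ and show $(x_1,\dots,x_k,z_1,\dots,z_m) \in \mathcal{IU}(M)$. Isotropy is immediate, since $\lambda(x_i,x_j)=0$, $\lambda(z_i,z_j)=0$, and $\lambda(x_i,z_j)=0$ because $x_i \in e_{x,y}$ while $z_j \in e_{x,y}^{\perp}$. For unimodularity, $(x_1,\dots,x_k)$ is unimodular in $e_{x,y}$ by Corollary \ref{subformunimod}, and $(z_1,\dots,z_m)$ is unimodular in $e_{x,y}^{\perp}$ by hypothesis; since these two subforms are complementary direct summands of $M$, choosing complements of their spans inside each exhibits the concatenation as a basis of a direct summand of $M$. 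Hence $k+m \le z(M)$, and taking the supremum over such sequences gives the inequality (using $k \le z(M)$, which holds since $(x_1,\dots,x_k) \in \mathcal{IU}(M)$).

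For the reverse bound $z(e_{x,y}^{\perp}) \ge z(M)-k$, I may assume $k < z(M)$ (otherwise the first bound already forces equality) and use Lemma \ref{sequenceext} to extend $(x_1,\dots,x_k)$ to a maximal sequence $(x_1,\dots,x_k,v_1,\dots,v_{z(M)-k}) \in \mathcal{IU}(M)$; the idea is to project the $v_l$ into $e_{x,y}^{\perp}$. Since the extended sequence is isotropic, $\lambda(x_j,v_l)=0$; decomposing $v_l$ along $M = e_{x,y}\oplus e_{x,y}^{\perp}$ and pairing with the $x_j$ shows its $e_{x,y}$-component involves no $y_j$, and pairing with the $y_j$ then identifies the coefficient of $x_j$ as $\lambda(v_l,y_j)$. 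Therefore $v'_l \coloneq v_l - \sum_{j=1}^{k}\lambda(v_l,y_j)x_j$ lies in $e_{x,y}^{\perp}$, and a short computation using $\lambda(x_j,v_l)=0=\lambda(x_i,x_j)$ gives $\lambda(v'_l,v'_m) = \lambda(v_l,v_m) = 0$. Since $\langle x_1,\dots,x_k,v_1,\dots,v_{z(M)-k}\rangle = \langle x_1,\dots,x_k,v'_1,\dots,v'_{z(M)-k}\rangle$ and the latter set of generators is obtained from a basis of this submodule by a unipotent, hence invertible, base change, the modified sequence is still unimodular in $M$; thus the subsequence $(v'_1,\dots,v'_{z(M)-k})$ is unimodular in $M$, and by Corollary \ref{subformunimod} unimodular in $e_{x,y}^{\perp}$. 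This produces the required length-$(z(M)-k)$ sequence in $\mathcal{IU}(e_{x,y}^{\perp})$. I expect this unimodularity bookkeeping — verifying that passing to the $v'_l$ does not disturb unimodularity and then transferring it into the non-degenerate summand $e_{x,y}^{\perp}$ — to be the only delicate point; everything else is routine manipulation of the orthogonal decomposition.

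Finally, for the ``in particular'' statement I would argue as follows. Given an embedding $N \subseteq M$ of metabolic forms, Proposition \ref{met=tz} (or the proof of Proposition \ref{metabolicrank}) supplies $(x_1,\dots,x_r) \in \mathcal{IU}(N)$ with $r = \textup{rank}(N)/2$, and Proposition \ref{theyexist} a witnessing sequence $(y_1,\dots,y_r)$ to it in $N$; then $e_{x,y}$ is a non-degenerate subform of $N$ of full rank, so $e_{x,y} = N$ and $N^{\perp} = e_{x,y}^{\perp}$. By Corollary \ref{subformunimod} the same data forms an isotropic unimodular sequence with witnessing sequence inside $M$, so the equality just proved gives $z(N^{\perp}) = z(M)-r$. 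As $M$ is metabolic, $z(M) = \textup{rank}(M)/2$ by Proposition \ref{metabolicrank}, whence $z(N^{\perp}) = \textup{rank}(M)/2 - r = \textup{rank}(N^{\perp})/2$, so $N^{\perp}$ is metabolic, again by Proposition \ref{metabolicrank}.
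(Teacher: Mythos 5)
Your proof is correct and follows essentially the same route as the paper: both arguments extend $(x_1,\dots,x_k)$ to a maximal sequence via Lemma \ref{sequenceext}, obtain the upper bound by concatenation, and obtain the lower bound by moving the extension vectors into $e_{x,y}^{\perp}$. The only cosmetic difference is that you perform the projection $v_l \mapsto v_l - \sum_j \lambda(v_l,y_j)x_j$ explicitly, whereas the paper re-chooses the witnessing sequence and invokes Lemma \ref{nochoicewitness} (whose proof is exactly that projection).
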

\begin{proof}
    Let $n \coloneq z(M)$. We extend $(x_1,...,x_k)$ to $(x_1,...,x_k,x'_1,...,x'_{n-k}) \in \mathcal{IU}(M)$ using Lemma \ref{sequenceext}, and pick a witnessing sequence $(z_1,...,z_k,z'_1,...,z'_{n-k})$ to it. By construction, the isotropic rank of $e_{x,z}^{\perp}$ equals $n-k$ (it has the isotropic sequence $(x'_{1},...,x'_{n-k})$ and cannot have any of longer length because concatenating with $(x_1,...,x_k)$ would produce a sequence in $\mathcal{IU}(M)$ of length $> n$). The first assertion now follows from Lemma \ref{nochoicewitness}. The second assertion follows from the first and Propositions \ref{met=tz} and \ref{metabolicrank}.
\end{proof}

We have following structural result on complements in $\mathcal{IU}(M)$, in the vein of \cite[Lem. 7.2]{unigrp} and of \cite[Lem. 3.22]{arboffset}:
\begin{prop}\label{complement}
    Let $x = (x_1,...,x_k) \in \mathcal{IU}(M)$. We have $\mathcal{IU}(M)_x \cong \mathcal{IU}(N)\langle \langle x_1,...,x_k\rangle \rangle$, for some non-degenerate symmetric bilinear form $N \subset M$ with $z(N) = z(M)-k$.
\end{prop}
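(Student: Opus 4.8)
The plan is to produce the non-degenerate form $N$ explicitly as the orthogonal complement of a witnessing sequence, and then to exhibit the claimed isomorphism of posets combinatorially. First I would invoke Proposition \ref{theyexist} to choose a witnessing sequence $(w_1,\dots,w_k)$ to $x = (x_1,\dots,x_k)$, and set $N \coloneqq e_{x,w}^{\perp}$. By Corollary \ref{complementgenus} we already know $z(N) = z(M) - k$, so the only real work is the identification $\mathcal{IU}(M)_x \cong \mathcal{IU}(N)\langle\langle x_1,\dots,x_k\rangle\rangle$. Recall that $\mathcal{IU}(M)_x$ consists of those sequences $(u_1,\dots,u_l)$ with $(u_1,\dots,u_l,x_1,\dots,x_k) \in \mathcal{IU}(M)$; since $M = e_{x,w} \oplus N$ with $e_{x,w} = \langle x_1,w_1\rangle \oplus \dots \oplus \langle x_k, w_k\rangle$, I would write each such $u_j = \pi_N(u_j) + \sum_i \big(\lambda(u_j,w_i)x_i + \lambda(u_j,x_i)w_i\big)$, and observe that the isotropy conditions $\lambda(u_j, x_i) = 0$ force $u_j \in \langle x_1,\dots,x_k\rangle \oplus N$.

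The key step is then to build the bijection on vertices and check it respects the order and the structural data. Given $u = \sum_i a_i x_i + \pi_N(u)$ with $\pi_N(u) \in N$ and $a_i = \lambda(u,w_i) \in R$, I would send $u$ to the pair $\big(\pi_N(u), (a_1,\dots,a_k)\big) \in N \times R^k$, thought of as living in $\mathcal{O}(N \times \langle x_1,\dots,x_k\rangle)$ after identifying $R^k$ with the relevant set of linear functionals, i.e. after recognizing that the ``decoration'' set $\langle\langle x_1,\dots,x_k\rangle\rangle$ is precisely $\mathrm{Hom}(\langle x_1,\dots,x_k\rangle, R) \cong R^k$ in the notation of Section \ref{twoone} (here I should be careful to match the exact convention for $\mathcal{P}\langle S\rangle$ and the double-bracket notation as used in \cite{unigrp}). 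One checks: (i) $(u_1,\dots,u_l) \in \mathcal{IU}(M)_x$ iff the $N$-components $(\pi_N(u_1),\dots,\pi_N(u_l))$ form a sequence in $\mathcal{IU}(N)$ — the mutual-isotropy and mutual-unimodularity conditions on the $u_j$ relative to $x$ translate, using that $\langle x_1,\dots,x_k\rangle$ is already isotropic and $N \perp e_{x,w}$, into exactly the conditions defining $\mathcal{IU}(N)$ on the projections, with the $R^k$-decorations unconstrained; here Corollary \ref{subformunimod} and Proposition \ref{asbasicasitgets} let me pass between unimodularity in $M$ and in $N$; (ii) the order relation — given by existence of a strictly increasing reindexing — is visibly preserved and reflected, since reindexing acts diagonally on the $N$-component and the $R^k$-decoration; (iii) the map is injective because $u$ is recovered from $(\pi_N(u), (a_i))$, and surjective because any $\pi_N(u) \in N$ and any tuple $(a_i) \in R^k$ assemble to a legitimate $u \in \langle x_1,\dots,x_k\rangle \oplus N$ that is unimodular exactly when $\pi_N(u)$ is (the $x$-part contributes a free summand witnessed by the $w_i$).

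I expect the main obstacle to be bookkeeping rather than conceptual: precisely matching the combinatorial formalism $\mathcal{P}\langle S\rangle$ and $\langle\langle -\rangle\rangle$ from Section \ref{twoone} and \cite{unigrp} so that the decoration set is genuinely $\langle x_1,\dots,x_k\rangle$ (equivalently $R^k$ via the dual basis), and verifying that the unimodularity of a decorated sequence in $\mathcal{IU}(N)\langle\langle x_1,\dots,x_k\rangle\rangle$ corresponds under the map to unimodularity of the concatenation $(u_1,\dots,u_l,x_1,\dots,x_k)$ in $M$ — this last point uses that $(x_1,\dots,x_k)$ is unimodular with witnessing $(w_1,\dots,w_k)$, so that the direct summand $\langle x_1,\dots,x_k\rangle$ splits off cleanly and Proposition \ref{asbasicasitgets} applies to the combined dual system $\{w_i\} \cup \{\text{witnesses for }\pi_N(u_j)\text{ in }N\}$. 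Once the dictionary is fixed, each verification is a short direct computation. Finally, $N$ is non-degenerate because it is the orthogonal complement in the non-degenerate form $M$ of the non-degenerate subform $e_{x,w}$, and the statement $z(N) = z(M) - k$ is Corollary \ref{complementgenus}.
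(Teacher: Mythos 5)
Your proposal is correct and follows essentially the same route as the paper: choose a witnessing sequence, set $N \coloneq e_{x,w}^{\perp}$, identify $\mathcal{IU}(M)_x$ with $\mathcal{IU}(N)\langle\langle x_1,\dots,x_k\rangle\rangle$ via $u \leftrightarrow (\pi_N(u),\, u-\pi_N(u))$ (the paper writes the inverse map $(v,s)\mapsto v+s$ and cites Charney for the verification you spell out), and invoke Corollary \ref{complementgenus} for $z(N)=z(M)-k$. The only bookkeeping point to fix is that the decoration set in the paper's convention is the span $\langle x_1,\dots,x_k\rangle$ itself (the components $s_i = \sum_m \lambda(u_i,w_m)x_m$), not its dual, though since $\mathcal{P}\langle S\rangle$ depends on $S$ only as a set this is immaterial.
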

\begin{proof}
    We choose a witnessing sequence $(y_1,...,y_k)$ for $(x_1,...,x_k)$, and let $N \coloneqq e_{x,y}^{\perp}$. The argument in the proof of \cite[Lem. 3.4]{charneytwo} implies that the map $\mathcal{IU}(N)\langle \langle x_1,...,x_k\rangle \rangle \to \mathcal{IU}(M)_x$ given by $((v_1,s_1),...,(v_l,s_l)) \mapsto (v_1+s_1,...,v_l+s_l)$ is an isomorphism of posets. The assertion that $z(N) = z(M)-k$ follows from Corollary \ref{complementgenus}.
\end{proof}
\begin{ddd}\label{complexity}
    We define the complexity of $M$, $c(M)$, to be the integer:
    \begin{itemize}
        \item $0$, if $2$ is invertible in $R$
        \item $\textup{dim}_{U(R)}P(M)$, otherwise
    \end{itemize}
\end{ddd}
This makes sense in light of Proposition \ref{essential}.

Recall that we denoted the hyperbolic plane $\theta(0)$ by $H$. We observe that if the isotropic rank of a form exceeds its complexity, then their difference provides a lower bound for the number of hyperbolic plane summands in the form:
\begin{lem}\label{paamarat}
    Suppose $z(M) \geq c(M)$. Then there is an embedding $H^{z(M)-c(M)} \hookrightarrow M$.
\end{lem}
\begin{proof}
    Choose $(v_1,...,v_{z(M)}) \in \mathcal{IU}(M)$ of maximal length, and let $(w_1,...,w_{z(M)})$ be a witnessing sequence. If $P(e_{v,w}) = \{0\}$, then $e_{v,w} \cong H^{z(M)}$ by Theorem \ref{myclassification}, and we are done. Else, $P(e_{v,w})$ is a non-zero $U(R)$-subspace of $P(M)$, and we can find $r_1,...,r_j \in R$ with $j = c(e_{v,w}) \leq c(M)$ such that $e_{v,w} \cong (\oplus_{i=1}^{j} \theta(r_i)) \oplus H^{z(M) - j}$, by Theorem \ref{myclassification}, and this completes the proof.
\end{proof}
We adapt a technical result from \cite{arboffset} to our setting, phrased in terms of the complexity of the form:
\begin{lem}\label{algebra}
    Suppose $M \neq 0$. Let $v_1,...,v_k$ be vectors in $M$. Suppose $k \leq z(M)-c(M) - 3$. There exists a decomposition of the form $\phi:M \cong N \oplus H^{z(M) - c(M)- k}$ such that $\phi(\langle v_1,...,v_k \rangle) \subseteq N$ and $z(N) = c(M) + k$.
\end{lem}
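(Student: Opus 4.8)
Put $m \coloneqq z(M)-c(M)$, so the hypothesis reads $m\ge k+3$. The plan is to reduce the statement to producing one large hyperbolic direct summand of $M$ orthogonal to $v_1,\dots,v_k$, and then to build that summand one plane at a time, mirroring the proof of the analogous statement for quadratic forms in \cite{arboffset} and patching the steps where $2$ is not invertible. For the reduction: the conclusion is equivalent to the existence of a non-degenerate direct summand $P\subseteq M$ with $P\cong H^{m-k}$ and $P\perp v_i$ for all $i$. Indeed, set $N\coloneqq P^{\perp}$; then $v_1,\dots,v_k\in N$ and $M\cong N\oplus H^{m-k}$, while $P(P)=\{0\}$ forces $P(N)=P(M)$, hence $c(N)=c(M)$, by Lemma~\ref{exampleparity}, and, realising $P$ as the image of the standard isotropic basis of $H^{m-k}$ together with its hyperbolic partners (a witnessing sequence lying in $P$), Corollary~\ref{complementgenus} gives $z(N)=z(M)-(m-k)=c(M)+k$. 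Moreover we may replace $(v_1,\dots,v_k)$ by a basis of the saturation of $\langle v_1,\dots,v_k\rangle$: this only decreases the number of vectors, a hyperbolic summand orthogonal to a generating set is orthogonal to the submodule it generates, and a sub-sum of $m-k$ of the planes of an $H^{m-k'}$ with $k'\le k$ is again an $H^{m-k}$. So we may assume $v_1,\dots,v_k$ freely generate a rank-$k$ direct summand.

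I would produce $P$ by induction on $k$. For $k=0$ this is Lemma~\ref{daityabaida} (valid as $z(M)\ge c(M)$): the embedding $H^m\hookrightarrow M$ is split off by its non-degenerate orthogonal complement. For the inductive step, the case $k-1$ (whose hypothesis $k-1\le z(M)-c(M)-3$ persists) furnishes $M=P'\perp N'$ with $P'\cong H^{m-k+1}$ and $v_1,\dots,v_{k-1}\in N'$; write $v_k=\bar v_k+v_k'$ with $\bar v_k\in P'$, $v_k'\in N'$. It now suffices to find a rank-$2$ metabolic direct summand $Q$ of $P'$ with $\bar v_k\in Q$: then $Q^{\perp}$ computed in $P'$ is, by Corollary~\ref{complementgenus}, metabolic of isotropic rank $m-k$, and of trivial parity since $P(P')=\{0\}$, hence $\cong H^{m-k}$ by Theorem~\ref{myclassification}; taking this for $P$ we have $P^{\perp}=Q\perp N'$ in $M$, so $P\perp v_k$, and $P\perp v_i$ for $i<k$ since $v_i\in N'\perp P$, while $P$ is a non-degenerate and hence split summand of $P'$, so of $M$.

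The real work is then a statement about hyperbolic spaces alone: \emph{in $H^{\ell}$ with $\ell\ge 2$, every vector lies in a rank-$2$ metabolic direct summand}. The zero vector lies in any hyperbolic-plane summand; otherwise, dividing by the gcd of its coordinates (Corollary~\ref{daitya}) reduces us to embedding a unimodular vector $u$ in such a summand. Choose $w_0$ with $\lambda(u,w_0)=1$ by Proposition~\ref{asbasicasitgets}. If $u$ is isotropic, then $\langle u,w_0\rangle$ has Gram matrix $\begin{pmatrix}0&1\\1&\lambda(w_0,w_0)\end{pmatrix}$, of unit determinant, and is metabolic as it contains the unimodular isotropic vector $u$; done. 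If $u$ is not isotropic (so $\lambda(u,u)=2c$ with $c\ne 0$, since $H^{\ell}$ is even), we instead want $w_0$ to be itself isotropic, which makes $\langle u,w_0\rangle$ a metabolic rank-$2$ summand containing $u$; replacing an arbitrary $w_0$ by $w_0+a$ with $a\in u^{\perp}$ preserves $\lambda(u,w_0+a)=1$, and the isotropy requirement becomes the congruence $\lambda(a,a)+2\lambda(w_0,a)+\lambda(w_0,w_0)=0$ to be solved for some $a\in u^{\perp}\subseteq H^{\ell}$. One solves it by reducing modulo $2$ and applying Assumption~\ref{onlyass}, following the corresponding argument in \cite{arboffset}; where that argument uses Eichler transformations of a quadratic form (unavailable when $2$ is not a unit) one uses instead the isometries $x\mapsto x+\lambda(a,x)e-\lambda(e,x)a$ of $H^{\ell}$, with $e$ and $a\in e^{\perp}$ isotropic, which require no inversion of $2$.

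The main obstacle is precisely this last congruence in the non-isotropic case: solving it inside a hyperbolic space is where Assumption~\ref{onlyass} is genuinely used, and where the quadratic argument of \cite{arboffset} does not transcribe unchanged --- compare Example~\ref{classfail}, where discarding the assumption already breaks the rank--parity classification of metabolic forms on which the whole strategy rests. The remaining ingredients --- the reduction to an orthogonal hyperbolic summand, the bookkeeping in the induction, and the use of the buffer ``$-3$'' to keep enough hyperbolic planes available at each stage --- are routine once the results already established (principally Lemma~\ref{daityabaida}, Corollary~\ref{complementgenus} and Theorem~\ref{myclassification}) are in hand.
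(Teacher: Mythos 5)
Your overall strategy coincides with the paper's: induct on $k$, use Lemma~\ref{daityabaida} to split off $H^{z(M)-c(M)-k+1}$ at stage $k-1$, decompose $v_k$ accordingly, and reduce to moving the (unimodular multiple of the) hyperbolic component of $v_k$ into a single hyperbolic-plane summand of the hyperbolic part. Your reductions (passing to a unimodular vector via Corollary~\ref{daitya}, identifying the complement of the plane via Corollary~\ref{complementgenus} and Theorem~\ref{myclassification}) are all sound.

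The gap is in the one step that is not routine, namely your claim that every non-isotropic unimodular vector $u$ of $H^{\ell}$, $\ell\ge 2$, lies in a rank-$2$ metabolic summand. You reduce this to solving $\lambda(a,a)+2\lambda(w_0,a)+\lambda(w_0,w_0)=0$ for $a\in u^{\perp}$ and propose to do so ``by reducing modulo $2$ and applying Assumption~\ref{onlyass}.'' That cannot work as stated: this is an equation in $R$, not a congruence, and a solution modulo $2$ does not lift; moreover $u^{\perp}$ need not contain an obvious hyperbolic plane, so the solvability is genuinely a representation statement for a quadratic form on a sublattice. Your diagnosis of which hypothesis carries the weight is also off. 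Assumption~\ref{onlyass} is irrelevant to this step (and Example~\ref{classfail} concerns the classification of metabolic forms, not this equation); what is relevant is that $H^{\ell}$ carries a \emph{canonical quadratic refinement}, so that Eichler transformations and the transitivity results for quadratic hyperbolic modules are available verbatim without inverting $2$ --- contrary to your remark that they are ``unavailable when $2$ is not a unit.'' This is exactly how the paper closes the step: by \cite[Cor.~3.13]{arboffset} together with \cite[Thm.~3.1]{pidusr} (PIDs have small unitary stable rank), the orthogonal group of $H^{\ell}$ acts transitively on unimodular vectors of a given value of the quadratic refinement $q$, so $u$ can be moved onto $e_1+q(u)f_1$, which visibly lies in the first plane. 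If you replace your congruence-solving sketch by this citation, your argument becomes a correct rendering of the paper's proof; as written, the decisive step is asserted rather than proved, and the proposed mechanism for proving it would fail.
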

\begin{proof}
    We induct on $k$. Consider the case $k=1$. By Lemma \ref{paamarat}, there is a decomposition $\psi: M \cong N' \oplus H^{z(M)-c(M)}$. By Corollary \ref{abhayakara}, we can write $\psi(v_1)$ as $v'_1 + tu_1$, where $v'_1 \in N'$, $t \in R$, and $u_1 \in H^{z(M)-c(M)}$ is unimodular. As the hyperbolic plane $H$ admits a unique quadratic refinement, by \cite[Cor. 3.13]{arboffset} and \cite[Thm. 3.1]{pidusr} we may assume that $\psi(v_1)$ lives in the underlined summands of $$\underline{N' \oplus H} \oplus H^{z(M)-c(M) - 1}$$ so we can take $N$ to be $N' \oplus H$. By Corollary \ref{complementgenus}, $z(N) = c(M) +1$.

    Now suppose $k > 1$. Inductively we have a decomposition $\psi: M \cong N' \oplus H^{z(M)-c(M) - k+1}$ such that $\psi(\langle v_1,...,v_{k-1}\rangle) \subseteq N'$. As in the base case, write $\psi(v_k) = v'_k + tu_k$, where $v'_k \in N'$, $t \in R$, and $u_k \in H^{z(M)-c(M)-k+1}$ is unimodular. We again use \cite[Cor. 3.13]{arboffset} and \cite[Thm. 3.1]{pidusr} to massage $u_k$ into a single hyperbolic summand $\alpha \colon H \hookrightarrow H^{z(M)-c(M)-k+1}$, and then conclude by taking $N$ to be $N' \oplus \alpha(H)$. That $z(N) = c(M)+k$ is again checked using Corollary \ref{complementgenus}.
    \end{proof} 
For the remainder of this section, fix an isomorphism class $F \in \text{Met}_2(R)$. We let $g_F(M)$ denote the maximum integer $n$ such that $F^n$ is a direct summand of $M$.

As defined in the introduction, let $\text{Unimod}^{s}(R)^{\simeq}$ denote the groupoid of non-degenerate symmetric bilinear forms on finitely generated projective $R$-modules. This is symmetric monoidal for the direct sum. Let $\text{Met}(R)$ be the full symmetric monoidal subgroupoid spanned by the metabolic forms.

\begin{ddd}\label{cancellativeform}
    An $F$-cancellative groupoid $\mathcal{G}$ is a full subgroupoid of $\textup{Unimod}^{s}(R)^{\simeq}$ satisfying the following properties:
    \begin{enumerate}
        \item $0, F \in \mathcal{G}$
        \item If $M \in \mathcal{G}$ and $M \neq 0$, then $g_F(M) \geq 1$
        \item If $M,N \in \mathcal{G}$, then $M \oplus N \in \mathcal{G}$
        \item If $M \oplus F \in \mathcal{G}$ and $g_F(M) \geq 1$, then $M \in \mathcal{G}$
        \item If $M,N \in \mathcal{G}$ are such that $M \oplus F \cong N \oplus F$, then $M \cong N$
        \end{enumerate}
\end{ddd}
Note that Conditions 1 and 3 above imply that an $F$-cancellative groupoid is a symmetric monoidal subcategory of $\text{Unimod}^{s}(R)^{\simeq}$. We also note, by a straightforward induction argument, that Condition 4 is equivalent to the following: if $ g_F(M) \geq 1$ and $M \oplus F^n \in \mathcal{G}$ for some $n \geq 1$, then $M \in \mathcal{G}$.

For each $F \in \text{Met}_2(R)$, there is at least one $F$-cancellative groupoid: let $\text{Met}_F$ be the full subgroupoid of $\text{Met}(R)$ on $0$ and metabolic forms $M$ for which $g_F(M) \geq 1$.

\begin{theorem}\label{twotwoiscancel}
    The groupoid $\textup{Met}_F$ is $F$-cancellative.
\end{theorem}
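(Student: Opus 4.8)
The plan is to verify, one at a time, the five conditions of Definition~\ref{cancellativeform} for the full subgroupoid $\mathcal{G} = \textup{Met}_F$; the only substantial inputs needed are the rank--parity classification of metabolic forms (Theorem~\ref{myclassification}) and the fact that the orthogonal complement of a metabolic subform is again metabolic (Corollary~\ref{complementgenus}).

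Conditions (1)--(4) should be quick. For (1), $F \in \textup{Met}_2(R)$ is metabolic by Proposition~\ref{met=tz} and evidently satisfies $g_F(F) \ge 1$, while $0 \in \textup{Met}_F$ by definition. Condition (2) is the defining property of $\textup{Met}_F$. For (3), if $M = 0$ or $N = 0$ there is nothing to check, and otherwise $M \oplus N$ is a direct sum of metabolic forms, hence metabolic, with $g_F(M \oplus N) \ge g_F(M) \ge 1$. For (4), assume $M \oplus F \in \textup{Met}_F$ and $g_F(M) \ge 1$; since $M \oplus F$ contains $F$ as a summand it is non-zero, hence metabolic, so $M$ --- being the orthogonal complement of the metabolic subform $F \hookrightarrow M \oplus F$ --- is metabolic by Corollary~\ref{complementgenus}. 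As $g_F(M) \ge 1$ forces $M \ne 0$, this gives $M \in \textup{Met}_F$.

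The heart of the argument, and the step I expect to require the one real idea, is the cancellation condition (5). Given $M, N \in \textup{Met}_F$ with $M \oplus F \cong N \oplus F$, comparing ranks yields $\textup{rank}(M) = \textup{rank}(N)$, so by Theorem~\ref{myclassification} it suffices to prove $P(M) = P(N)$. If $M = 0$ then $\textup{rank}(N) = 0$, forcing $N = 0$, and symmetrically; hence we may assume both are non-zero, so that $g_F(M), g_F(N) \ge 1$ and in particular $F$ is a direct summand of each. By Lemma~\ref{exampleparity} this gives $P(F) \subseteq P(M)$ and $P(F) \subseteq P(N)$, whence
\[
P(M) = P(M) + P(F) = P(M \oplus F) = P(N \oplus F) = P(N) + P(F) = P(N),
\]
and Theorem~\ref{myclassification} concludes $M \cong N$. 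The only slightly delicate point is the observation that membership in $\textup{Met}_F$ already forces $F$ to split off; this is precisely what upgrades the a priori weaker equality $P(M) + P(F) = P(N) + P(F)$ coming from $M \oplus F \cong N \oplus F$ to the equality $P(M) = P(N)$ that the classification theorem requires.
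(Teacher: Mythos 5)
Your proposal is correct and follows essentially the same route as the paper: Conditions (1)--(3) are immediate, Condition (4) is Corollary~\ref{complementgenus}, and Condition (5) reduces to $P(M)=P(N)$ via Lemma~\ref{exampleparity} and concludes by Theorem~\ref{myclassification}. The only cosmetic difference is that the paper absorbs the extra $P(F)$ by writing $N\cong N'\oplus F$ and using $P(F^2)=P(F)$, whereas you use that $P(M)$ is closed under addition; both are valid.
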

\begin{proof}
    Conditions 1, 2 and 3 of Definition \ref{cancellativeform} are obviously satisfied by $\text{Met}_F$.

    If $M \oplus F \in \text{Met}_F$, then $M$ is metabolic by Corollary \ref{complementgenus}. Thus $M \in \text{Met}_F$ if $g_F(M) \geq 1$, verifying Condition 4 of Definition \ref{cancellativeform}.

    Suppose that $N,M \in \text{Met}_F$ satisfy $N \oplus F \cong M \oplus F$. Then $\text{rank}(N) = \text{rank}(M)$. Clearly, $N = 0$ if and only if $M=0$. Otherwise, $g_F(N) \geq 1$ and $g_F(M) \geq 1$. Thus, we have a decomposition $N \cong N' \oplus F$, so $P(N \oplus F) = P(N' \oplus F^2) = P(N') + P(F^2) = P(N') + P(F) = P(N'\oplus F) = P(N)$, by Lemma \ref{exampleparity}. Similarly, $P(M \oplus F) = P(M)$, so $P(N) = P(M)$ and Condition 5 of Definition \ref{cancellativeform} follows from Theorem \ref{myclassification}.
\end{proof}
For the rest of this section, we fix an $F$-cancellative groupoid $\mathcal{G}$ and an element $M \in \mathcal{G}$. Let $r \in R$ be such that $\theta(r) = F$. Let $\{f,f'\}$ be the standard basis of $F$ with respect to the choice $r$; we have $\lambda(f,f) = 0, \lambda(f,f') = 1, \lambda(f',f') = r$. Extending this yields a basis $\{f_1,f_1',...,f_n,f_n'\}$ of $F^n$.
    \begin{ddd}
    Let $(v_1,...,v_k) \in \mathcal{IU}(M)$. \begin{enumerate}
        \item We will say that a witnessing sequence $(w_1,...,w_k)$ to $(v_1,...,v_k)$ is partly $F$-like if the image of $\lambda(w_i,w_i)$ in $S(R)$ equals that of $r$ for each $i$, and that it is $F$-like if additionally $e_{v,w}^{\perp} \in \mathcal{G}$ and $e_{v,w}^{\perp} \neq 0$.
        \item We will say that $(v_1,...,v_k)$ is fully $F$-like if it has an $F$-like witnessing sequence.
        \item We will say that $(v_1,...,v_k)$ is parity preserving if for any witnessing sequence $(w_1,...,w_k)$ to it, we have that $P(e_{v,w}^{\perp}) = P(M)$.
    \end{enumerate}
\end{ddd}
\begin{ex}\label{tocite}
         Suppose $M = F^n \oplus T$. Consider the sequence $(f_1,...,f_n) \in \mathcal{IU}(M)$. It is clear that the sequence $(f_1',...,f_n')$ is a partly $F$-like witnessing sequence to it, and is $F$-like if and only if $T \in \mathcal{G}$ and $T \neq 0$. Moreover, we deduce from Lemma \ref{nochoicewitness} that $(f_1,...,f_n)$ is fully $F$-like if and only if $T \in \mathcal{G}$ and $T \neq 0$. Further, any strict subsequence of $(f_1,...,f_n)$ is fully $F$-like by Condition 4 of Definition \ref{cancellativeform}. 
\end{ex}
\begin{prop}\label{sankarusha}
    If $(v_1,...,v_k) \in \mathcal{IU}(M)$ is fully $F$-like, then it is parity preserving.
\end{prop}
\begin{proof}
    Let $(w_1,...,w_k)$ be a $F$-like witnessing sequence to $(v_1,...,v_k)$. Then $e_{v,w} \cong F^n$ and Condition 2 of Definition \ref{cancellativeform} yields a decomposition $e_{v,w}^{\perp} \cong F \oplus N$. Lemma \ref{exampleparity} yields $P(M) = P(e_{v,w}) + P(e_{v,w}^{\perp}) = P(F^n) + P(F) + P(N) = P(F) + P(N) = P(e_{v,w}^{\perp})$.
\end{proof}
\begin{lem}\label{createodd}
    If $(v_1,...,v_k) \in \mathcal{IU}(M)$ is parity preserving, then it has a partly $F$-like witnessing sequence.
\end{lem}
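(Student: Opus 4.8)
The plan is to begin with an arbitrary witnessing sequence $(w_1,\dots,w_k)$ to $(v_1,\dots,v_k)$, which exists by Proposition~\ref{theyexist}, and to correct it into a partly $F$-like one by modifying each $w_i$ within the orthogonal complement $N \coloneqq e_{v,w}^{\perp}$, up to auxiliary terms in the span of the $v_j$. Write $s_i \coloneqq \lambda(w_i,w_i)$, so that $\langle v_i,w_i\rangle \cong \theta(s_i)$ and $M \cong \bigoplus_i \theta(s_i) \oplus N$. Since $(v_1,\dots,v_k)$ is parity preserving we have $P(N) = P(M)$. As $k \geq 1$ we have $M \neq 0$, so Condition~2 of Definition~\ref{cancellativeform} gives $g_F(M) \geq 1$; thus $F$ is a direct summand of $M$ and, by Lemma~\ref{exampleparity}, the image $\bar r$ of $r$ in $R/(2)$ lies in $P(M)$. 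Also $\bar s_i \in P(\langle v_i, w_i\rangle) \subseteq P(M)$, and since $P(M)$ is a $U(R)$-vector space (Proposition~\ref{essential}), the element $\bar r + \bar s_i$ lies in $P(M) = P(N)$ for each $i$; by the very definition of the parity, this means we may choose $z_i \in N$ with $\lambda(z_i,z_i) \equiv r + s_i \pmod 2$.

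Next I would set, with $c_{ij} \coloneqq -\lambda(z_j,z_i)$ when $j<i$ and $c_{ij}\coloneqq 0$ otherwise, the vectors $w_i' \coloneqq w_i + \sum_{j} c_{ij} v_j + z_i$, and check that $(w_1',\dots,w_k')$ is again a witnessing sequence to $(v_1,\dots,v_k)$. Because each $z_i \in N$ is orthogonal to all the $v_l$ and $w_l$, and the $v_l$ are mutually orthogonal and isotropic, one gets $\lambda(v_l,w_i') = \delta_{li}$ at once; in particular $(w_1',\dots,w_k')$ is unimodular by Proposition~\ref{asbasicasitgets}, hence an element of $\mathcal O(M)$ with distinct entries. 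Expanding bilinearly, for $l \neq i$ one finds $\lambda(w_l',w_i') = \lambda(z_l,z_i) + c_{il} + c_{li}$, and the triangular choice of the $c_{ij}$ makes this vanish; while $\lambda(w_i',w_i') = s_i + \lambda(z_i,z_i)$, whose image in $R/(2)$ is $\bar s_i + (\bar r + \bar s_i) = \bar r$ (using $\operatorname{char} R/(2) = 2$ when $2$ is a non-unit, the case of $2$ a unit being degenerate since then $R/(2) = 0$). Hence $\lambda(w_i',w_i')$ and $r$ have the same image in $S(R)$, which is exactly the requirement that $(w_1',\dots,w_k')$ be partly $F$-like.

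The conceptual content is entirely in the first paragraph: ``parity preserving'' is precisely what makes the complement $N$ contain vectors realising every class in $P(M)$ as a self-inner-product, which is what lets us shift each $s_i$ into the $S(R)$-class of $r$; the $v_j$-terms play the purely cosmetic role of cancelling the cross terms $\lambda(z_i,z_j)$ that a naive substitution $w_i' = w_i + z_i$ would introduce. Accordingly, I expect the main (but routine) obstacle to be the careful bilinear bookkeeping in verifying $\lambda(w_l',w_i') = 0$ for $l \neq i$ and that the diagonal entry is unchanged modulo $2$ --- everything else is immediate from the definitions and the results already established.
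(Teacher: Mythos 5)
Your proof is correct and follows essentially the same route as the paper's: both exploit that parity preservation makes $P(e_{v,w}^{\perp})=P(M)$ and that $g_F(M)\geq 1$ puts $\bar r$ in $P(M)$, then fix the witnessing sequence by adding suitable vectors from the complement. The only (immaterial) difference is that the paper modifies one $w_i$ at a time, adding two vectors $x,y\in e_{v,w}^{\perp}$ realising $\bar s_i$ and $\bar r$ separately — which makes the cross-term cancellation automatic and avoids your triangular $c_{ij}$-corrections — whereas you modify all entries simultaneously with a single $z_i$ realising $\bar r+\bar s_i$.
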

\begin{proof}
    Let $(w_1,...,w_k)$ be a witnessing sequence to $(v_1,...,v_k)$. Suppose $1 \leq i \leq k$ is such that the image of $\lambda(w_i,w_i)$ in $S(R)$ does not equal that of $r$. By assumption, there exist $x \in e_{v,w}^{\perp}$ such that the image of $\lambda(x,x)$ in $R/(2)$ equals that of $\lambda(w_i,w_i)$, and $y \in e_{v,w}^{\perp}$ such that the image of $\lambda(y,y)$ in $R/(2)$ equals that of $r$ (the latter because $g_F(M) \geq 1$ by Condition 2 of Definition \ref{cancellativeform}). Set $w'_i \coloneqq w_i + x + y$; it is clear that the image of $\lambda(w_i',w_i')$ in $R/(2)$, and therefore in $S(R)$, equals that of $r$. Then $(w_1,...,w_{i-1},w'_i,w_{i+1},...,w_k)$ remains a witnessing sequence to $(v_1,...,v_k)$. Iterating this procedure over each $j$ for which the image of $\lambda(w_j,w_j)$ in $S(R)$ does not equal $r$ yields the desired partly $F$-like witnessing sequence.
\end{proof}
We thus obtain the following characterization of fully $F$-like isotropic unimodular sequences.
\begin{kor}\label{realstuff}
    Let $(v_1,...,v_k) \in \mathcal{IU}(M)$. The following statements are equivalent:
    \begin{enumerate}
        \item $(v_1,...,v_k)$ is fully $F$-like.
        \item $(v_1,...,v_k)$ is parity preserving and for any witnessing sequence $(w_1,...,w_k)$ to it, $e_{v,w}^{\perp} \in \mathcal{G}$ and $e_{v,w}^{\perp} \neq 0$.
        \item $(v_1,...,v_k)$ is parity preserving and for any witnessing sequence $(w_1,...,w_k)$ to it, $g_F(e_{v,w}^{\perp}) \geq 1$.
        \item $(v_1,...,v_k)$ is parity preserving and for some witnessing sequence $(w_1,...,w_k)$ to it, $e_{v,w}^{\perp} \in \mathcal{G}$ and $e_{v,w}^{\perp} \neq 0$.
        \item $(v_1,...,v_k)$ is parity preserving and for some witnessing sequence $(w_1,...,w_k)$ to it, $g_F(e_{v,w}^{\perp}) \geq 1$.
    \end{enumerate}
\end{kor}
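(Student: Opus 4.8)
The plan is to reduce the five conditions to three by collapsing the ``for any'' versus ``for some'' distinction, and then to close a short cycle of implications $(1)\Rightarrow(2)\Rightarrow(3)\Rightarrow(1)$ in which only the last step uses anything beyond formalities. For the reduction, note that $\mathcal{G}$ is a full subgroupoid, so the conditions ``$e_{v,w}^{\perp}\in\mathcal{G}$'', ``$e_{v,w}^{\perp}\neq 0$'' and ``$g_F(e_{v,w}^{\perp})\geq 1$'' each depend only on the isomorphism class of $e_{v,w}^{\perp}$; by Lemma \ref{nochoicewitness} this class is independent of the chosen witnessing sequence, and since witnessing sequences exist by Proposition \ref{theyexist}, each such condition holds for some witnessing sequence if and only if it holds for every one. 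This yields $(2)\Leftrightarrow(4)$ and $(3)\Leftrightarrow(5)$, so it suffices to prove $(1)\Leftrightarrow(2)\Leftrightarrow(3)$.

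The implication $(1)\Rightarrow(2)$ is immediate: a fully $F$-like sequence is parity preserving by Proposition \ref{iamsofedup}, and an $F$-like witnessing sequence is a witnessing sequence $(w_1,\dots,w_k)$ with $e_{v,w}^{\perp}\in\mathcal{G}$ and $e_{v,w}^{\perp}\neq 0$, so the same holds for every witnessing sequence by the reduction above. For $(2)\Rightarrow(3)$ I would apply Condition~2 of Definition \ref{cancellativeform} to $e_{v,w}^{\perp}\in\mathcal{G}$, $e_{v,w}^{\perp}\neq 0$, which forces $g_F(e_{v,w}^{\perp})\geq 1$.

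The substantive step is $(3)\Rightarrow(1)$. Here I would first use parity preservation together with Lemma \ref{createodd} to pass to a partly $F$-like witnessing sequence $(w_1,\dots,w_k)$; for such a sequence $\langle v_i,w_i\rangle = \theta(\lambda(w_i,w_i))$, which is isomorphic to $\theta(r)=F$ because $\lambda(w_i,w_i)$ and $r$ have the same image in $S(R)$ (Proposition \ref{tzr}), so $e_{v,w}\cong F^{k}$ and hence $M\cong F^{k}\oplus e_{v,w}^{\perp}$. Condition $(3)$ gives $g_F(e_{v,w}^{\perp})\geq 1$, in particular $e_{v,w}^{\perp}\neq 0$; since $k\geq 1$ and $M\cong e_{v,w}^{\perp}\oplus F^{k}$ lies in $\mathcal{G}$, the iterated form of Condition~4 of Definition \ref{cancellativeform} (recorded just after that definition) yields $e_{v,w}^{\perp}\in\mathcal{G}$, so $(w_1,\dots,w_k)$ is $F$-like and $(v_1,\dots,v_k)$ is fully $F$-like. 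The only point I expect to require care is arranging, via Lemma \ref{createodd} and Proposition \ref{tzr}, that the witnessing sequence makes $e_{v,w}$ exactly $F^{k}$ and then invoking the correct cancellativity axiom in its iterated form; the rest is routine bookkeeping around Lemma \ref{nochoicewitness}.
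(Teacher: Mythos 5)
Your proposal is correct and follows essentially the same route as the paper's proof: Lemma \ref{nochoicewitness} to identify the ``some'' and ``any'' versions, Proposition \ref{iamsofedup} for $(1)\Rightarrow(2)$, Condition~2 of Definition \ref{cancellativeform} for $(2)\Rightarrow(3)$, and for $(3)\Rightarrow(1)$ a partly $F$-like witnessing sequence from Lemma \ref{createodd} so that $e_{v,w}^{\perp}\oplus F^{k}\cong M\in\mathcal{G}$, closed off by the iterated Condition~4. Your write-up merely spells out details (existence of witnessing sequences via Proposition \ref{theyexist}, the identification $e_{v,w}\cong F^{k}$ via Proposition \ref{tzr}) that the paper leaves implicit.
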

\begin{proof}
    By Lemma \ref{nochoicewitness}, $(2)$ is equivalent to $(4)$ and $(3)$ is equivalent to $(5)$. By Proposition \ref{sankarusha}, $(1)$ implies $(4)$. Condition 2 of Definition \ref{cancellativeform} shows that $(2)$ implies $(3)$. To conclude, we show that $(3)$ implies $(1)$: by Lemma \ref{createodd}, we can choose a partly $F$-like witnessing sequence $(w_1,...,w_k)$; now we have $e_{v,w}^{\perp} \oplus F^k \cong M \in \mathcal{G}$, and we conclude using Condition 4 of Definition \ref{cancellativeform}.
\end{proof}
\begin{kor}\label{makelifeeasier}
    Let $(v_1,...,v_k) \in \mathcal{IU}(M)$ be parity preserving, and $(w_1,...,w_k)$ be a witnessing sequence to it. If $e_{v,w}^{\perp}$ contains $H$ as a summand, $(v_1,...,v_k)$ is fully $F$-like.
\end{kor}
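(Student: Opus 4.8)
The plan is to deduce the statement from Corollary \ref{realstuff}. Since $(v_1,\dots,v_k)$ is assumed to be parity preserving, the equivalence of items $(1)$ and $(5)$ there reduces the claim to a single algebraic fact: that $g_F(e_{v,w}^{\perp}) \geq 1$ for the given witnessing sequence $(w_1,\dots,w_k)$, i.e. that $e_{v,w}^{\perp}$ admits a direct summand isomorphic to $F$.

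First I would identify a vector of $e_{v,w}^{\perp}$ of the ``right length''. The sequence $(v_1,\dots,v_k)$ is non-empty, so $M \neq 0$, and Condition 2 of Definition \ref{cancellativeform} forces $g_F(M) \geq 1$; hence, by Lemma \ref{exampleparity}, the image of $r$ in $R/(2)$ lies in $P(F) \subseteq P(M)$. Parity preservation gives $P(e_{v,w}^{\perp}) = P(M)$, and writing $e_{v,w}^{\perp} \cong H \oplus N$ (the hypothesis) and using $P(H) = \{0\}$ together with Lemma \ref{exampleparity} again, we get $P(N) = P(e_{v,w}^{\perp}) \ni r \bmod 2$. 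So there is $y \in N$ with $\lambda(y,y) \equiv r \pmod 2$.

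Then I would manufacture the summand out of this copy of $H$ and this $y$. Taking a standard basis $\{e,f\}$ of $H$ with $\lambda(e,e)=\lambda(f,f)=0$ and $\lambda(e,f)=1$, the pair $(e,\, f+y)$ has Gram matrix $\bigl(\begin{smallmatrix} 0 & 1 \\ 1 & \lambda(y,y)\end{smallmatrix}\bigr)$, since $y$ is orthogonal to $H$. This matrix has determinant $-1 \in R^{\times}$, so $e$ and $f+y$ span a free rank-two submodule $P \subseteq e_{v,w}^{\perp}$ on which the form is non-degenerate; by the splitting principle for non-degenerate subforms recalled at the start of Section \ref{twothree}, $P$ is a direct summand of $e_{v,w}^{\perp}$, and Proposition \ref{tzr} identifies $P$ with $\theta(\lambda(y,y)) \cong \theta(r) = F$. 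Hence $g_F(e_{v,w}^{\perp}) \geq 1$, and Corollary \ref{realstuff} concludes that $(v_1,\dots,v_k)$ is fully $F$-like.

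The whole argument is essentially formal once Corollary \ref{realstuff} is in hand, so I do not expect a genuine obstacle; the only points that need a little bookkeeping are that $P$ is an \emph{honest} direct summand (which is why the determinant being a unit, rather than merely non-zero, matters) and that $P$ is isomorphic to $F$ on the nose and not only stably (which is exactly the content of the $\pmod 2$ invariance in Proposition \ref{tzr}). The fact that the hypothesis singles out one particular witnessing sequence is harmless, since the isomorphism type of $e_{v,w}^{\perp}$, and hence $g_F$ of it, does not depend on that choice by Lemma \ref{nochoicewitness}.
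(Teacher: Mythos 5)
Your proposal is correct and follows essentially the same route as the paper: decompose $e_{v,w}^{\perp} \cong H \oplus N$, use $P(H) = \{0\}$ and parity preservation to find $y \in N$ with $\lambda(y,y) \equiv r \pmod 2$, perturb the hyperbolic basis to produce a non-degenerate rank-two summand isomorphic to $F$, and conclude via the equivalence in Corollary \ref{realstuff}. Your explicit justification that $r \bmod 2 \in P(M)$ (via Condition 2 of Definition \ref{cancellativeform}) is a point the paper leaves implicit, but otherwise the arguments coincide.
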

\begin{proof}
    Write $e_{v,w}^{\perp} \cong H \oplus N$. By Lemma \ref{standardparity}, $P(H) = 0$. Lemma \ref{exampleparity} implies $P(N) = P(e_{v,w}^{\perp}) = P(M)$, and we fix $s \in N$ such that the image of $\lambda(s,s)$ in $R/(2)$ equals that of $r$. Let $u_1$ be an isotropic unimodular vector in $H$, and let $u'_1 \in H$ be such that $\lambda(u_1,u_1') = 1$. Note that $\lambda(u_1',u_1') \equiv 0 (\text{mod }2)$ because $P(H) = 0$. Thus $(u'_1+s)$ is a partly $F$-like witnessing sequence to $(u_1)$ in $e_{v,w}^{\perp}$. Therefore, $g_F(e_{v,w}^{\perp}) \geq 1$, and we conclude by Corollary \ref{realstuff}.
\end{proof}
Let $\mathcal{FIU}(M)$ be the subposet of $\mathcal{IU}(M)$ consisting of the fully $F$-like sequences. This is clearly a downward closed poset. We now build up to the main result of this section which is a description of the fibres of the inclusion $\mathcal{FIU}(M) \hookrightarrow \mathcal{IU}(M)$.
\begin{lem}\label{readerfriendly}
    Let $(v_1,...,v_k) \in \mathcal{IU}(M)$. Suppose $c(M) > 0$. There exists a witnessing sequence $(w_1,...,w_k)$ to $(v_1,...,v_k)$ such that there is a direct sum decomposition $ P(M) = P(e_{v,w}) \oplus P(e_{v,w}^{\perp})$. For such a witnessing sequence $(w_1,...,w_k)$, one has that a subsequence $(v'_1,...,v'_l) \leq (v_1,...,v_k)$ is parity preserving if and only if the the image of the set $\{\lambda(w_i,w_i)|(v_i) \not \leq (v_1',...,v_l')\}$ in $R/(2)$ generates the $U(R)$-vector space $P(e_{v,w})$.
\end{lem}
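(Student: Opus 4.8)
\emph{Proof proposal.} The plan is to establish the existence statement by induction on $k$, and then to read off the characterisation of parity-preserving subsequences by pure linear algebra over $U(R)$. The one observation that makes everything run smoothly is that for any $r \in R$ the parity $P(\theta(r))$ is the $U(R)$-line spanned by the class of $r$ (and is $0$ precisely when $r \equiv 0 \pmod 2$); this is immediate from Lemma \ref{standardparity} with $n=1$ together with Assumption \ref{onlyass}. Throughout we use that $c(M) > 0$ forces $2$ not to be a unit, so that $U(R)$ is a field and all the parities in sight are honest finite-dimensional $U(R)$-vector spaces (Proposition \ref{essential}).

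For the existence statement, the case $k=0$ is trivial. For the step, apply the inductive hypothesis to the subsequence $(v_2,\dots,v_k) \in \mathcal{IU}(M)$ to get a witnessing sequence $(w_2,\dots,w_k)$ to it with $P(M) = P(e) \oplus P(N_1)$, where $e = e_{(v_2,\dots,v_k),(w_2,\dots,w_k)}$ and $N_1 \coloneq e^{\perp}$. The vector $v_1$ is orthogonal to $v_2,\dots,v_k$ but in general not to $w_2,\dots,w_k$; removing its $e$-component, the vector $\hat v_1 \coloneq v_1 - \sum_{j \geq 2} \lambda(v_1,w_j)v_j$ lies in $N_1$, is isotropic, and spans, together with $v_2,\dots,v_k$, the same submodule as $v_1,\dots,v_k$, so that $(\hat v_1, v_2,\dots,v_k) \in \mathcal{U}(M)$ and hence $(\hat v_1) \in \mathcal{O}(N_1) \cap \mathcal{U}(M) = \mathcal{U}(N_1)$ by Corollary \ref{subformunimod}. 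Choosing (Proposition \ref{theyexist}) a witnessing vector $\hat w_1 \in N_1$ to $\hat v_1$, a direct check of the orthogonality relations shows $(\hat w_1, w_2,\dots,w_k)$ is a witnessing sequence to $(v_1,\dots,v_k)$ with $e_{(v_1,\dots,v_k),(\hat w_1,w_2,\dots,w_k)} = \langle \hat v_1, \hat w_1 \rangle \oplus e$ and orthogonal complement $N_0$, the orthogonal complement of $\langle \hat v_1, \hat w_1 \rangle$ inside $N_1$.

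It remains to force the direct-sum property. Since $P(M) = P(e) \oplus P(N_1)$ and $P(N_1) = P(\langle \hat v_1, \hat w_1 \rangle) + P(N_0)$ with $P(\langle \hat v_1, \hat w_1 \rangle) = P(\theta(\lambda(\hat w_1, \hat w_1)))$ of dimension $\leq 1$ and meeting $P(e)$ trivially, the decomposition $P(M) = \big( P(\theta(\lambda(\hat w_1, \hat w_1))) + P(e) \big) \oplus P(N_0)$ holds exactly when $\dim_{U(R)} P(\theta(\lambda(\hat w_1, \hat w_1))) = \dim P(N_1) - \dim P(N_0) \in \{0,1\}$. If this difference is $1$ the equality is automatic; if it is $0$, then $\overline{\lambda(\hat w_1, \hat w_1)} \in P(\langle \hat v_1, \hat w_1 \rangle) \subseteq P(N_1) = P(N_0)$, so we may choose $y \in N_0$ with $\lambda(y,y) \equiv \lambda(\hat w_1, \hat w_1) \pmod 2$ and replace $\hat w_1$ by $\hat w_1 + y$, which preserves the witnessing property and kills the summand $P(\theta(\lambda(\hat w_1,\hat w_1)))$. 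This is the heart of the argument and the main obstacle, and the only delicate point is bookkeeping: each modification of a witnessing vector alters the relevant orthogonal complements as submodules, but only their isomorphism types enter, and these are pinned down by Lemma \ref{nochoicewitness}; so the genuine work is all here, not in the characterisation.

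For the second statement, write $(v'_1,\dots,v'_l) = (v_i)_{i \in T}$ with $T \subseteq \{1,\dots,k\}$; then $(w_i)_{i \in T}$ is a witnessing sequence to it, so by Lemma \ref{nochoicewitness} parity-preservation can be tested on this one. The orthogonal decomposition $M = \bigoplus_{i=1}^{k} \langle v_i, w_i \rangle \oplus e_{v,w}^{\perp}$ gives $e_{(v_i)_{i \in T},(w_i)_{i \in T}}^{\perp} = \bigoplus_{i \notin T} \langle v_i, w_i \rangle \oplus e_{v,w}^{\perp}$, whose parity equals $S + P(e_{v,w}^{\perp})$ for $S \coloneq \mathrm{span}_{U(R)}\{\overline{\lambda(w_i,w_i)} : i \notin T\}$, by Lemmas \ref{exampleparity} and \ref{standardparity}. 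As $(v_i) \not\leq (v'_1,\dots,v'_l)$ means precisely $i \notin T$, parity-preservation says $S + P(e_{v,w}^{\perp}) = P(M)$. But $P(M) = P(e_{v,w}) \oplus P(e_{v,w}^{\perp})$ by the first part, $P(e_{v,w}) = \mathrm{span}_{U(R)}\{\overline{\lambda(w_i,w_i)} : 1 \leq i \leq k\} \supseteq S$, and $S + P(e_{v,w}^{\perp}) \subseteq P(M)$ always; intersecting with $P(e_{v,w})$ and invoking the modular law together with $P(e_{v,w}) \cap P(e_{v,w}^{\perp}) = 0$ gives $P(e_{v,w}) \cap (S + P(e_{v,w}^{\perp})) = S$. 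Hence $S + P(e_{v,w}^{\perp}) = P(M)$ if and only if $S = P(e_{v,w})$, which is exactly the asserted condition.
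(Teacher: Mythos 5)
Your second half (the characterisation of parity\--preserving subsequences) is correct and is essentially the paper's argument: test parity preservation on the subsequence $(w_i)_{i\in T}$ via Lemma \ref{nochoicewitness}, decompose the complement as $\bigoplus_{i\notin T}\langle v_i,w_i\rangle\oplus e_{v,w}^{\perp}$, and compare spans inside $P(e_{v,w})$ using that $P(e_{v,w})\cap P(e_{v,w}^{\perp})=0$. The dimension count in your existence step and the trick of absorbing $\overline{\lambda(\hat w_1,\hat w_1)}$ into $N_0$ by replacing $\hat w_1$ with $\hat w_1+y$ are also sound.

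However, the existence argument has a genuine gap at the sentence ``a direct check of the orthogonality relations shows $(\hat w_1, w_2,\dots,w_k)$ is a witnessing sequence to $(v_1,\dots,v_k)$.'' A witnessing sequence to $(v_1,\dots,v_k)$ must satisfy $\lambda(v_i,w_j)=\delta_{ij}$ for \emph{all} $i,j$, in particular $\lambda(v_1,w_j)=0$ for $j\ge 2$ --- and you yourself observe one sentence earlier that $v_1$ is in general \emph{not} orthogonal to $w_2,\dots,w_k$. What you have actually produced is a witnessing sequence to the modified sequence $(\hat v_1,v_2,\dots,v_k)$, which is not the sequence the lemma is about, and naive attempts to correct only $w_2,\dots,w_k$ (e.g.\ subtracting multiples of $\hat w_1$) destroy the relations $\lambda(w_i,w_j)=0$ because $\lambda(\hat w_1,\hat w_1)\ne 0$ in general. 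The cleanest repair within your framework: your construction does correctly produce a non-degenerate rank-$2k$ subform $e'=\langle\hat v_1,\hat w_1\rangle\oplus e$ containing $v_1,\dots,v_k$ with $P(M)=P(e')\oplus P((e')^{\perp})$; since $(v_1,\dots,v_k)\in\mathcal{IU}(e')$ by Corollary \ref{subformunimod}, Proposition \ref{theyexist} applied inside $e'$ yields a genuine witnessing sequence to $(v_1,\dots,v_k)$ lying in $e'$, and a rank count forces $e_{v,w}=e'$, giving the claim. Note that the paper avoids this issue entirely by never changing the isotropic sequence: it starts from an arbitrary witnessing sequence $(w_1',\dots,w_k')$ to $(v_1,\dots,v_k)$ and perturbs each $w_i'$ only by vectors $z_i\in e_{v,w'}^{\perp}$ chosen to move $\overline{\lambda(w_i',w_i')}$ into a fixed complement $V$ of $P(e_{v,w'}^{\perp})$, which manifestly preserves all the relations $\lambda(v_i,w_j)=\delta_{ij}$; your induction is a legitimate alternative route, but only once the witnessing relations to the original sequence are actually restored.
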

\begin{proof}
We note, by Lemma \ref{nochoicewitness}, that for any witnessing sequence $(t_1,...,t_k)$, the parity space $P(e_{v,t}^{\perp})$ is independent of $(t_1,...,t_k)$. Let us denote this parity space by $S$, and choose a $U(R)$-linear splitting $P(M) = V \oplus S$. Given a witnessing sequence $(w_1',...,w'_k)$, we define $\alpha(w')$ to be the cardinality of the set of those indices $i$ for which the image of $\lambda(w'_i,w'_i)$ in $R/(2)$ lies in $V$; we will prove the first statement of the lemma by producing for any such $(w_1',...,w_k')$ a witnessing sequence $(w''_1,...,w''_k)$ with $\alpha(w'') = \alpha(w') +1$ if $\alpha(w')<k$. Indeed, if $\alpha(w') = k$, then Lemma \ref{exampleparity} yields $P(M) = P(e_{v,w'}) + S$, so $P(e_{v,w'}) = V$ and we are done.

To construct such a $(w''_1,...,w''_k)$ from a given $(w_1',...,w_k')$ with $\alpha(w') < k$, we write the image of $\lambda(w'_i,w'_i)$ in $R/(2)$ as a sum $x_i + y_i$ with $x_i \in V$ and $y_i \in S$, for each $i$. Since $\alpha(w') < k$, there exists $i$ such that $y_i \neq 0$. We choose $z_i \in e_{v,w'}^{\perp}$ such that the image of $\lambda(z_i,z_i)$ in $R/(2)$ is $y_i$. We set $w''_j \coloneq w'_j$ if $j \neq i$ and $w''_i \coloneq w_i' + z_i$, and see that $(w''_1,...,w''_k)$ is a witnessing sequence with the desired property.

We now turn to the second assertion. Let $(v_1',...,v'_l) \leq (v_1,...,v_k)$ be a subsequence and $(w'_1,...,w'_l)$ be the witnessing sequence to it obtained by taking the corresponding subsequence of $(w_1,...,w_k)$. Then $e_{v',w'}^{\perp} =  (e_{v',w'}^{\perp} \cap e_{v,w}) \oplus e_{v,w}^{\perp} $, so $M \cong e_{v',w'} \oplus (e_{v',w'}^{\perp} \cap e_{v,w}) \oplus e_{v,w}^{\perp}$. By Lemma \ref{nochoicewitness}, $(v_1',...,v'_l)$ is parity preserving if and only if $P(M) = P(e_{v',w'}^{\perp}) =P(e_{v',w'}^{\perp} \cap e_{v,w}) \oplus P(e_{v,w}^{\perp})$. This is, in turn, true if and only if $P(e_{v',w'}^{\perp} \cap e_{v,w}) = P(e_{v,w})$, which by Lemma \ref{standardparity} is equivalent to the image of the set $\{\lambda(w_i,w_i)|(v_i) \not \leq (v'_1,...,v'_l)\}$ in $R/(2)$ generating $P(e_{v,w})$ as a $U(R)$-vector space.
\end{proof}
\begin{theorem}\label{triv}
Let $(v_1,...,v_k) \in \mathcal{IU}(M)$, with $k \geq c(M) + 2$. There exists an integer $g_v$ with $0 \leq g_v \leq c(M)+1$ such that the following statements hold:
\begin{enumerate}
    \item There exists a fully $F$-like subsequence of $(v_1,...,v_k)$ of length $k-g_v$.
    \item Any fully $F$-like subsequence of $(v_1,...,v_k)$ is contained in a fully $F$-like subsequence of length $k - g_v$.
    \item A subsequence of $(v_1,...,v_k)$ of length $k - g_v$ is fully $F$-like if it is parity preserving.
\end{enumerate}
\end{theorem}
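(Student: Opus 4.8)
Write $v \coloneqq (v_1, \dots, v_k)$, and for $r \in R$ let $\overline{r}$ be its class in $R/(2)$. The plan is to take $g_v \coloneqq k - L$, where $L$ is the largest length of a fully $F$-like subsequence of $v$, and then verify the three statements together with the estimate $L \geq k - c(M) - 1$. Since $k \geq c(M) + 2 \geq 2$ and $v \in \mathcal{IU}(M)$, we have $z(M) \geq k \geq 2$, so $M \neq 0$ and hence $g_F(M) \geq 1$ by Condition 2 of Definition \ref{cancellativeform}. Fix a witnessing sequence $w = (w_1, \dots, w_k)$ to $v$; if $c(M) > 0$ choose it as in Lemma \ref{readerfriendly}, so that $P(M) = P(e_{v,w}) \oplus P(e_{v,w}^{\perp})$, while if $c(M) = 0$ then $P(M) = 0$ and this holds trivially. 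Put $c_0 \coloneqq \dim_{U(R)} P(e_{v,w})$, so $0 \leq c_0 \leq c(M)$. For $D \subseteq \{1, \dots, k\}$, let $v_D$ be the subsequence of $v$ obtained by deleting the entries indexed by $D$, and let $e^{\perp}_{v_D}$ be the orthogonal complement of (the image of) a witnessing pair of $v_D$, which is well defined up to isomorphism by Lemma \ref{nochoicewitness}. By Lemma \ref{readerfriendly}, $v_D$ is parity preserving if and only if $\{\overline{\lambda(w_i, w_i)} : i \in D\}$ spans $P(e_{v,w})$ over $U(R)$; so every parity-preserving subsequence has length at most $k - c_0$, and, since $\overline{\lambda(w_1,w_1)}, \dots, \overline{\lambda(w_k,w_k)}$ already span $P(e_{v,w})$ (Lemma \ref{standardparity}), for every $c_0 \leq m \leq k-1$ there is such a $D$ with $|D| = m$.

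\textbf{Two auxiliary facts.} (i) \emph{Every parity-preserving $v_D$ with $|v_D| \leq z(M) - c(M) - 1$ is fully $F$-like.} Indeed, $z(e^{\perp}_{v_D}) = z(M) - |v_D|$ by Corollary \ref{complementgenus}, while $c(e^{\perp}_{v_D}) = c(M)$ since $v_D$ is parity preserving; hence $z(e^{\perp}_{v_D}) - c(e^{\perp}_{v_D}) \geq 1$, so $e^{\perp}_{v_D}$ has $H$ as a summand by Lemma \ref{daityabaida}, and Corollary \ref{makelifeeasier} applies. (ii) \emph{Parity-preserving subsequences of equal length have isomorphic orthogonal complements.} By the proof of Lemma \ref{readerfriendly}, $e^{\perp}_{v_D} \cong \big(\bigoplus_{i \in D} \theta(\lambda(w_i, w_i))\big) \oplus e_{v,w}^{\perp}$; when $v_D$ is parity preserving the first summand is metabolic of rank $2|D|$ and parity $P(e_{v,w})$ (Lemma \ref{standardparity} and the spanning criterion), hence determined up to isomorphism by Theorem \ref{myclassification}, while $e_{v,w}^{\perp}$ depends only on $v$ by Lemma \ref{nochoicewitness}. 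Together with Corollary \ref{realstuff}, (ii) says that ``being fully $F$-like'' depends only on the length among parity-preserving subsequences.

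\textbf{Verification.} A fully $F$-like subsequence is parity preserving (Proposition \ref{iamsofedup}), hence has length $\leq k - c_0$, so $L \leq k - c_0 \leq k$. Conversely, by the setup there is a parity-preserving subsequence of length $k - c(M) - 1$ (this is $\geq 1$ as $k \geq c(M) + 2$, and $\leq k - c_0$ as $c_0 \leq c(M)$), and it is fully $F$-like by (i) because $k - c(M) - 1 \leq z(M) - c(M) - 1$ (using $z(M) \geq k$). Hence $k - c(M) - 1 \leq L$, so $g_v = k - L$ satisfies $0 \leq g_v \leq c(M) + 1$, and statement (1) holds by the choice of $L$. For (3), a parity-preserving subsequence $u$ of length $L = k - g_v$ has $e^{\perp}_{u}$ isomorphic, by (ii), to $e^{\perp}_{u_0}$ for some fully $F$-like subsequence $u_0$ of length $L$; thus $g_F(e^{\perp}_{u}) \geq 1$, so $u$ is fully $F$-like by Corollary \ref{realstuff}. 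For (2), a fully $F$-like subsequence $u = v_{D_u}$ is parity preserving, so $\{\overline{\lambda(w_i, w_i)} : i \in D_u\}$ spans $P(e_{v,w})$ and $|D_u| = k - |u| \geq k - L = g_v \geq c_0$; pick $D' \subseteq D_u$ with $|D'| = g_v$ whose $\overline{\lambda(w_i, w_i)}$ still span $P(e_{v,w})$ (extract a spanning subset of size $c_0$ from $D_u$ and enlarge it within $D_u$); then $v_{D'}$ is parity preserving of length $L$, hence fully $F$-like by (3), and it contains $u$.

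\textbf{Expected main difficulty.} The delicate point is the uniform bound $g_v \leq c(M) + 1$ even when $M$ has arbitrarily large isotropic rank: a priori one might fear that many deletions are needed before a subsequence becomes fully $F$-like. The resolution is that deleting merely $c(M) + 1$ entries already brings the length down to $\leq z(M) - c(M) - 1$ (because $z(M) \geq k$), in which range the orthogonal complement has isotropic rank strictly greater than its complexity, hence a hyperbolic summand (Lemma \ref{daityabaida}), which forces the fully $F$-like property (Corollary \ref{makelifeeasier}). The other ingredient, that ``fully $F$-like'' is length-determined among parity-preserving subsequences, is more routine and relies on the rank-parity classification of metabolic forms (Theorem \ref{myclassification}) pinning down the metabolic part of the orthogonal complement.
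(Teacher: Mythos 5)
Your proof is correct, and its overall architecture coincides with the paper's: fix a witnessing sequence split as in Lemma \ref{readerfriendly}, characterise parity-preserving subsequences by the spanning condition on the deleted indices, show via Theorem \ref{myclassification} that equal-length parity-preserving subsequences have isomorphic complements (your fact (ii) is the paper's ``Observation 2''), define $g_v$ through the maximal fully $F$-like length, and deduce (2) and (3) from (ii) together with Corollary \ref{realstuff}. The one step where you genuinely diverge is the existence of a fully $F$-like subsequence, i.e.\ the bound $g_v \leq c(M)+1$: the paper takes a parity-preserving subsequence of length $k - h_v$ (where $h_v = c(e_{v,w})$), gives it a partly $F$-like witnessing sequence via Lemma \ref{createodd}, and drops one more entry so that Condition 4 of Definition \ref{cancellativeform} (via Example \ref{tocite}) forces the complement into $\mathcal{G}$; you instead descend all the way to length $k - c(M) - 1 \leq z(M) - c(M) - 1$, where Lemma \ref{daityabaida} forces a hyperbolic summand in the complement and Corollary \ref{makelifeeasier} concludes. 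Your route is arguably cleaner (it avoids the paper's case split on $h_v = 0$ versus $h_v > 0$ and the slightly delicate appeal to Example \ref{tocite}), at the cost of only ever producing the uniform bound $g_v \leq c(M)+1$, whereas the paper's argument yields the finer intermediate estimate $g_v \leq h_v + 1$ --- which is not needed for the statement, so nothing is lost.
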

\begin{proof} 
Let $(w_1,...,w_k)$ be a witnessing sequence to $(v_1,...,v_k)$, and put $h_v \coloneq c(M) - c(e_{v,w}^{\perp})$. By Lemma \ref{nochoicewitness}, $h_v$ does not depend on the choice of witnessing sequence.

We first consider the case $h_v = 0$. Then $c(M) = c(e_{v,w}^{\perp})$, and it follows directly from the definitions that $(v_1,...,v_k)$ is parity preserving. By Example \ref{tocite}, we see that every subsequence of $(v_1,...,v_k)$ of length $k-1$ is fully $F$-like. If $e_{v,w}^{\perp} \in \mathcal{G}$ and $e_{v,w}^{\perp} \neq 0$, then $(v_1,...,v_k)$ is itself fully $F$-like. In this case, we may take $g_v = 0$. If not, we take $g_v = 1$.

Now suppose $h_v > 0$. In particular, this means that $c(M) > 0$. Using Lemma \ref{readerfriendly}, we can assume that $P(M) = P(e_{v,w}) \oplus P(e_{v,w}^{\perp})$. In particular, $h_v = c(e_{v,w})$.

Two important ramifications of this reduction are:
\begin{enumerate}
    \item  By Lemma \ref{readerfriendly}, we deduce the existence of a parity preserving sequence of length $k-h_v$ (because $k - h_v \geq k - c(M) \geq 2 > 0$), and the fact that any parity preserving subsequence of $(v_1,...,v_k)$ extends to one of length $k - h_v$.
    \item Suppose $(v_1',...,v_l')$ and $(v_1'',...,v_l'')$ are two parity preserving subsequences of $(v_1,...,v_k)$ of the same length, and $(w_1',...,w'_l)$ and $(w_1'',...,w_l'')$ are the corresponding subsequences of $(w_1,...,w_k)$. We see that $P(M) = P(e_{v',w'}^{\perp}) = P(e_{v',w'}^{\perp} \cap e_{v,w}) \oplus P(e_{v,w}^{\perp})$ and similarly $P(M) =  P(e_{v'',w''}^{\perp} \cap e_{v,w}) \oplus P(e_{v,w}^{\perp})$; therefore $P(e_{v',w'}^{\perp} \cap e_{v,w}) = P(e_{v'',w''}^{\perp} \cap e_{v,w})$. Theorem \ref{myclassification} now yields $e_{v',w'}^{\perp} = (e_{v',w'}^{\perp} \cap e_{v,w}) \oplus e_{v,w}^{\perp} \cong (e_{v'',w''}^{\perp} \cap e_{v,w}) \oplus e_{v,w}^{\perp} =e_{v'',w''}^{\perp}$.
\end{enumerate}

Let us first establish the existence of fully $F$-like subsequences of $(v_1,...,v_k)$. To do this, we consider a parity preserving subsequence $(v_1',...,v'_l)$ of length $l = k - h_v$. By Lemma \ref{createodd}, we can choose a partly $F$-like witnessing sequence $(u_1,...,u_l)$ to $(v_1',...,v'_l)$. By Example \ref{tocite}, every subsequence of $(v_1',...,v_l')$ of length $l-1$ is fully $F$-like (notice that $l - 1 > 0$).

We may therefore define $g_v$ by the condition that $k-g_v$ is the maximum length of a fully $F$-like subsequence of $(v_1,...,v_k)$; we will check presently that it satisfies the desiderata. Since fully $F$-like sequences are parity preserving (Proposition \ref{sankarusha}), Observation 2 yields (using Corollary \ref{realstuff}) that any parity preserving subsequence of length $k-g_v$ is fully $F$-like. From this and Observation 1 above, it follows that every fully $F$-like subsequence extends to one of length $k-g_v$. It remains to check that $g_v \leq c(M) + 1$. In the previous paragraph, we observed that there is a fully $F$-like subsequence of length $k- h_v -1$. It follows that $g_v \leq h_v + 1$. But $h_v = c(e_{v,w}) \leq c(M)$.
\end{proof}
We conclude this section with a brief discussion of yet another poset. Let $\mathcal{FU}(M)$ be the subposet of $\mathcal{O}(M \times M)$ consisting of sequences $((v_1,w_1),...,(v_k,w_k))$ such that $(v_1,...,v_k) \in \mathcal{IU}(M)$, $\lambda(v_i,w_j) = \lambda(w_i,w_j) = r\delta_{ij}$, and $e_{v,w}^{\perp} \in \mathcal{G}$ and $e_{v,w}^{\perp} \neq 0$. For an element $((v_1,w_1),...,(v_k,w_k)) \in \mathcal{FU}(M)$, we notice that $(w_1,...,w_k)$ is an $F$-like witnessing sequence to $(v_1,...,v_k)$ that exhibits it as fully $F$-like. There is a one-to-one correspondence between elements of $\mathcal{FU}(M)$ and embeddings $e:F^{k} \hookrightarrow M$ such that $e(F^{k})^{\perp} \in \mathcal{G}$ and $e(F^{k})^{\perp} \neq 0$, given by evaluation on the standard basis $\{f_1,...,f_k,f'_1,...,f'_k\}$.
\begin{prop}\label{complementsodd}
    Let $z = ((x_1,y_1),...,(x_k,y_k)) \in \mathcal{FU}(M)$. There is a canonical isomorphism $\mathcal{FU}(M)_z \cong \mathcal{FU}(e_{x,y}^{\perp})$.
\end{prop}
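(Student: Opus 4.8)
The plan is to exhibit the isomorphism by the two obvious maps — ``append $z$'' and ``restrict'' — and to check that they are mutually inverse order-isomorphisms. Set $N \coloneqq e_{x,y}^{\perp}$; since the witnessing sequence $(y_1,\dots,y_k)$ is $F$-like, $e_{x,y} \cong F^k$ is non-degenerate, so $M$ splits orthogonally as $M \cong e_{x,y} \oplus N$, and as $z \in \mathcal{FU}(M)$ one has $N \in \mathcal{G}$ and $N \neq 0$, so that $\mathcal{FU}(N)$ is defined (and $N$ is finitely generated free over the PID $R$, being a summand of $M$).

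The heart of the argument is a rigidity observation. First I would note that if $((u_1,t_1),\dots,(u_l,t_l)) \in \mathcal{FU}(M)_{z}$, then the concatenation $((u_1,t_1),\dots,(u_l,t_l),(x_1,y_1),\dots,(x_k,y_k))$ lies in $\mathcal{FU}(M)$ by definition, and the defining relations of $\mathcal{FU}(M)$ — which prescribe \emph{every} Gram-matrix entry of a sequence exactly, not merely up to the relation defining $S(R)$ — force $\lambda(u_i,x_j) = \lambda(u_i,y_j) = \lambda(t_i,x_j) = \lambda(t_i,y_j) = 0$ for all $i,j$. Hence each $u_i$ and each $t_i$ lies in $\langle x_1,y_1,\dots,x_k,y_k\rangle^{\perp} = N$, so the sequence already belongs to $\mathcal{O}(N \times N)$; conversely, any sequence in $\mathcal{FU}(N)$ has all its terms in $N$, hence orthogonal to $e_{x,y}$, so appending $z$ automatically produces a sequence satisfying all the bilinear relations of $\mathcal{FU}(M)$. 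It then remains only to see that the two remaining conditions match under appending/restricting. For unimodularity and isotropy: starting from the concatenated sequence in $\mathcal{FU}(M)$, $(u_1,\dots,u_l)$ is a subsequence of a unimodular sequence of $M$, hence unimodular in $M$, and it lies in $\mathcal{O}(N)$, so Corollary \ref{subformunimod} yields $(u_1,\dots,u_l) \in \mathcal{U}(N)$, which with the (forced) isotropy gives $(u_1,\dots,u_l) \in \mathcal{IU}(N)$; conversely, from $(u_1,\dots,u_l) \in \mathcal{IU}(N)$ and the orthogonal splitting $M \cong N \oplus e_{x,y}$, together with the fact that $(x_1,\dots,x_k)$ corresponds to part of a basis of $e_{x,y} \cong F^k$, one checks directly that the concatenation is an isotropic unimodular sequence of $M$. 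For the complement condition: writing $e_{u,t} \subseteq N$ for the span of $u_1,t_1,\dots,u_l,t_l$, one has the orthogonal decomposition $e_{(u,x),(t,y)} = e_{u,t} \oplus e_{x,y}$, so from $M \cong N \oplus e_{x,y}$ it follows that the orthogonal complement of $e_{(u,x),(t,y)}$ computed in $M$ equals the orthogonal complement of $e_{u,t}$ computed in $N$; hence ``$e_{(u,x),(t,y)}^{\perp} \in \mathcal{G}$ and $\neq 0$'' is literally the same requirement as ``$e_{u,t}^{\perp_{N}} \in \mathcal{G}$ and $\neq 0$''.

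Putting this together, the maps ``append $z$'': $\mathcal{FU}(N) \to \mathcal{FU}(M)_{z}$ and ``restrict'': $\mathcal{FU}(M)_{z} \to \mathcal{FU}(N)$ are both well-defined, visibly order-preserving, and mutually inverse, which gives the asserted isomorphism; it is canonical as it depends only on $z$ (through $N = e_{x,y}^{\perp}$). I expect the only step needing genuine care to be the rigidity observation: it is precisely because $\mathcal{FU}(M)$ records a fully specified Gram matrix that no projection or choice is required here — in contrast with the identification of $\mathcal{IU}(M)_{x}$ in Proposition \ref{complement}, where the fibre element has to be projected off the $e_{x,y}$-direction. After that, the transfer of the unimodularity and complement conditions is routine bookkeeping built on the orthogonal splitting $M \cong e_{x,y} \oplus N$.
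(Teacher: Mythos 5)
Your proof is correct and takes essentially the same route as the paper, whose entire argument is the one-line observation that for an embedding $j\colon F^{l}\hookrightarrow M$ factoring through $e_{x,y}^{\perp}$ one has $(j(F^{l})\oplus e_{x,y})^{\perp}=j(F^{l})^{\perp}\cap e_{x,y}^{\perp}$, everything else being ``immediate from the definitions.'' Your rigidity observation (the fully prescribed Gram matrix of a sequence in $\mathcal{FU}(M)$ forces the fibre elements into $e_{x,y}^{\perp}$, so no projection is needed as in Proposition \ref{complement}) together with the transfer of unimodularity via Corollary \ref{subformunimod} is precisely the content the paper leaves implicit.
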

\begin{proof}
    This is immediate from the definitions and the following: for an embedding $j:F^{l} \hookrightarrow M$ that factors through $e_{x,y}^{\perp}$, we have that $(j(F^l)  \oplus e_{x,y})^{\perp} = j(F^l)^{\perp} \cap e_{x,y}^{\perp}$.
\end{proof}
\section{Connectivity of the space of destabilisations}\label{three}
In this section, we will work exclusively over a ring $R$ that satisfies Assumption \ref{onlyass}. We retain our notation and conventions from the previous section. 

 Let $M$ be a non-degenerate symmetric bilinear form and $F \in \text{Met}_2(R)$. The sequence of embeddings 
\[\begin{tikzcd}
	M & {M \oplus F} & {...} & {M \oplus F^n} & {(M \oplus F^{n}) \oplus F} & {...}
	\arrow["{\text{id} \oplus0}", hook, from=1-1, to=1-2]
	\arrow[hook, from=1-2, to=1-3]
	\arrow[hook, from=1-3, to=1-4]
	\arrow["{\text{id} \oplus0}", hook, from=1-4, to=1-5]
	\arrow[hook, from=1-5, to=1-6]
\end{tikzcd}\]
gives rise to a sequence of orthogonal groups
\[\begin{tikzcd}
	{\text{O}(M)} & {\text{O}(M \oplus F)} & {...} & {\text{O}(M \oplus F^n)} & {\text{O}(M \oplus F^{n+1})} & {...}
	\arrow[ hook, from=1-1, to=1-2]
	\arrow[hook, from=1-2, to=1-3]
	\arrow[hook, from=1-3, to=1-4]
	\arrow[ hook, from=1-4, to=1-5]
	\arrow[hook, from=1-5, to=1-6]
\end{tikzcd}\]
for which we want to establish homological stability. 

We will follow the approach outlined in Section \ref{twotwo}. To this end, let us fix an $F$-cancellative groupoid $\mathcal{G}$ for the rest of this section. As $(\mathcal{G},\oplus,0)$ is symmetric monoidal, we can form the symmetric monoidal category $(U\mathcal{G},\oplus_{\mathcal{G}},0)$.
\begin{prop}\label{localhom}
    If $M \in \mathcal{G}$, the category $U\mathcal{G}$ is locally homogeneous at $(M,F)$.
\end{prop}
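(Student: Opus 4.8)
The plan is to verify the two hypotheses of Proposition \ref{crit} for the pair $(A,X) = (M,F)$ and then invoke it directly. First, $(M,F)$ is genuinely a pair of objects of $\mathcal{G}$: the form $F$ lies in $\mathcal{G}$ by Condition 1 of Definition \ref{cancellativeform}, and $M \in \mathcal{G}$ by hypothesis. Moreover, Condition 3 of that definition shows that every form of the shape $Y \oplus F^{\oplus j}$ with $Y \in \mathcal{G}$ (in particular $M \oplus F^{\oplus j}$) again lies in $\mathcal{G}$; this is the observation that will make the cancellation axiom applicable below.

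For the first hypothesis of Proposition \ref{crit}, I would take $0 \leq p < n$ and $Y \in \mathcal{G}$ with $Y \oplus F^{\oplus p+1} \cong M \oplus F^{\oplus n}$, rewrite the right-hand side as $(M \oplus F^{\oplus n-p-1}) \oplus F^{\oplus p+1}$, and then cancel one copy of $F$ at a time using Condition 5 of Definition \ref{cancellativeform}. This is legitimate at each stage because all the intermediate forms $Y \oplus F^{\oplus i}$ and $M \oplus F^{\oplus n-p-1+i}$ lie in $\mathcal{G}$ by the previous remark, so after $p+1$ cancellations one obtains $Y \cong M \oplus F^{\oplus n-p-1}$, as wanted.

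For the second hypothesis, I would note that the stabilisation map $\mathrm{Aut}_{\mathcal{G}}(M \oplus F^{\oplus n-p-1}) \to \mathrm{Aut}_{\mathcal{G}}(M \oplus F^{\oplus n})$, $f \mapsto f \oplus \mathrm{id}_{F^{\oplus p+1}}$, is injective for the elementary reason that $f$ is recoverable from $f \oplus \mathrm{id}_{F^{\oplus p+1}}$: precomposing the latter with the inclusion of the summand $M \oplus F^{\oplus n-p-1}$ and postcomposing with the projection onto it returns $f$, so equality of two such stabilisations forces equality of the original isometries. (Concretely, for $v$ in $M \oplus F^{\oplus n-p-1}$ one has $f(v)\oplus 0 = (f \oplus \mathrm{id})(v \oplus 0)$.)

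Having checked both hypotheses, Proposition \ref{crit} immediately gives that $(U\mathcal{G},\oplus_{\mathcal{G}},0)$ is locally homogeneous at $(M,F)$. I do not expect any real obstacle here: the substantive content has been pushed into the axioms of an $F$-cancellative groupoid, above all the cancellation Condition 5 (whose verification for $\mathrm{Met}_F$ was the content of Theorem \ref{twotwoiscancel}), and what remains is formal. The only point demanding a little care is the bookkeeping of which forms belong to $\mathcal{G}$ while iterating cancellation, and this is exactly what closure under direct sum provides.
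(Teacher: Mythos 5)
Your proposal is correct and follows exactly the paper's route: the paper likewise verifies the two hypotheses of Proposition \ref{crit}, observing that the second is immediate and the first follows from Condition 5 of Definition \ref{cancellativeform} (applied iteratively, with Condition 3 guaranteeing the intermediate forms stay in $\mathcal{G}$, just as you spell out). Your write-up simply makes explicit the bookkeeping the paper leaves implicit.
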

\begin{proof}
    We check using Proposition \ref{crit}. The second condition is obviously true, while the first is simply Condition 5 of Definition \ref{cancellativeform}.
\end{proof}
The meat of the matter lies in establishing that the semi-simplicial sets $W_n(M,F)$ are highly connected. We will build up to this in the remainder of this section. The arguments are along similar lines to those presented in \cite{unigrp} and \cite{arboffset}. Until otherwise stated, $M$ will denote a non-degenerate symmetric bilinear form that is not necessarily an element of $\mathcal{G}$.
\begin{prop}\label{copycat}
    Let $N$ be a non-degenerate symmetric bilinear form such that $M = N$ or $M \oplus T \subseteq N$ for some $T \in \textup{Met}_2(R)$. We have
    \begin{enumerate}
        \item $\mathcal{O}(M) \cap \mathcal{U}'(N)$ is $(z(M)-3)$-connected
        \item $\mathcal{O}(M) \cap \mathcal{U}'(N)_v$ is $(z(M)-3-|v|)$-connected for all $v \in \mathcal{U}'(N)$
        \item $\mathcal{O}(M) \cap \mathcal{U}' (N) \cap \mathcal{U}(N)_{v}$ is $(z(M)-3-|v|)$-connected for all $v \in \mathcal{U}(N)$
    \end{enumerate}
    \end{prop}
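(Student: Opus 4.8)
The plan is to prove (1), (2) and (3) together by induction on the isotropic rank $z(M)$, running the argument of \cite{genlin} in the form adapted to the isotropic setting in \cite{unigrp} and \cite{arboffset}. The first move is a reduction: when $M\oplus T\subseteq N$ is a non-degenerate subform, Corollary \ref{subformunimod} identifies $\mathcal{O}(M)\cap\mathcal{U}(N)$ with $\mathcal{U}(M)$, so the poset in (1) is exactly $\mathcal{U}'(M)$, while in (2) and (3) the condition ``unimodular in $N$'' may be tested inside $M$ for the $M$-valued part of a sequence. The point of allowing both $M=N$ and $M\oplus T\subseteq N$ is that these families of posets reproduce themselves under passing to links and fibres: deleting or adjoining a vertex replaces $N$ by a form of the same admissible shape and drops $z(M)$ by one, which is precisely what lets the induction close. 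Since $x\mapsto|x|-1$ is a height function on any downward-closed subposet of $\mathcal{O}(M)$, both Proposition \ref{the2.12thingy} (transferring acyclicity from the fibres $\mathcal{P}_w$ to the links) and the pushout square for $|\textup{Link}_{\mathcal{P}}(v)|\to|\mathcal{P}|$ recorded in Section \ref{twoone} are available, and one passes between acyclicity and connectivity by the usual Hurewicz argument in the relevant range.

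For the inductive step one treats (1) by the van der Kallen link argument on $\mathcal{P}=\mathcal{U}'(M)$, which is downward closed: the fibres $\mathcal{U}'(M)_w$ are exactly the posets of statement (2) with $N=M$, so the inductive form of (2) feeds into Proposition \ref{the2.12thingy} to bound $\textup{Link}^{+}_{\mathcal{P}}(w)$, and one then rebuilds $\mathcal{P}$ itself by the standard induction over simplices. In parallel one controls the auxiliary poset $\mathcal{IU}(M)$ of \emph{pairwise}-orthogonal unimodular isotropic sequences, using the identification $\mathcal{IU}(M)_x\cong\mathcal{IU}(N')\langle\langle x\rangle\rangle$ with $z(N')=z(M)-|x|$ from Proposition \ref{complement} together with Proposition \ref{addset} to discard the colouring; a surgery argument (replacing a non-orthogonal pair of vertices by an orthogonal pair spanning the same summand, which is possible once $z(M)$ exceeds the simplex dimension) then shows the inclusion $\mathcal{IU}(M)\hookrightarrow\mathcal{U}'(M)$ is highly connected, so either poset may be used to feed the others. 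Statements (2) and (3) are the more delicate bookkeeping, because there the fixed sequence $v$ ranges over all of $N$ and need not lie in $M$: one first moves $v$ out of the way — choosing a dual sequence via Proposition \ref{asbasicasitgets} and passing to an appropriate orthogonal complement, after translating each vector by a combination of the $v_i$, as in the swapping arguments of \cite{unigrp} and \cite{arboffset} — to obtain a poset isomorphism onto some $\mathcal{O}(M')\cap\mathcal{U}'(N')$ with $N'$ again admissible and $z(M')\ge z(M)-|v|$, whereupon the inductive form of (1) gives the claimed bounds.

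The base cases $z(M)\le 2$ are harmless, the asserted connectivity being at most $-1$ and non-emptiness (the case $z(M)=2$) following from the existence of an isotropic unimodular vector; the first genuine case, connectedness at $z(M)=3$, and the low-dimensional surgeries used to compare $\mathcal{U}'(M)$ with $\mathcal{IU}(M)$, are supplied by an explicit combinatorial argument, and this is where Proposition \ref{coolmatroid} enters, via the fact that a family of isotropic vectors spanning a small subspace always contains a basis-like ``good'' subfamily. I expect the main obstacle to be uniformity rather than any single hard step: one must check that every connectivity estimate produced along the way depends only on $z(M)$ and never on the ambient $N$, since this is exactly what makes the three statements self-contained as an induction; and one must consistently avoid hyperbolic or symplectic completions of unimodular sequences, which are not available when $2\notin R^{\times}$, by working throughout with $\mathcal{U}'$ (requiring only self-pairings to vanish) and by producing the unimodular vectors needed for surgery after clearing denominators over $Q(R)$, exactly as in the proof of Lemma \ref{sequenceext}.
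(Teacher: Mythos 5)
Your proposal misses the single idea the paper's proof turns on, and the inductive scheme you describe in its place does not close. The paper does not induct on $z(M)$ at all: it picks a maximal sequence $x=(x_1,\dots,x_{z(M)})\in\mathcal{IU}(M)$ with witnessing sequence $y$, sets $W=\langle x_1,\dots,x_{z(M)}\rangle$, and observes that for sequences lying entirely in the totally isotropic free summand $W$ the defining condition of $\mathcal{U}'$ (vanishing self-pairings) is automatic, so that $\mathcal{O}(W)\cap\mathcal{P}$ and $\mathcal{O}(W)\cap\mathcal{P}_w$ are honest posets of unimodular sequences drawn from a rank-$z(M)$ free summand. Their connectivity then comes straight from \cite[Lem. 5.1]{unigrp} (using that a PID has stable rank at most $2$), and \cite[Lem. 2.13(1)]{genlin} absorbs the rest of $\mathcal{P}$ into this core. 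This is the only place the quantity $z(M)-3$ can come from; nothing in your outline actually produces that bound, as opposed to one in terms of $\mathrm{rank}(M)$.

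Concretely, two steps of your plan fail. First, you assert that the fibres reproduce the same family of posets with $z$ dropped by one, so that statement (2) in lower rank can feed Proposition \ref{the2.12thingy} in the proof of (1). But $\mathcal{U}'(N)_v$ is not of the form $\mathcal{O}(M')\cap\mathcal{U}'(N')$ for a smaller admissible pair: the entries of a sequence in $\mathcal{U}'(N)_v$ need only be individually isotropic and jointly unimodular with $v$, not orthogonal to $v$, so there is no orthogonal complement to pass to. (Such a complement description exists for $\mathcal{IU}$ --- Proposition \ref{complement} --- precisely because full orthogonality is imposed there, but not for $\mathcal{U}'$.) Since (1) for a given $M$ would need (2) for the same $M$ with the same $z(M)$, your induction is circular. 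Second, the auxiliary machinery you invoke --- the surgery comparison of $\mathcal{IU}(M)$ with $\mathcal{U}'(M)$, Proposition \ref{coolmatroid}, the nerve-type covers --- belongs to the results built on top of this proposition (Proposition \ref{copied}, Theorems \ref{alliso} and \ref{orig}), not to its proof; importing it here puts the logical order of the section backwards.
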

    \begin{proof}
        Let $x = (x_1,...,x_{z(M)}) \in \mathcal{IU}(M)$ be of maximum length, and let $(y_1,...,y_{z(M)})$ be a witnessing sequence to it. The proposition can be proved identically to \cite[Lem. 6.8]{unigrp}, by writing $M$ as $M' \oplus e_{x,y}$ and taking $W \coloneqq \langle x_1,...,x_{z(M)}\rangle$ instead of what they consider. To illustrate the method, let $\mathcal{P}$ be the poset in item 3. Then $\mathcal{O}(W) \cap \mathcal{P} = \mathcal{O}(W) \cap \mathcal{U}(N)_{v}$ and for any $w \in \mathcal{P}$, we have $\mathcal{O}(W) \cap \mathcal{P}_w = \mathcal{O}(W) \cap \mathcal{U}(N)_{vw}$. From \cite[Lem. 5.1]{unigrp} and the fact that the stable rank of a principal ideal domain is at most $2$, it follows that $\mathcal{O}(W) \cap \mathcal{P}$ and $\mathcal{O}(W) \cap \mathcal{P}_w$ are $(z(M) - 3-|v|)-$connected and $(z(M)-3-|v|-|w|)-$connected, respectively. The conclusion now follows from \cite[Lem. 2.13(1)]{genlin}.
    \end{proof}
We now formulate the following analogue of \cite[Lem. 6.9]{unigrp}.
\begin{prop}\label{copied}
    Let $(v_1,...,v_k) \in \mathcal{U}'(M)$. The poset $\mathcal{O}(\langle v_1,...,v_k \rangle^{\perp}) \cap \mathcal{U}'(M)_{(v_1,...,v_k)}$ is $(z(M)-c(M)-3-k)$-connected.
\end{prop}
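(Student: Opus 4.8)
The plan is to reduce the statement to Proposition \ref{copycat} by absorbing the ``$-c(M)$'' correction via Lemma \ref{algebra}. First I would dispose of the trivial cases: if $z(M) - c(M) - 3 - k < 0$ there is nothing to prove, so I may assume $k \leq z(M) - c(M) - 3$. Since $(v_1,\dots,v_k) \in \mathcal{U}'(M)$, the vectors $v_1,\dots,v_k$ live in $M$, and $M \neq 0$; Lemma \ref{algebra} then furnishes a decomposition $\phi : M \cong N' \oplus H^{z(M)-c(M)-k}$ with $\phi(\langle v_1,\dots,v_k\rangle) \subseteq N'$ and $z(N') = c(M) + k$. The point of invoking Lemma \ref{algebra} rather than just Lemma \ref{daityabaida} is that we need the $v_i$ to lie entirely in the complementary summand $N'$, so that $\langle v_1,\dots,v_k\rangle^{\perp}$ within $M$ decomposes as $(\langle v_1,\dots,v_k\rangle^{\perp}\cap N') \oplus H^{z(M)-c(M)-k}$.

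Next I would set up the comparison with Proposition \ref{copycat}. Let $W \coloneqq H^{z(M)-c(M)-k}$, so that under $\phi$ we have $M \cong N' \oplus W$ with $z(W) = z(M) - c(M) - k$. The poset in question, $\mathcal{O}(\langle v_1,\dots,v_k\rangle^{\perp}) \cap \mathcal{U}'(M)_{(v_1,\dots,v_k)}$, consists of sequences $(u_1,\dots,u_l)$ with each $u_i$ isotropic, orthogonal to all the $v_j$, and with $(u_1,\dots,u_l,v_1,\dots,v_k)$ unimodular in $M$. I want to compare this with $\mathcal{O}(W) \cap \mathcal{U}'(M')$ type posets as in Proposition \ref{copycat}, where now the ``$M$'' of that proposition is played by $W$ and the ambient module is $M$ (noting $M = W$ or $W \oplus T \subseteq M$ with $T \in \textup{Met}_2(R)$ — indeed $N'$ contains a summand in $\textup{Met}_2(R)$ since $g_F$ or at least $z(N') \geq 1$, using $c(M)+k \geq k \geq 1$, or directly since $z(N') = c(M)+k$ and $N'$ is non-degenerate of positive isotropic rank). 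The key observation is that $\langle v_1,\dots,v_k\rangle^{\perp}$ contains $W$, and more precisely the relevant subposet should coincide with, or be closely comparable to, $\mathcal{O}(W) \cap \mathcal{U}'(M)_{v}$ for $v$ an appropriate sequence; then Proposition \ref{copycat}(2) gives $(z(W) - 3 - |v|)$-connectivity. Since $z(W) = z(M) - c(M) - k$, plugging in $|v| = 0$ (or handling the shift carefully) yields exactly $(z(M) - c(M) - 3 - k)$-connectivity.

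The main obstacle I anticipate is making the identification in the previous paragraph precise: one has to check that restricting to sequences supported in $W$ does not lose connectivity, i.e. that the inclusion of the ``$W$-supported'' subposet into $\mathcal{O}(\langle v_1,\dots,v_k\rangle^{\perp}) \cap \mathcal{U}'(M)_{(v_1,\dots,v_k)}$ is highly connected, or alternatively that one can run the argument of \cite[Lem. 6.9]{unigrp} directly. I expect this is done exactly as in \emph{loc. cit.}: one uses the decomposition $M \cong N' \oplus W$, notes that any unimodular isotropic sequence orthogonal to the $v_j$ can be ``pushed into'' $W$ by projecting away its $N'$-component (the projection stays isotropic and unimodular because the $v_j$-orthogonality kills the relevant cross terms and $W$ is a sum of hyperbolic planes), and then applies the link/height-function machinery (Theorem \ref{3.6thing} or the bad-simplex/Hatcher-style argument in \cite{unigrp}) to the map ``project to $W$'' whose fibres are contractible cone-like posets. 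So the proof is essentially: reduce via Lemma \ref{algebra}, then cite Proposition \ref{copycat} applied to the hyperbolic summand $W$ with the argument of \cite[Lem. 6.9]{unigrp} transported verbatim, the only new input being the bookkeeping that $z(W) = z(M) - c(M) - k$.
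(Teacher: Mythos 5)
Your proposal matches the paper's proof: the paper likewise disposes of the trivial range, applies Lemma \ref{algebra} to obtain $M \cong N \oplus H^{z(M)-c(M)-k}$ with the $v_i$ landing in $N$, and then runs the argument of \cite[Lem. 6.9]{unigrp} verbatim, feeding Proposition \ref{copycat} (applied to the hyperbolic summand $W$) into \cite[Lem. 2.13(1)]{genlin}. One small correction to your parenthetical: the paper does not project sequences onto $W$ (the orthogonal projection of an isotropic vector need not stay isotropic), but only subtracts the $\langle v_1,\dots,v_k\rangle$-component via a module splitting of $N$, and the resulting sequences are only claimed to be unimodular --- which is precisely why item 3 of Proposition \ref{copycat} is stated for $v \in \mathcal{U}(N)$ rather than $\mathcal{U}'(N)$.
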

\begin{proof}
    The proof of \cite[Lem. 6.9]{unigrp} works verbatim upon using Lemma \ref{algebra} instead of \cite[Lem. 6.6]{unigrp} and Proposition \ref{copycat} instead of \cite[Lem. 6.8]{unigrp}. A sketch of the argument follows.

    Let $\mathcal{P}$ be the poset in question. There is nothing to do if $k > z(M)-c(M)-2$, so assume $k \leq z(M)-c(M)-2$. Lemma \ref{algebra} yields a decomposition $\phi:M \cong N \oplus H^{z(M)-c(M)-k}$ such that $\phi(\langle v_1,...,v_k \rangle) \subseteq N$ and $z(N) = c(M)+ k$. Hence, there exists $T \in \text{Met}_2(R)$ such that $T \subseteq N$. Let $W = \phi^{-1}(H^{z(M)-c(M)-k})$. We find that $W \oplus T \subseteq M$. Now, $\mathcal{O}(W) \cap \mathcal{P} = \mathcal{O}(W) \cap \mathcal{U}'(M)$. Let $V \coloneq \langle v_1,...,v_k \rangle$; then $N$ splits as $V \oplus P$ (as modules, not as forms!) for some $P \subset N$. Given $(w_1,...,w_l) \in \mathcal{P} - \mathcal{O}(W)$, because $(v_1,...,v_k,w_1,...,w_l) \in \mathcal{U}(M)$, it follows that $(w_1-w_1|_{V},...,w_l-w_l|_V) \in \mathcal{U}(M)$. Furthermore, $\mathcal{O}(W) \cap \mathcal{P}_{(w_1,...,w_l)} = \mathcal{O}(W) \cap \mathcal{U}'(M) \cap \mathcal{U}(M)_{(w_1-w_1|_{V},...,w_l-w_l|_V)}$. Propositions \ref{copycat} and \ref{metabolicrank}, together with  \cite[Lem. 2.13(1)]{genlin}, now imply that $\mathcal{P}$ is, in fact, $(z(M)-c(M)-3-k)$-connected.
\end{proof}
\begin{theorem}\label{alliso}
    The poset $\mathcal{IU}(M)$ is $\lfloor \frac{z(M)-c(M)-5}{2} \rfloor$-connected, and for all $y \in \mathcal{IU}(M)$, the poset $\mathcal{IU}(M)_y$ is $\lfloor \frac{z(M)-c(M)-5-|y|}{2} \rfloor$-connected.
\end{theorem}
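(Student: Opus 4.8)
The plan is to prove both assertions simultaneously by induction on the isotropic rank $z(M)$. The base cases are those in which $z(M)-c(M)\le 4$, so that both claimed connectivities are $\le -1$: here there is nothing to prove beyond non-emptiness of the posets involved, and $\mathcal{IU}(M)$ (hence each $\mathcal{IU}(M)_y$ in the stated range) is non-empty by Lemma \ref{sequenceext} as soon as $z(M)\ge 1$. In the inductive step the two assertions are interlocked: the statement about the complement posets $\mathcal{IU}(M)_y$ at isotropic rank $z(M)$ is deduced from \emph{both} assertions at strictly smaller isotropic rank, while the connectivity of $\mathcal{IU}(M)$ itself is deduced from the complement statement \emph{at the same} isotropic rank together with the already-established connectivity of $\mathcal{U}'(M)$.

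First I would treat the complement posets. For $y=(y_1,\dots,y_k)\in\mathcal{IU}(M)$ with $k\ge 1$, Proposition \ref{complement} gives $\mathcal{IU}(M)_y\cong\mathcal{IU}(N)\langle\langle y_1,\dots,y_k\rangle\rangle$ where $N$ is a non-degenerate form with $z(N)=z(M)-k<z(M)$; moreover $N$ may be taken to be $e_{y,w}^{\perp}$ for a witnessing sequence $w$, so that $P(N)\subseteq P(M)$ by Lemma \ref{exampleparity} and hence $c(N)\le c(M)$. Write $n=\lfloor\tfrac{z(M)-c(M)-5-k}{2}\rfloor$. By the inductive hypothesis applied to $N$, the poset $\mathcal{IU}(N)$ is $\lfloor\tfrac{z(N)-c(N)-5}{2}\rfloor$-connected and each $\mathcal{IU}(N)_v$ is $\lfloor\tfrac{z(N)-c(N)-5-|v|}{2}\rfloor$-connected; since $z(N)-c(N)\ge z(M)-k-c(M)$ and $\lfloor\tfrac{a-j}{2}\rfloor\ge\lfloor\tfrac a2\rfloor-j$ for $j\ge 0$, these are at least $n$ and $n-|v|$ respectively. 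Hence Proposition \ref{addset}(2) applies with this $n$, showing $\mathcal{IU}(N)\to\mathcal{IU}(N)\langle\langle y_1,\dots,y_k\rangle\rangle$ is an isomorphism on $\pi_{\le n}$; as $\mathcal{IU}(N)$ is itself $n$-connected, $\mathcal{IU}(M)_y$ is $\lfloor\tfrac{z(M)-c(M)-5-|y|}{2}\rfloor$-connected, which is the second assertion at isotropic rank $z(M)$.

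Next I would deduce the connectivity of $\mathcal{IU}(M)$ itself. With the complement statement now available at isotropic rank $z(M)$, Proposition \ref{the2.12thingy} (applied to the downward-closed poset $\mathcal{IU}(M)\subseteq\mathcal{U}(M)$, with the connectivity of the $\mathcal{IU}(M)_w$ as input) bounds the acyclicity of the links $\text{Link}_{\mathcal{IU}(M)}(v)$. On the other hand, Proposition \ref{copied} with $k=0$ says that $\mathcal{U}'(M)$ is $(z(M)-c(M)-3)$-connected, and $\mathcal{IU}(M)$ and $\mathcal{U}'(M)$ have the same set of vertices. It then remains to show that the inclusion $\mathcal{IU}(M)\hookrightarrow\mathcal{U}'(M)$ is $\lfloor\tfrac{z(M)-c(M)-5}{2}\rfloor$-connected. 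This I would carry out by the ``bad simplices'' method: one builds $\mathcal{U}'(M)$ up from $\mathcal{IU}(M)$ by adjoining the simplices on which pairwise orthogonality fails, one stratum at a time, coning off the relevant links via the pushout square recorded in Section \ref{twoone}, and controls these links and relative links by means of the complement statement and Propositions \ref{copycat}, \ref{the2.12thingy} — exactly along the lines of \cite[Sec.~6--7]{unigrp} and \cite[Sec.~3]{arboffset}. A final passage from homology to homotopy (the posets in play are simply connected in the relevant range) then upgrades the comparison to the claimed connectivity.

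The main obstacle is precisely this last comparison. The naive route — feeding the inclusion $\mathcal{IU}(M)\hookrightarrow\mathcal{U}'(M)$ directly into Theorem \ref{3.6thing} — does not work, because the fibre of this inclusion over a sequence $v\in\mathcal{U}'(M)$ is the order complex of the poset of isotropic subsequences of $v$, which is essentially the clique complex of the orthogonality graph on the entries of $v$; such clique complexes can be arbitrarily non-acyclic (a perfect matching, say, gives a very disconnected clique complex), so the hypotheses of Theorem \ref{3.6thing} fail. One must instead resolve the failure of pairwise orthogonality more carefully and keep precise track of how each resolution step degrades connectivity; it is exactly this bookkeeping that is responsible for the halving of the connectivity range relative to $\mathcal{U}'(M)$ and for the precise constant $-5$, and making it close the simultaneous induction — including the interplay with the correction term $c(M)$, which can drop when one passes to $N=e_{y,w}^{\perp}$ — is the delicate point of the proof.
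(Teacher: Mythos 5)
Your handling of the second assertion is fine and matches the paper: Proposition \ref{complement} identifies $\mathcal{IU}(M)_y$ with $\mathcal{IU}(N)\langle\langle y_1,\dots,y_k\rangle\rangle$ for $N=e_{y,w}^{\perp}$ with $z(N)=z(M)-|y|$ and $c(N)\le c(M)$, and Proposition \ref{addset}(2) then transports the connectivity of $\mathcal{IU}(N)$ (known by induction) across the decoration; the floor-function bookkeeping you do is correct.

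The first assertion, however, is where the real content lies, and your proposal does not actually prove it. You correctly diagnose that feeding $\mathcal{IU}(M)\hookrightarrow\mathcal{U}'(M)$ into Theorem \ref{3.6thing} fails because the fibres are clique complexes of orthogonality graphs, but your replacement — ``resolve the failure of pairwise orthogonality by a bad-simplices argument, coning off links one stratum at a time'' — is left entirely unexecuted, and you yourself concede that the bookkeeping responsible for the halved slope and the constant $-5$ is ``the delicate point.'' That delicate point \emph{is} the theorem. It is also not the route the paper takes: following \cite[Thm.~7.3]{unigrp}, the paper covers (a cofinal truncation of) $\mathcal{IU}(M)$ by the subposets $X_v=\mathcal{IU}(M)\cap\mathcal{O}(\langle v_1,\dots,v_l\rangle^{\perp})\cap\mathcal{U}'(M)_{(v_1,\dots,v_l)}$ indexed by $v\in\mathcal{U}'(M)$ and applies the nerve theorem (Theorem \ref{nerve}), with $\mathcal{U}'(M)$ and the posets $\mathcal{A}_x$ controlled by Propositions \ref{copycat} and \ref{copied}; the connectivity $\lfloor\frac{z(M)-c(M)-5-|v|}{2}\rfloor$ of each piece $X_v$ is then proved by a \emph{descending} induction on $|v|$, with base cases supplied by Lemma \ref{algebra} and the inductive step by a second application of the nerve theorem to an analogous cover. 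The halving of the slope arises from this nested nerve-theorem structure, not from a link-by-link resolution of non-orthogonal simplices; your complement statement and Proposition \ref{the2.12thingy} (which the paper only deploys later, in Theorem \ref{orig}) are not the inputs that close this part of the argument. As written, the proof of the first assertion is a genuine gap.
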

\begin{proof}
    For the first assertion, the proof of \cite[Thm. 7.3]{unigrp} almost goes through verbatim on using Lemma \ref{algebra} instead of \cite[Lem. 6.6]{unigrp}, Proposition \ref{copied} instead of \cite[Lem. 6.9]{unigrp}, and Proposition \ref{copycat} instead of \cite[Prop. 6.8]{unigrp}. For the second assertion, the cited proof also goes through with very minor modifications: we need Lemma \ref{complement} instead of \cite[Lem. 7.2]{unigrp}.

    We include a sketch of the argument for the convenience of the reader. There is nothing to do if $z(M) -c(M) < 3$, so we assume that $z(M) - c(M) \geq 3$. For $v = (v_1,...,v_l)$ in $\mathcal{U}'(M)$, we put $X_v \coloneq \mathcal{IU}(M) \cap \mathcal{O}(\langle v_1,...,v_l \rangle^{\perp}) \cap \mathcal{U}'(M)_{(v_1,...,v_l)}$, and set \begin{equation}\label{big}
        X \coloneq \bigcup_{v \in \mathcal{U}'(M)} X_v
    \end{equation}
    Suppose $x = (x_1,...,x_k) \in \mathcal{IU}(M)$, with $k \leq z(M)-1$. Choose a witnessing sequence $(y_1,...,y_k)$ to it, and note that $e_{x,y}^{\perp}$ has an isotropic unimodular vector, by Corollary \ref{complementgenus}. Then $x \in X_v$, for each unimodular isotropic vector $v \in e_{x,y}^{\perp}$. Therefore, $\mathcal{IU}(M)_{\leq z(M)-1} = X_{\leq z(M)-1}$, where the subscripts indicate that only sequences of length at most $z(M)-1$ are under consideration. It suffices to show that $X$ is $\lfloor \frac{z(M)-c(M)-5}{2} \rfloor$-connected.

    To do this, we wish to apply Theorem \ref{nerve}. For $x = (x_1,...,x_k) \in X$, the poset $\mathcal{A}_x \coloneqq \{v \in \mathcal{U}'(M)|x \in X_v\}$ naturally identifies with $\mathcal{O}(\langle x_1,...,x_k \rangle^{\perp}) \cap \mathcal{U}'(M)_{(x_1,...,x_k)}$. The indexing poset $\mathcal{U}'(M)$ and the poset $\mathcal{A}_x$ are suitably highly connected by Propositions \ref{copycat} and \ref{copied}, respectively; it only remains to check that $X_v$ is $\text{min}\{{\lfloor \frac{z(M)-c(M)-7}{2} \rfloor},\lfloor \frac{z(M)-c(M)-3}{2}-|v| \rfloor\}$-connected and to verify the last condition in the statement of Theorem \ref{nerve}.

    We can show that $X_v$ is actually $\lfloor \frac{z(M)-c(M)-5-|v|}{2} \rfloor$-connected by descending induction on $|v|$. The base cases can be checked directly using Lemma \ref{algebra}. In general, for $w = (w_1,...,w_m) \in \mathcal{O}(\langle v_1,...,v_l \rangle ^{\perp}) \cap \mathcal{U}'(M)_{(v_1,...,v_l)}$, we set $T_w \coloneqq \mathcal{IU}(M) \cap \mathcal{O}(\langle w_1,...,w_m,v_1,...,v_l \rangle^{\perp}) \cap \mathcal{U}'(M)_{(w_1,...,w_m,v_1,...,v_l)}$, and put \begin{equation}\label{small} T \coloneqq \bigcup_{w \in \mathcal{O}(\langle v_1,...,v_l \rangle ^{\perp}) \cap \mathcal{U}'(M)_{(v_1,...,v_l)}} T_w \end{equation}
    As above, one checks that $T_{\leq z(M)-|v|-1} = {(X_v)}_{\leq z(M)-|v|-1}$, so we reduce to showing that $T$ is $\lfloor \frac{z(M)-c(M)-5-|v|}{2} \rfloor$-connected. We again wish to apply Theorem \ref{nerve} to the cover (\ref{small}), but this time, we may do so directly using the induction hypothesis and Proposition \ref{copied}.

    Let us comment on how the last condition of Theorem \ref{nerve} is checked for (\ref{small}); the case of (\ref{big}) is very similar. Consider a length one sequence $w = (w_1) \in \mathcal{O}(\langle v_1,...,v_l \rangle ^{\perp}) \cap \mathcal{U}'(M)_{(v_1,...,v_l)}$. Then $w \in \mathcal{IU}(M)$, so $w \in X_v$. Let $C_w$ be the contractible subposet of $X_v$ consisting of all sequences $(z_1,...,z_n)$ with $z_1 = w_1$. We have $T_w \subseteq T_w \cup (C_w)_{\leq z(M)-|v|-1} \subseteq T \cup (X_v)_{\leq z(M)-|v|+1} = T$. Now, the realization of $T_w \cup (C_{w}-\{w\})$ is homeomorphic to the cylinder on that of $T_w$, so $T_w \cup C_w$ is contractible. Therefore, $T_w \cup (C_w)_{\leq z(M)-|v|-1}$ is $(z(M)-|v|-1)$-connected.

    We can prove the second assertion of the theorem by induction on $|y|$, using Lemma \ref{complement} and Proposition \ref{addset}. The requirements of Proposition \ref{addset} follow from the inductive hypothesis and the first assertion of the present theorem.
\end{proof}
From this moment onward, we specialize to the case that $M \in \mathcal{G}$.
\begin{theorem}\label{orig}
    The poset $\mathcal{FIU}(M)$ is $\lfloor \frac{z(M)-3c(M)-6}{2} \rfloor$-connected.
\end{theorem}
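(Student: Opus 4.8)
The plan is to transfer the connectivity of $\mathcal{IU}(M)$ established in Theorem \ref{alliso} to the subposet $\mathcal{FIU}(M)$ along the inclusion $f\colon \mathcal{FIU}(M) \hookrightarrow \mathcal{IU}(M)$, by an application of the poset comparison result Theorem \ref{3.6thing}. Equip $\mathcal{IU}(M)$ with the height function $ht(v) = |v|-1$. Two inputs must be assembled: an integer $n$ with $\textup{Link}^{+}_{\mathcal{IU}(M)}(v)$ being $(n-ht(v))$-acyclic for all $v$, and an integer $m$ with each fibre $f/v$ non-empty and $(ht(v)-m)$-acyclic; then $H_k(f)$ is an isomorphism for $k \leq n-m+2$, and since $\mathcal{IU}(M)$ is already more connected than the asserted bound, this forces $\mathcal{FIU}(M)$ to be $(n-m+2)$-acyclic, which together with a standard simple-connectivity check gives the stated connectivity.

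For the link input, $\mathcal{IU}(M)$ is downward closed in $\mathcal{U}(M)$ and $M$ is finitely generated free, so Proposition \ref{the2.12thingy} reduces the acyclicity of $\textup{Link}^{+}_{\mathcal{IU}(M)}(v)$ to the acyclicity of the posets $\mathcal{IU}(M)_w$ for $w \geq v$, which is exactly what Theorem \ref{alliso} controls. Chasing the floors shows that $\textup{Link}^{+}_{\mathcal{IU}(M)}(v)$ is $\lfloor \tfrac{z(M)-c(M)-6-ht(v)}{2}\rfloor$-acyclic, so one may take $n = \lfloor \tfrac{z(M)-c(M)-6}{2}\rfloor$.

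For the fibre input, $f/v$ is precisely the poset of fully $F$-like subsequences of $v$. When $|v| \geq c(M)+2$, Theorem \ref{triv} pins down its shape: all maximal fully $F$-like subsequences of $v$ have a common length $|v|-g_v$ with $0 \leq g_v \leq c(M)+1$, and at that top length ``fully $F$-like'' coincides with ``parity preserving''. By Lemma \ref{readerfriendly}, parity-preservation of the subsequence indexed by $S$ is the statement that the classes $\{\overline{\lambda(w_i,w_i)} : i \notin S\}$ span a fixed $U(R)$-vector space of dimension $h_v \leq c(M)$ (for a suitably chosen witnessing sequence $(w_1,\dots,w_{|v|})$). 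That is exactly the ``good faces'' condition of Proposition \ref{coolmatroid}, so $f/v$ is the good-faces complex of that proposition, or its top skeleton in the case $g_v = h_v+1$, and is therefore $(ht(v)-c(M)-2)$-connected. Hence the fibre hypothesis of Theorem \ref{3.6thing} holds with $m = c(M)+2$, and plugging in gives $H_k(f)$ an isomorphism for $k \leq n-m+2 = \lfloor \tfrac{z(M)-3c(M)-6}{2}\rfloor$, as needed.

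The point I expect to require the most care is that the fibre $f/v$ can genuinely be empty for low $v$ once $c(M) \geq 1$: a single isotropic unimodular vector whose hyperbolic partner is forced to carry parity (e.g. $v=(a)$ inside $\theta(1)\oplus H^{N}$ with $F = H$) admits no fully $F$-like subsequence at all, so Theorem \ref{3.6thing} does not apply to $f$ verbatim. The remedy I would use is to delete from $\mathcal{IU}(M)$ the downward closed subposet $B$ of sequences all of whose entries fail to be fully $F$-like; every sequence in $B$ has length at most $c(M)$, so $B$ lives in low dimension, $\mathcal{IU}(M)\setminus B$ retains the connectivity used above, $\textup{Link}^{+}_{\mathcal{IU}(M)}(v)$ is unaffected by the deletion because $B$ is downward closed, and $\mathcal{FIU}(M)$ still maps into $\mathcal{IU}(M)\setminus B$ with all fibres now non-empty. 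Verifying that $\mathcal{IU}(M)\setminus B$ is still sufficiently highly connected, and checking the matroid connectivity estimate for $f/v$ across the boundary cases ($h_v=0$, and $g_v=h_v$ versus $g_v=h_v+1$), is where essentially all of the remaining bookkeeping lies.
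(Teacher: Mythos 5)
Your skeleton is essentially the paper's own: it too runs Theorem \ref{3.6thing} against links controlled by Theorem \ref{alliso} and Proposition \ref{the2.12thingy}, identifies the fibres via Theorem \ref{triv}, Lemma \ref{readerfriendly} and the matroid Proposition \ref{coolmatroid} as $(ht(v)-c(M)-2)$-connected, and --- crucially --- your poset $\mathcal{IU}(M)\setminus B$ is precisely the paper's auxiliary poset $\mathcal{PIU}(M)$ of sequences containing a fully $F$-like subsequence, introduced for exactly the non-emptiness reason you identify. The connectivity of $\mathcal{IU}(M)\setminus B$, which you defer as "bookkeeping", is done in the paper by filtering via $\mathcal{IU}(M)_k \coloneq \mathcal{PIU}(M)\cup\mathcal{IU}(M)_{\geq k+1}$: each stage is obtained from the next by coning off links $\text{Link}^{+}_{\mathcal{IU}(M)}(y)$ over the discarded sequences of length $k+1\leq c(M)+1$, and the worst case $k=c(M)$ is what degrades $z(M)-c(M)$ to $z(M)-3c(M)$ in the final bound --- so this step is where the stated constant actually comes from, not a formality. (Two small slips: sequences with no fully $F$-like subsequence can have length up to $c(M)+1$, not $c(M)$; and Proposition \ref{the2.12thingy} gives that $\text{Link}^{+}(v)$ is $(\lfloor\frac{z(M)-c(M)-6}{2}\rfloor-ht(v))$-acyclic rather than $\lfloor\frac{z(M)-c(M)-6-ht(v)}{2}\rfloor$-acyclic, though your choice of $n$ is the correct one.)

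The genuine gap is the phrase "a standard simple-connectivity check". Theorem \ref{3.6thing} is purely a homology statement, so everything you have written produces only $\lfloor\frac{z(M)-3c(M)-6}{2}\rfloor$-acyclicity, and nothing in your toolkit upgrades this to connectivity. The paper spends roughly half of its proof on a direct geometric argument that $\mathcal{FIU}(M)$ is simply connected when $z(M)-3c(M)\geq 7$: given a loop, Lemma \ref{algebra} pushes its vertices into the complement of many hyperbolic summands, an auxiliary isotropic vector $g$ living in those summands is shown via Corollary \ref{makelifeeasier} to span an edge with every vertex of the loop, which cones the loop down to triangles; a refined version of the same construction then fills each triangle by a tetrahedron whose other three faces are $2$-simplices of $\mathcal{FIU}(M)$. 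This argument is specific to the situation (it needs $H$-summands manufactured by Lemma \ref{algebra} and the parity bookkeeping of Corollary \ref{makelifeeasier}) and is not replaceable by a generic device; it is the piece you must supply to complete the proposal.
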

\begin{proof}
We introduce another auxiliary poset: let $\mathcal{PIU}(M)$ be the subposet of $\mathcal{IU}(M)$ that consists of those $(v_1,...,v_k)$ that contain a fully $F$-like subsequence. By Theorem \ref{triv}, $\mathcal{PIU}(M)$ includes all sequences in $\mathcal{IU}(M)$ of length at least $c(M)+2$.

    We define a family of subposets $\{\mathcal{IU}(M)_k\}_{k \in \mathbb N}$ by setting $\mathcal{IU}(M)_k\coloneq \mathcal{PIU}(M) \cup \mathcal{IU}(M)_{\geq k+1}$. Then $\mathcal{IU}(M)_k = \mathcal{PIU}(M)$ when $k \geq c(M) +1$, and $\mathcal{IU}(M)_0= \mathcal{IU}(M)$.
    For $y \in \mathcal{IU}(M)_k - \mathcal{IU}(M)_{k+1}$, we observe that $\text{Link}_{\mathcal{IU}(M)_k}(y) = \text{Link}_{\mathcal{IU}(M)_{k+1}}(y) = \text{Link}^{+}_{\mathcal{IU}(M)_{k+1}}(y) = \text{Link}^{+}_{\mathcal{IU}(M)}(y)$. The second assertion of Theorem \ref{alliso} and Proposition \ref{the2.12thingy} applied to $d = \lfloor \frac{z(M)-c(M)-4}{2} \rfloor$ yield that $\text{Link}^{+}_{\mathcal{IU}(M)}(y)$ is $\lfloor \frac{z(M)-c(M)-6-2k}{2} \rfloor$-acyclic.

    As $\mathcal{IU}(M)_k$ is obtained by successively adding cones over  $\lfloor \frac{z(M)-c(M)-6-2k}{2} \rfloor$-acyclic links to $\mathcal{IU}(M)_{k+1}$, it follows that $H_i(\mathcal{IU}(M)_{k+1}, \mathbb Z) \to H_i(\mathcal{IU}(M)_k,\mathbb Z)$ is an isomorphism when $i \leq \lfloor \frac{z(M)-c(M)-6-2k}{2} \rfloor$. By Theorem \ref{alliso}, it follows that $\mathcal{PIU}(M)$ is $\lfloor \frac{z(M)-3c(M)-6}{2} \rfloor$-acyclic.

    Consider the inclusion $i\colon \mathcal{FIU}(M) \hookrightarrow \mathcal{PIU}(M)$. By construction, for each $y \in \mathcal{PIU}(M)$, the fibre $i/y$ is non-empty. 
    
    If $2$ is invertible in $R$, then $c(M) = 0$, and Theorem \ref{triv} implies that $i/y$ is homeomorphic to $\Delta^{ht(y)}$ (in the case $g_y = 0$ or $|y|=1$) or $S^{ht(y)-1}$ (in the case $g_y = 1$, because all subsequences are parity preserving). In particular, $i/y$ is $(ht(y)-2)$-connected. 

    Now suppose $2$ is not invertible. First, assume that $|y| \geq c(M)+2$. Writing $y = (y_1,...,y_l)$, we use Lemma \ref{readerfriendly} to choose a witnessing sequence $z = (z_1,...,z_l)$ to it with the property that $P(M) = P(e_{y,z}) \oplus P(e_{y,z}^{\perp})$. By Lemma \ref{readerfriendly}, a subsequence $y' = (y'_1,...,y'_j)$ of $y$ is parity preserving if and only if the image of the set $\{\lambda(z_i,z_i)|(y_i) \not \leq y'\}$ in the $U(R)$-vector space $P(e_{y,z})$ is a spanning set. Let $T/y$ be the poset $\{y' \in \mathcal{IU}(M)|y' \leq y, y' \text{ is parity preserving}\}$. An application of Proposition \ref{coolmatroid} shows that $T/y$ is $(ht(y) - \text{dim}_{U(R)}(P(e_{y,z}))-1)$-connected. By Theorem \ref{triv}, we have that $i/y$ is the $j$-skeleton of $T/y$, for some $j \geq ht(y)-c(M)-1$. Therefore, $i/y$ is $(ht(y)-c(M)-2)$-connected. Clearly, $i/y$ is also $(ht(y)-c(M)-2)$-connected if $|y| < c(M)+2$.

    In either case, $i/y$ is $(ht(y)-c(M)-2)$-connected. As before, from Proposition \ref{the2.12thingy} one deduces that for $y \in \mathcal{PIU}(M)$, $\text{Link}^{+}_{\mathcal{PIU}(M)}(y) = \text{Link}^{+}_{\mathcal{IU}(M)}(y)$ is $(\lfloor \frac{z(M)-c(M)-6}{2} \rfloor - ht(y))$-acyclic. By Theorem \ref{3.6thing}, it follows that $\mathcal{FIU}(M)$ is $\lfloor \frac{z(M) -3c(M)-6}{2} \rfloor$-acyclic.

    We will conclude the proof by giving a direct argument that $\mathcal{FIU}(M)$ is simply connected if $z(M)-3c(M) \geq 7$. Let $S(M)$ be the simplicial complex associated to $\mathcal{FIU}(M)$.

    Let $S(M)_{\leq 1}$ be the $1$-skeleton of $S(M)$, and pick a basepoint vertex $v = (v_1) \in S(M)_0$. By a \textit{triangle}, we will mean a loop in $S(M)_{\leq 1}$ of the form 
\[\begin{tikzcd}
	\bullet && \bullet \\
	& \bullet
	\arrow[no head, from=1-1, to=1-3]
	\arrow[no head, from=1-1, to=2-2]
	\arrow[no head, from=1-3, to=2-2]
\end{tikzcd}\] with a choice of clockwise or counter-clockwise orientation of the edges. We shall say that a loop $\alpha$ based at $v$ is \textit{triangular} if it is homotopic to a triangle.

We will first show that the triangular loops generate $\pi_1(S(M)_{\leq 1},v)$. Given a loop $\alpha$ based at $v$, we will show that it is homotopic to a concatenation of triangular loops by induction on $|\alpha|$, the number of edges in $\alpha$. If $|\alpha| = 3$, $\alpha$ is, by definition, a triangle. Consider the case $|\alpha| \geq 4$. We may describe $\alpha$ by a figure in $S(M)_{\leq 1}$ of the form 
\[\begin{tikzcd}
	{v=(v_1)} && {s=(s_1)} \\
	\\
	{u = (u_1)} && {t=(t_1)}
	\arrow[no head, from=1-1, to=1-3]
	\arrow[no head, from=1-3, to=3-3]
	\arrow["\beta", dashed, no head, from=3-1, to=1-1]
	\arrow[no head, from=3-1, to=3-3]
\end{tikzcd}\] for some path $\beta$ from $u$ to $v$.
By Lemma \ref{algebra}, we obtain a decomposition of the form $M \cong N \oplus H^{z(M) -c(M)- 4}$, with $v_1,s_1,t_1,u_1 \in N$. Choose $g = (g_1) \in \mathcal{IU}(H^{z(M)-c(M)-4})$. Let $x = (x_1) \in \{v,s,t,u\}$. We claim that there is an edge $xg$ in $S(M)_{\leq 1}$. By assumption $x_1$ is unimodular, so it is also unimodular within $N$ (by Corollary \ref{subformunimod}). Therefore, we may find a witnessing sequence $y = (y_1)$ to $x$ with $y_1 \in N$. Similarly, we can find a witnessing sequence $h = (h_1)$ to $g$ with $h_1 \in H^{z(M)-c(M)-4}$, so that $H \cong e_{g,h} \subset H^{z(M) - c(M)-4}$. The sequence $(x_1,g_1) \in \mathcal{IU}(M)$ and $(y_1,h_1)$ is a witnessing sequence to it; moreover, we have $P(M) = P(e_{x,y}^{\perp}) = P(\langle x_1,y_1,g_1,h_1\rangle^{\perp} \oplus H)=  P(\langle x_1,y_1,g_1,h_1\rangle^{\perp})$ by Lemma \ref{exampleparity} (recall that $P(H) = 0$ by Lemma \ref{standardparity}). Thus, $(x_1,g_1)$ is parity preserving. It is also clear that $\langle x_1,y_1,g_1,h_1\rangle^{\perp}$ has $H$ as a direct summand. Corollary \ref{makelifeeasier} now yields that $xg = (x_1,g_1) \in \mathcal{FIU}(M)$. Therefore, we obtain
\[\begin{tikzcd}
	{v=(v_1)} && {s=(s_1)} \\
	& {g=(g_1)} \\
	{u = (u_1)} && {t=(t_1)}
	\arrow[no head, from=1-1, to=1-3]
	\arrow[no head, from=1-1, to=2-2]
	\arrow[no head, from=1-3, to=3-3]
	\arrow[no head, from=2-2, to=1-3]
	\arrow[no head, from=2-2, to=3-1]
	\arrow[no head, from=2-2, to=3-3]
	\arrow["\beta", dashed, no head, from=3-1, to=1-1]
	\arrow[no head, from=3-1, to=3-3]
\end{tikzcd}\]
We now observe that $\alpha$ is homotopic to $$(vsgv)(vgstgv)(vgtugv)(vgu)(\beta)$$ which is the concatenation of three triangular loops and the loop $(vgu)\beta$ that has one less edge than $\alpha$, so we are done by induction.

Finally, we will show that each triangle in $S(M)_{\leq 1}$ becomes trivial in $S(M)_{\leq 2}$. This will complete the proof because $\pi_1(S(M)_{\leq 1},v) \to \pi_1(S(M),v)$ is surjective.

So consider a triangle 
\[\begin{tikzcd}
	& {s=(s_1)} \\
	{t=(t_1)} && {u=(u_1)}
	\arrow[no head, from=1-2, to=2-1]
	\arrow[no head, from=1-2, to=2-3]
	\arrow[no head, from=2-1, to=2-3]
\end{tikzcd}\]
We will use a more refined version of the argument above. By Lemma \ref{algebra}, we can find a decomposition $M \cong N \oplus H^{z(M)-c(M)-3}$ such that $s_1,t_1,u_1 \in N$. As before, we pick $g = (g_1) \in \mathcal{IU}(H^{z(M) -c(M)- 3})$ and a witnessing sequence $h = (h_1)$ to it with $h_1 \in H^{z(M) -c(M)- 3}$. Now, let $x = (x_1),y = (y_1)\in \{s,t,u\}$ be distinct. By assumption, the sequence $(x_1,y_1)$ is unimodular, so it is unimodular inside $N$. Therefore we may choose a witnessing sequence $(w_1,z_1)$ to it that also lives inside $N$. We note that $(x_1,y_1,g_1) \in \mathcal{IU}(M)$ and that $(w_1,z_1,h_1)$ is a witnessing sequence to it. Exactly as before, we argue that $(x_1,y_1,g_1)$ is parity preserving and $\langle x_1,y_1,g_1,w_1,z_1,h_1 \rangle^{\perp}$ contains $H$ as a direct summand, and then use Corollary \ref{makelifeeasier} to deduce that $(x_1,y_1,g_1) \in \mathcal{FIU}(M)$. We thus obtain the skeleton of a tetrahedral shape:
\[\begin{tikzcd}
	& {g=(g_1)} \\
	\\
	& {s=(s_1)} \\
	{t=(t_1)} && {u=(u_1)}
	\arrow[no head, from=1-2, to=3-2]
	\arrow[no head, from=1-2, to=4-1]
	\arrow[no head, from=1-2, to=4-3]
	\arrow[no head, from=3-2, to=4-1]
	\arrow[no head, from=3-2, to=4-3]
	\arrow[no head, from=4-1, to=4-3]
\end{tikzcd}\]
Except for the base triangle $stu$, all the other faces of the tetrahedron are filled up in $S(M)_{\leq 2}$ because they correspond to length three sequences in $\mathcal{FIU}(M)$. This, however, implies that the triangle $stu$ also becomes trivial in $S(M)_{\leq 2}$, finishing the proof.
\end{proof}
\begin{theorem}\label{posetconn}
    The poset $\mathcal{FU}(M)$ is $\lfloor \frac {z(M)-3c(M)-7}{2} \rfloor$-connected, and for all $z \in \mathcal{FU}(M)$, the poset $\mathcal{FU}(M)_z$ is $\lfloor \frac {z(M)-3c(M)-7-|z|}{2} \rfloor$-connected.
\end{theorem}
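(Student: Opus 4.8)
The plan is to deduce the statement from the connectivity of $\mathcal{FIU}(M)$ (Theorem \ref{orig}) by analysing the forgetful poset map, and to reduce the relative assertion to the absolute one.

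I would first dispose of the second assertion. Given $z = ((x_1,y_1),\dots,(x_l,y_l)) \in \mathcal{FU}(M)$, Proposition \ref{complementsodd} identifies $\mathcal{FU}(M)_z$ with $\mathcal{FU}(N)$ for $N := e_{x,y}^{\perp}$, and $N \in \mathcal{G}$ by the very definition of $\mathcal{FU}(M)$, so the first assertion applies to it. By Corollary \ref{complementgenus} one has $z(N) = z(M) - l$; and since the underlying isotropic sequence $(x_1,\dots,x_l)$ is fully $F$-like, hence parity preserving by Proposition \ref{iamsofedup}, one has $P(N) = P(M)$ and therefore $c(N) = c(M)$. Substituting these equalities into the bound $\lfloor\frac{z(N)-3c(N)-7}{2}\rfloor$ for $\mathcal{FU}(N)$ produces exactly $\lfloor\frac{z(M)-3c(M)-7-|z|}{2}\rfloor$, so it suffices to prove the first assertion.

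For the first assertion I would use the forgetful map $p \colon \mathcal{FU}(M) \to \mathcal{FIU}(M)$, $((v_i,w_i))_i \mapsto (v_i)_i$, which is well defined because $(w_i)_i$ witnesses that $(v_i)_i$ is fully $F$-like, and then apply Theorem \ref{3.6thing} to $p$ with the height function $ht(v) = |v|-1$ on $\mathcal{FIU}(M)$. Two inputs are needed. The \emph{fibres} $p/v$ must be non-empty: every $v \in \mathcal{FIU}(M)$ admits an $F$-like witnessing sequence $(w_i)$, and after replacing each $w_i$ by $w_i + x_i$ for a suitable $x_i \in e_{v,w}^{\perp}$ — possible since $v$ is parity preserving and $r \in P(M)$, whence $(1-u_i^2)r \in P(M) = P(e_{v,w}^{\perp})$ for the relevant units $u_i$ — one can arrange $\lambda(w_i,w_i) = r$ exactly, producing an element of $\mathcal{FU}(M)$ over $v$. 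The \emph{links} $\mathrm{Link}^{+}_{\mathcal{FIU}(M)}(v)$ must be suitably acyclic: for this I would first establish a relative version of Theorem \ref{orig}, namely that $\mathcal{FIU}(M)_w$ is roughly $\lfloor\frac{z(M)-3c(M)-6-|w|}{2}\rfloor$-connected, by identifying $\mathcal{FIU}(M)_w$ — in the manner of Proposition \ref{complement} — with $\mathcal{FIU}(N')\langle\langle w\rangle\rangle$ for $N' = e_{w,z}^{\perp}$ (where $z(N') = z(M)-|w|$ and $c(N') = c(M)$), and invoking Theorem \ref{orig} together with Proposition \ref{addset}; Proposition \ref{the2.12thingy} then converts this into the required link estimate, the uniform choice forced on the parameter $n$ (worst at $|v|=1$) being what accounts for the drop from $\lfloor\frac{z-3c-6}{2}\rfloor$ to $\lfloor\frac{z-3c-7}{2}\rfloor$.

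The crux, and the step I expect to be the main obstacle, is the acyclicity of the fibres: I would aim to show $p/v$ is $(|v|-2)$-acyclic. The mechanism is that, after fixing a valid partner sequence $\bar w$ over all of $v=(v_1,\dots,v_k)$ as above, there is a retraction $r_v \colon p/v \to Z_v$, $(\sigma,w)\mapsto (\sigma,\bar w|_\sigma)$, onto the subposet $Z_v$ of ``standard-partnered'' subsequences; since the order of $\mathcal{FU}(M)$ restricted to fixed-partner subsequences only sees the subset relation, $Z_v$ is isomorphic to the poset of non-empty subsets of $\{1,\dots,k\}$, whose realisation is a subdivided simplex (contractible, with subdivided-simplex upward links), and whose comma-fibres under $r_v$ are the smaller instances $p/(v_\sigma)$. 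One then runs an induction on $|v|$ — base case $|v|=1$, where $p/v$ is a non-empty discrete set — feeding Theorem \ref{3.6thing} applied to $r_v$. The delicate point is the circularity at the top cell of $Z_v$, whose $r_v$-fibre is $p/v$ itself, together with the precise bookkeeping of height functions so that the sharp bound $(|v|-2)$ emerges; I expect to handle this by a simultaneous induction (e.g.\ on $\mathrm{rank}(M)$) or by excising the top cell and using that $\partial\Delta^{k-1}$ is a $(k-2)$-sphere. The orthogonality constraints that partner sequences must satisfy upon restriction to a subsequence are exactly what keep $p/v$ from being contractible. Once Theorem \ref{3.6thing} yields that $H_*(p)$ is an isomorphism in the relevant range, combining with Theorem \ref{orig} gives acyclicity of $\mathcal{FU}(M)$ through degree $\lfloor\frac{z(M)-3c(M)-7}{2}\rfloor$; simple-connectivity, when this range is at least $1$, is obtained by a direct triangle-filling argument just as in the proof of Theorem \ref{orig}, and Hurewicz upgrades acyclicity to connectivity.
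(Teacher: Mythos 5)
Your reduction of the relative statement to the absolute one is sound and matches the paper's (via Proposition \ref{complementsodd}, Corollary \ref{complementgenus}, and the fact that fully $F$-like sequences preserve parity, so $c(e_{x,y}^{\perp})=c(M)$). But your route to the absolute statement is not the paper's, and it has a genuine gap exactly where you flag "the main obstacle": the acyclicity of the fibres $p/v$ of the forgetful map $p\colon \mathcal{FU}(M)\to\mathcal{FIU}(M)$. To run Theorem \ref{3.6thing} with $m=1$ you need $p/v$ to be $(|v|-2)$-acyclic for \emph{every} $v\in\mathcal{FIU}(M)$, including sequences of length close to $z(M)$; for such $v$ the complement $e_{v,w}^{\perp}$ has very small isotropic rank, so there is essentially no room to move partner sequences around, and there is no reason to expect acyclicity in degrees as high as $|v|-2$. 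Your proposed mechanism does not repair this: the retraction $r_v\colon p/v\to Z_v$ onto standard-partnered subsequences has, over the top simplex of $Z_v$, the fibre $p/v$ itself, so the induction does not close; an induction on $\textup{rank}(M)$ does not help because that top fibre is literally the same poset, not a smaller instance; and excising the top cell leaves you needing to control the links of the full-length partnered sequences, which is the same problem again. A secondary but real issue is non-emptiness of the fibres: upgrading an $F$-like witnessing sequence (where $\lambda(w_i,w_i)$ only agrees with $r$ in $S(R)$) to an exact partner sequence with $\lambda(w_i,w_j)=r\delta_{ij}$ requires correction terms $x_i\in e_{v,w}^{\perp}$ that are \emph{mutually orthogonal} with prescribed self-pairings on the nose (not just mod $2$); your sketch only produces the right parity classes and does not address mutual orthogonality.

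For contrast, the paper avoids the forgetful map entirely. It applies the nerve theorem (Theorem \ref{nerve}) to the cover of a truncation of $\mathcal{FU}(M)$ by subposets $X_v$, indexed by $v\in\mathcal{FIU}(M)$, consisting of elements of $\mathcal{FU}(M)$ whose span is orthogonal to $\langle v\rangle$ and whose underlying isotropic sequence concatenates with $v$ to a fully $F$-like sequence. Each piece $X_v$ is identified with $\mathcal{FU}(e_{v,w}^{\perp})\langle\langle v_1,\dots,v_k\rangle\times\langle v_1,\dots,v_k\rangle\rangle$ and handled by induction on $z(M)-3c(M)$ together with Proposition \ref{addset}, while the intersection posets $\mathcal{A}_z$ are identified with $\mathcal{FIU}(e_{x,y}^{\perp})$ and handled by Theorem \ref{orig}; the final condition of the nerve theorem is arranged by adjoining cone-like posets $Y_v$. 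If you want to salvage your approach you would need a genuinely new argument for the fibres of $p$, restricted at the very least to short $v$ (which would then force a larger $m$ in Theorem \ref{3.6thing} and degrade the connectivity bound); as written the proof does not go through.
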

\begin{proof}
    We replicate the approach used in the proof of \cite[Thm. 7.4]{unigrp}. The crucial tool is the nerve theorem, Theorem \ref{nerve}. We will proceed by induction on $z(M)-3c(M)$. Furthermore, it suffices to only prove the first assertion by Proposition \ref{complementsodd} and Corollary \ref{complementgenus}. The cases when $z(M)-3c(M) \leq 4$ are trivial. 
    
    For $v = (v_1,...,v_k) \in \mathcal{FIU}(M)$, let $X_v \subset \mathcal{FU}(M)$ consist of those sequences $z =((x_1,y_1),...,(x_l,y_l))$ such that $\langle v_1,...,v_k \rangle \subset e_{x,y}^{\perp}$ and $(x_1,...,x_l,v_1,...,v_k) \in \mathcal{FIU}(M)$. This definition in particular asserts that $(v_1,...,v_k) \in \mathcal{FIU}(e_{x,y}^{\perp})$ if $z \in X_v$, so we may equivalently define $X_v$ to consist of those sequences $z = ((x_1,y_1),...,(x_l,y_l))$ for which there exists a witnessing sequence $(w_1,...,w_k)$ to $(v_1,...,v_k)$ such that $((v_1,w_1),...,(v_k,w_k)) \in \mathcal{FU}(M)_{z}$. Observe that if $v \leq v'$, then $X_{v'} \subseteq X_v$. We put $$X \coloneq \bigcup _{v \in \mathcal{FIU}(M)} X_v$$
We claim that $X_{\leq z(M) -3c(M)-2} = \mathcal{FU}(M)_{\leq z(M)-3c(M)-2}$. To verify this, consider an element $((x_1,y_1),...,(x_l,y_l)) \in \mathcal{FU}(M)$ with $l \leq z(M)-3c(M)-2$. In light of Proposition \ref{complementsodd}, we need to show that $\mathcal{FU}(e_{x,y}^{\perp})$ is non-empty. By Corollary \ref{complementgenus} and Lemma \ref{paamarat}, we can select an embedding $H^2 \hookrightarrow e_{x,y}^{\perp}$. Let $(z_1,z_2) \in \mathcal{IU}(e_{x,y}^{\perp})$ and let $(z_1',z_2')$ be a witnessing sequence to it; we have $\langle z_i,z_1'\rangle \cong \langle z_2,z_2'\rangle \cong H$. It is clear that $(z_1) \in \mathcal{IU}(e_{x,y}^{\perp})$ is parity preserving, and $\langle z_1,z_1'\rangle^{\perp} \cap e_{x,y}^{\perp}$ contains $H$ as a direct summand. The claim now follows from Corollary \ref{makelifeeasier}. 
    
    It suffices to show that $X$ is $\lfloor \frac {z(M)-3c(M)-7}{2} \rfloor$-connected. We first remark that the indexing poset $\mathcal{FIU}(M)$ is $\lfloor \frac {z(M)-3c(M)-6}{2} \rfloor$-connected by Theorem \ref{orig}. For $z \in X$, set $A_z\coloneqq \{v \in \mathcal{FIU}(M)| z \in X_v\}$. Writing $z = ((x_1,y_1),...,(x_l,y_l))$, it is immediate from the definitions and Proposition \ref{complementsodd} that there is a canonical isomorphism $A_z \cong \mathcal{FIU}(e_{x,y}^{\perp})$. Further, $z(e_{x,y}^{\perp}) = z(M) -|z|$ by Corollary \ref{complementgenus} and $c(e_{x,y}^{\perp}) = c(M)$ because $z$ is fully $F$-like. Thus, Theorem \ref{orig} yields that $A_z$ is $\lfloor \frac {z(M)-3c(M)-|z|-6}{2} \rfloor$-connected. In particular, it is $(\lfloor \frac {z(M)-3c(M)-7}{2} \rfloor - |z| + 1)$-connected.

    Let $v = (v_1,...,v_k) \in \mathcal{FIU}(M)$, and let $(w_1,...,w_k)$ be an $F$-like witnessing sequence to it. We now observe that projecting onto $e_{v,w}^{\perp}$ defines a map $\phi: X_v \to \mathcal{FU}(e_{v,w}^{\perp})$. To see this, consider an element $((x_1,y_1),...,(x_l,y_l)) \in X_v$, and for each $i$, write $x_i = x_i' + s_i$, $y_i = y_i' + t_i$ where $x_i',y_i' \in e_{v,w}^{\perp}$ and $s_i,t_i \in e_{v,w}$. Any element $p \in \{s_i,t_i|i \in \{1,...,l\}\}$ may be written as a linear combination $p = \Sigma_{j=1}^{k}(a_jv_j + b_jw_j)$, and the condition that $\lambda(x_i,v_j) = \lambda(y_i,v_j) = 0$ for all $i$ and $j$ forces 
    $b_j = 0$ for all $j$. Thus, each $s_i$ and $t_i$ is a linear combination of $v_1,...,v_k$; from this it follows that $\lambda(x'_i,x'_j) = 0, \lambda (x_i',y_j') = 1$, and $\lambda (y'_i,y'_j) = \lambda(y_i,y_j)\delta_{ij}$. It remains to verify that the partly $F$-like witnessing sequence $(y_1',...,y_l')$ is $F$-like in $e_{v,w}^{\perp}$. To do this, one has to check that $e_{x',y'}^{\perp} \cap e_{v,w}^{\perp} \in \mathcal{G}$ and $e_{x',y'}^{\perp} \cap e_{v,w}^{\perp} \neq 0$. The subform  $e_{x',y'} \oplus e_{v,w}$ is non-degenerate, being a direct sum of copies of $F$, so we have $M \cong (e_{x',y'}^{\perp} \cap e_{v,w}^{\perp}) \oplus e_{x',y'} \oplus e_{v,w}$. By assumption, $(x_1,...,x_l,v_1,...,v_k) \in  \mathcal{IU}(M)$, and its span is contained in $e_{x',y'} \oplus e_{v,w}$. Therefore, one can find a witnessing sequence $(u_1,...,u_{k+l})$ for it that lives in $e_{x',y'} \oplus e_{v,w}$. Counting dimensions, we see that the orthogonal complement $\langle x_1,...,x_l,v_1,...,v_k,u_1,...,u_{k+l} \rangle ^{\perp} = e_{x',y'}^{\perp} \cap e_{v,w}^{\perp}$. Because $(x_1,...,x_l,v_1,...,v_k)$ is fully $F$-like, Corollary \ref{realstuff} implies that $\langle x_1,...,x_l,v_1,...,v_k,u_1,...,u_{k+l} \rangle^{\perp} \in \mathcal{G}$ and is non-zero, and this completes the verification.

     The map $\phi: X_v \to \mathcal{FU}(e^{\perp}_{v,w})$ refines to a map $\psi: X_v \to \mathcal{FU}(e_{v,w}^{\perp})\langle \langle v_1,...,v_k \rangle \times \langle v_1,...,v_k \rangle \rangle$ upon additionally preserving the data of the $s_i$ and $t_i$ as defined in the previous paragraph. We claim that $\psi$ is an isomorphism. It is obviously injective.
    
    To show surjectivity, consider an arbitrary element $(((x_1',y_1'),(s_1,t_1)),...,((x'_l,y'_l),(s_l,t_l)))$ in the target. Set $x_i \coloneqq x'_i + s_i$ and $y_i \coloneqq y'_i + t_i$; we need to show that $z\coloneqq ((x_1,y_1),...,(x_l,y_l)) \in X_v$. It is clear that $\lambda(x_i,x_j) = 0$, $\lambda(x_i,y_j) = \lambda(y_i,y_j) = \delta_{ij}$, and $\langle v_1,...,v_k \rangle \subseteq e_{x,y}^{\perp}$. Because $((x_1',y_1'),...,(x_k',y_k')) \in \mathcal{FU}(e_{v,w}^{\perp})$, it follows that $e_{x',y'}^{\perp} \cap e_{v,w}^{\perp} \in \mathcal{G}$ and is non-zero. The sequence $(x_1,...,x_l,v_1,...,v_k)$ is unimodular because $(x_1',...,x_l',v_1,...,v_k)$ is, and moreover, it is unimodular inside $e_{x',y'} \oplus e_{v,w}$ for the same reason. As in the argument for well-definedness of $\phi$, we can choose a witnessing sequence $(u_1,...,u_{l+k})$ for $(x_1,...,x_l,v_1,...,v_k)$ that lives in $e_{x',y'} \oplus e_{v,w}$. Now by a dimension count, we see that the orthogonal complement $\langle x_1,...,x_l,v_1,...,v_k,u_1,...,u_{l+k} \rangle ^{\perp} = e_{x',y'}^{\perp} \cap e_{v,w}^{\perp}$, which lies in $\mathcal{G}$ and is non-zero. Therefore, $(x_1,...,x_l,v_1,...,v_k) \in \mathcal{FIU}(M)$ as required.

    We have $z(e_{v,w}^{\perp}) = z(M) - |v|$ by Corollary \ref{complementgenus}, and $c(M) = c(e_{v,w}^{\perp})$ because $(v_1,...,v_k)$ is fully $F$-like. By induction, $\mathcal{FU}(e_{v,w}^{\perp})$ is $\lfloor \frac {z(M)-3c(M)-7-|v|}{2} \rfloor$-connected; in particular, it is $\text{min}\{1,\lfloor \frac{z(M)-3c(M)-7-|v|}{2}\rfloor-1\}$-connected. The inductive hypothesis also implies that for $z' \in \mathcal{FU}(e_{v,w}^{\perp})$, the complement $\mathcal{FU}(e_{v,w}^{\perp})_{z'}$ is $\lfloor \frac {z(M)-3c(M)-7-|v|-|z'|}{2} \rfloor$-connected. The conditions of Proposition \ref{addset} are thereby satisfied, and it follows that $X_v$ is $\lfloor \frac {z(M)-3c(M)-7-|v|}{2} \rfloor$-connected. In particular, it is $\text{min}\{\lfloor \frac {z(M)-3c(M)-7}{2} \rfloor - 1, \lfloor \frac {z(M)-3c(M)-7}{2} \rfloor - |v| + 1\}$-connected.

    It remains to construct, for each $v = (v_1) \in \mathcal{FIU}(M)$ with $|v| = 1$, a $\lfloor \frac {z(M)-3c(M)-7}{2} \rfloor$-connected $Y_v$ such that $X_v \subseteq Y_v \subseteq X$. This can be done in an identical manner to the proof of \cite[Thm. 7.4]{unigrp}. We detail this argument below for the convenience of the reader. We choose an $F$-like witnessing sequence $w = (w_1)$ to $v$. Let $C_{v,w}$ be the contractible subposet of $\mathcal{FU}(M)$ consisting of those sequences $((x_1,y_1),...,(x_l,y_l))$ in $X$ for which $(x_1,y_1) = (v_1,w_1)$. Let $D_{v,w} \coloneq \mathcal{FU}(e_{v,w}^{\perp})$, which canonically identifies with a subposet of $X_v$ by Proposition \ref{complementsodd}. Note that $z(e_{v,w}^{\perp}) = z(M)-1$ and $c(e_{v,w}^{\perp}) = c(M)$ as in the previous paragraph. By induction, $D_{v,w}$ is $\lfloor \frac {z(M)-3c(M)-8}{2} \rfloor$-connected. Put $Y'_v \coloneq (C_{v,w})_{\leq z(M)-3c(M)-2} \cup (D_{v,w})_{\leq z(M)-3c(M)-2}$, and $Y_v \coloneq Y'_v \cup X_v$. As the realization of the poset $(C_{v,w}-\{(v,w)\}) \cup D_{v,w}$ is homeomorphic to the cylinder on $D_{v,w}$, it follows that $C_{v,w} \cup D_{v,w}$ is contractible. Therefore, $Y'_v$ is $(z(M)-3c(M)-3)$-connected. Set $n \coloneq \lfloor \frac {z(M)-3c(M)-7}{2} \rfloor$. A Mayer-Vietoris argument yields that $Y_v$ is $(n-1)$-acyclic, and an exact sequence
    
\begin{tikzcd}
	{H_n(D_{v,w},\mathbb Z)} & {H_n(X_v,\mathbb Z)} & {H_n(Y_v,\mathbb Z)} & 0
	\arrow["\beta", from=1-1, to=1-2]
	\arrow[from=1-2, to=1-3]
	\arrow[from=1-3, to=1-4]
\end{tikzcd}

    Under the isomorphism $\psi: X_v \cong D_{v,w}\langle \langle v_1\rangle \rangle$ established above, $\alpha$ is induced by the map $l_0\colon D_{v,w} \to D_{v,w} \langle \langle v_1 \rangle \rangle$. The hypothesis of Proposition \ref{addset} is satisfied by induction, so it follows that $\beta$ is an isomorphism and hence that $Y_v$ is $n$-acyclic. When $n \geq 1$, the van Kampen theorem yields $\pi_1(Y_v) \cong \pi_1(X_v)/N$, where $N$ is the normal subgroup generated by the image of $l_0$. As $D_{v,w}$ is $(n-1)$-connected, Proposition \ref{addset} again kicks in to show that $\pi_1(Y_v) = 0$. Thus, $Y_v$ is $n$-connected and $X_v \subseteq Y_v \subseteq X$.
    
    The theorem now follows from Theorem \ref{nerve}. 
\end{proof}
We can now deduce the main result of this paper:
\begin{theorem}\label{mainl}
    Let $\mathcal{A}$ be a coefficient system on $\mathcal{G}$ of degree $r$ at $0$. Then the map induced by $- \oplus F$ on homology with coefficients in $\mathcal{A}$ $$H_i(\textup{O}(M),\mathcal{A}(M)) \to H_i(\textup{O}(M \oplus F),\mathcal{A}(M \oplus F))$$ is an epimorphism if $i \leq \frac{z(M)- 3c(M)- 5}{2} - r$ and an isomorphism if $i \leq \frac {z(M)-3c(M)- 7}{2} - r$. If $\mathcal{A}$ is split, then it is an epimorphism if $i \leq \frac{z(M) -3c(M) - 5-r}{2}$ and an isomorphism if $i \leq \frac {z(M)-3c(M) - 7-r}{2}$. If $\mathcal{A}$ is constant, then it is an epimorphism if $i \leq \frac{z(M) -3c(M) - 5}{2}$ and an isomorphism if $i \leq \frac{z(M) -3c(M)- 6}{2}$.
\end{theorem}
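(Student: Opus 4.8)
\emph{Proposal.} The plan is to deduce the statement from the homological stability machine of Section \ref{twotwo} (Theorem \ref{addlater}) applied to the category $U\mathcal{G}$, feeding in the connectivity estimate of Theorem \ref{posetconn} for the spaces of destabilisations. The one structural point to get right is that the bound in Theorem \ref{addlater} for the $n$-th stabilisation map is phrased in terms of $n$, so rather than applying the machine at $(M,F)$ I would first exhibit $M$ as $A\oplus F^{\nu}$ for a suitable $A\in\mathcal{G}$ with $\nu$ as large as Theorem \ref{posetconn} permits, and then read off $H_i(\textup{O}(M))\to H_i(\textup{O}(M\oplus F))$ as the $\nu$-th map of the stability sequence for $(A,F)$. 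Set $\nu := z(M)-3c(M)-5$; if $\nu<0$ every claimed range is empty, so I may assume $\nu\geq 0$, hence $M\neq 0$.

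The key elementary input is the inequality $g_F(M)\geq z(M)-c(M)$. To prove it, write $M\cong F^{g}\oplus N$ with $g=g_F(M)\geq 1$ (Condition 2 of Definition \ref{cancellativeform}) and $g_F(N)=0$ (by maximality of $g$). Corollary \ref{complementgenus}, with $F^{g}$ as the metabolic summand, gives $z(N)=z(M)-g$. Splitting off a maximal isotropic unimodular sequence of $N$ yields $N\cong L\oplus N_0$ with $L$ metabolic, $z(L)=z(N)$, and $g_F(L)=0$ (using Proposition \ref{met=tz} and Corollary \ref{complementgenus}). By the classification Theorem \ref{myclassification} together with Lemma \ref{standardparity}, $L\cong\big(\bigoplus_{j=1}^{d}\theta(b_j)\big)\oplus H^{\,z(N)-d}$ with $d=c(L)$. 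Were $z(N)>d$, the hyperbolic summands could be absorbed into $F^{g}$ via $F^{a}\oplus H\cong F^{a+1}$ for $a\geq 1$ (the case $s=0$ of Lemma \ref{addition}), contradicting maximality of $g$; hence $z(N)=z(L)\leq d=c(L)\leq c(M)$ (the last step by Lemma \ref{exampleparity}), and so $g_F(M)=z(M)-z(N)\geq z(M)-c(M)$.

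Since $\nu\leq z(M)-c(M)-1<g_F(M)$, I may write $M\cong F^{\nu+1}\oplus A'$ and set $A:=F\oplus A'$, so that $M\cong A\oplus F^{\nu}$, $g_F(A)\geq 1$, and hence $A\in\mathcal{G}$ by the iterated form of Condition 4 of Definition \ref{cancellativeform}; moreover $z(A)=z(M)-\nu=3c(M)+5$ and, since $g_F(A)\geq 1$ forces $P(F)\subseteq P(A)$ and hence $P(M)=P(A)$ (Lemma \ref{exampleparity}), we get $c(A)=c(M)$ and $z(A)-3c(A)=5$. Now $U\mathcal{G}$ is locally homogeneous at $(A,F)$ by Proposition \ref{localhom}. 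A $p$-simplex of $W_n(A,F)$ is a morphism $F^{\oplus p+1}\to A\oplus F^{n}$ in $U\mathcal{G}$, i.e.\ (Proposition \ref{ff}) an isometric embedding $F^{p+1}\hookrightarrow A\oplus F^{n}$ with orthogonal complement in $\mathcal{G}$; the sub-semi-simplicial set of those with nonzero complement is exactly $\mathcal{FU}(A\oplus F^{n})$, and $W_n(A,F)$ differs from it only in simplices whose complement vanishes, which (when they occur) lie in dimension $\tfrac12\operatorname{rank}(A\oplus F^{n})-1\geq z(A)+n-1$. As adjoining cells in so high a dimension cannot lower the connectivity, Theorem \ref{posetconn} gives that $W_n(A,F)$ is $\lfloor\tfrac{z(A)+n-3c(A)-7}{2}\rfloor=\lfloor\tfrac{n-2}{2}\rfloor$-connected for every $n$, so the hypotheses of Theorem \ref{addlater} hold with $k=2$. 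By Proposition \ref{ff} and the fact that $\mathcal{G}\to U\mathcal{G}$ is strong symmetric monoidal, the $\nu$-th map of the stability sequence for $(A,F)$ is precisely $H_i(\textup{O}(M),\mathcal{A}(M))\to H_i(\textup{O}(M\oplus F),\mathcal{A}(M\oplus F))$; substituting $n=\nu=z(M)-3c(M)-5$ and $k=2$ into Theorem \ref{addlater} yields all the stated ranges in the general, split and constant cases.

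The genuinely hard work sits in Theorem \ref{posetconn}; granting it, this argument is essentially bookkeeping. The only step requiring an idea is the inequality $g_F(M)\geq z(M)-c(M)$ of the second paragraph — it is what makes it possible to restart the stability sequence far enough down — and it is there that the classification of metabolic forms is indispensable. Tracking the numerical constants through the last paragraph is fiddly but routine.
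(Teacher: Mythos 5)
Your proposal is correct and follows the same overall strategy as the paper: feed the connectivity estimate of Theorem \ref{posetconn} for $\mathcal{FU}(-)$ into the Randal-Williams--Wahl machine (Theorem \ref{addlater}) via Propositions \ref{localhom} and \ref{ff}. The one place where you go beyond the paper's (very terse) deduction is the explicit reindexing: since the range in Theorem \ref{addlater} for the map out of $A\oplus X^{\oplus n}$ grows with $n$, one really does need to exhibit $M$ as $A\oplus F^{\nu}$ with $\nu=z(M)-3c(M)-5$ and $A\in\mathcal{G}$, $z(A)-3c(A)=5$, in order to read off the stated bounds for the single map $H_i(\textup{O}(M))\to H_i(\textup{O}(M\oplus F))$; the paper applies the machine at $(M,F)$ and leaves this step implicit. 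Your inequality $g_F(M)\geq z(M)-c(M)$, proved by absorbing hyperbolic summands into $F$-summands via $F\oplus H\cong F^{2}$ and invoking the classification of metabolic forms, is correct and is exactly what makes the decomposition available; your treatment of the top-dimensional simplices of $W_n(A,F)$ with vanishing complement (which $\mathcal{FU}$ excludes but $U\mathcal{G}$ allows, since $0\in\mathcal{G}$) is likewise a legitimate point that the paper glosses over. The numerology in the final substitution checks out in all three coefficient cases.
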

\begin{proof}
   Recall from Proposition \ref{localhom} that $U\mathcal{G}$ is locally homogeneous at $(M,F)$. Furthermore, Proposition \ref{ff} shows that the automorphism groups in $U\mathcal{G}$ are the same as those in $\mathcal{G}$. The face poset of the space of destabilisations $W_n(M,F)$ is canonically isomorphic to $\mathcal{FU}(M \oplus F^n)$, which is $\lfloor \frac{z(M)-3c(M)+n-7}{2} \rfloor$-connected by Theorem \ref{posetconn}. In particular, it is $\frac{n-2}{2}$-connected when $z(M)-3c(M) \geq 5$. The theorem now follows by Theorem \ref{addlater} (note that it is vacuously true when $z(M)-3c(M) < 5$).
\end{proof}
In particular, we recover Theorem \ref{mainlintro} by taking $\mathcal{G} = \text{Met}_F$ in light of Theorem \ref{twotwoiscancel}. We also remark that a version of Theorem \ref{mainl} for abelian coefficients (see Remark \ref{abeliancoefficient}) can be deduced from Theorem \ref{posetconn} using \cite[Thm. 3.4]{machine} and \cite[Thm. 4.20.]{machine}.
    It is conceivable that the slope in the stable range can be improved from $1/2$ to $2/3$ using the methods of \cite{cellular}, in line with other examples.
\begin{rem}
    In \cite[Sec. 7.3]{krannichimpr}, Krannich explains that a homological stability result akin to the above can be deduced without any conditions on the underlying symmetric monoidal groupoid (like the ones that we have imposed in Definition \ref{cancellativeform}) if a certain subsemisimplicial set of $W_n(M,F)$ (corresponding to maps $F^{p+1} \to M \oplus F^n$ whose complement is isomorphic to $M \oplus F^{n-p-1}$) is highly connected. However, in our situation, we do not know how to prove that this is the case in general, and can only prove high connectivity when the complements are allowed to vary in the $F$-cancellative groupoid $\mathcal{G}$.
\end{rem}
We conclude this section by noting that all forms in $\text{Met}_2(R)$ induce the same isomorphism in homology in the stable range. Let us restrict ourselves to constant coefficients for the sake of brevity. We remind the reader that $M$ is assumed to be an object of some $F$-cancellative groupoid $\mathcal{G}$, for a fixed $F \in \text{Met}_2(R)$. The precise statement is:
\begin{kor}\label{allsame}
    Let $F' \in \textup{Met}_2(R)$, and let $\mathcal{G}'$ be a $F'$-cancellative groupoid. Suppose $M \in \mathcal{G}'$ and $M \neq 0$. Then there exists an isomorphism $\alpha: M \oplus F \xrightarrow{\cong} M \oplus F'$ such that the following triangle commutes 
\[\begin{tikzcd}
	& {H_i(O(M),A)} \\
	{H_i(O(M \oplus F),A)} && {H_i(O(M \oplus F'),A)}
	\arrow[from=1-2, to=2-1]
	\arrow[from=1-2, to=2-3]
	\arrow["{\alpha_*}"', from=2-1, to=2-3]
\end{tikzcd}\]
for constant coefficients $A$, for $i \leq \frac{z(M) -3c(M)- 6}{2}$.
\end{kor}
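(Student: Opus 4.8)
The plan is to split the corollary into the algebraic statement that \emph{some} isometry $\alpha\colon M\oplus F\xrightarrow{\cong}M\oplus F'$ exists, and the homotopical statement that, granted this, the triangle commutes on $H_i$ in the stated range. First, the hypotheses give $g_F(M)\ge 1$ and $g_{F'}(M)\ge 1$ (Condition 2 of Definition \ref{cancellativeform}, using $M\neq 0$), so $F$ and $F'$ are orthogonal summands of $M$ and $P(F),P(F')\subseteq P(M)$ by Lemma \ref{exampleparity}; there is nothing to prove once $z(M)-3c(M)-6<0$, so I may assume $z(M)\ge c(M)$. For the existence of $\alpha$: both $M\oplus F$ and $M\oplus F'$ have rank $\operatorname{rank}(M)+2$ and, by Lemma \ref{exampleparity}, parity $P(M)$; using Lemma \ref{daityabaida} together with the decompositions $M\cong M_0\oplus F\cong M_1\oplus F'$ one would split off from $M$ a metabolic summand $L$ with $P(L)=P(M)$ and $\operatorname{rank}(L)$ large, so that $L\oplus F$ and $L\oplus F'$ are metabolic of equal rank and equal parity, hence isometric by Theorem \ref{myclassification}; adding back the common complement of $L$ in $M$ produces $\alpha$. (When $M$ is metabolic this is immediate, as in Corollary \ref{sameisointro}.) Fix one such $\alpha$.

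The homotopical statement I would reduce as follows. Any two isometries $M\oplus F\to M\oplus F'$ differ by post-composition with an element of $O(M\oplus F')$, and an inner automorphism of a group acts trivially on its homology, so $\alpha_*\colon H_i(O(M\oplus F))\to H_i(O(M\oplus F'))$ does not depend on the choice of $\alpha$. It therefore suffices to show this canonical map equals $(s_{F'})_*\circ(s_F)_*^{-1}$ in the range, where $s_F\colon O(M)\hookrightarrow O(M\oplus F)$ and $s_{F'}\colon O(M)\hookrightarrow O(M\oplus F')$ are the stabilisations; these are $H_i$-isomorphisms for $i\le\frac{z(M)-3c(M)-6}{2}$ by Theorem \ref{mainl} with constant coefficients, applied inside $\mathcal G$ and inside $\mathcal G'$ respectively.

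To identify $(s_{F'})_*\circ(s_F)_*^{-1}$, I pass through $M\oplus F\oplus F'$ using the braiding $\tau\colon F\oplus F'\xrightarrow{\cong}F'\oplus F$. The two group homomorphisms $O(M)\to O(M\oplus F\oplus F')$ given by $f\mapsto f\oplus\mathrm{id}_{F\oplus F'}$ --- read once as ``stabilise by $F$, then by $F'$'' and once as ``conjugate by $\mathrm{id}_M\oplus\tau^{-1}$ after stabilising by $F'$, then by $F$'' --- coincide on the nose. Passing to $H_i$ and inverting, in the range, the stabilisations $-\oplus F'$ on $O(M\oplus F)$ and $-\oplus F$ on $O(M\oplus F')$ (both $H_i$-isomorphisms for $i\le\frac{z(M)-3c(M)-5}{2}$, since passing from $M$ to $M\oplus F$ raises $z$ by $1$ by Corollary \ref{complementgenus} and leaves $c$ unchanged by Lemma \ref{exampleparity}), one writes $(s_{F'})_*\circ(s_F)_*^{-1}$ as the composite ``stabilise by $F'$, apply $(\mathrm{id}_M\oplus\tau)_*$, destabilise by $F$''. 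That this zig-zag is again induced by an isometry --- hence, by the previous paragraph, equals $\alpha_*$ --- is then a diagram chase, using once more that any two isometries between a fixed pair of forms agree on group homology, together with a further application of Theorem \ref{myclassification} to produce the auxiliary metabolic isometries needed to carry it out. The numerical range is exactly what Theorem \ref{mainl} gives for constant coefficients.

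The main obstacle, apart from the extraction of the large metabolic summand $L$, is that Theorem \ref{mainl} is being invoked for the ``mixed'' stabilisations $-\oplus F'$ on $M\oplus F$ and $-\oplus F$ on $M\oplus F'$, which requires $M\oplus F$ to lie in an $F'$-cancellative groupoid and $M\oplus F'$ in an $F$-cancellative one, while $\mathcal G$ need not contain $F'$ nor $\mathcal G'$ contain $F$. I would deal with this by noting that $M\oplus F$ still splits off $F'$, so $g_{F'}(M\oplus F)\ge 1$, and passing to the $F'$-cancellative groupoid of $0$ together with all forms $N$ with $g_{F'}(N)\ge 1$; its Condition 5 in Definition \ref{cancellativeform} is cancellation of $F'$ against forms already splitting off a metabolic plane, which rests on the same inputs as Lemma \ref{algebra}. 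Symmetrically for $F$, and for the further mixed stabilisations appearing in the diagram chase; when $M$ is metabolic one may instead stay within $\mathrm{Met}_F$ and $\mathrm{Met}_{F'}$. Everything else is the symmetric monoidal bookkeeping indicated above.
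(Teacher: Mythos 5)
Your proposal has two genuine gaps, both at the point you yourself flag as ``the main obstacle.'' First, your fix for the mixed stabilisations --- passing to the groupoid of $0$ together with all forms $N$ with $g_{F'}(N)\geq 1$ and asserting it is $F'$-cancellative --- is unjustified and almost certainly not available: Condition 5 of Definition \ref{cancellativeform} for that groupoid is Witt cancellation of $F'$ against arbitrary non-degenerate symmetric bilinear forms, which does not follow from anything like Lemma \ref{algebra}; the paper only obtains the rank--parity classification (hence cancellation) for \emph{metabolic} forms (Theorem \ref{myclassification}), establishes the all-forms version only over $\mathbb Z$ and finite fields of characteristic $2$ via external classification theorems (Proposition \ref{dooooooo}), and explicitly states it cannot do so even over $\mathbb Z[i]$ (hence the ad hoc groupoid $\mathcal D_F$ of Theorem \ref{cofininzi}). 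Second, even granting invertibility of the mixed stabilisations, the concluding ``diagram chase'' is where the actual content lies and it is not a diagram chase: you must compare the embedding $M\oplus F\hookrightarrow M\oplus F'\oplus F$ coming from $\mathrm{id}\oplus\tau^{-1}$ (whose orthogonal complement is $F'$) with the embedding $\alpha$ followed by $-\oplus F$ (whose complement is $F$). Since $F\not\cong F'$ in general, these embeddings are \emph{not} conjugate under $O(M\oplus F'\oplus F)$, so the ``any two isometries induce the same map on homology'' principle does not apply, and trying to check the square after precomposing with $H_i(O(M))\to H_i(O(M\oplus F))$ just reproduces the statement being proved. A smaller issue: your existence argument for $\alpha$ needs a metabolic summand $L\subseteq M$ with $P(L)\supseteq P(F)+P(F')$, but Lemma \ref{daityabaida} only produces hyperbolic summands, whose parity is $\{0\}$; this step is asserted, not proved.

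The paper's proof avoids all of this with one idea you are missing: by Lemma \ref{addition}, $F\oplus F\cong F\oplus H$, so after writing $M\cong N\oplus F$ with $N\in\mathcal G$ (Condition 4) and $z(N)=z(M)-1$ still large, the two embeddings of $N$ into $N\oplus F^2\cong N\oplus F\oplus H$ have \emph{isomorphic} complements and the relevant triangles commute on the nose from $N$; the surjectivity of $H_i(O(N))\to H_i(O(M))$ supplied by Theorem \ref{mainl} (this is exactly why the range is $\frac{z(M)-3c(M)-6}{2}$ rather than the isomorphism range for $M$ itself) then forces the triangle based at $H_i(O(M))$ to commute. This produces $\beta_*\colon H_i(O(M\oplus F))\to H_i(O(M\oplus H))$ compatible with both stabilisations; doing the same for $F'$ and setting $\alpha=(\beta')^{-1}\circ\beta$ gives both the existence of $\alpha$ and the commutativity at once, using only stabilisation by $F$ inside $\mathcal G$ and by $F'$ inside $\mathcal G'$. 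I would encourage you to rework your argument around this ``destabilise one step and use surjectivity'' mechanism and the comparison with $H$, rather than the braiding through $M\oplus F\oplus F'$.
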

\begin{proof}
     Let us assume that $z(M)-3c(M) \geq 6$; otherwise, there is nothing to do. Condition 2 of Definition \ref{cancellativeform} implies that $g_F(M) \geq 1$, so we have a splitting $M \cong N' \oplus F$. By Corollary \ref{complementgenus}, there is a sequence $(z_1,...,z_{z(M)-1}) \in \mathcal{IU}(N')$. Let $(z_1',...,z_{z(M)-1}')$ be a witnessing sequence to it in $N'$. Since $z(N') = z(M)-1 \geq c(M) \geq c(N')$, Lemma \ref{paamarat} implies that $H$ is a direct summand of $N'$. We may therefore write $N' = N'' \oplus H$. Put $N \coloneq N'' \oplus F$. Then $M \cong N \oplus F$ (using Lemma \ref{addition}) and $c(N) = c(M)$ (using Lemma \ref{exampleparity} and that $g_F(N) \geq 1$ by construction). Fix some isomorphism $\psi \colon F^2 \cong F \oplus H$. Consider the following diagram of forms:
     
\begin{tikzcd}
	&& N \\
	&& {N \oplus F} \\
	{N \oplus F^2} &&&& {N \oplus F \oplus H}
	\arrow[from=1-3, to=2-3]
	\arrow[from=1-3, to=3-1]
	\arrow[from=1-3, to=3-5]
	\arrow[from=2-3, to=3-1]
	\arrow[from=2-3, to=3-5]
	\arrow["{N \oplus \psi}"', from=3-1, to=3-5]
\end{tikzcd}

All triangles except the inner middle one commute. Taking homology, we observe that $H_i(O(N),A) \to H_i(O(N \oplus F),A)$ is an epimorphism by Theorem \ref{mainl}; therefore, the following triangle commutes:

\[\begin{tikzcd}
	&& {H_i(O(M),A)} \\
	{H_i(O(M \oplus F),A)} &&&& {H_i(O(M \oplus H),A)}
	\arrow[from=1-3, to=2-1]
	\arrow[from=1-3, to=2-5]
	\arrow["{\beta_*}"', from=2-1, to=2-5]
\end{tikzcd}\]
Here, $\beta \colon M \oplus F \xrightarrow{\cong} M \oplus H$ is the composite $M \oplus F \cong N \oplus F^2 \xrightarrow{N \oplus \psi} N \oplus F \oplus H \cong M \oplus H$.

We can repeat this argument with $F$ replaced by $F'$ to obtain a suitable isomorphism $\beta': M \oplus F' \xrightarrow{\cong} M \oplus H$, and then take $\alpha \coloneq (\beta')^{-1} \circ\beta$.
\end{proof}
Specialising to the $F$-cancellative groupoid $\text{Met}_F$ (Theorem \ref{twotwoiscancel}), we recover Corollary \ref{sameisointro} from the introduction.

There seems to be no obvious reason for the result of Corollary \ref{allsame} to hold outside the stable range. With $M$ as in the statement of the corollary, we notice that every element in the image of the map $O(M) \hookrightarrow O(M \oplus F)$ fixes a particular embedding of $F$ in $M \oplus F$; the same is true for any conjugate of this subgroup. In particular, if there is no non-trivial vector in $M$ that is fixed by every element in $O(M)$, it follows that for $F' \neq F$, the stabilisation maps $O(M) \hookrightarrow O(M \oplus F)$ and $O(M) \hookrightarrow O(M \oplus F')$ are not conjugate for any choice of isomorphism $M \oplus F \cong M \oplus F'$.
\section{Applications}
Let $R$ be a ring satisfying Assumption \ref{onlyass}. We recall from the introduction that the symmetric Grothendieck-Witt theory of $R$ can be defined as the group completion $$\mathcal{GW}^{s}(R) \coloneq (\text{Unimod}^{s}(R)^{\simeq},\oplus)^{\text{grp}}$$
 We also recall that a form $M \in \text{Unimod}^{s}(R)^{\simeq}$ is said to be \textit{cofinal} if for any $N \in \text{Unimod}^{s}(R)^{\simeq}$, there exists $n \in \mathbb Z_{>0}$ and $N' \in \text{Unimod}^{s}(R)^{\simeq}$ such that $N \oplus N' \cong M^{n}$. As explained in the introduction, the stable (co)homology of orthogonal groups of cofinal forms can be accessed through Grothendieck-Witt theory by means of the group completion theorem.
 
 We begin this section by establishing the criterion for the existence of cofinal forms stated in the introduction (Theorem \ref{cofincritintro}).
\begin{theorem}\label{cofincrit}
    The following statements are equivalent:
    \begin{enumerate}
        \item There exists a cofinal form in $\textup{Unimod}^{s}(R)^{\simeq}$ that is metabolic.
        \item There exists a cofinal form in $\textup{Unimod}^{s}(R)^{\simeq}$.
        \item There exists a form $M$ in $\textup{Unimod}^{s}(R)^{\simeq}$ such that $P(M) = R/(2)$.
        \item If $2$ is not invertible in $R$, $R/(2)$ is finite dimensional over $U(R)$.
    \end{enumerate}
    In this case, a metabolic form is cofinal if and only if its parity equals $R/(2)$.
\end{theorem}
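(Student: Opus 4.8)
The plan is to prove the cycle $(1)\Rightarrow(2)\Rightarrow(3)\Rightarrow(4)\Rightarrow(1)$ and to settle the final sentence alongside it. First I would dispose of the degenerate case in which $2$ is invertible in $R$: then $R/(2)=0$, so $P(M)=\{0\}=R/(2)$ for every $M$ (giving $(3)$), condition $(4)$ is vacuous, and — since every symmetric bilinear form over $R$ then has a unique quadratic refinement — the hyperbolic plane $H$, which is cofinal among quadratic forms, is a metabolic cofinal form, giving $(1)$ and $(2)$; likewise a nonzero metabolic form over such a ring is a sum of $k\geq 1$ hyperbolic planes (the Lagrangian being free over the PID $R$) and hence cofinal, which is the last sentence. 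So the genuine content is the case $2\notin R^{\times}$, which I assume from now on; note that then $R/(2)\neq 0$.

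The common backbone of several implications is the claim that \emph{any cofinal form $N$ satisfies $P(N)=R/(2)$}. Indeed, for each $r\in R$ there are $n$ and $N'$ with $\theta(r)\oplus N'\cong N^{n}$, and since $\lambda((0,1),(0,1))=r$ in $\theta(r)$, the image of $r$ in $R/(2)$ lies in $P(\theta(r))\subseteq P(\theta(r)\oplus N')=P(N^{n})=P(N)$ by Lemma \ref{exampleparity}; as $r$ was arbitrary, $P(N)=R/(2)$. This gives $(2)\Rightarrow(3)$ (take $M=N$) and the ``only if'' half of the last sentence at once; then $(3)\Rightarrow(4)$ is immediate from Proposition \ref{essential}, which bounds $\dim_{U(R)}P(M)$ — and hence $\dim_{U(R)}R/(2)$ — by $\textup{rank}(M)$. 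The implication $(1)\Rightarrow(2)$ is trivial.

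The substance is $(4)\Rightarrow(1)$, which I would prove together with the ``if'' half of the last sentence via a single construction. The key algebraic input is that Assumption \ref{onlyass} forces $U(R)\setminus\{0\}$ to consist precisely of the images of squares of units of $R$; combined with Lemma \ref{standardparity}, this shows that for any $s_{1},\dots,s_{m}\in R$ the parity $P\bigl(\bigoplus_{i=1}^{m}\theta(s_{i})\bigr)$ equals the $U(R)$-linear span in $R/(2)$ of the images of the $s_{i}$. Granting $(4)$, I would choose $r_{1},\dots,r_{d}\in R$, with $d\geq 1$, whose images form a $U(R)$-basis of $R/(2)$, and set $M_{0}\coloneqq\bigoplus_{i=1}^{d}\theta(r_{i})$; by Proposition \ref{met=tz} this is metabolic of rank $2d$, and $P(M_{0})=R/(2)$ by the preceding remark. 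It then remains to show that an arbitrary metabolic form $M$ with $P(M)=R/(2)$ is cofinal — this covers $M_{0}$, hence $(1)$, and also the given form in the ``if'' direction. Write $\textup{rank}(M)=2m$, so $m\geq 1$ since $M\neq 0$. Given any form $N$, the form $N\oplus(-N)$ is metabolic, of rank $2k$ say; choosing $n$ with $mn\geq k+m$, the form $N\oplus(-N)\oplus M\oplus H^{mn-k-m}$ is metabolic of rank $2mn$ and parity $R/(2)$, hence isomorphic to $M^{n}$ by Theorem \ref{myclassification} (using $P(M^{n})=P(M)$). Thus $N$ is a direct summand of $M^{n}$, so $M$ is cofinal.

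I do not expect a real obstacle here. The two substantive ingredients — the rank--parity classification of metabolic forms (Theorem \ref{myclassification}) and the joint cofinality of metabolic forms, packaged in the fact that $N\oplus(-N)$ is always metabolic — are already in hand, and only the bookkeeping of ranks and parities remains. The one step that calls for genuine care is the identification of $P\bigl(\bigoplus\theta(s_{i})\bigr)$ with a $U(R)$-span, and this is exactly the point at which Assumption \ref{onlyass} is used, through the statement that every nonzero square in $R/(2)$ is the class of the square of a unit; Lemma \ref{standardparity} handles the rest.
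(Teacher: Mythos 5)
Your proposal is correct and follows essentially the same route as the paper: the implications $(1)\Rightarrow(2)\Rightarrow(3)\Rightarrow(4)$ are handled identically, and for $(4)\Rightarrow(1)$ you, like the paper, combine the joint cofinality of metabolic forms (via $N\oplus(-N)$) with the rank--parity classification of Theorem \ref{myclassification} applied to $\bigoplus\theta(r_i)$ for a $U(R)$-spanning set. Your explicit padding $N\oplus(-N)\oplus M\oplus H^{mn-k-m}\cong M^{n}$ just spells out the detail the paper leaves to the reader.
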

\begin{proof}
$\underline{(1) \implies (2)}$: Obvious.

$\underline{(2) \implies (3)}$: If $M$ is cofinal, then for each $r \in  R/(2)$, there exists $n$ such that $\theta(r) \hookrightarrow M^n$. Thus, $r \in P(\theta(r)) \subset P(M^n) = P(M)$ by Lemma \ref{exampleparity}, and $P(M) = R/(2)$.

$\underline{(3) \implies (4)}$: This follows from Proposition \ref{essential}.

$\underline{(4) \implies (1)}$: As noted in the introduction, metabolic forms are cofinal in all forms: given $N \in \text{Unimod}^{s}(R)^{\simeq}$, we let $-N$ be the form with the same underlying $R$-module as $N$, and $\lambda_{-N}(x,y) = -\lambda_N(x,y)$ for all $x,y$; then $N \oplus (-N)$ is metabolic with the diagonal $N \to N \oplus(-N)$ serving a Lagrangian. Therefore, it suffices to exhibit a metabolic form that is cofinal among all metabolic forms. Since metabolic forms are closed under direct sums and classified by their rank and parity (Theorem \ref{myclassification}), it follows that if a metabolic form $M$ has parity $R/(2)$, then it is cofinal. We can build such an $M$ as follows - if $2$ is invertible, we can take $M \coloneq H$; else we choose a $U(R)$-spanning set $r_1,...,r_n$ of $R/(2)$ and put $M \coloneq \oplus_{i=1}^{n}\theta(r_i)$ (Lemma \ref{standardparity}).

The remaining assertion in the statement of the theorem follows immediately from an examination of the arguments above.
\end{proof}
An immediate application of this result is that a field of characteristic $2$ admits a cofinal form if and only if it is finite over its Frobenius. We proceed to discuss more examples below.
\begin{ex}\label{cofinexamb} 
    \begin{enumerate}
        \item The form $\theta(1)$ is cofinal over $\mathbb Z$ as $U(\mathbb Z) = \mathbb Z/(2)$. This may also be deduced from the classical fact that non-degenerate symmetric bilinear forms over the integers are classified by their rank, signature, and parity (see \cite[Chp. 5, Sec. 2.2, Thm. 6]{arithmetic}). 
        \item The rank one form $[1]$ is cofinal over any perfect field $F$ of characteristic $2$. Indeed, because every element is a square, the set $S(F)$ has two elements and thus the form $\theta(1)$ is cofinal. Now, $\theta(1) \cong \text{diag}(1,-1) \cong \text{diag}(1,1)$, whence the statement follows.
        \item The form $\text{diag}(1,i)$ is cofinal over $\mathbb Z[i]$. To see this, observe that $U(\mathbb Z[i]) = \mathbb F_2$ and $\mathbb Z[i]/(2) \cong \mathbb F_2[x]/(x^2)$, so $\theta(1) \oplus \theta(i)$ is cofinal. Further, $\theta(1) \cong \text{diag}(1,1)$ and $\theta(i) \cong \text{diag}(i,i)$, and the statement follows.
        \item The form $\theta(1) \cong \text{diag}(1,-1)$ is cofinal over $\mathbb Z[\omega]$. This is because $U(\mathbb Z[\omega]) = \mathbb Z[\omega]/(2)$, as we explained in the course of proving Proposition \ref{whichnumberrings}.
    \end{enumerate}
\end{ex}
Of independent interest is that all number rings, not just the ones satisfying Assumption \ref{onlyass}, admit cofinal forms:
\begin{prop}\label{allnumbring}
    For any number ring $S$, there is a cofinal form in $\textup{Unimod}^{s}(S)^{\simeq}$.
\end{prop}
\begin{proof}
     It is enough to find a metabolic form that is cofinal among all metabolic forms. The ring $S/(2)$ is finite because $S$ is a lattice over $\mathbb Z$; therefore the form $\bigoplus_{s \in S/(2)} \theta(s)$ is cofinal among all metabolic forms with free Lagrangian, as follows from Propositions \ref{thetasurj} and \ref{met=tz}. It now suffices to show that such metabolic forms are cofinal in all metabolic forms. Let $M$ be a metabolic form with Lagrangian $L$; because $S$ is a Dedekind domain, we find that $L$ is isomorphic to $S^n \oplus \mathcal{L}$ for some line bundle $\mathcal{L}$. As the Picard group $\text{Pic}(S)$ is finite, we can set $t$ to be the order of $\mathcal{L} \in \text{Pic}(S)$. The form $M^{t}$ then admits as Lagrangian the module $S^{nt} \oplus \mathcal{L}^{t} \cong S^{nt + t -1} \oplus \mathcal{L}^{\otimes t} \cong S^{(n+1)t}$, which is free. This concludes the proof.
\end{proof}

For the remainder of this section, we will assume that $R$ satisfies the conditions of Theorem \ref{cofincrit}. In this case, let $c(R)$ be $0$ or $\text{dim}_{U(R)}(R/(2))$ depending on whether $2$ is invertible in $R$ or not. Note that $c(R) = c(M)$ for any cofinal form $M$. We restrict ourselves to constant coefficients in our homological stability statements for the sake of brevity.
\begin{theorem}\label{homstabcofin}
    Let $M \in \textup{Unimod}^{s}(R)^{\simeq}$ be the cofinal form of minimal rank that is metabolic. For constant coefficients ${A}$, the map $$H_i(O(M^n),{A}) \to H_i(O(M^{n+1}),{A})$$ is an isomorphism if $i \leq \frac{(n-3)c(R)-6}{2}$ and $c(R) \neq 0$, or if $i \leq \frac{{n}-6}{2}$ and $c(R) = 0$.
\end{theorem}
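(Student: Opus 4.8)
The plan is to factor the stabilisation map $-\oplus M$ into a chain of $d \coloneqq \operatorname{rank}(M)/2$ stabilisations by single metabolic planes and to apply Theorem \ref{mainl} to each link, with the cancellative groupoid taken to be $\text{Met}_{F}$.

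First I would identify $M$ explicitly. By Theorem \ref{myclassification} together with the last assertion of Theorem \ref{cofincrit}, the form $M$ is, up to isomorphism, the unique metabolic form of minimal rank whose parity is all of $R/(2)$. If $2$ is invertible then $R/(2) = 0$ and $M = H$, so $d = 1$; if $2$ is not invertible then, by Lemma \ref{standardparity}, the parity of $\bigoplus_{i=1}^{m}\theta(r_i)$ is the $U(R)$-span in $R/(2)$ of the images of the $r_i$, so minimality of the rank forces $m = \dim_{U(R)}(R/(2)) = c(R)$ with the images of $r_1,\dots,r_{c(R)}$ forming a $U(R)$-basis, and $M \cong \bigoplus_{i=1}^{c(R)}\theta(r_i)$; thus $d = c(R)$. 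In either case I fix such a decomposition $M \cong \bigoplus_{i=1}^{d}\theta(r_i)$.

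Next, noting that the statement is vacuous for $n=0$ (the asserted range is then negative), I fix $n\ge 1$ and set $N_j \coloneqq M^n \oplus \bigoplus_{i=1}^{j}\theta(r_i)$ for $0 \le j \le d$, so that $N_0 = M^n$, $N_d = M^{n+1}$, and the stabilisation $H_i(O(M^n),A) \to H_i(O(M^{n+1}),A)$ is the composite of the maps $H_i(O(N_{j-1}),A)\to H_i(O(N_j),A)$ induced by $-\oplus \theta(r_j)$. For a fixed $j$, since $n\ge1$ the form $N_{j-1}$ is metabolic and contains $\theta(r_j)$ as a direct summand (it contains $M^n$, hence $M$, hence each $\theta(r_i)$), so it lies in the $\theta(r_j)$-cancellative groupoid $\text{Met}_{\theta(r_j)}$ of Theorem \ref{twotwoiscancel}. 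Because $N_{j-1}$ is metabolic of rank $2(nd + j - 1)$, Proposition \ref{metabolicrank} gives $z(N_{j-1}) = nd + j-1$; and $P(N_{j-1}) \supseteq P(M^n) = P(M) = R/(2)$ by Lemma \ref{exampleparity} and Theorem \ref{cofincrit}, so $c(N_{j-1}) = c(R)$ (which equals $0$ exactly when $2$ is invertible). Feeding this into the constant-coefficient clause of Theorem \ref{mainl} with $F = \theta(r_j)$, the $j$-th map is an isomorphism for $i \le \tfrac{z(N_{j-1}) - 3c(N_{j-1}) - 6}{2} = \tfrac{nd + (j-1) - 3c(R) - 6}{2}$.

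Finally I would combine the links: a composite of maps each of which is an isomorphism in a fixed range of degrees is again an isomorphism in that range, so the total map is an isomorphism for $i$ below the minimum of the $d$ bounds above, which is achieved at $j=1$. When $2$ is not invertible this minimum is $\tfrac{(n-3)c(R) - 6}{2}$ (using $d = c(R)$), and when $2$ is invertible there is a single link with bound $\tfrac{n-6}{2}$ (using $d=1$, $c(R)=0$); this is precisely the claimed range. I do not expect a genuine obstacle here: the substantive work is already contained in Theorem \ref{mainl}, ultimately in the connectivity estimate of Theorem \ref{posetconn}, and the only points needing care are the explicit decomposition of the minimal cofinal metabolic form into planes of linearly independent parity and the verification that each intermediate form $N_{j-1}$ lies in a suitable cancellative groupoid, both immediate from the classification of metabolic forms (Theorem \ref{myclassification}).
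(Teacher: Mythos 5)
Your proposal is correct and follows essentially the same route as the paper: identify the minimal-rank cofinal metabolic form as $\bigoplus_{i=1}^{c(R)}\theta(r_i)$ for a $U(R)$-basis $r_1,\dots,r_{c(R)}$ of $R/(2)$ (or $H$ when $2$ is invertible), factor $-\oplus M$ into single-plane stabilisations, check each intermediate form lies in $\text{Met}_{\theta(r_j)}$, and apply Theorem \ref{mainl} to each link, with the worst bound occurring at the first link. Your write-up is in fact slightly more explicit than the paper's about the computation of $z(N_{j-1})$ and $c(N_{j-1})$ and about why the minimum over $j$ gives exactly the stated range.
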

\begin{proof}
    In the case $c(R) = 0$, we see that $M = H$, and the result follows immediately from Theorem \ref{mainl}. The offset in our bound is slightly worse than that of \cite[Thm. 5.15]{machine}.

    Suppose $c(R) \neq 0$. We observe that $z(M) = c(M) = c(R)$, with the first equality being a consequence of Theorem \ref{cofincrit} and the minimal rank assumption. Moreover, $M \in \bigcap_{F \in \text{Met}_2(R)} \text{Met}_F$. Let $r_1,...,r_{c(R)}$ be a $U(R)$-basis of $R/(2)$. The map $O(M^n) \to O(M^{n+1})$ can be realised as the composition $$O(M^n) \to O(M^n \oplus \theta(r_1)) \to ... \to O(M^n \oplus (\bigoplus_{i=1}^{c(R)-1} \theta(r_i))) \to O(M^{n+1})$$ Each map above induces an isomorphism in $H_i(-,A)$ if $i \leq \frac{(n-3)c(R)-6}{2}$ by Theorem \ref{mainl}, and the present theorem immediately follows.
\end{proof}
As indicated by Example \ref{cofinexamb}, there could be cofinal forms of smaller rank than the one that the previous theorem concerns. To treat these, we will need to find larger $F$-cancellative groupoids than $\text{Met}_F$, which we can sometimes do through ad-hoc analysis of the specific case at hand. For instance:
\begin{prop}\label{dooooooo}
     Suppose that $R = \mathbb Z$ or $R$ is a finite field of characteristic $2$. For any $F \in \textup{Met}_2(R)$, the subgroupoid of $\textup{Unimod}^{s}(R)^{\simeq}$ consisting of all forms $M$ for which $g_F(M) \geq 1$ is $F$-cancellative. 
\end{prop}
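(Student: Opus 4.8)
The plan is to verify the five conditions of Definition \ref{cancellativeform} for the groupoid $\mathcal{G}_F$ of all non-degenerate symmetric bilinear forms $M$ with $g_F(M) \geq 1$ (together with $0$). Conditions 1, 2, and 3 are immediate: $0$ and $F$ are in $\mathcal{G}_F$ by construction, condition 2 is the defining property, and $g_F(M \oplus N) \geq g_F(M) \geq 1$ gives closure under direct sums. Condition 4 follows from Corollary \ref{complementgenus}: if $M \oplus F \in \mathcal{G}_F$ and $g_F(M) \geq 1$, then $M$ is already assumed to have an $F$-summand, so $M \in \mathcal{G}_F$ directly (here we do not even need the metabolicity input, since the hypothesis $g_F(M)\geq 1$ is exactly what is required). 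The entire content of the proposition is therefore \textbf{Condition 5}: the cancellation statement that $M \oplus F \cong N \oplus F$ with $g_F(M), g_F(N) \geq 1$ implies $M \cong N$.

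For Condition 5, the strategy is to reduce to a classification of the relevant forms over $\mathbb{Z}$ and over finite fields of characteristic $2$. First I would record that $\mathrm{rank}(M) = \mathrm{rank}(N)$, and that (by splitting off an $F$, as in the proof of Theorem \ref{twotwoiscancel}) $P(M \oplus F) = P(M)$ and likewise for $N$, so cancellation forces $P(M) = P(N)$. Over a finite field $\mathbf{k}$ of characteristic $2$ this is already enough: non-degenerate symmetric bilinear forms over such a field are classified by rank and parity — forms with parity $\{0\}$ (the ``even'' ones, i.e.\ direct sums of hyperbolic planes, existing only in even rank) are distinguished from forms with parity $\mathbf{k}$, and Witt-style cancellation holds. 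Over $\mathbb{Z}$ we must be more careful because indefinite-form classification (rank, signature, parity, via \cite[Chp.~5, Sec.~2.2, Thm.~6]{arithmetic}) only applies to \emph{indefinite} forms; but since $g_F(M) \geq 1$ and $F \in \mathrm{Met}_2(\mathbb{Z})$ is a metabolic plane (hence indefinite, with signature $0$), $M \oplus F$ and $N \oplus F$ are automatically indefinite, so they are each classified by rank, signature, and parity. Thus $M \oplus F \cong N \oplus F$ gives that $M$ and $N$ agree on all three invariants once we know signatures match — and signature is additive, so $\mathrm{sign}(M) = \mathrm{sign}(M\oplus F) = \mathrm{sign}(N \oplus F) = \mathrm{sign}(N)$. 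The remaining subtlety is that $M$ and $N$ themselves need not be indefinite, so I would invoke the same classification theorem applied to $M \oplus F$ and $N \oplus F$: two indefinite integral forms are isomorphic iff they share rank, signature, parity; cancelling the common summand $F$ using Witt cancellation for integral forms (\cite[Chp.~5]{arithmetic}) — or, more elementarily, observing that $M \cong (M \oplus F)$-minus-$F$ is determined because $M$ and $N$ with the matched invariants $(\mathrm{rank}, \mathrm{sign}, P)$ must lie in the same genus, and over $\mathbb{Z}$ indefinite forms of rank $\geq 3$ satisfy strong approximation so the genus is a single class — yields $M \cong N$.

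The main obstacle I anticipate is the honest execution of Witt-type cancellation for integral symmetric bilinear forms with $2$ not invertible: ordinary Witt cancellation can fail over $\mathbb{Z}$ for definite forms, so the argument genuinely relies on the indefiniteness of $M\oplus F$ and $N\oplus F$ (guaranteed by the presence of the metabolic plane $F$) to place us in the regime where the rank–signature–parity classification of \cite{arithmetic} applies and is a complete invariant. I would make sure to phrase the reduction so that \emph{all} forms to which a classification theorem is applied are either over a finite field or are indefinite integral forms of the requisite rank, and cite \cite[Chp.~5, Sec.~2.2, Thm.~6]{arithmetic} for the latter; for the finite-field case the classification (rank $+$ parity, two classes per even rank and one per odd rank) together with its manifest cancellation property finishes the argument. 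A clean way to package both cases uniformly is: in both situations, a non-degenerate form is determined up to isomorphism by $(\mathrm{rank}, P, \text{plus signature over }\mathbb{Z})$ \emph{among forms admitting an $F$-summand}, and all three of these invariants are additive under $\oplus$, so cancellation of $F$ is immediate once we know the invariants of $M$ and $N$ coincide — which they do, since those of $M \oplus F$ and $N \oplus F$ coincide and $F$ contributes a fixed amount to each.
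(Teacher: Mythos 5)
Your proposal is correct and follows essentially the same route as the paper, whose proof simply cites the rank--signature--parity classification of non-degenerate indefinite forms over $\mathbb{Z}$ and the rank--parity classification over finite fields of characteristic $2$ (together with the splitting-off-an-$F$ trick for comparing parities, exactly as in Theorem \ref{twotwoiscancel}). The only superfluous detour is your worry that $M$ and $N$ themselves might be definite over $\mathbb{Z}$: since $g_F(M), g_F(N) \geq 1$ and every $F \in \textup{Met}_2(\mathbb{Z})$ is indefinite of signature $0$, both forms are automatically indefinite, so the classification theorem applies to them directly and no genus or strong-approximation argument is needed.
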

\begin{proof}
         This follows from known classification theorems - over the integers, the result is that non-degenerate, indefinite, symmetric bilinear forms are classified by their rank, signature and parity (see \cite[Chp. 5, Sec. 2.2, Thm. 6]{arithmetic}), and over finite fields of characteristic $2$, the result (see \cite{ffc2} for a proof) is that there is precisely one isomorphism class of forms of odd rank, and precisely two of even rank (either a direct sum of copies of $H$ or of $[1]^{2}$). 
\end{proof}
The above result shows that one actually has homological stability with respect to stabilization by $F \in \text{Met}_2(R)$ starting with an arbitrary offset form in the case that $R$ is $\mathbb Z$ or a finite field of characteristic $2$. It is not clear to us if this holds for $R = \mathbb Z[i]$, but with some work, we can obtain larger cancellative groupoids than $\text{Met}_F$. To begin with, we observe:
\begin{lem}\label{odddiag}
    Let $F \in \textup{Met}_2(\mathbb Z[i])$, and $r \in \{1,i\}$. Then $F \oplus [r]$ is isomorphic to a diagonal form.
\end{lem}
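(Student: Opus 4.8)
The plan is to reduce everything to one explicit change of basis together with a short arithmetic check modulo $2$. By Proposition \ref{tzr} we may write $F \cong \theta(s)$ for some $s \in \mathbb{Z}[i]$; fix the standard basis $\{g_1,g_2\}$ of $\theta(s)$ (so $\lambda(g_1,g_1)=0$, $\lambda(g_1,g_2)=1$, $\lambda(g_2,g_2)=s$) and a generator $f$ of $[r]$ with $\lambda(f,f)=r$. The whole point is that $\theta(s)\oplus[r]$ always contains the unimodular vector $g_1+f$, whose square under $\lambda$ is the unit $r$.

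First I would prove a \emph{shift lemma}: for any unit $a\in\mathbb{Z}[i]$ and any $s$, one has $\theta(s)\oplus[a]\cong[a]\oplus\theta(s-a^{-1})$. This is done by exhibiting the basis $v:=g_1+f$, $w_1:=g_1$, $w_2:=g_2-a^{-1}v$ of $\theta(s)\oplus[a]$. A routine computation gives $\lambda(v,v)=a$, $\lambda(v,w_1)=\lambda(v,w_2)=0$, $\lambda(w_1,w_1)=0$, $\lambda(w_1,w_2)=1$, and $\lambda(w_2,w_2)=s-a^{-1}$; since the change-of-basis matrix has determinant a unit, $\{v,w_1,w_2\}$ is a basis realizing the Gram matrix $\mathrm{diag}\bigl([a],\theta(s-a^{-1})\bigr)$.

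Next I would iterate the shift lemma on the $\theta$-summand to obtain $F\oplus[r]\cong[r]\oplus\theta(s-kr^{-1})$ for every $k\ge0$. Because $r\in\{1,i\}$ and $r^{-1}\equiv r\pmod 2$, the class of $s-kr^{-1}$ in $\mathbb{Z}[i]/(2)=\{0,1,\bar i,1+\bar i\}$ takes only the two values $\bar s$ and $\bar s+\bar r$. The images of the units of $\mathbb{Z}[i]$ are exactly $\{1,\bar i\}$, and a two-line check over the four possibilities for $\bar s$ shows that for $k=0$ or $k=1$ the class $\overline{s-kr^{-1}}$ lies in $\{1,\bar i\}$, i.e. $t:=s-kr^{-1}\equiv u\pmod 2$ for some unit $u$, say $t=u+2m$. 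In $\theta(t)$ the vector $g_2-mg_1$ is then unimodular with $\lambda$-square $t-2m=u$, a unit, so it spans a non-degenerate rank-one summand of $\theta(t)$; its orthogonal complement is a non-degenerate rank-one form over the PID $\mathbb{Z}[i]$, hence $[u']$ for a unit $u'$ (by the elementary facts recalled at the start of Section \ref{twothree}). Thus $\theta(t)\cong\mathrm{diag}(u,u')$, and $F\oplus[r]\cong[r]\oplus\theta(t)\cong\mathrm{diag}(r,u,u')$ is diagonal.

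The only genuinely delicate step is the reduction modulo $2$: one must check that after subtracting a suitable multiple of $r^{-1}$ one lands in a reduction class represented by a unit, equivalently that one avoids the classes $0$ and $1+\bar i$ — these are precisely the classes of the two non-diagonalizable metabolic planes $H=\theta(0)$ and $G=\theta(1+i)$, so some shifting is genuinely necessary. Everything else is a mechanical base-change computation. As an alternative I could skip the iteration and simply run the shift lemma once for each of the four isomorphism classes of $F$, but the uniform argument above avoids the case analysis.
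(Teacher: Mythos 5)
Your proof is correct, and it takes a genuinely different route from the paper. The paper's proof of Lemma \ref{odddiag} is a brute-force case analysis: it observes that $\theta(1)\cong\mathrm{diag}(1,1)$ and $\theta(i)\cong\mathrm{diag}(i,i)$ make those cases trivial, and then writes down four explicit coordinate isomorphisms, one for each of the remaining pairs $(\theta(0),[1])$, $(\theta(0),[i])$, $(\theta(1+i),[1])$, $(\theta(1+i),[i])$, leaving the reader to verify them on Gram matrices. Your argument replaces all of this with a single reusable computation --- the shift lemma $\theta(s)\oplus[a]\cong[a]\oplus\theta(s-a^{-1})$, whose change of basis I checked has determinant $1$ and realizes the claimed Gram matrix --- followed by the observation that iterating it moves the parameter through the classes $\bar s$ and $\bar s+\bar r$ in $\mathbb{Z}[i]/(2)$, one of which is always represented by a unit, at which point $\theta(t)$ with $t=u+2m$ splits off the non-degenerate rank-one summand spanned by $g_2-mg_1$ and is therefore $\mathrm{diag}(u,-u)$. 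What your approach buys is uniformity and transparency: it isolates exactly where the special arithmetic of $\mathbb{Z}[i]$ enters (the unit classes mod $2$ are $\{1,\bar i\}$, so only the classes of $H$ and $\theta(1+i)$ must be escaped by shifting), and the shift lemma itself is ring-independent. What the paper's version buys is that the explicit isomorphisms are reused later (in the proof of Theorem \ref{cofininzi}, the map $\mathrm{diag}(1,1,1,i)\cong\theta(0)\oplus[i]\oplus[1]\cong\mathrm{diag}(i,i,i,1)$ is extracted from them to compare discriminants); with your proof one would instead derive that isomorphism by tracking the shift lemma through two applications, which is doable but slightly less immediate.
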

\begin{proof}
    The cases that $F = \theta(1)$ and $F = \theta(i)$ are clear, because $\theta(1) \cong \text{diag}(1,1)$ and $\theta(i) \cong \text{diag}(i,i)$. In the case $F = \theta(0)$, there are isomorphisms $\psi_1\colon F \oplus [1] \to \text{diag}(1,1,1)$ given by $(x_1,x_2,x_3) \mapsto (x_1 + x_2+ix_3,ix_1-x_3,ix_2-x_3)$ and $\psi_i\colon F \oplus [i] \to \text{diag}(i,i,i)$ given by $(x_1,x_2,x_3) \mapsto (x_1-ix_2+ix_3,ix_1-x_3,x_2-x_3)$. In the only remaining case that $F = \theta(1+i)$, we have isomorphisms $\phi_1\colon F \oplus [1] \to \text{diag}(i,i,1)$ given by $(x_1,x_2,x_3) \mapsto (x_1-x_3,ix_1-x_2-ix_3,x_2+x_3)$ and $\phi_i \colon F \oplus [i] \to \text{diag}(1,1,i)$ given by $(x_1,x_2,x_3) \mapsto (ix_1-x_3,x_1+x_2+ix_3,x_2-x_3)$.
\end{proof}
\begin{lem}\label{representzero}
    Let $M$, $D$, and $D'$ be forms in $\textup{Unimod}^{s}({\mathbb Z[i]})^{\simeq}$ such that $D$ and $D'$ are diagonal and $M \oplus D \cong D'$. Then for any non-degenerate rank one subform $E \subseteq D$, the form $M \oplus E$ is isomorphic to a diagonal form.
\end{lem}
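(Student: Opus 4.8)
The plan is to translate the hypothesis into a statement about forms carrying a rank-one direct summand, remove the diagonal part by an induction, and use Lemma~\ref{odddiag} to absorb any leftover metabolic part. First note that over $\mathbb{Z}[i]$ there are exactly two isomorphism classes of non-degenerate rank-one forms, $[1]$ and $[i]$, distinguished by their parity (the squares of the units being $\{\pm1\}$); write $E\cong[r]$ with $r\in\{1,i\}$. Since $E$ is non-degenerate, $D\cong E\oplus E^{\perp}$, hence $E^{\perp}\oplus E\cong D$ is diagonal, and feeding this into $(M\oplus E)\oplus E^{\perp}=M\oplus D\cong D'$ yields $(M\oplus E)\oplus D\cong D'\oplus E$, with both $D$ and $D'\oplus E$ diagonal. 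So it suffices to prove: any form $N$ over $\mathbb{Z}[i]$ that has $[r]$ as a direct summand and satisfies $N\oplus D_1\cong D_2$ with $D_1,D_2$ diagonal is itself diagonal; we then apply this to $N=M\oplus E$, whose $[r]$-summand is $E$.

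As an auxiliary step I would establish, essentially by iterating Lemma~\ref{odddiag}, that for any $r\in\{1,i\}$ and any metabolic form $T$ over $\mathbb{Z}[i]$ the form $[r]\oplus T$ is diagonal, and in fact is a diagonal form with a coordinate entry isomorphic to $[r]$. Indeed, by Proposition~\ref{met=tz} one writes $T$ as a sum of metabolic planes; over $\mathbb{Z}[i]$ each metabolic plane is one of $H,[1]^{2},[i]^{2},G$, and $[r]$ plus any one of these is diagonal with an $[r]$-entry ($[r]\oplus H\cong[r]^{3}$; $[r]\oplus G$ is Lemma~\ref{odddiag}; the remaining two are immediate). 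One then peels the planes off one at a time, each time regenerating an $[r]$-entry with which to attack the next plane.

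For the main claim I would induct on $\operatorname{rank}(D_1)$. The case $\operatorname{rank}(D_1)=0$ is trivial. If $D_1\cong[a]\oplus\widetilde D_1$, then $N\oplus[a]$ still carries $[r]$ as a summand and $(N\oplus[a])\oplus\widetilde D_1\cong D_2$, so by induction $N\oplus[a]$ is diagonal; this reduces to the case $N\cong[r]\oplus N_0$ and $N\oplus[a]\cong D_2$ with $D_2$ diagonal. The given isometry exhibits $[r]\oplus[a]$ as a non-degenerate subform of the diagonal form $D_2$ whose orthogonal complement is isomorphic to $N_0$; the crux is to deduce that this forces $N_0\cong(\text{diagonal})\oplus(\text{metabolic})$, either from a structural description of forms over $\mathbb{Z}[i]$ or by conjugating the embedded copy of $[r]\oplus[a]$ within $D_2$ toward a coordinate position. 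Granting it, $N\cong(\text{diagonal})\oplus\bigl([r]\oplus(\text{metabolic})\bigr)$, which is diagonal by the auxiliary step, and we are done.

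The main obstacle is exactly this last description of the complement $N_0$; the $[r]$-summand hypothesis is genuinely needed here, not cosmetic. Without it the statement is false — e.g.\ the isometry $H\oplus[1]\cong[1]^{3}$ realizes the non-diagonal form $H$ as a complement of a rank-one subform of a diagonal form — so any argument must exploit the rank-one summand; the underlying issue is the failure of Witt cancellation for $\mathbb{Z}[i]$-lattices, which the rank-one summand together with Lemma~\ref{odddiag} is precisely what tames.
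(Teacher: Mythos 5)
Your reduction to the case $D_1=[a]$ of rank one is sound, and your auxiliary step (iterating Lemma~\ref{odddiag} to show $[r]\oplus T$ is diagonal with a surviving $[r]$-entry for $T$ metabolic) is correct and is indeed how the paper absorbs the metabolic part. But the argument has a genuine gap exactly at what you call the crux: you need that the orthogonal complement $N_0$ of $[r]\oplus[a]$ in a diagonal form $D_2$ decomposes as $(\text{diagonal})\oplus(\text{metabolic})$, and you offer only the hope that this follows ``from a structural description of forms over $\mathbb{Z}[i]$ or by conjugating the embedded copy \ldots toward a coordinate position.'' Neither suggestion is carried out, and neither is routine: every form splits as $(\text{anisotropic})\oplus(\text{metabolic})$ by Corollary~\ref{complementgenus}, so your claim amounts to showing the anisotropic part of $N_0$ is diagonal, which is essentially a restatement of the lemma one rank lower. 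Your own example $H\oplus[1]\cong[1]^3$ shows that complements of non-degenerate subforms of diagonal forms need not be diagonal, so some quantitative input is unavoidable here.

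The paper closes precisely this gap with an isotropic-rank computation that your proposal lacks: $z([1]^n\oplus[i]^m)=\lfloor n/2\rfloor+\lfloor m/2\rfloor$, so every diagonal form of rank at least $3$ has non-zero isotropic rank. Combined with Corollary~\ref{complementgenus} (after reducing to $D$ metabolic and $z(M)=0$ via your auxiliary step), this gives $0=z(M)\geq\frac{\operatorname{rank}(M)-2}{2}$, forcing $\operatorname{rank}(M)\leq 2$; in the rank-two case $N=M\oplus E$ has rank $3$ and hence $z(N)=1$, so $N$ contains a metabolic plane with a rank-one complement, and Lemma~\ref{odddiag} finishes. To repair your write-up you would need to import this rank bound (or an equivalent one) into your inductive step; without it the induction bottoms out at an unproven claim.
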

\begin{proof}
    We make a preliminary observation: every non-degenerate diagonal form over $\mathbb Z[i]$ is of the form $[1]^n \oplus [i]^{m}$, and $z([1]^n \oplus [i]^{m}) = \lfloor n/2 \rfloor + \lfloor m/2 \rfloor$, as can be checked using the isomorphisms $\theta(1) \cong \text{diag}(1,1)$ and $\theta(i) \cong \text{diag}(i,i)$, together with Corollary \ref{complementgenus} and the fact that the isotropic rank of the diagonal form $\text{diag}(1,i)$ equals $0$. In particular, every diagonal form of rank at least $3$ has non-zero isotropic rank.

    Choose a decomposition $M \cong M' \oplus M''$ such that $z(M') = 0$ and $M''$ is metabolic (using Corollary \ref{complementgenus}). An iterative application of Lemma \ref{odddiag} shows that the $M'' \oplus [r]$ is isomorphic to a diagonal form for $r \in \{1,i\}$. Therefore, we reduce to the case that $z(M) = 0$.

    We may assume without loss of generality that $D$ is metabolic. Corollary \ref{complementgenus} gives $z(M) = z(D') - \text{rank}(D)/2 \geq \frac{\text{rank}(D')- \text{rank}(D)}{2}-1 = \frac{\text{rank}(M)-2}{2}$, with the inequality following from the first paragraph. Thus, we only have to deal with the case $\text{rank}(M) = 2$.

    Let $E$ be a non-degenerate rank one subform of $D$, and consider the form $N \coloneq M \oplus E$. There exists a diagonal form $E'$ (for instance, the orthogonal complement of $E$ in $D$) such that $N \oplus E' \cong D'$. Since $\text{rank}(N) = 3$, the previous paragraph informs us that $z(N) = 1$. Hence, $N$ has a summand in $\text{Met}_2(R)$ and it follows that $N$ is isomorphic to a diagonal form by Lemma \ref{odddiag}.
\end{proof}
\begin{theorem}\label{cofininzi}
    Let $R = \mathbb Z[i]$ and $F = \theta(r)$ for $r \in \{1,i\}$. Let $\mathcal{D}_F$ be the full subgroupoid of $\textup{Unimod}^{s}{(\mathbb Z[i])}^{\simeq}$ consisting of $0$ and all diagonal forms $M$ with $g_F(M) \geq 1$. Then $\mathcal{D}_F$ is $F$-cancellative.
\end{theorem}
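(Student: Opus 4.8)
The plan is to verify the five conditions of Definition \ref{cancellativeform} for $\mathcal{G} = \mathcal{D}_F$ in turn; the first four are short and the fifth is the crux. Since $F = \theta(r)$ with $r \in \{1,i\}$ and $\theta(1) \cong \mathrm{diag}(1,1)$, $\theta(i) \cong \mathrm{diag}(i,i)$ (Example \ref{cofinexamb}), the form $F$ is diagonal with $g_F(F) = 1$, so $0, F \in \mathcal{D}_F$ (Condition 1); Condition 2 is immediate from the definition of $\mathcal{D}_F$; and Condition 3 holds because an orthogonal sum of diagonal forms is diagonal and, if $M \neq 0$, an $F$-summand of $M$ persists in $M \oplus N$. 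For Condition 4, suppose $M \oplus F \in \mathcal{D}_F$ (hence diagonal, as its rank exceeds $0$) and $g_F(M) \geq 1$; writing $M \cong M' \oplus F$ gives $M' \oplus F^2 \cong M \oplus F$ with $F^2 \cong \mathrm{diag}(r,r,r,r)$ diagonal, so Lemma \ref{representzero} applied with the non-degenerate rank-one subform $E = [r] \subseteq F^2$ shows that $M' \oplus [r]$ is diagonal, whence $M \cong (M' \oplus [r]) \oplus [r]$ is diagonal and lies in $\mathcal{D}_F$.

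For Condition 5 --- the cancellation $M \oplus F \cong N \oplus F \Rightarrow M \cong N$ for $M, N \in \mathcal{D}_F$ --- I would first establish a classification of diagonal forms over $\mathbb{Z}[i]$: \emph{such a form is determined up to isomorphism by its rank, its parity (Definition \ref{parity}), and the class of the determinant of a Gram matrix in $\mathbb{Z}[i]^{\times}/(\mathbb{Z}[i]^{\times})^{2}$}. Granting this, Condition 5 follows at once: $\mathrm{rank}(M) = \mathrm{rank}(N)$ is clear; since $g_F(M) \geq 1$ forces $P(F) \subseteq P(M)$, Lemma \ref{exampleparity} gives $P(M \oplus F) = P(M) + P(F) = P(M)$ and likewise $P(N \oplus F) = P(N)$, so $P(M) = P(N)$; and since the determinant is multiplicative under $\oplus$ and that of $F$ is a unit, the determinant classes of $M$ and $N$ coincide. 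The classification then forces $M \cong N$. (One may assume $M, N \neq 0$, since $M = 0$ forces $\mathrm{rank}(N) = 0$.)

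It remains to prove the classification, and this is where the real work lies. Every unit of $\mathbb{Z}[i]$ is $\pm 1$ or $\pm i$, and $[-1] \cong [1]$, $[-i] \cong [i]$ (rescale by $i$), so a diagonal form is isomorphic to $[1]^n \oplus [i]^m$. Using $[1]^2 \cong \theta(1)$, $[i]^2 \cong \theta(i)$, together with the isomorphisms $\psi_1, \psi_i$ from the proof of Lemma \ref{odddiag} (which give $[1]^3 \cong \theta(0) \oplus [1]$ and $[i]^3 \cong \theta(0) \oplus [i]$), one obtains $[1]^3 \oplus [i] \cong \theta(0) \oplus [1] \oplus [i] \cong [1] \oplus [i]^3$, and hence the shift relation $[1]^n \oplus [i]^m \cong [1]^{n-2} \oplus [i]^{m+2}$ whenever $n \geq 3$ and $m \geq 1$. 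Iterating the shift collapses any $[1]^n \oplus [i]^m$ with $n, m \geq 1$ to $[1] \oplus [i]^{n+m-1}$ or $[1]^2 \oplus [i]^{n+m-2}$ according to whether $n$ is odd or even, and together with $[1]^{2k} \cong \theta(1)^k$, $[i]^{2k} \cong \theta(i)^k$ and the classification of metabolic forms by rank and parity (Theorem \ref{myclassification}) one checks that every diagonal form is isomorphic to one of the normal forms $0$, $[1]^k$, $[i]^k$, $[1]^{k-1} \oplus [i]$, $[1]^{k-2} \oplus [i]^2$, and that these are pairwise distinguished by (rank, parity, determinant class). The main obstacle is precisely this reduction-and-separation step --- confirming that the shift relation and Theorem \ref{myclassification} identify every diagonal form with one of the listed normal forms, and that rank, parity, and determinant suffice to tell them apart --- since once the classification is in hand, all five conditions of Definition \ref{cancellativeform} are routine.
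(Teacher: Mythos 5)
Your proposal is correct and follows essentially the same route as the paper: Condition 4 via Lemma \ref{representzero} applied to $M' \oplus F^2 \cong M \oplus F$, and Condition 5 via the observation that rank, parity and discriminant are preserved, combined with the relation $[1]^3 \oplus [i] \cong \theta(0)\oplus[1]\oplus[i] \cong [1]\oplus[i]^3$ extracted from the maps $\psi_1,\psi_i$ of Lemma \ref{odddiag} to show these invariants classify diagonal forms over $\mathbb Z[i]$. Your write-up merely spells out the resulting normal forms more explicitly than the paper does.
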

\begin{proof}
    Conditions 1, 2 and 3 of Definition \ref{cancellativeform} are immediate (recall $F \cong \text{diag}(r,r)$).

    To verify Condition 4, suppose $M \oplus F \in \mathcal{D}_F$ and $g_F(M) \geq 1$. Writing $M = N \oplus F \cong N \oplus \text{diag}(r,r)$, an application of Lemma \ref{representzero} shows that $N \oplus [r]$ is isomorphic to a diagonal form, from which it follows that $M \in \mathcal{D}_F$.

    It remains to check Condition 5. Suppose $M,N \in \mathcal{D}_F$ satisfy $M \oplus F \cong N \oplus F$. Clearly $M = 0$ if and only if $N = 0$. Otherwise, $\text{rank}(M) = \text{rank}(N)$, and since $g_F(M)$ and $g_F(N)$ are at least $1$, we also have $P(M) = P(M\oplus F) = P(N \oplus F) = P(N)$. The fact that $M \oplus F \cong N \oplus F$ also implies that both $M$ and $N$ have the same discriminant. The maps in the proof of Lemma \ref{odddiag} furnish an isomorphism $\text{diag}(1,1,1,i) \cong \theta(0) \oplus [i] \oplus [1] \cong \text{diag}(i,i,i,1)$, whence it follows that diagonal forms of the same parity, rank, and discriminant are isomorphic. Thus $M \cong N$.
\end{proof}
\begin{kor}\label{useintro} Let $R = \mathbb Z[i]$. For constant coefficients ${A}$, the map $$H_i(O(\textup{diag}(1,i)^n),{A}) \to H_i(O(\textup{diag}(1,i)^{n+1}),{A})$$ is an isomorphism if $i \leq \frac{n-12}{2}$.
\end{kor}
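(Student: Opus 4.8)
The plan is to realise a \emph{double} stabilisation by $\text{diag}(1,i)$ as a composite of two metabolic--plane stabilisations and then descend to the single one. The form $\text{diag}(1,i)$ is not metabolic, so Theorem~\ref{mainl} does not apply to $-\oplus\text{diag}(1,i)$ directly; but using $\theta(1)\cong\text{diag}(1,1)$ and $\theta(i)\cong\text{diag}(i,i)$ one has $\text{diag}(1,i)^{2}\cong\text{diag}(1,1,i,i)\cong\theta(1)\oplus\theta(i)$. Fixing an isomorphism $\gamma\colon\text{diag}(1,i)^{2}\xrightarrow{\cong}\theta(1)\oplus\theta(i)$, for any form $M$ the isometry $\text{id}_{M}\oplus\gamma$ induces a group isomorphism $O(M\oplus\text{diag}(1,i)^{2})\cong O(M\oplus\theta(1)\oplus\theta(i))$ intertwining the stabilisation $-\oplus\text{diag}(1,i)^{2}$ with $(-\oplus\theta(1))$ followed by $(-\oplus\theta(i))$. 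Hence on homology, with any coefficients, the former is an isomorphism (resp.\ epimorphism) in a given degree if and only if the latter composite is.

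Write $D_{n}\coloneqq\text{diag}(1,i)^{n}\cong[1]^{n}\oplus[i]^{n}$. For $n\geq 1$ one has $P(D_{n})=\mathbb Z[i]/(2)$, so $c(D_{n})=2$, and $z(D_{n})=2\lfloor n/2\rfloor$ by the computation $z([1]^{a}\oplus[i]^{b})=\lfloor a/2\rfloor+\lfloor b/2\rfloor$ from the proof of Lemma~\ref{representzero}; likewise $D_{n}\oplus\theta(1)\cong[1]^{n+2}\oplus[i]^{n}$ and $D_{n}\oplus\theta(i)\cong[1]^{n}\oplus[i]^{n+2}$ still have complexity $2$ and isotropic rank $z(D_{n})+1$. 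For $n\geq 2$ all of these are diagonal forms containing $\theta(1)$ (resp.\ $\theta(i)$) as a summand, hence lie in the $F$-cancellative groupoid $\mathcal D_{\theta(1)}$ (resp.\ $\mathcal D_{\theta(i)}$) of Theorem~\ref{cofininzi}, and Theorem~\ref{mainl} for constant coefficients makes each metabolic--plane stabilisation among them an isomorphism on $H_{i}$ for $i\leq\frac{z-12}{2}$ and an epimorphism for $i\leq\frac{z-11}{2}$, where $z$ is the isotropic rank of the source. Composing the two and invoking the comparison of the first paragraph, the map $H_{i}(O(D_{m}))\to H_{i}(O(D_{m+2}))$ induced by $-\oplus\text{diag}(1,i)^{2}$ is an isomorphism for $i\leq\frac{z(D_{m})-12}{2}$ and an epimorphism for $i\leq\frac{z(D_{m})-11}{2}$.

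Let $\phi_{m}\colon H_{i}(O(D_{m}))\to H_{i}(O(D_{m+1}))$ denote the single stabilisation. Since $\phi_{m+1}\circ\phi_{m}$ is an isomorphism in the above range, $\phi_{m}$ is injective there; since $\phi_{m}\circ\phi_{m-1}$ is an epimorphism in its range, $\phi_{m}$ is surjective there. Substituting $z(D_{m})=2\lfloor m/2\rfloor$ yields that $\phi_{m}$ is an isomorphism for $i\leq\lfloor\frac{m-13}{2}\rfloor$, which is exactly the asserted bound $i\leq\frac{n-12}{2}$ when $n$ is odd, but falls one unit short when $n$ is even (the loss coming from the fact that $z(D_{n})$ drops by two, not one, from an even to the preceding odd power). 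For even $n$ I would close the gap with two commutative squares of groups: the square with horizontal maps $-\oplus\theta(1)$ and vertical maps $\phi_{\bullet}$, followed by the square with horizontal maps $-\oplus\theta(i)$ and vertical maps the single $\text{diag}(1,i)$-stabilisation. For even $n$ all four horizontal maps have source of isotropic rank $\geq n$, hence are isomorphisms on $H_{i}$ for $i\leq\frac{n-12}{2}$; so in that range $H_{i}(\phi_{n})$ is an isomorphism if and only if $H_{i}(\phi_{n+2})$ is, and $\phi_{n+2}$ is an isomorphism for $i\leq\lfloor\frac{(n+2)-13}{2}\rfloor=\frac{n-12}{2}$ by the preceding sentence. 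This gives the corollary.

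The only genuine subtlety is this last bookkeeping: the isotropic rank $z(D_{n})=2\lfloor n/2\rfloor$ behaves irregularly under a single $\text{diag}(1,i)$-stabilisation, so the two-out-of-three argument must be refined by the commuting-square comparison in order to transfer stability for the ``$\theta(1)$-then-$\theta(i)$'' composite to the single $\text{diag}(1,i)$-stabilisation with the sharp offset. Everything else is a direct combination of Theorems~\ref{mainl} and~\ref{cofininzi} with the elementary isomorphisms $\theta(1)\cong\text{diag}(1,1)$ and $\theta(i)\cong\text{diag}(i,i)$.
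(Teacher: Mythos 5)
Your proof is correct, and its main ingredients coincide with the paper's: the decomposition $\text{diag}(1,i)^2 \cong \theta(1)\oplus\theta(i)$, the $F$-cancellative groupoids $\mathcal D_{\theta(1)}$ and $\mathcal D_{\theta(i)}$ from Theorem \ref{cofininzi}, and Theorem \ref{mainl} applied to each metabolic-plane stabilisation. Where you diverge is the descent from the double stabilisation to the single one. The paper considers the chain $H_i(O(\text{diag}(1,i)^{2n})) \to H_i(O(\text{diag}(1,i)^{2n+1})) \to H_i(O(\text{diag}(1,i)^{2n+2})) \to H_i(O(\text{diag}(1,i)^{2n+3}))$ in which both consecutive composites are isomorphisms for $i \leq n-6$, and invokes the $2$-out-of-$6$ property: this forces the \emph{middle} map to be an isomorphism (split epi from the first composite, split mono from the second), whence all three are, in the full range of the composites. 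You instead extract only ``composite iso $\Rightarrow$ first map injective'' and ``composite epi $\Rightarrow$ second map surjective,'' which is genuinely weaker: the surjectivity range is governed by $z(D_{m-1})$, and since $z(D_m)=2\lfloor m/2\rfloor$ drops by $2$ from even $m$ to $m-1$, you lose one degree precisely for even $m$. Your repair — the two commuting squares comparing $\phi_n$ with $\phi_{n+2}$ via horizontal $\theta(1)$- and $\theta(i)$-stabilisations, all four of which are isomorphisms in the range $i\leq\frac{n-12}{2}$ because their sources have isotropic rank $\geq n$ — is sound and non-circular (the needed range for $\phi_{n+2}$ comes from composites with larger isotropic rank). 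So your argument reaches the stated bound, but the paper's $2$-out-of-$6$ step renders both the parity case analysis and the commuting-square patch unnecessary; you may want to adopt it for economy.
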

\begin{proof}
 Let $M$ be the form $\text{diag}(1,i)^2$; it is the cofinal metabolic form of minimal rank (see Example \ref{cofinexamb}). Theorem \ref{homstabcofin} tells us that the map $H_i(O(M^n),A) \to H_i(O(M^{n+1}),A)$ is an isomorphism if $i \leq n-6$. The key observation is that both $M^n$ and $M^{n} \oplus \text{diag}(1,i)$ are objects of the $F$-cancellative groupoid $\mathcal{D}_F$ (Theorem \ref{cofininzi}), where $F = \theta(1)$ or $\theta(i)$. Thus, the stabilisation-by-$M$ map, which can be written as a composite of stabilisations by $\theta(1)$ and $\theta(i)$, $$H_i(O(M^n \oplus \text{ diag}(1,i)),A) \to H_i(O(M^n \oplus \text{ diag}(1,i) \oplus \theta(1)),A)\to H_i(O(M^{n+1} \oplus \text{ diag}(1,i)),A)$$ is an isomorphism if $i \leq n-6$. Consequently, in the sequence 
\[\begin{tikzcd}
	{H_i(O(\text{diag}(1,i)^{2n}),A)} & {H_i(O(\text{diag}(1,i)^{2n+1}),A)} \\
	& {H_i(O(\text{diag}(1,i)^{2n+2}),A)} & {H_i(O(\text{diag}(1,i)^{2n+3}),A)}
	\arrow[from=1-1, to=1-2]
	\arrow[from=1-2, to=2-2]
	\arrow[from=2-2, to=2-3]
\end{tikzcd}\] where each arrow is the stabilisation-by-$\text{diag}(1,i)$ map, we gather that the composites of the first two and the last two arrows are isomorphisms if $i \leq n -6$. Therefore, each arrow is an isomorphism by the $2$-out-of-$6$ property, proving the corollary.
\end{proof}
One can similarly deduce, using Proposition \ref{dooooooo} in place of Theorem \ref{cofininzi}, homological stability with respect to stabilisation by the form $[1]$ in the case that $R$ is a finite field of characteristic $2$.

We conclude with calculations of the low-dimensional cohomology of the cofinal form $\theta(1)$ for $R = \mathbb Z$, as stated in the introduction (Corollary \ref{calculationdintro}). Let $\textup{O}_{\langle n,n \rangle}(\mathbb Z)$ denote the group of isometries of the form $\theta(1)^n$.
\begin{kor}\label{calculationd}
    For an odd regular prime $p$, there are maps $$\mathbb F_2[v_i,w_i,a_i|i \geq 1] \to H^{*}(\textup{O}_{\langle n,n \rangle}(\mathbb Z),\mathbb{F}_2)$$ $$\mathbb F_p[p_i^{+},p_i|i \geq 1] \otimes_{\mathbb F_p} \Lambda_{\mathbb F_p}[y_i|i \geq 1] \to H^{*}(\textup{O}_{\langle n,n \rangle}(\mathbb Z),\mathbb{F}_p)$$ where the $v_i,w_i$ have degree $i$, the $p_i^{+}$ have degree $4i$, the $p_i$ have degree $2(p-1)i$, the $y_i$ have degree $(p-1)(2i-1)$, and the $a_i$ have degree $2i-1$, that are isomorphisms in degree $\leq \frac{n-9}{2}$. 
\end{kor}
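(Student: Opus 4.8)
Recall that $\mathrm{O}_{\langle n,n\rangle}(\mathbb{Z}) = \mathrm{O}(\theta(1)^n)$, that $\theta(1)$ is a cofinal form (Example \ref{cofinexamb}) which is moreover metabolic and of minimal rank among cofinal forms (Theorem \ref{cofincrit}), and that $c(\mathbb{Z}) = \dim_{U(\mathbb{Z})}(\mathbb{Z}/2) = 1$. Applying Theorem \ref{homstabcofin} (equivalently Theorem \ref{mainl}) with $R = \mathbb{Z}$ shows that the stabilisation map $H_i(\mathrm{O}_{\langle n,n\rangle}(\mathbb{Z})) \to H_i(\mathrm{O}_{\langle n+1,n+1\rangle}(\mathbb{Z}))$ is an isomorphism for $i \leq \frac{n-9}{2}$; since $\mathbb{F}_2$ and $\mathbb{F}_p$ are fields and the groups in question have homology of finite type, the same bound holds with $\mathbb{F}_p$-coefficients and, dually, in $\mathbb{F}_p$-cohomology, where the stabilisation maps are ring homomorphisms. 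Combining this with the group completion theorem \cite{oggroup} --- applicable because $\theta(1)$ is cofinal --- one obtains, for $i \leq \frac{n-9}{2}$, a ring isomorphism $H^i(\mathrm{O}_{\langle n,n\rangle}(\mathbb{Z}); \mathbb{F}_p) \cong H^i(\mathcal{GW}^s(\mathbb{Z})_0; \mathbb{F}_p)$, where $\mathcal{GW}^s(\mathbb{Z})_0$ is the basepoint component of the group completion $\mathcal{GW}^s(\mathbb{Z})$.

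By \cite{hebestreit2025stablemodulispaceshermitian}, the group completion $\mathcal{GW}^s(\mathbb{Z})$ is the infinite loop space $\Omega^\infty \mathrm{GW}^{\mathrm{gs}}(\mathbb{Z})$ of the \emph{genuine} symmetric Grothendieck--Witt spectrum of $\mathbb{Z}$ (it is the genuine symmetric, and not the symmetric, Poincaré structure that is relevant here precisely because $2$ is not invertible in $\mathbb{Z}$). The task is thereby reduced to computing $H^*(\Omega^\infty_0\,\mathrm{GW}^{\mathrm{gs}}(\mathbb{Z}); \mathbb{F}_p)$ for a regular prime $p$. I would carry this out by invoking the explicit computations of $\mathrm{GW}^{\mathrm{gs}}(\mathbb{Z})$ localised at $p$: the $2$-primary statement is due to \cite{hebestreit2025stablemodulispaceshermitian}, and the odd-primary one follows from the forthcoming work \cite{upcomingcalc} together with the known homotopy type of $\mathrm{K}(\mathbb{Z})$ at a regular prime, via the fundamental pullback square relating $\mathrm{GW}^{\mathrm{gs}}(\mathbb{Z})$ to $\mathrm{L}^s(\mathbb{Z})$ and $\mathrm{K}(\mathbb{Z})^{hC_2}$ (using $\mathrm{L}^s(\mathbb{Z})[\tfrac12] \simeq \mathrm{KO}[\tfrac12]$ and the vanishing of the Tate construction away from $2$). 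After $p$-localisation the relevant connective spectrum splits as a product of pieces of $\mathrm{ko}$-, $\mathrm{K}(\mathbb{Z})$-, $\mathrm{H}\mathbb{Z}$- and $\mathrm{H}\mathbb{Z}/2$-type, all of whose infinite loop spaces have polynomial mod-$p$ cohomology; assembling these yields a ring map from the displayed free graded-commutative $\mathbb{F}_p$-algebra to $H^*(\Omega^\infty_0\,\mathrm{GW}^{\mathrm{gs}}(\mathbb{Z}); \mathbb{F}_p)$ which is an isomorphism (in all degrees, for regular $p$). Concretely, the $v_i, w_i$ in degree $i$ are the Stiefel--Whitney-type classes of the two ``orthogonal real'' factors (since $\mathcal{GW}^s(\mathbb{R})_0 \simeq B\mathrm{O} \times B\mathrm{O}$), the $p_i^+$ in degree $4i$ are the Pontryagin-type classes coming from symmetric $L$-theory, and the $a_i$ in degree $2i-1$ (for $p=2$), respectively $p_i$ in degree $2(p-1)i$ and $y_i$ in degree $(p-1)(2i-1)$ (for $p$ odd), are the classes contributed by the algebraic $K$-theory of $\mathbb{Z}$ at $p$. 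Precomposing this isomorphism with the stability isomorphism of the first paragraph produces the maps in the statement.

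The genuinely nontrivial input --- the identification of $\mathcal{GW}^s(\mathbb{Z})$ with $\Omega^\infty$ of the genuine symmetric Grothendieck--Witt spectrum, and the computation of the homotopy type of the latter at regular primes --- is imported wholesale from \cite{hebestreit2025stablemodulispaceshermitian} and \cite{upcomingcalc}; what remains to be supplied here is only the verification that the range $\leq \frac{n-9}{2}$ in which the constructed map is an isomorphism is exactly the homological stability range produced by Theorem \ref{homstabcofin} when $c(\mathbb{Z}) = 1$. I therefore expect this last reconciliation --- keeping the stable range and the degrees of the named generators consistent with the cited computations --- to be the only real obstacle, the rest being a matter of correctly composing known statements.
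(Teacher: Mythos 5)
Your proposal is correct and takes essentially the same route as the paper: the paper's proof consists precisely of combining Theorem \ref{homstabcofin} (with $c(\mathbb Z)=1$, so the range $\frac{(n-3)\cdot 1-6}{2}=\frac{n-9}{2}$) with the group completion theorem and the stable computations imported from \cite{hebestreit2025stablemodulispaceshermitian} and \cite{upcomingcalc}. The extra detail you give about the internal structure of those external computations goes beyond what the paper records but does not alter the argument.
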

\begin{proof}
    Both assertions follow from Theorem \ref{homstabcofin} coupled with calculations in other work - the first from \cite[Theorem 8.1.9]{hebestreit2025stablemodulispaceshermitian} and the second from upcoming work of Hebestreit, Land, and Nikolaus (\cite{upcomingcalc}).
\end{proof}
\printbibliography
\end{document}